\documentclass[10pt]{amsart}

\calclayout

\usepackage{amsmath, amssymb}
\usepackage{amsfonts}
\usepackage{mathrsfs}
\usepackage[color, arrow,matrix,curve,cmtip,ps]{xy}
\usepackage{paralist}
\usepackage{quiver}
\usepackage{amsthm}
\usepackage{caption}
\usetikzlibrary{arrows,calc,matrix}
\tikzset{
curvarr/.style={
  to path={ -- ([xshift=2ex]\tikztostart.east)
    |- (#1) [near end]\tikztonodes
    -| ([xshift=-2ex]\tikztotarget.west)
    -- (\tikztotarget)}
  }
}
\tikzset{%
    symbol/.style={%
        draw=none,
        every to/.append style={%
            edge node={node [sloped, allow upside down, auto=false]{$#1$}}}
    }
}

\usepackage[final]{pdfpages}
\usepackage{rotating}
\PassOptionsToPackage{hyphens}{url}\usepackage{hyperref}
\usepackage{enumitem}
\usepackage{graphicx}
\usepackage{mathtools}
\usetikzlibrary{arrows,chains,matrix,positioning,scopes}

\usepackage[T1]{fontenc}
\usepackage[variablett]{lmodern}
\usepackage{xcolor}
\usepackage[citestyle=alphabetic,bibstyle=alphabetic]{biblatex}
\usepackage{lipsum}
\usepackage[most]{tcolorbox}

\newtheorem{theorem}{Theorem}[section]
\theoremstyle{definition}
\newtheorem{lemma}[theorem]{Lemma}

\newtheorem{warning}[theorem]{Warning}

\newtheorem{proposition}[theorem]{Proposition}
\newtheorem{corollary}[theorem]{Corollary}
\newtheorem{definition}[theorem]{Definition}

\newtheorem{remark}[theorem]{Remark}

\newtheorem{example}[theorem]{Example}

\allowdisplaybreaks

\newcommand{\Set}{\text{Set}}

\newcommand{\An}{\mathrm{An}}
\newcommand{\colim}{\mathrm{colim}}
\newcommand{\Map}{\mathrm{Map}}

\newcommand{\op}{\mathrm{op}}

\DeclareFieldFormat{postnote}{#1}
\DeclareFieldFormat{multipostnote}{#1}

\begin{document}
\title{Shape Theory of $\infty$-Topoi: Inverse Limits, Products, and (Co)homology}
\author{Georg Lehner}
\subjclass[2020]{Primary 18N60; Secondary 55P55, 18F10, 18F20, 55N30, 55N45}
\keywords{shape of $\infty$-topoi, pro-spaces, inverse limits, products, Künneth theorems, sheaf cohomology, sheaf homology, compactly assembled categories}
\begin{abstract}
We give a systematic account of the shape theory of $\infty$-topoi, viewing
the shape of an $\infty$-topos as its generalized homotopy type. We establish
the basic functorial properties of the shape, including preservation of colimits, descent,
and homotopy invariance. We then prove that shape preserves cofiltered limits
under compactness and perfectness hypotheses and establish Künneth-type
formulas for products. Finally, we give conditions under which the shape of an
$\infty$-topos determines its cohomology and homology.
\end{abstract}
\maketitle

\begin{figure}[h!]
    \centering
    \includegraphics[width=0.45\textwidth]{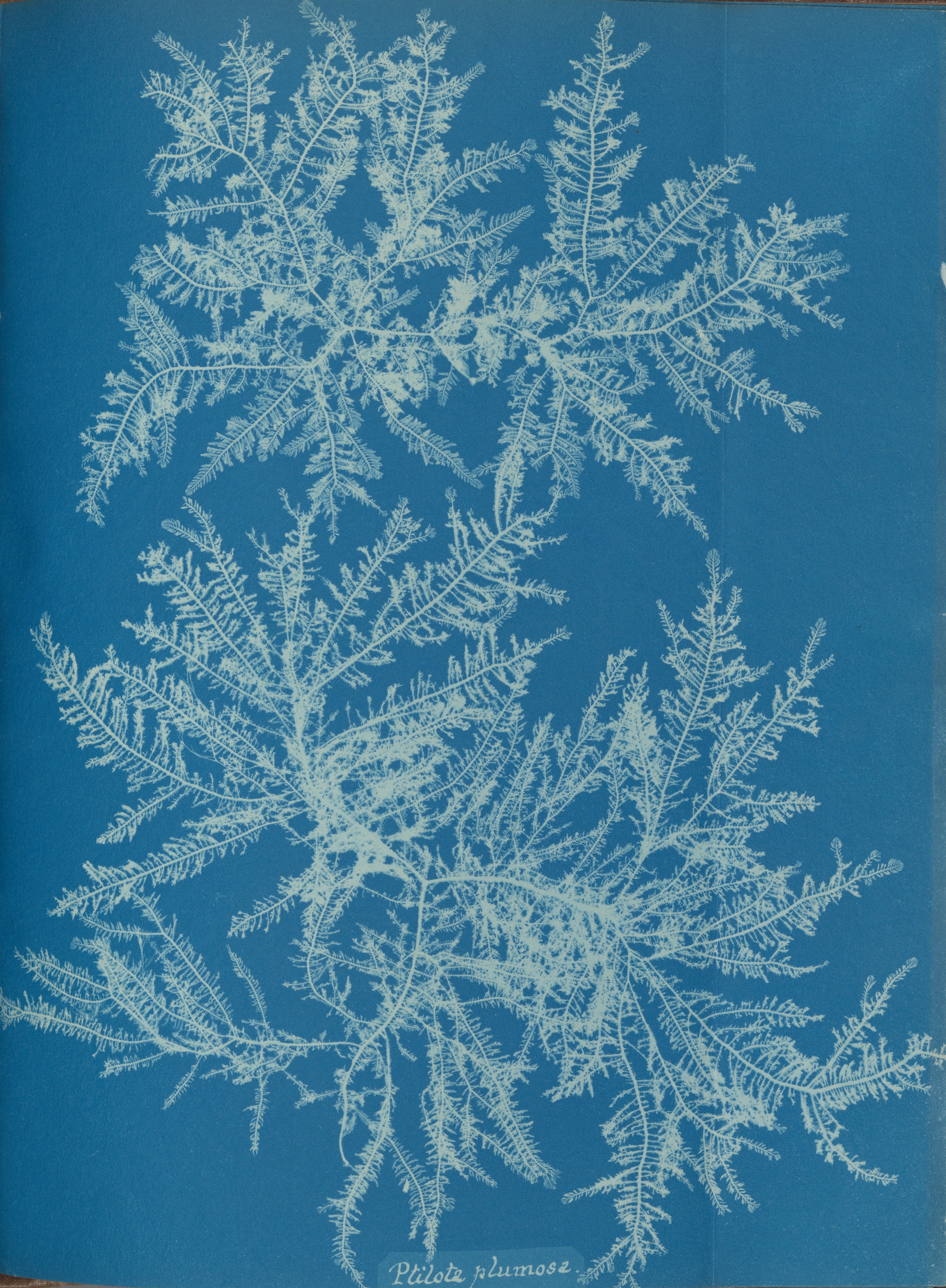}
    \caption*{\tiny Anna Atkins, \textit{Ptilota plumosa}, ca.~1853, cyanotype, from \textit{Photographs of British Algae: Cyanotype Impressions}, The Metropolitan Museum of Art, acc.~no.~2005.100.557 (92), Public domain, via The Met Open Access}
\end{figure}
    
\tableofcontents

\section{Introduction}

\emph{The set of connected components of a topological space is not a set, it is a pro-object of sets.} This perhaps controversial statement is substantiated by examples. Consider for instance the Cantor space $C$.
\begin{figure}[ht]
    \centering
    \includegraphics[width=0.4\textwidth]{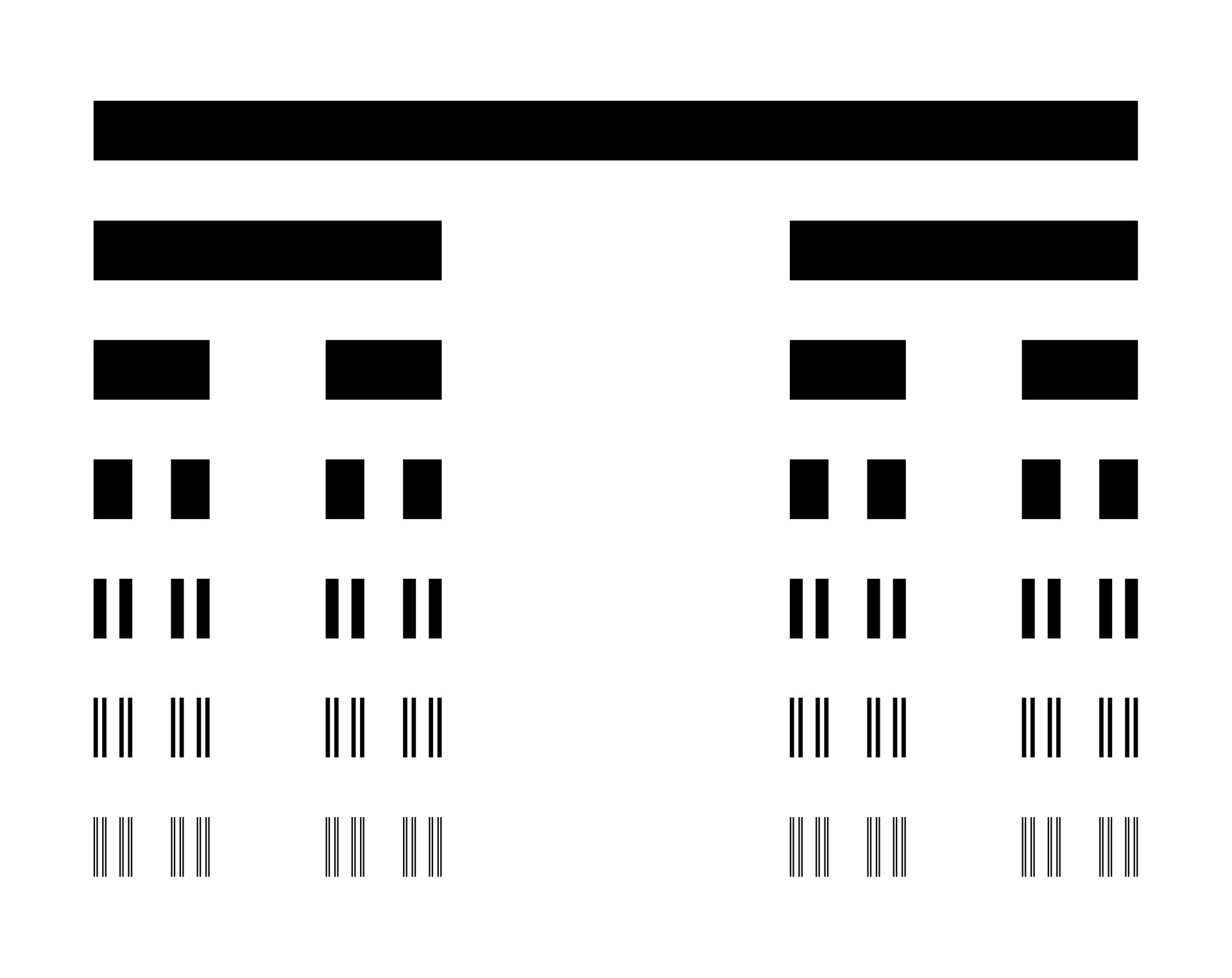}
    \caption{The Cantor space. Source: Wikimedia Commons.}\label{cantor_space}
\end{figure}

This space does not contain any non-empty connected open subsets. However, by thickening up the Cantor space just slightly, one obtains a finite union of closed intervals, dependent on the parameter of the thickening. Thus, as the thickening parameter tends to zero, one obtains a pro-set. This intuition can be made precise, and results in the adjunction
\[ \begin{tikzcd}
	{\mathrm{Top}} & {\mathrm{Pro}(\mathrm{Set})}
	\arrow[""{name=0, anchor=center, inner sep=0}, "{\pi_0}", curve={height=-12pt}, from=1-1, to=1-2]
	\arrow[""{name=1, anchor=center, inner sep=0}, "\beta", curve={height=-12pt}, from=1-2, to=1-1]
	\arrow["\dashv"{anchor=center, rotate=-90}, draw=none, from=0, to=1]
\end{tikzcd} \]
where $\beta$ is the pro-extension of the functor $(-)^\mathrm{disc} : \mathrm{Set} \rightarrow \mathrm{Top}$ that equips a set with its discrete topology.

The field of \emph{shape theory} applies this intuition further to homotopy theory. If one wants to understand the homotopy type of a space with poor local behaviour, one should rather assign to the space a pro-system of homotopy types that sees how the homotopy type changes at progressively finer resolutions of the space. Shape theory as a field grew in activity in the 1970s until the 1990s, and initially dealt mostly with the context of compact metric spaces, later generalized to general topological spaces. We recommend \cite{mardesic1982shape}, \cite{borsuk1975theory} and \cite{Gevorgyan2021ShapeTheory} for an overview.

A far-reaching generalization of the theory of shape occurred when Toën and Vezzosi \cite[Definition 5.3.2]{toen2003segaltopoistackssegal} and then Lurie \cite[Section 7.1.6]{Lurie2009HTT} and \cite[Appendix A]{Lurie2017HA}, building on ideas of Grothendieck, defined the \emph{shape} of an $\infty$-topos. It was realized that this notion is in fact a conservative extension of the so-called \emph{strong shape} of a compact Hausdorff space; however, this generalization is extremely far-reaching. Since \emph{every} topos has a shape, this implies that the theory is meaningful for arbitrary topological spaces (via taking their sheaf topoi), or more generally locales. It also works immediately in equivariant contexts, for example, by considering equivariant sheaves on a space equipped with a $G$-action. Hoyois developed the connection to classical Galois theory using the shape of the étale topos of a scheme, and showed that the shape of the latter agrees with the étale topological type of the scheme \cite{Hoyois2018HigherGaloisTheory}. The author’s motivations include applications to dynamical systems arising in geometric group theory as well as measure theory, and the study of the large-scale properties of spaces via the \emph{shape at infinity}, which will be the topic of the follow-up article \cite{Lehner2026Excision}.

Systematic treatments for topos-theoretic shape theory remain relatively few at the time of writing. Let us mention for instance \cite{Volpe2021SixOperationsInTopology} as well as \cite[Chapter 4]{BarwickGlasmanHaineExodromy}. The purpose of this article is twofold. First, we give a streamlined
account of the general theory of shape via higher topoi. Second, we establish preservation results for
cofiltered limits and products and show that, under suitable compactness
hypotheses, shape determines topos cohomology and homology.

In the following, write $\mathrm{An}$ for the $\infty$-category of \emph{animae}, equivalently homotopy types, equivalently $\infty$-groupoids, and write $\mathrm{Pro}(\mathrm{An})$ for the $\infty$-category of pro-objects in animae. For a given (higher) topos $\mathcal{X}$, such as the $\infty$-category $\mathrm{Sh}(X)$ of sheaves with values in $\mathrm{An}$ on a topological space $X$, there is an associated pro-anima
\[ \Pi_\infty( \mathcal{X} ) \in \mathrm{Pro}(\mathrm{An}) \]
called the \emph{shape} of $\mathcal{X}$. In the following, the word \emph{topos} refers to a \emph{Grothendieck $\infty$-topos} as defined by Lurie. Let $\mathrm{RTop}$ be the $\infty$-category of topoi and geometric morphisms, i.e.\ we view arrows as aligned with the right adjoint directions. The shape functor has the following properties. 

\begin{theorem}[See Proposition \ref{goodopencover}, Example \ref{abstractsimplicialcomplex}, Theorem \ref{shapeofstablycompact}, Theorem \ref{homotopyinvariance} and Theorem \ref{shapeadjunction}]
There exists a functor
\[ \Pi_\infty : \mathrm{RTop} \rightarrow \mathrm{Pro}(\mathrm{An}) \]
with the following properties.
\begin{itemize}
\item If $X$ is a CW complex, then $\Pi_\infty(\mathrm{Sh}(X)) \cong |X|$, where the latter refers to the homotopy type of $X$, viewed as an anima.
\item If $X$ is a compact Hausdorff space, then
\[ \Pi_\infty(\mathrm{Sh}(X)) \simeq \text{``}\lim_{(P,\mathcal{U}) \emph{\text{ finite }}P\emph{\text{-indexed stratification of }}X} \text{''} ~|P| \]
where the limit in question goes over the category of finite-poset-indexed stratifications of $X$.
\item The functor $\Pi_\infty$ is \emph{homotopy invariant}: For every topos $\mathcal{X}$, the natural geometric morphism 
\[ \mathrm{Sh}([0,1]) \times^\mathrm{RTop} \mathcal{X} \rightarrow \mathcal{X} \]
induces an isomorphism
\[ \Pi_\infty(  \mathrm{Sh}([0,1]) \times^\mathrm{RTop} \mathcal{X} ) \xrightarrow{ \cong } \Pi_\infty( \mathcal{X} ). \]
\item The functor $\Pi_\infty$ admits a right adjoint $\beta : \mathrm{Pro}(\mathrm{An}) \rightarrow \mathrm{RTop}$. In particular, it preserves colimits of topoi.
\end{itemize}
\end{theorem}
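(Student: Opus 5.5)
We define $\Pi_\infty$ first and then check the four properties in the order listed. For a topos $\mathcal{X}$ with unique geometric morphism $p : \mathcal{X} \to \mathrm{An}$, the composite $p_* p^* : \mathrm{An} \to \mathrm{An}$ is accessible and preserves finite limits. It is therefore pro-corepresentable, and we let $\Pi_\infty(\mathcal{X})$ be the corresponding pro-anima. Functoriality in $\mathrm{RTop}$ comes from the unit maps $p_*p^* \to p_*f_*f^*q^*$.

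For the right adjoint, send a pro-anima $K = \text{``}\lim\text{''} K_i$ to the topos $\beta(K) = \lim_i^{\mathrm{RTop}} \mathrm{An}_{/K_i}$. The key input is that $\mathrm{Fun}^{*}(\mathrm{An}_{/A}, \mathcal{X}) \simeq \mathcal{X}_{/p^*A} \simeq \Map_{\mathrm{An}}(A, p_*p^*\ast)$ on the groupoid core; here $\mathrm{Fun}^*$ denotes the $\infty$-category of left-exact colimit-preserving functors, i.e.\ inverse image parts of geometric morphisms. Since $p_*p^*$ is the functor corepresented by $\Pi_\infty(\mathcal{X})$, this expresses
\[\Map_{\mathrm{RTop}}(\mathcal{X},\mathrm{An}_{/A}) \simeq \Map_{\mathrm{Pro}(\mathrm{An})}(\Pi_\infty\mathcal{X}, A).\]
Passing to the limit over $i$ on both sides gives the adjunction, and preservation of colimits follows formally.

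For a CW complex $X$, we cover $X$ by a good open cover, i.e.\ one whose finite intersections are contractible or empty. Proposition \ref{goodopencover} then identifies $\Pi_\infty(\mathrm{Sh}(X))$ with the nerve of the cover, which is $|X|$. This uses descent: $\Pi_\infty$ is a left adjoint, and $\mathrm{Sh}(X)$ is the colimit in $\mathrm{RTop}$ of the slices over the cover. It also uses that contractible open sets of a locally contractible space have trivial shape. Alternatively, Example \ref{abstractsimplicialcomplex} handles simplicial complexes directly, and CW complexes reduce to these up to homotopy via homotopy invariance.

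For compact Hausdorff $X$, we use Theorem \ref{shapeofstablycompact}. Each finite $P$-indexed stratification gives a map to the Alexandroff space of $P$, whose sheaf topos has shape $|P|$. We then show that $\mathrm{Sh}(X)$ is the cofiltered limit of these topoi, that shape commutes with this limit thanks to compactness, and that the pro-system is cofinal. \textbf{The main obstacle} is this limit-preservation step: it requires proper base change and the fact that, for coherent/compact topoi, global sections commute with filtered colimits of the relevant truncated objects. I would first prove it for truncated coefficients and then use hypercompleteness of $\mathrm{Sh}(X)$ at finite covering dimension, or Postnikov towers, to pass to pro-animae.

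Homotopy invariance (Theorem \ref{homotopyinvariance}) follows from the proper base change theorem applied to the proper map $[0,1]\to\ast$. It gives $\pi_*\pi^* \simeq \mathrm{id}$ on $\mathcal{X}$ for the projection $\pi$, provided the unit $F \to \Gamma([0,1], \mathrm{const} F)$ is an equivalence for every anima $F$. That holds because $\Pi_\infty(\mathrm{Sh}([0,1]))\simeq \ast$, by the CW case.
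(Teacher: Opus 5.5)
\textbf{There is a circularity at the analytic core of your proposal.} Your homotopy-invariance step is fine as a reduction. Proper base change for the proper morphism $\mathrm{Sh}([0,1])\to\mathrm{An}$ gives $\pi_*\pi^*\mathcal{X}^*A\simeq\mathcal{X}^*\Gamma([0,1],\underline{A})$ on constant objects. It does not give this on all of $\mathcal{X}$, as you claim, but constant objects are all the shape sees. This is the paper's proper K\"unneth theorem \ref{shapekuenneth} specialized. It is a legitimate alternative to the route via Proposition \ref{essentialandcontractible}, and it needs only $\Pi_\infty([0,1])\simeq\ast$ rather than local contractibility of $\mathrm{Sh}([0,1])$.

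However, you obtain $\Pi_\infty([0,1])\simeq\ast$ ``by the CW case''. The CW case rests on the assertion that contractible opens (here: intervals) have trivial shape, which is exactly the statement to be proved. Your alternative CW route invokes homotopy invariance outright. Nothing in the proposal shows that sheafifying a constant presheaf on an interval leaves its global sections unchanged.

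The paper breaks the circle by going in the opposite order:
\begin{enumerate}
\item By Theorem \ref{shapeinverselimits}, the functor $X\mapsto\Map(\Pi_\infty(X),A)$ on compact Hausdorff spaces turns cofiltered intersections of closed subspaces into filtered colimits.
\item Hoyois' Lemma \ref{HoyoisLemma} (reduce to $\mathrm{Set}$-valued functors, then a compactness-plus-connectedness argument on $[0,1]$) forces such a functor to be homotopy invariant. This gives $\Pi_\infty([0,1])\simeq\ast$.
\item Then $[0,1)$ and $(0,1)$ follow as unions of closed subintervals, so $\mathrm{Sh}([0,1])$ is locally contractible and homotopy invariance holds for all topoi.
\item Only then do contractible opens have trivial shape, so that good covers of CW complexes compute $|X|$.
\end{enumerate}
So the compact Hausdorff machinery must come first; it is the input to both the interval statement and the CW statement.

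\textbf{The compact Hausdorff step contains a false claim.} $\mathrm{Sh}(X)$ is not the cofiltered limit of the topoi $\mathrm{Sh}(P_{\mathrm{Alex}})$. Since $\mathrm{Sh}:\mathrm{Loc}\to\mathrm{RTop}$ preserves limits, that limit is the sheaf topos of a cofiltered limit of coherent locales along coherent maps, namely of the coherent space $\gamma(X)$. And $\gamma(X)\neq X$ already for $X=[0,1]$, which is not coherent.

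The missing idea is the comparison map $p:\gamma(X)\to X$. The retraction gives an internal adjunction $p^*\dashv i^*$ on frames with $p^*$ fully faithful. $\mathrm{Sh}$ transports this to topoi, so $p$ is a contractible geometric morphism and hence a shape equivalence by Lemma \ref{contractibleshapeequivalence}; this is Proposition \ref{coveringmonadshapeequivalence}.

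The limit step itself also does not need truncated coefficients, hypercompleteness, or finite covering dimension. The last would in any case exclude, e.g., the Hilbert cube. Each $\mathrm{Sh}(P_{\mathrm{Alex}})\simeq\mathrm{Fun}(P,\mathrm{An})$ is compact with perfect transition maps, so the push--pull formula of Lemma \ref{pushpull} gives $\mathcal{X}_*\mathcal{X}^*\simeq\colim_i(\mathcal{X}_i)_*(\mathcal{X}_i)^*$ directly.

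\textbf{A small slip in the adjunction step.} $\Map_{\mathrm{An}}(A,p_*p^*\ast)$ is contractible, since $p^*\ast\simeq 1$. The correct input is $\Map_{\mathrm{RTop}}(\mathcal{X},\mathrm{An}_{/A})\simeq\Map_{\mathcal{X}}(1,p^*A)=p_*p^*A$. With that fix, your argument coincides with the paper's.
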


It is thus reasonable to think of the shape of a topos as the correct notion of \emph{homotopy type} of a topos, even away from locally contractible contexts. What is quite poignant is that while classically, the ``homotopy type'' functor, as defined on a suitable category of spaces, tends to be quite poorly behaved, for the shape instead one has the following (non-composable!) chain of adjunctions.

\[ \begin{tikzcd}
	{\mathrm{Top}} & {\mathrm{Loc}} & {\mathrm{RTop}} & {\mathrm{Pro}(\mathrm{An})}
	\arrow[""{name=0, anchor=center, inner sep=0}, "{\mathrm{Loc}}", curve={height=-12pt}, from=1-1, to=1-2]
	\arrow[""{name=1, anchor=center, inner sep=0}, "{\mathrm{pts}}", curve={height=-12pt}, from=1-2, to=1-1]
	\arrow[""{name=2, anchor=center, inner sep=0}, "{\mathrm{Sh}}"', curve={height=12pt}, hook', from=1-2, to=1-3]
	\arrow[""{name=3, anchor=center, inner sep=0}, "{\mathrm{Open}}"', curve={height=12pt}, from=1-3, to=1-2]
	\arrow[""{name=4, anchor=center, inner sep=0}, "{\Pi_\infty}", curve={height=-12pt}, from=1-3, to=1-4]
	\arrow[""{name=5, anchor=center, inner sep=0}, "\beta", curve={height=-12pt}, from=1-4, to=1-3]
	\arrow["\dashv"{anchor=center, rotate=-90}, draw=none, from=0, to=1]
	\arrow["\dashv"{anchor=center, rotate=-90}, draw=none, from=3, to=2]
	\arrow["\dashv"{anchor=center, rotate=-90}, draw=none, from=4, to=5]
\end{tikzcd} \]

Let us make two comments.
\begin{itemize}
\item The category $\mathrm{Loc}$ refers to the category of \emph{locales}, which are an abstraction of the notion of topological space, where only the information of the lattice of open sets enters. The functor $\mathrm{Top} \rightarrow \mathrm{Loc}$ is the functor that forgets the underlying set of points of a topological space. It is fully faithful when restricted to the class of \emph{sober} topological spaces. This is a rather mild separation condition and is satisfied by Hausdorff spaces or spectra of rings, for example.
\item The functor $\mathrm{Sh} : \mathrm{Loc} \rightarrow \mathrm{RTop}$ sends a locale $L$ to its topos of sheaves $\mathrm{Sh}(L)$. It is in fact fully faithful. Its left adjoint sends a topos $\mathcal{X}$ to its locale of \emph{open objects}, that is subobjects of the terminal object.
\end{itemize}	

From this one can see that preservation, or failure thereof, of certain limits or colimits of topological spaces or locales under taking the shape can be understood on a granular level, by understanding at which stage in the passage of a topological space to its sheaf topos, and then to its shape a failure can happen. For instance, the fact that shape is a left adjoint immediately produces the following Mayer--Vietoris-type statement.

\begin{theorem}[Descent for shape, see Theorem \ref{shapemayervietorisdescent}]
Let $\mathcal{X}$ be a topos and suppose $U \twoheadrightarrow 1$ is an effective epimorphism. Then
\[
\Pi_\infty(\mathcal{X}) \simeq \colim_{[n] \in \Delta^{\mathrm{op}}} \Pi_\infty( \mathcal{X}_{/U^{\times n+1}}),
\]
where the colimit in question is taken in the $\infty$-category $\mathrm{Pro}(\mathrm{An})$.
\end{theorem}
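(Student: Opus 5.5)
The plan is to realize $\mathcal{X}$ as a colimit of the étale topoi $\mathcal{X}_{/U^{\times n+1}}$ in $\mathrm{RTop}$, and then to apply the left adjoint $\Pi_\infty$ (Theorem \ref{shapeadjunction}), which preserves all colimits of topoi. First I form the Čech nerve $U_\bullet$ of $U \to 1$, with $U_n = U^{\times n+1}$. Each face and degeneracy map $U_n \to U_m$ induces an étale geometric morphism $\mathcal{X}_{/U_n} \to \mathcal{X}_{/U_m}$; its inverse image is pullback along the map and it has a further left adjoint $\Sigma$ given by postcomposition. This gives a simplicial object $[n] \mapsto \mathcal{X}_{/U_n}$ in $\mathrm{RTop}$, augmented over $\mathcal{X} = \mathcal{X}_{/1}$.

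The main step is to show that $\colim_{\Delta^{\op}} \mathcal{X}_{/U_n} \simeq \mathcal{X}$ in $\mathrm{RTop}$. Colimits in $\mathrm{RTop}$ are computed as limits in $\mathrm{Pr}^L$ along the inverse image functors; concretely, by \cite[Proposition 6.3.2.3]{Lurie2009HTT} they are computed as limits of the underlying $\infty$-categories along the pullback functors. So I must check that the functor
\[ \mathcal{X} \to \lim_{[n]\in\Delta} \mathcal{X}_{/U_n}, \qquad F \mapsto (F\times U_n)_n, \]
is an equivalence. This is descent in $\mathcal{X}$. The functor $\mathcal{X}_{/(-)} \colon \mathcal{X}^{\op} \to \widehat{\mathrm{Cat}}_\infty$ sends colimits to limits (\cite[Theorem 6.1.3.9]{Lurie2009HTT}, part of Giraud's axioms). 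Since $U \to 1$ is an effective epimorphism, we have $\colim_{\Delta^{\op}} U_n \simeq 1$. Therefore $\mathcal{X} \simeq \mathcal{X}_{/1} \simeq \lim_n \mathcal{X}_{/U_n}$, and this equivalence is implemented by the pullback functors, as required.

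I expect this identification to be the main obstacle. The difficulty is coherence rather than mathematics: the descent equivalence is naturally phrased with the functor $\mathcal{X}_{/(-)}$ valued in large $\infty$-categories, and one must match it with the diagram of étale geometric morphisms built in $\mathrm{RTop}$. I would handle this by using the functor $\mathcal{X} \to \mathrm{RTop}$, $V \mapsto \mathcal{X}_{/V}$, from \cite[Section 6.3.5]{Lurie2009HTT}. Composing it with the inclusion into $(\mathrm{Pr}^L)^{\op}$ recovers $\mathcal{X}_{/(-)}$ with the pullback functors. Lurie shows in that section that this functor preserves small colimits, which gives exactly $\colim_n \mathcal{X}_{/U_n} \simeq \mathcal{X}_{/\colim U_n} = \mathcal{X}$ in $\mathrm{RTop}$. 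Citing this result avoids hand-built coherence.

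Finally, I apply $\Pi_\infty$. By Theorem \ref{shapeadjunction} it is a left adjoint, so it preserves the geometric realization, and this yields
\[ \Pi_\infty(\mathcal{X}) \simeq \colim_{[n]\in\Delta^{\op}} \Pi_\infty(\mathcal{X}_{/U^{\times n+1}}) \]
in $\mathrm{Pro}(\mathrm{An})$. Here the maps in the diagram are induced by the étale morphisms from the first step.
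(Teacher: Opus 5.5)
Your argument is correct, but it follows a different route from the paper's.

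\textbf{The paper's route.} The paper never leaves $\mathcal{X}$. The local shape $\Pi^{\mathcal{X}}_\infty \colon \mathcal{X} \to \mathrm{Pro}(\mathrm{An})$ is by construction the pro-left adjoint of $\mathcal{X}^*$, so it is an actual left adjoint and preserves the colimit $1 \simeq \colim_{\Delta^{\op}} U^{\times n+1}$. That colimit formula is just the definition of $U \twoheadrightarrow 1$ being an effective epimorphism. The paper then rewrites $\Pi^{\mathcal{X}}_\infty(V) \simeq \Pi_\infty(\mathcal{X}_{/V})$ by composing pro-left adjoints along $\mathrm{An} \to \mathcal{X} \to \mathcal{X}_{/V}$.

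\textbf{Your route.} You lift the colimit to $\mathrm{RTop}$: descent, packaged via \cite[Section 6.3.5]{Lurie2009HTT}, gives $\mathcal{X} \simeq \colim_{\Delta^{\op}} \mathcal{X}_{/U^{\times n+1}}$. You then apply the global adjunction $\Pi_\infty \dashv \beta$.

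\textbf{What each buys.} The paper's argument is lighter. It needs neither descent proper (only the defining colimit of an effective epimorphism), nor the description of colimits in $\mathrm{RTop}$, nor the shape adjunction. The paper only proves that adjunction later, though your use of it is not circular. Its cost is that the identification $\Pi^{\mathcal{X}}_\infty(V) \simeq \Pi_\infty(\mathcal{X}_{/V})$ has to be natural in $V$, so that the simplicial structure maps agree with those induced by the étale geometric morphisms. This is formal but not spelled out. Your argument proves a stronger intermediate fact: $\mathcal{X}$ itself is the geometric realization of the étale topoi in $\mathrm{RTop}$. So the conclusion holds for any colimit-preserving invariant of topoi, and the transition maps are visibly $\Pi_\infty$ of the étale morphisms, so no local-to-global comparison is needed. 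The price is the heavier input from Lurie's theory of étale morphisms and the global adjunction. Your argument is in fact the one suggested by the introduction's phrase ``the fact that shape is a left adjoint''.
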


The particular application to topological spaces gives the following.

\begin{proposition}[See Corollary \ref{shapemayervietorisopens}]
Let $X$ be a topological space, and $\{U_i\}_{i\in I}$ a cover of $X$ by open subsets. Then
\[ \Pi_\infty(\mathrm{Sh}(X)) \cong \mathrm{colim} \Pi_\infty(\mathrm{Sh}(U_{i_0} \cap U_{i_1} \cap \cdots \cap U_{i_n}) ) \]
where the colimit in question is indexed by the \v{C}ech nerve of the cover.
\end{proposition}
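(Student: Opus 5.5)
The plan is to derive this from the descent theorem for shape (Theorem \ref{shapemayervietorisdescent}) applied to $\mathcal{X} = \mathrm{Sh}(X)$ with a suitable effective epimorphism $U \twoheadrightarrow 1$. For $U$ I take the coproduct $U = \coprod_{i \in I} h(U_i)$ in $\mathrm{Sh}(X)$, where $h(U_i)$ is the subterminal sheaf represented by the open $U_i$. The first step is to check that $U \to 1$ is an effective epimorphism. In an $\infty$-topos this can be checked on $\pi_0$, i.e.\ after $0$-truncation. Since the $U_i$ cover $X$, the induced map of sheaves of sets is a local epimorphism: every section of $1$ over an open $V$ restricts, on the cover $\{V \cap U_i\}$, to sections that factor through some $h(U_i)$.

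The second step identifies the terms of the descent formula. Because coproducts in a topos are universal (disjoint and stable under pullback), the fibre power decomposes as
\[ U^{\times n+1} \simeq \coprod_{(i_0,\dots,i_n) \in I^{n+1}} h(U_{i_0}) \times \cdots \times h(U_{i_n}) \simeq \coprod_{(i_0,\dots,i_n)} h(U_{i_0} \cap \cdots \cap U_{i_n}), \]
using that products of subterminals are intersections. Hence the slice splits as a product of topoi:
\[ \mathrm{Sh}(X)_{/U^{\times n+1}} \simeq \prod_{(i_0,\dots,i_n)} \mathrm{Sh}(X)_{/h(U_{i_0}\cap\cdots\cap U_{i_n})} \simeq \prod \mathrm{Sh}(U_{i_0}\cap\cdots\cap U_{i_n}). \]
This uses the standard identification $\mathrm{Sh}(X)_{/h(W)} \simeq \mathrm{Sh}(W)$ for $W$ open. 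The product of $\infty$-categories here is the coproduct in $\mathrm{RTop}$.

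The third step applies the fact that $\Pi_\infty$ is a left adjoint (Theorem \ref{shapeadjunction}), so it preserves coproducts. This gives
\[ \Pi_\infty(\mathrm{Sh}(X)_{/U^{\times n+1}}) \simeq \coprod_{(i_0,\dots,i_n)} \Pi_\infty(\mathrm{Sh}(U_{i_0}\cap\cdots\cap U_{i_n})) \]
in $\mathrm{Pro}(\mathrm{An})$. Substituting into the simplicial colimit of Theorem \ref{shapemayervietorisdescent} and rewriting "colimit over $\Delta^{\mathrm{op}}$ of coproducts over $I^{n+1}$" as a single colimit over the category of simplices of the Čech nerve (a left Kan extension/Grothendieck construction step) yields the stated formula.

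The main obstacle is naturality. The face and degeneracy maps of the simplicial topos $\mathrm{Sh}(X)_{/U^{\times \bullet+1}}$ must match, under the identifications above, the inclusions of intersections indexed by the Čech nerve. I would handle this by constructing the splitting functorially: the whole simplicial object $U^{\times\bullet+1}$ is the left Kan extension of the functor (Čech nerve simplices) $\to \mathrm{Sh}(X)$, $(i_0,\dots,i_n) \mapsto h(\bigcap U_{i_k})$. Slicing is natural in the base, so the identification comes out coherently.
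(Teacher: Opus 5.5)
Your proposal is correct and follows the paper's route: the corollary is the descent theorem for shape (Theorem \ref{shapemayervietorisdescent}) applied to the effective epimorphism $\coprod_i y_{U_i} \twoheadrightarrow 1$, with the fibre powers $U^{\times n+1}$ split into coproducts of the representables of the finite intersections. The only variation is how the coproducts get through the shape. You pass through $\Pi_\infty$ on $\mathrm{RTop}$ using $\mathrm{Sh}(X)_{/\coprod_j V_j} \simeq \prod_j \mathrm{Sh}(X)_{/V_j}$ and Theorem \ref{shapeadjunction}, whereas the paper's formulation in terms of $\Pi^{\mathcal{X}}_\infty(U^{\times n+1})$ gets this directly from the local shape $\Pi^{\mathcal{X}}_\infty$ being a left adjoint on $\mathcal{X}$. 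Your coherence concern is harmless: every object in this \v{C}ech nerve is $0$-truncated, so identifying the simplicial objects is a $1$-categorical check.
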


One of the main points of the article is that shape is also well-behaved with certain classes of limits. First, there is the following theorem about inverse limits. We note that a geometric morphism $f : \mathcal{X} \rightarrow \mathcal{Y}$ is called \emph{perfect}, if the pushforward $f_*$ preserves filtered colimits. A topos $\mathcal{X}$ is called \emph{compact}, if the unique geometric morphism $\mathcal{X} \rightarrow \mathrm{An}$ is perfect.

\begin{theorem}[See Theorem \ref{shapeinverselimits}] \label{shapeinverselimitspreview}
Let $\mathcal{X}_\bullet : I \rightarrow \mathrm{RTop}$ be a diagram of topoi, with $I$ cofiltered, such that:
\begin{itemize}
\item for all $i \in I$ the topos $\mathcal{X}_i$ is compact, and 
\item for every morphism $i \rightarrow j$ in $I$ the geometric morphism $\mathcal{X}_i \rightarrow \mathcal{X}_j$ is perfect.
\end{itemize}
Let $\mathcal{X} = \mathrm{lim}_{i \in I} \mathcal{X}_i$. Then
\[ \Pi_\infty( \mathcal{X} ) \cong \mathrm{lim}_{i \in I} \Pi_\infty( \mathcal{X}_i ), \]
with the limit taken in $\mathrm{Pro}(\mathrm{An})$.
\end{theorem}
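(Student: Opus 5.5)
The plan is to test the claimed isomorphism against every anima $K$, via the corepresented functors. Recall that the shape is characterised by the formula
\[ \mathrm{Map}_{\mathrm{Pro}(\mathrm{An})}(\Pi_\infty(\mathcal{X}), K) \simeq \Gamma_{\mathcal{X}}(\pi_{\mathcal{X}}^* K), \]
where $\pi_{\mathcal{X}} : \mathcal{X} \to \mathrm{An}$ is the terminal geometric morphism, so $\pi_{\mathcal{X}}^* K$ is the constant sheaf with value $K$.

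Since a cofiltered limit in $\mathrm{Pro}(\mathrm{An})$ of pro-objects is computed levelwise, corepresentably we have
\[ \mathrm{Map}(\lim_i \Pi_\infty(\mathcal{X}_i), K) \simeq \colim_{i} \mathrm{Map}(\Pi_\infty(\mathcal{X}_i), K) \]
for every $K \in \mathrm{An}$. Indeed, $K$ is cocompact in $\mathrm{Pro}(\mathrm{An})$, and cofiltered limits of pro-objects are the formal ones. It therefore suffices to prove that the natural map
\[ \colim_{i \in I} \Gamma_{\mathcal{X}_i}(\pi_i^* K) \to \Gamma_{\mathcal{X}}(\pi^* K) \]
is an equivalence for every anima $K$, naturally in $K$. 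The pro-object $\lim_i \Pi_\infty(\mathcal{X}_i)$ and $\Pi_\infty(\mathcal{X})$ then corepresent the same functor on $\mathrm{An}$, which is exactly equality in $\mathrm{Pro}(\mathrm{An})$.

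The key input is a description of global sections on a cofiltered limit of topoi along perfect transition maps. Write $p_i : \mathcal{X} \to \mathcal{X}_i$ for the projections. Here I would use the standard presentation of the limit in $\mathrm{RTop}$ as the colimit in $\mathrm{Pr}^L$ of the $\mathcal{X}_i$ along the pullbacks $f_{ij}^*$. The perfectness hypothesis says precisely that the right adjoints $f_{ij*}$ preserve filtered colimits, so the diagram lives in compactly-generated-style (``compactly assembled'') presentable categories with colimit-preserving right adjoints. This gives the key step: for $F = f_i^* G$ one has
\[ p_{i*} p_i^* G \simeq \colim_{j \to i} f_{ji*} f_{ji}^* G, \]
a Beck--Chevalley / continuity formula. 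Applying $\Gamma_{\mathcal{X}_i} = \pi_{i*}$, which commutes with filtered colimits because $\mathcal{X}_i$ is compact, yields
\[ \Gamma_{\mathcal{X}}(\pi^* K) = \pi_{i*} p_{i*} p_i^* \pi_i^* K \simeq \colim_{j \to i} \Gamma_{\mathcal{X}_j}(\pi_j^* K), \]
which is the claim, since $\{j \to i\}$ is cofinal in $I$.

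The main obstacle is justifying the formula for $p_{i*}p_i^*$. My first attempt would be to use the limit presentation in $\mathrm{Pr}^R$: objects of $\mathcal{X}$ are compatible families $(F_j)$ with $F_j \simeq f_{kj*}F_k$. Because the transition right adjoints preserve filtered colimits, the family $j \mapsto \colim_{k \to j} f_{kj*} f_{ki}^* G$ is compatible, and it is the value of $p_i^*G$. Checking this requires the unit/counit compatibilities and the fact that a colimit along a cofiltered diagram of $\mathrm{Pr}^R$ whose right adjoints are filtered-colimit preserving can be computed by this explicit formula (the analogue of Lurie's result on filtered colimits of compactly generated categories). Alternatively, I would reduce to the case where all $\mathcal{X}_i$ are compact and then invoke an earlier characterisation: $\mathcal{X}$ is compact, and $p_i$ is perfect because perfect morphisms are stable under cofiltered limits. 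Either way, the naturality in $K$ needed to conclude is formal.
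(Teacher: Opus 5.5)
Your proposal is correct and is essentially the paper's proof. Both reduce to the push-pull formula for cofiltered limits in $\mathrm{Pr}^R$ along filtered-colimit-preserving right adjoints (the paper cites it from Geiß, Lemma~\ref{pushpull}; your colimit over $I_{/i}$ agrees with its index category $I_{/\{i,i\}}$ by cofinality, since $I$ is cofiltered), and both then conclude via the pointwise description of cofiltered limits in $\mathrm{Pro}(\mathrm{An})\simeq\mathrm{Fun}^{\mathrm{lex,acc}}(\mathrm{An},\mathrm{An})^{\mathrm{op}}$.

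The only difference is cosmetic. You apply the formula at some $i\in I$ and then use compactness of $\mathcal{X}_i$ to commute $(\mathcal{X}_i)_*$ with the filtered colimit. The paper instead adjoins $\mathrm{An}$ as a cone point, so compactness becomes part of the lemma's hypothesis and the formula at the cone point gives $\mathcal{X}_*\mathcal{X}^*\simeq\mathrm{colim}_i(\mathcal{X}_i)_*(\mathcal{X}_i)^*$ directly. Your fallback route, which uses only that $\mathcal{X}$ is compact and the $p_i$ are perfect, would not by itself yield this formula, but your main argument does not need it.
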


Next, we consider products. Here, we note that there is a canonical monoidal structure on $\mathrm{Pro}(\mathrm{An})$ called the \emph{substitution tensor} $\triangleright$, which has the property that
\[ X \triangleright Y \cong X \times Y \in \mathrm{Pro}(\mathrm{An}) \]
holds whenever $X$ is pro-compact or $Y$ is (equivalent to) a constant pro-object, see Proposition \ref{substitutiontensor}.

\begin{theorem}[Künneth theorem for shape, see Theorem \ref{kunnethlocallycontractible} and Theorem \ref{shapekuenneth}] \label{Kunnethpreview}
Let $\mathcal{X}, \mathcal{Y}$ be topoi and suppose that either
\begin{itemize}
\item $\mathcal{X}$ is a locally contractible topos, or
\item $\mathcal{Y}$ is a compact topos.
\end{itemize}
Then
\[ \Pi_\infty( \mathcal{X} \times^\mathrm{RTop} \mathcal{Y} ) \cong \Pi_\infty( \mathcal{X} ) \triangleright \Pi_\infty( \mathcal{Y} ). \]
If, furthermore, either 
\begin{itemize}
\item $\mathcal{X}$ has pro-compact shape, or
\item $\mathcal{Y}$ has constant shape
\end{itemize}
then \[ \Pi_\infty( \mathcal{X} \times^\mathrm{RTop} \mathcal{Y} ) \cong \Pi_\infty( \mathcal{X} ) \times \Pi_\infty( \mathcal{Y} ). \]
\end{theorem}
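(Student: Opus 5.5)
We work with the pro-corepresentable description of shape: $\Pi_\infty(\mathcal{X})$ is the pro-anima corresponding to the left exact, accessible functor
\[ \mathrm{An} \to \mathrm{An}, \qquad K \mapsto \Gamma_{\mathcal{X}}(\mathcal{X}^*K) = \Map_{\mathcal{X}}(1,\mathcal{X}^* K), \]
where $\mathcal{X}^* : \mathrm{An} \to \mathcal{X}$ is the constant sheaf functor. Since $\mathrm{Pro}(\mathrm{An})$ embeds contravariantly into functors $\mathrm{An}\to\mathrm{An}$, it suffices to identify $K \mapsto \Gamma(p^*K)$ for the product topos $\mathcal{X}\times\mathcal{Y}$ with the functor corepresented by $\Pi_\infty(\mathcal{X})\triangleright\Pi_\infty(\mathcal{Y})$. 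The substitution tensor is designed for exactly this: if $\Pi_\infty(\mathcal{X})$ corepresents $F$ and $\Pi_\infty(\mathcal{Y})$ corepresents $G$, then $\Pi_\infty(\mathcal{X})\triangleright\Pi_\infty(\mathcal{Y})$ corepresents the composite $F\circ G$ (in some order). The goal is therefore the identification
\[ \Gamma_{\mathcal{X}\times\mathcal{Y}}(p^*K) \simeq \Gamma_{\mathcal{X}}\bigl(\mathcal{X}^*\,\Gamma_{\mathcal{Y}}(\mathcal{Y}^*K)\bigr), \]
that is, a base change formula for the projection $\pi:\mathcal{X}\times\mathcal{Y}\to\mathcal{X}$:
\[ \pi_*\pi_Y^*\,\mathcal{Y}^*K \simeq \mathcal{X}^*\,\Gamma_{\mathcal{Y}}(\mathcal{Y}^*K). \]

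\emph{Case $\mathcal{Y}$ compact.} We use the proper base change theorem for the pullback square of $\mathcal{X}\times\mathcal{Y}\to\mathcal{X}$ along $\mathcal{X}\to\mathrm{An}$. Since $\mathcal{Y}\to\mathrm{An}$ is perfect (proper), its pushforward commutes with pullback along $\mathcal{X}\to\mathrm{An}$ (Lurie, HTT §7.3); this holds whenever $\mathcal{Y}\to\mathrm{An}$ is proper. If only perfectness is available, we argue more directly. Write $\mathcal{X}\times\mathcal{Y}$ as $\mathrm{Sh}$ of the tensor product $\mathcal{X}\otimes\mathcal{Y}$ of presentable categories. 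Then $\pi_*$ evaluated on $U\in\mathcal{X}$ is $\Gamma_{\mathcal{Y}}$ of the restriction to $\mathcal{X}_{/U}\times\mathcal{Y}$. Perfectness of $\Gamma_{\mathcal{Y}}$, i.e.\ commutation with filtered colimits (and with finite limits), lets $\Gamma_{\mathcal{Y}}$ pass through the colimits presenting $\mathcal{X}^*$-constant objects. This yields the sheafified identification above.

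\emph{Case $\mathcal{X}$ locally contractible.} We use the other order of composition. Here $\mathcal{X}^*$ admits a left adjoint $\mathcal{X}_!$ and $\Pi_\infty(\mathcal{X})=\mathcal{X}_!1$ is constant. Local contractibility is stable under base change, so $\pi^*$ has a left adjoint $\pi_!$ satisfying base change: $\pi_!\pi^*$ on $\mathcal{Y}$ is $\mathcal{X}_!1\times-$, evaluated on constant sheaves. Then
\[ \Gamma_{\mathcal{X}\times\mathcal{Y}}(p^*K)=\Map(\pi^*\mathcal{Y}^*1,\pi^*\mathcal{Y}^*K)\simeq \Map_{\mathcal{Y}}\bigl(\mathcal{Y}^*\mathcal{X}_!1,\mathcal{Y}^*K\bigr), \]
and this is $\Gamma_{\mathcal{Y}}(\mathcal{Y}^*K^{\mathcal{X}_!1})$. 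This is precisely the functor corepresented by $\Pi_\infty(\mathcal{X})\triangleright\Pi_\infty(\mathcal{Y})$ with $\Pi_\infty(\mathcal{X})$ constant, which is $\Pi_\infty(\mathcal{X})\times\Pi_\infty(\mathcal{Y})$.

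The second statement then follows immediately from Proposition \ref{substitutiontensor}: $\triangleright$ agrees with $\times$ when the left factor is pro-compact or the right factor is constant. The main obstacle is the compact case: establishing the base change equivalence for a merely perfect $\mathcal{Y}\to\mathrm{An}$, and keeping track of which variance and order of the substitution tensor the composite corresponds to. We would first try to reduce to Lurie's proper base change via the characterization of compact topoi, falling back on the explicit tensor-product computation if that reduction fails.
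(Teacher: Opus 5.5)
\textbf{The locally contractible case contains a false step.} Write $\Pi=\Pi_\infty(\mathcal{X})=\mathcal{X}_!1$. Your identification $\Gamma(p^*K)\simeq\mathrm{Map}_{\mathcal{Y}}(\mathcal{Y}^*\Pi,\mathcal{Y}^*K)$ is correct. Since $\mathcal{Y}^*\Pi\simeq\mathrm{colim}_{\Pi}1$, this equals $\lim_{\Pi}\Gamma_{\mathcal{Y}}(\mathcal{Y}^*K)\simeq\mathrm{Map}_{\mathrm{An}}(\Pi,\mathcal{Y}_*\mathcal{Y}^*K)$. That is the composite $\mathcal{X}_*\mathcal{X}^*\circ\mathcal{Y}_*\mathcal{Y}^*$, i.e.\ the functor of $\Pi_\infty(\mathcal{X})\triangleright\Pi_\infty(\mathcal{Y})$. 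It is in the same order as in the compact case; there is no ``other order''.

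Your next step rewrites this as $\Gamma_{\mathcal{Y}}(\mathcal{Y}^*(K^{\Pi}))$. That silently moves the limit $K^\Pi=\lim_\Pi K$ inside $\mathcal{Y}^*$. This is legitimate only if $\Pi$ is compact or $\mathcal{Y}^*$ preserves $\Pi$-indexed limits (e.g.\ $\mathcal{Y}$ locally contractible), not in general.

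The functor you end up with, $K\mapsto\mathcal{Y}_*\mathcal{Y}^*\mathrm{Map}(\Pi,K)$, is that of $\Pi_\infty(\mathcal{Y})\triangleright\Pi_\infty(\mathcal{X})$. You then also misapply Proposition \ref{substitutiontensor}: a constant \emph{left} factor does not make $\triangleright$ agree with $\times$. You state the correct condition yourself at the end (left factor pro-compact or right factor constant).

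As written, your argument would give the product formula for every locally contractible $\mathcal{X}$, and Example \ref{counterexample} refutes this. Take $\mathcal{X}=\mathrm{Sh}(\mathbb{N}^{\mathrm{disc}})$ and $\mathcal{Y}=\mathrm{Sh}(C)$ with $C$ the Cantor space. The true functor is $K\mapsto\prod_{\mathbb{N}}\mathrm{colim}_k\mathrm{Map}(\mathbf{2}^k,K)$, whereas your formula produces $K\mapsto\mathrm{colim}_k\mathrm{Map}(\mathbb{N}\times\mathbf{2}^k,K)$, and these differ.

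The fix is to stop at $\mathrm{Map}_{\mathrm{An}}(\Pi,\mathcal{Y}_*\mathcal{Y}^*K)$. Your $\pi_!$-argument is then fine and is just a repackaging of the paper's Lemma \ref{essentialbasechange}: both come from applying $-\otimes\mathcal{Y}$ to the internal adjunction $\mathcal{X}_!\dashv\mathcal{X}^*$. The product formula then holds only under the additional hypotheses, via Proposition \ref{substitutiontensor}.

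\textbf{The compact case is the paper's argument.} Proper base change for the square exhibiting $\pi_X\colon\mathcal{X}\times\mathcal{Y}\to\mathcal{X}$ as the pullback of $\mathcal{Y}\to\mathrm{An}$ along $\mathcal{X}\to\mathrm{An}$ gives $(\pi_X)_*\pi_Y^*\simeq\mathcal{X}^*\mathcal{Y}_*$, hence $\mathcal{X}_*\mathcal{X}^*\mathcal{Y}_*\mathcal{Y}^*$. The input you flagged as the main obstacle is exactly Theorem \ref{propercompact} (Martini--Wolf: a topos is compact iff it is proper), so the reduction you wanted to try goes through. This is a genuinely nontrivial theorem, and your fallback sketch (passing $\Gamma_{\mathcal{Y}}$ through ``the colimits presenting constant objects'') does not amount to a proof of the needed base change, so you should not rely on it.
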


Some hypotheses of this kind are necessary: Example~\ref{counterexample}
shows that the formula can fail when the given compactness and local
contractibility assumptions are absent.

\begin{remark}
Less general versions of Theorem \ref{shapeinverselimitspreview} as well as Theorem \ref{Kunnethpreview} have appeared in \cite[Proposition 2.11]{Hoyois2018HigherGaloisTheory}.
\end{remark}

Finally, we relate the shape to cohomology and homology. For a given topos $\mathcal{X}$, a presentable $\infty$-category $\mathcal{E}$ and an object $E \in \mathcal{E}$ denote by
\[ H^\bullet(\mathcal{X},E) \in \mathcal{E}, \qquad \text{ and } \qquad H_\bullet(\mathcal{X},E) \in \mathcal{E} \]
the \emph{topos cohomology} as well as \emph{topos homology} of $\mathcal{X}$ with values in $E$, see Sections \ref{cohomology} and \ref{homology}. These are generalizations of ordinary sheaf (co)-homology of spaces. Define for a pro-anima $\text{``} \mathrm{lim}_{i \in I} \text{''} X_i \in \mathrm{Pro}(\mathrm{An})$ the cotensoring and tensoring as
\[ E^{\text{``} \mathrm{lim}_{i \in I} \text{''} X_i} := \mathrm{colim}_{i \in I} E^{X_i} \qquad (\text{``} \mathrm{lim}_{i \in I} \text{''} X_i ) \otimes E := \mathrm{lim}_{i \in I} (X_i \otimes E). \]
The main results are the following.

\begin{theorem}[See Theorem \ref{shapeandcohomologylocallycontractible} and Theorem \ref{shapeandcohomologycompactlyassembled}]
Let $\mathcal{X}$ be a topos, and $\mathcal{E}$ a presentable $\infty$-category. Let $E \in \mathcal{E}$. Assume that either
\begin{itemize}
\item $\mathcal{X}$ is locally contractible, or
\item $\mathcal{E}$ is compactly assembled.
\end{itemize}
Then there exists a natural isomorphism
\[ H^\bullet(\mathcal{X},E) \cong E^{\Pi_\infty(\mathcal{X})}. \]
\end{theorem}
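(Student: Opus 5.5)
We first fix the definitions we intend to use. Write $p_* \dashv p^*$ for the geometric morphism $p : \mathcal{X} \to \mathrm{An}$. The shape is the pro-anima corresponding to the left exact, accessible functor $p_* p^* : \mathrm{An} \to \mathrm{An}$, so that $\Pi_\infty(\mathcal{X}) \simeq \text{``}\lim_{K}\text{''} K$. Here the formal limit runs over the (cofiltered) category of pairs $(K, \eta)$ with $K \in \mathrm{An}$ and $\eta \in p_*p^*K = \Gamma(\mathcal{X}, p^*K)$. Topos cohomology with coefficients in $E$ is $H^\bullet(\mathcal{X},E) = \Gamma(\mathcal{X}, p^*_{\mathcal{E}} E)$, computed in $\mathrm{Sh}(\mathcal{X};\mathcal{E}) \simeq \mathcal{X} \otimes \mathcal{E}$. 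The strategy is to compare this with $\mathrm{colim}_{(K,\eta)} E^{K}$ via a natural map, and then prove the map is an isomorphism in each of the two cases.

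\textbf{The comparison map.} For each $(K,\eta)$ the section $\eta : 1 \to p^*K$ induces
\[ E^K = \Gamma(\mathrm{An}_{/K}, \text{const}\, E) \to \Gamma(\mathcal{X}_{/p^*K}, E) \xrightarrow{\eta^*} \Gamma(\mathcal{X}, E). \]
These maps are compatible along the indexing category, so they assemble into a map $E^{\Pi_\infty(\mathcal{X})} \to H^\bullet(\mathcal{X},E)$. Equivalently, this map is the Beck--Chevalley/assembly map $\mathrm{colim}_{(K,\eta)} \Gamma(\mathcal{X}/p^*K \text{ restricted along }\eta)$. It is natural in $\mathcal{X}$ and in $E$.

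\textbf{Locally contractible case.} Here $p^*$ admits a left adjoint $p_!$, and $\Pi_\infty(\mathcal{X})$ is the constant pro-anima $p_!1$, so we must show $\Gamma(\mathcal{X},p^*E) \simeq E^{p_!1}$. Under $\mathcal{X}\otimes\mathcal{E}$ the functor $p^*_{\mathcal{E}} = p^*\otimes \mathcal{E}$ also has the left adjoint $p_!\otimes\mathcal{E}$. Since $\Gamma = \Map(1,-)$ tensored with $\mathcal{E}$, we get
\[ \Gamma(\mathcal{X},p^*_\mathcal{E}E) \simeq \mathrm{Map}_{\mathcal{E}}\text{-enriched}(p_!1\otimes -, E) = E^{p_!1}, \]
using that $\mathrm{An}\otimes\mathcal{E}\simeq\mathcal{E}$ and that $p_!\otimes\mathcal{E}$ sends $1\otimes e$ to $p_!1\otimes e$. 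The only thing to check is that the resulting isomorphism agrees with the comparison map, which follows by evaluating on the initial object of the cofiltered index category.

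\textbf{Compactly assembled case.} This is the main obstacle. The plan is to reduce to compactly generated $\mathcal{E}$ and then to $\mathcal{E}=\mathrm{An}$. First we treat $\mathcal{E} = \mathrm{Fun}(\mathcal{C}^{\op},\mathrm{An})$-style compactly generated categories. Write $E = \mathrm{colim}_j E_j$ as a filtered colimit of compacts, and note that both sides are computed objectwise after mapping out of compact generators $c$: $\Map(c, E^{K}) = \Map(c,E)^K$. Also $\Map(c,\Gamma(\mathcal{X},E)) = \Gamma(\mathcal{X}, p^*\Map(c,E))$, since $\mathrm{Sh}(\mathcal{X};\mathcal{E})$ is computed by $\mathcal{E}$-valued limits. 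This reduces the problem to anima coefficients $A$, where $\Gamma(\mathcal{X},p^*A) = p_*p^*A$. The claim becomes $p_*p^*A \simeq \mathrm{colim}_{(K,\eta)} A^K$, and this follows from the Yoneda identity $\Map_{\mathrm{Pro}(\mathrm{An})}(\Pi_\infty(\mathcal{X}), A) = p_*p^*A$ together with the fact that $\Map(\text{``}\lim\text{''}K, A) = \mathrm{colim} \Map(K,A)$. Pointwise cotensors, rather than cotensors of hom-anima, appear because $\Map(c,-)$ commutes with cotensoring.

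For general compactly assembled $\mathcal{E}$, we realize $\mathcal{E}$ as a retract in $\mathrm{Pr}^L$ of a compactly generated $\mathcal{E}' = \mathrm{Ind}(\mathcal{E})$ (or $\mathrm{Ind}(\mathcal{E}^{\omega_1})$) via $\hat y\dashv\mathrm{colim}$. Both $\hat y$ and $\mathrm{colim}$ are left adjoints preserving filtered colimits, and $\hat y$ preserves limits. The key point is that both sides of the comparison are functorial in $\mathcal{E}$ along functors that preserve limits and filtered colimits. The right-hand side uses a filtered colimit of finite-type limits $E^K$, and $\hat y$ commutes with cotensors, so it is preserved. The left-hand side is preserved because $\mathcal{X}\otimes(-)$ is functorial in $\mathrm{Pr}^L$ and compatible with $\Gamma$ when the functor also has the required adjoint. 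Naturality of the comparison map then shows that it is a retract of the isomorphism for $\mathcal{E}'$, hence itself an isomorphism.

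The delicate step is verifying that $\mathrm{colim}:\mathrm{Ind}(\mathcal{E})\to\mathcal{E}$ commutes with $\Gamma$. This functor need not preserve limits. The fix we would try first is to use the retraction only through $\hat y$: apply the compactly generated case to $\hat y E$, then apply $\mathrm{colim}$ only to the right-hand side. Since $\mathrm{colim}\circ\hat y=\mathrm{id}$, one only needs the comparison for $\hat yE$ together with the fact that $\Gamma(\mathcal{X},-)$ commutes with the filtered colimit defining $\mathrm{colim}$. That commutation in turn uses the sheaf description $\mathcal{X}\otimes\mathcal{E}\simeq \mathrm{Fun}^{R}(\mathcal{X}^{\op},\mathcal{E})$.
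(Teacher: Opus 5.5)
Your locally contractible case is the paper's argument: apply $-\otimes\mathcal{E}$ to the internal adjunction $p_!\dashv p^*$ and use $p_!p^*\simeq\Pi_\infty(\mathcal{X})\times -$. Your compactly generated case follows the paper's outline. The genuine gap is in the passage to compactly assembled $\mathcal{E}$, where you have the exactness properties of the retraction reversed.

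Since $\hat{y}\dashv\mathrm{colim}\dashv y$, the functor $\mathrm{colim}\colon\mathrm{Ind}(\mathcal{E})\to\mathcal{E}$ preserves all limits and all colimits. By contrast, $\hat{y}$ does \emph{not} preserve limits. Already for $\mathcal{E}=\mathrm{An}$ one has $\prod_{\mathbb{N}}\hat{y}(\{0,1\})\simeq y(\{0,1\}^{\mathbb{N}})\not\simeq\hat{y}(\{0,1\}^{\mathbb{N}})$, because $\{0,1\}^{\mathbb{N}}$ is not compact. Nor are the cotensors $E^K$ in your colimit ``of finite type'': $K$ ranges over all anima admitting a global section of $p^*K$.

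So your retract argument through $\hat{y}$ breaks down, because $\hat{y}$ commutes with neither side. For $\mathcal{X}=\mathrm{An}_{/\mathbb{N}}$ both sides equal $E^{\mathbb{N}}$, and $\hat{y}(E^{\mathbb{N}})\not\simeq(\hat{y}E)^{\mathbb{N}}$. Your final fix goes in the right direction, but it explicitly denies the property it needs. To get $\mathrm{colim}((\hat{y}E)^{K})\simeq E^{K}$ for infinite $K$, you must know that $\mathrm{colim}$ preserves limits. The ingredient you name instead, that $\Gamma(\mathcal{X},-)$ commutes with the filtered colimit defining $\mathrm{colim}$, is not true in general: global sections preserve filtered colimits only for compact $\mathcal{X}$. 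It is also not the statement you need.

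What is needed, and what the paper does, is the following. Write $\mathcal{E}'$ for the compactly generated category, and $\Gamma_{\mathcal{E}}$, $\Gamma_{\mathcal{E}'}$ for global sections.
\begin{itemize}
\item Since $\hat{y}\dashv\mathrm{colim}$ is an adjunction inside $\mathrm{Pr}^L$ and $\mathcal{X}\otimes -$ is a $2$-functor, pass to right adjoints in the commuting square $(\mathrm{id}_{\mathcal{X}}\otimes\hat{y})\circ p^*_{\mathcal{E}}\simeq p^*_{\mathcal{E}'}\circ\hat{y}$. This gives $\Gamma_{\mathcal{E}}\circ(\mathrm{id}_{\mathcal{X}}\otimes\mathrm{colim})\simeq\mathrm{colim}\circ\Gamma_{\mathcal{E}'}$.
\item Functoriality gives $(\mathrm{id}_{\mathcal{X}}\otimes\mathrm{colim})\circ p^*_{\mathcal{E}'}\simeq p^*_{\mathcal{E}}\circ\mathrm{colim}$.
\item $\mathrm{colim}$ commutes with cotensors by pro-anima, because it preserves limits and filtered colimits.
\end{itemize}
Together with $\mathrm{colim}\circ\hat{y}\simeq\mathrm{id}$, this gives
\[ H^\bullet(\mathcal{X},E)\simeq\mathrm{colim}\,H^\bullet(\mathcal{X},\hat{y}E)\simeq\mathrm{colim}\,((\hat{y}E)^{\Pi_\infty(\mathcal{X})})\simeq E^{\Pi_\infty(\mathcal{X})}. \]

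A smaller point concerns the compactly generated step. The identity $\mathrm{Map}(c,\Gamma(\mathcal{X},p^*_{\mathcal{E}}E))\simeq p_*p^*\mathrm{Map}(c,E)$ does not follow from sheaves being limit-preserving functors; it fails for non-compact $c$. The paper derives it from $\mathcal{X}\otimes\mathcal{E}\simeq\mathrm{Fun}^{\mathrm{lex}}((\mathcal{E}^{\omega})^{\mathrm{op}},\mathcal{X})$. Under that equivalence, $p^*_{\mathcal{E}}$ and global sections are postcomposition with $p^*$ and $p_*$.
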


\begin{theorem}[See Theorem \ref{localcontractiblehomology} and Theorem \ref{compacthomology}]
Let $\mathcal{X}$ be a topos, $\mathcal{E}$ a presentable $\infty$-category, and $E \in \mathcal{E}$. Assume that either
\begin{itemize}
\item $\mathcal{X}$ is locally contractible, or
\item $\mathcal{X}$ is compact and $\mathcal{E}$ is compactly assembled and stable.
\end{itemize}
Then
\[ H_\bullet( \mathcal{X}, E ) \cong \Pi_\infty(\mathcal{X}) \otimes E. \]
\end{theorem}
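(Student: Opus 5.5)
We need the definitions of topos homology and of the tensoring. I take $H_\bullet(\mathcal{X},E)$ to be the global-sections-with-compact-support dual construction: writing $\mathrm{Sh}(\mathcal{X};\mathcal{E}) \simeq \mathcal{X}\otimes\mathcal{E}$ for $\mathcal{E}$-valued sheaves and $p:\mathcal{X}\to\An$ for the terminal geometric morphism, set
\[ H_\bullet(\mathcal{X},E) = p_\sharp$-type left adjoint applied to the constant sheaf, i.e. the colimit-preserving "pushforward" $(p_*\otimes \mathcal{E})^{L}$ \]
evaluated on $E$. Concretely, $H_\bullet(\mathcal{X},E)$ is the image of $E$ under the functor $\mathcal{E}\to\mathcal{E}$ obtained by tensoring a functor $\An\to\An$ with $\mathcal{E}$. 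The plan treats the two cases separately.

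\emph{Locally contractible case.} Here $p^*$ has a left adjoint $p_\sharp$, and $\Pi_\infty(\mathcal{X})$ is the constant pro-anima $p_\sharp(1)$. Tensoring the adjunction $p_\sharp\dashv p^*$ with $\mathcal{E}$ in $\mathrm{Pr}^L$ gives $p_\sharp\otimes\mathcal{E}$, which is left adjoint to the constant-sheaf functor. Then $H_\bullet(\mathcal{X},E) = (p_\sharp\otimes \mathcal{E})(1\otimes E) = p_\sharp(1)\otimes E$. This matches $\Pi_\infty(\mathcal{X})\otimes E$ because, for a constant pro-object, the tensoring is just the colimit $X\otimes E$.

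\emph{Compact case.} Here $p_*$ preserves filtered colimits. Write the shape as the pro-anima corepresenting $K\mapsto p_*p^*K$, so $\Pi_\infty(\mathcal{X}) = \text{``}\lim_i\text{''}X_i$ with $p_*p^*K \simeq \colim_i K^{X_i}$ for $K\in\An$. Since $p_*p^*$ preserves filtered colimits, this holds for all $K$, not just truncated ones. I would proceed in three steps.
\begin{enumerate}
\item Show that $H_\bullet(\mathcal{X},E)$ is computed by the dual of cohomology. By stability and compact assembly, $\mathcal{E}$ is dualizable in $\mathrm{Pr}^L_{\mathrm{st}}$. By perfectness, $p_*\otimes\mathcal{E}$ is colimit-preserving, so $\mathrm{Sh}(\mathcal{X};\mathcal{E})\to\mathcal{E}$ admits the needed adjoints. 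Homology should then correspond under duality to cohomology of the dual category.
\item Use the already-proved cohomology theorem for compactly assembled $\mathcal{E}$ (Theorem \ref{shapeandcohomologycompactlyassembled}) on $\mathcal{E}^\vee$, giving $\colim_i (-)^{X_i}$.
\item Dualize. Filtered colimits of cotensors dualize to cofiltered limits of tensors, yielding $\lim_i X_i\otimes E$.
\end{enumerate}

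The main obstacle is step (1)/(3): making the duality precise, i.e. identifying the linear dual of $\colim_i E^{X_i}$ (as a functor) with $\lim_i (X_i\otimes -)$. This needs the fact that in a stable compactly assembled category, cofiltered limits interact well with the colimits used in tensoring (the "Grothendieck AB4*"-type exactness of compactly assembled categories). It also needs $X_i\otimes E$ to be dual to $E^{X_i}$ via $\mathcal{E}^\vee$, which holds since $X_i\otimes-$ and $(-)^{X_i}$ are mutually adjoint, colimit-preserving in the stable setting. I would first try to check the formula directly on compact/compactly-exhaustible objects via the Yoneda embedding $\mathcal{E}\to\mathrm{Ind}(\mathcal{E})$, where it reduces to the compactly generated case. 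There it follows from $\mathrm{Sp}$-linearity and perfectness.
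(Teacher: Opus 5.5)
There is a genuine gap in the compact case, and the definition of homology you use is not the paper's.

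\textbf{Definition and the locally contractible case.} The paper sets $H_\bullet(\mathcal{X},E)=\mathcal{X}_+\mathcal{X}^+(E)$. Here $\mathcal{X}_+=(\mathcal{X}^*)^\circ\colon \mathrm{CoSh}(\mathcal{X},\mathcal{E})=\mathrm{Fun}^L(\mathcal{X},\mathcal{E})\to\mathrm{Fun}^L(\An,\mathcal{E})\simeq\mathcal{E}$ is precomposition with $\mathcal{X}^*$, and $\mathcal{X}^+$ is its right adjoint. It is not ``a left adjoint of the constant-sheaf functor on $\mathcal{X}\otimes\mathcal{E}$, applied to $E$.'' That left adjoint need not exist for compact topoi: for Cantor space the constant-sheaf functor does not preserve countable products. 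So in the compact case your plan has no definition to work with. In the locally contractible case the fix is cosmetic. Apply the contravariant $2$-functor $\mathrm{Fun}^L(-,\mathcal{E})$, rather than $-\otimes\mathcal{E}$, to $\Pi^{\mathcal{X}}_\infty\dashv\mathcal{X}^*$. This gives $(\mathcal{X}^*)^\circ\dashv(\Pi^{\mathcal{X}}_\infty)^\circ$, so $\mathcal{X}^+=(\Pi^{\mathcal{X}}_\infty)^\circ$. Hence $H_\bullet(\mathcal{X},-)=(\Pi^{\mathcal{X}}_\infty\mathcal{X}^*)^\circ$, which is precomposition with $\Pi_\infty(\mathcal{X})\times-$, i.e.\ $\Pi_\infty(\mathcal{X})\otimes-$. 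That is the paper's proof.

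\textbf{The compact case.} Homology is not the linear dual of cohomology on $\mathcal{E}^\vee$, so your steps (1) and (3) fail. Duality in $\mathrm{Pr}^L_{\mathrm{st}}$ sends colimit-preserving endofunctors of $\mathcal{E}^\vee$ to colimit-preserving endofunctors of $\mathcal{E}$, and it does not exchange cotensors with tensors. Since the $X_i$ are compact, cohomology $\colim_i(-)^{X_i}$ is $\mathrm{Sp}$-linear: it is tensoring with the spectrum $\colim_i\mathbb{S}^{X_i}$. Its dual is therefore the same formula on $\mathcal{E}$, i.e.\ cohomology again. Meanwhile $\lim_i(X_i\otimes-)$ need not preserve colimits at all; for Cantor space and $\mathcal{E}=\mathrm{Sp}$ it is $E\mapsto\prod_{\mathbb{N}}E$. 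Concretely, for $\mathcal{X}=\mathrm{Sh}(S^1)$ and $\mathcal{E}=\mathrm{Sp}$ your recipe returns $E^{S^1}\simeq E\oplus\Sigma^{-1}E$ instead of $S^1\otimes E\simeq E\oplus\Sigma E$.

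\textbf{The missing idea.} Homology is the right \emph{adjoint} of cohomology. The paper proves this as follows.
\begin{enumerate}
\item By stability, $\mathrm{CoSh}(\mathcal{X},\mathcal{E})\simeq\mathrm{Fun}^L(\mathrm{Sh}(\mathcal{X},\mathrm{Sp}),\mathcal{E})$ and $\mathcal{E}\simeq\mathrm{Fun}^L(\mathrm{Sp},\mathcal{E})$.
\item Compactness and Lemma \ref{perfectfunctorstable} make $\mathcal{X}_*$ on spectral sheaves colimit-preserving, so $\mathcal{X}^*\dashv\mathcal{X}_*$ is an adjunction inside $\mathrm{Pr}^L$.
\item Applying $\mathrm{Fun}^L(-,\mathcal{E})$ gives $(\mathcal{X}_*)^\circ\dashv(\mathcal{X}^*)^\circ$. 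Since $(\mathcal{X}^*)^\circ=\mathcal{X}_+\dashv\mathcal{X}^+$, composing yields $(\mathcal{X}_*\mathcal{X}^*)^\circ\dashv\mathcal{X}_+\mathcal{X}^+$.
\item The left-hand side is identified with $(-)^{\Pi_\infty(\mathcal{X})}$.
\item Compact topoi have pro-compact shape, so the corollary to Proposition \ref{ambidexterity} gives $(-)^{\Pi_\infty(\mathcal{X})}\dashv\Pi_\infty(\mathcal{X})\otimes-$. Uniqueness of right adjoints concludes.
\end{enumerate}
The ingredient you mention last, the mutual adjointness of $X_i\otimes-$ and $(-)^{X_i}$, is the right one. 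It requires the $X_i$ to be compact, which you should state explicitly. But it must be paired with this adjunction, not with a duality through $\mathcal{E}^\vee$, and no AB4$^*$-type exactness is involved.
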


\begin{remark}
The material presented in this article is part of the larger book project \cite{Lehner2026ShapeTheory}.
\end{remark}

\subsection{Acknowledgements}
We thank Thomas Nikolaus, Thorger Geiß, Phil Pützstück, Marco Volpe, and Koen Bresters for helpful comments and discussions. Furthermore, the author was funded by the Deutsche Forschungsgemeinschaft (DFG, German Research Foundation) – Project-ID 427320536–SFB 1442, as well as under Germany's Excellence Strategy EXC 2044/2 - 390685587, Mathematics Münster: Dynamics-Geometry-Structure. \\
Large language models were used for conceptual exploration, literature
searches, and editing. The text as presented, as well as the statements and mathematical proofs given, are written by the author, who assumes full responsibility for the contents.

\section{Preliminaries}

We will use the language of $\infty$-categories throughout the text, as developed by Lurie \cite{Lurie2009HTT} and \cite{Lurie2017HA}. See \cite{Land2021InfinityCategories}, \cite{Hebestreit2019HigherCategoriesI}, \cite{Hebestreit2020HigherCategoriesII}, and \cite{Haugseng2025} for additional references. The term $\mathrm{An}$ refers to the $\infty$-category of anima, equivalently the $\infty$-category of spaces or also the $\infty$-category of $\infty$-groupoids. The $\infty$-category of small $\infty$-categories is denoted by $\mathrm{Cat}_\infty$, the $\infty$-category of large $\infty$-categories is denoted as $\widehat{\mathrm{Cat}}_\infty$. The term $\mathrm{Pr}^L$ denotes the $\infty$-category of presentable $\infty$-categories and left adjoint functors, and $\mathrm{Pr}^R \simeq (\mathrm{Pr}^L)^\mathrm{op}$ denotes the $\infty$-category of presentable $\infty$-categories and right adjoint functors. A functor $f : C \rightarrow D$ between categories with finite limits is called \emph{left-exact} if $f$ preserves finite limits.

Given a small $\infty$-category $C$, we denote its $\mathrm{Ind}$-completion by $\mathrm{Ind}(C)$. It admits a fully faithful embedding $y : C \hookrightarrow \mathrm{Ind}(C)$ that is characterized by the universal property that $\mathrm{Ind}(C)$ admits filtered colimits and for any $\infty$-category $D$ admitting filtered colimits the $\infty$-category of functors $C \rightarrow D$ is equivalent to the $\infty$-category of filtered colimit preserving functors $\mathrm{Ind}(C) \rightarrow D$. If $C$ admits finite colimits, then we have an equivalence
\[ \mathrm{Ind}(C) \cong \mathrm{Fun}^\mathrm{lex}( C^{op}, \mathrm{An} ), \]
where the right-hand side refers to left exact functors. The $\infty$-category $\mathrm{Pro}(C)$ is defined as $\mathrm{Pro}(C) := \mathrm{Ind}(C^\mathrm{op})^\mathrm{op}$ and satisfies the dual universal property with respect to cofiltered limits.

A \emph{locale} $L$ is defined via its complete lattice $(\mathcal{O}(L), \leq )$ of \emph{abstract open subsets}, and is required to satisfy the distributivity relation
\[ V \wedge \bigvee_{ \alpha } U_\alpha = \bigvee_{ \alpha } V \wedge U_\alpha \]
for any $V, U_\alpha \in \mathcal{O}(L)$. If $X$ is a topological space, then the lattice $\mathcal{O}(X)$ defines a locale. A locale homomorphism $f : L \rightarrow L'$ consists of the datum of a right adjoint functor $f_* : \mathcal{O}(L) \rightarrow \mathcal{O}(L')$, together with its left adjoint $f^* : \mathcal{O}(L') \rightarrow \mathcal{O}(L)$, which is required to preserve finite meets. For references on locale theory, see \cite{PicadoPultr}.

Let us give a summary of topoi. The standard reference for higher topos theory is of course \cite{Lurie2009HTT}. See also \cite{CnossenHigherToposTheory} for an additional reference. If $C$ is a small $\infty$-category, then the $\infty$-category $\mathrm{PSh}(C) := \mathrm{Fun}(C^\mathrm{op},\mathrm{An})$ denotes the $\infty$-category of presheaves on $C$. A \emph{topos} $\mathcal{X}$ is an $\infty$-category arising as a left exact, accessible localization of a presheaf $\infty$-category. A geometric morphism $f : \mathcal{X} \rightarrow \mathcal{Y}$ between topoi consists of the datum of a right adjoint functor $f_* : \mathcal{X} \rightarrow \mathcal{Y}$ called \emph{pushforward} and its left adjoint $f^* : \mathcal{Y} \rightarrow \mathcal{X}$ called \emph{pullback}, which is assumed to be left exact. The resulting non-full subcategory of $\widehat{\mathrm{Cat}}_\infty$ is denoted as $\mathrm{RTop}$ and called the \emph{$\infty$-category of topoi}. Similarly, we write $\mathrm{LTop} \simeq (\mathrm{RTop})^\mathrm{op}$ as the non-full subcategory of $\widehat{\mathrm{Cat}}_\infty$ spanned by topoi and the left adjoint pullback functors as morphisms. The topos $\mathrm{An} \cong \mathrm{PSh}(\ast)$ is the terminal topos. If $\mathcal{X}$ is a topos, we also denote the unique geometric morphism $\mathcal{X} \rightarrow \mathrm{An}$ by $\mathcal{X}$.

If $\mathcal{X}$ is a topos, and $X \in \mathcal{X}$, then also the overcategory $\mathcal{X}_{/X}$ is a topos. It comes equipped with a natural geometric morphism $X : \mathcal{X}_{/X} \rightarrow \mathcal{X}$. Geometric morphisms equivalent to $\mathcal{X}_{/X} \rightarrow \mathcal{X}$ for some $X \in \mathcal{X}$ as geometric morphisms over $\mathcal{X}$ are also called \emph{étale geometric morphisms}.

If $\mathcal{X}$ is a topos, then we refer to a monomorphism $f : U \hookrightarrow 1$, where $1 \in \mathcal{X}$ is the terminal object, as an \emph{open} of $\mathcal{X}$. The collections of opens, up to isomorphism, of a topos always describes a locale. Conversely, if $L$ is a locale, then the $\infty$-category of \emph{sheaves on $L$}, $\mathrm{Sh}(L)$ is a topos. The assignment of a topological space to its locale, and of a locale to its topos of sheaves forms two adjunctions
\[ \begin{tikzcd}
	{\mathrm{Top}} & {\mathrm{Loc}} & {\mathrm{RTop}}
	\arrow[""{name=0, anchor=center, inner sep=0}, "{\mathrm{Loc}}", curve={height=-12pt}, from=1-1, to=1-2]
	\arrow[""{name=1, anchor=center, inner sep=0}, "{\mathrm{pts}}", curve={height=-12pt}, from=1-2, to=1-1]
	\arrow[""{name=2, anchor=center, inner sep=0}, "{\mathrm{Sh}}"', curve={height=12pt}, hook', from=1-2, to=1-3]
	\arrow[""{name=3, anchor=center, inner sep=0}, "{\mathrm{Open}}"', curve={height=12pt}, from=1-3, to=1-2]
	\arrow["\dashv"{anchor=center, rotate=-90}, draw=none, from=0, to=1]
	\arrow["\dashv"{anchor=center, rotate=-90}, draw=none, from=3, to=2]
\end{tikzcd} \]
where the \emph{sheaves} functor $\mathrm{Loc} \rightarrow \mathrm{RTop}$ is fully faithful.

A morphism $f : X \twoheadrightarrow Y$ in a topos $\mathcal{X}$ is called an \emph{effective epimorphism} if the canonical map
\[ \mathrm{colim}_{[n] \in \Delta^{\mathrm{op}}} X \times_Y \cdots \times_Y X \rightarrow Y \]
is an isomorphism in $\mathcal{X}$, where the colimit in question goes over the \v{C}ech nerve of $f$. In any topos, an arbitrary morphism $f : X \rightarrow Y$ decomposes as $f : X \twoheadrightarrow \mathrm{im}(f) \hookrightarrow Y$, where the first arrow is an effective epimorphism, and the second a monomorphism. If $\{ U_i \}_{i \in I}$ is an open covering of a topological space or locale $X$, then the morphism
\[ \coprod_{i \in I} y_{U_i} \rightarrow 1 \]
is an effective epimorphism in $\mathrm{Sh}(X)$.

The $\infty$-category $\mathrm{RTop}$ of topoi admits all limits and colimits.
\begin{itemize}
\item Colimits in $\mathrm{RTop}$ are computed as limits along the left adjoints, i.e.\ the functor
\[ \mathrm{RTop}^{\mathrm{op}} \rightarrow \widehat{\mathrm{Cat}}_\infty \]
that sends a geometric morphism to its left adjoint part preserves and creates limits; \cite[Corollary 6.3.1.8 and Proposition 6.3.2.3]{Lurie2009HTT}.
\item Products in $\mathrm{RTop}$ are computed using the \emph{Lurie tensor product}, i.e. for topoi $\mathcal{X}, \mathcal{Y}$ it holds that
\[ \mathcal{X} \times^\mathrm{RTop} \mathcal{Y} \simeq \mathcal{X} \otimes \mathcal{Y}, \]
see \cite[Example 4.8.1.19]{Lurie2017HA}.
\item Cofiltered limits in $\mathrm{RTop}$ are computed as cofiltered limits along the right adjoints, i.e.\ the functor 
\[ \mathrm{RTop} \rightarrow \widehat{\mathrm{Cat}}_\infty \]
that sends a geometric morphism to its right adjoint part preserves and creates cofiltered limits; see \cite[Theorem 6.3.3.1]{Lurie2009HTT}.
\item Pullbacks in $\mathrm{RTop}$ exist; see \cite[Proposition 6.3.4.6]{Lurie2009HTT}. Lurie observes that his construction is somewhat inexplicit \cite[Remark 6.3.4.8]{Lurie2009HTT}.
An explicit description is given by Martini--Wolf \cite[Corollary 3.2.7.3 and Remark 3.2.7.4]
{martini2025presentabilitytopoiinternalhigher}.
\end{itemize}

Let us finally mention a crucial property of topoi: \emph{Colimits satisfy descent.} This is expressed as saying that for any topos $\mathcal{X}$, the functor
\[ \begin{array}{rcl}
\mathcal{X}^\mathrm{op} & \rightarrow & \widehat{\mathrm{Cat}}_{\infty} \\
X & \mapsto & \mathcal{X}_{/X}
\end{array} \]
where a morphism $f : X \rightarrow Y$ is sent to the pullback functor $f^* : \mathcal{X}_{/Y} \rightarrow \mathcal{X}_{/X}$, preserves \emph{limits}; \cite[Section 2]{CnossenHigherToposTheory}.

\section{Shape theory of topoi}

Recall that for any small $\infty$-category $C$, we define $\mathrm{Pro}(C) = \mathrm{Ind}(C^\mathrm{op})^\mathrm{op}$. If $C$ admits finite limits, there is an equivalence
\[ \mathrm{Fun}^{\mathrm{lex}}( C, \mathrm{An} )^{\op} \simeq \mathrm{Pro}( C ) \]
which sends a pro-object in $C$ to
\[ \text{``} \lim_{i \in I} \text{''} X_i \mapsto \mathrm{colim}_{i \in I} \mathrm{Map}( X_i, - ). \]
If $C$ is not small, but presentable, we can still use this equivalence by restricting the left-hand side to \emph{accessible} functors, and get
\[ \mathrm{Fun}^{\mathrm{lex, acc}}( C, \mathrm{An} )^{\op} \simeq \mathrm{Pro}( C ). \]
The reasoning here is that, for any cardinal $\kappa$, the subcategory $C^\kappa \subset C$ of $\kappa$-compact objects is a small subcategory. If $F : C \rightarrow \mathrm{An}$ is $\kappa$-accessible, it is determined by its restriction to $C^\kappa$, and we get the corresponding pro-object in $\mathrm{Pro}(C^\kappa)$. In particular, for $C = \mathrm{An}$ we obtain an equivalence
\[ \mathrm{Fun}^{\mathrm{lex, acc}}( \mathrm{An}, \mathrm{An} )^{\op} \simeq \mathrm{Pro}( \mathrm{An} ), \]
see also \cite[Definition 7.1.6.1]{Lurie2009HTT}.

\begin{definition}
Let $\mathcal{X}$ be a topos, and let
\[ \begin{tikzcd}
	{\mathcal{X}} & {\mathrm{An}}
	\arrow[""{name=0, anchor=center, inner sep=0}, "{\mathcal{X}_* = \mathrm{Map}_\mathcal{X}(1,-)}"', curve={height=12pt}, from=1-1, to=1-2]
	\arrow[""{name=1, anchor=center, inner sep=0}, "{\mathcal{X}^*}"', from=1-2, to=1-1]
	\arrow["\dashv"{anchor=center, rotate=-90}, draw=none, from=1, to=0]
\end{tikzcd}
\]
be the associated geometric morphism to $\mathrm{An}$. The shape of $\mathcal{X}$ is the pro-anima $\Pi_{\infty}( \mathcal{X} ) \in \mathrm{Pro}(\mathrm{An})$ associated to the left-exact and accessible functor
\[ \mathcal{X}_* \mathcal{X}^* : \mathrm{An} \rightarrow \mathrm{An}. \]
\end{definition}

If $X$ is a topological space or a locale, we write $\Pi_\infty(X) := \Pi_\infty(\mathrm{Sh}(X))$.

\begin{example}
If $L$ is a locale, and $\mathcal{X} = \mathrm{Sh}(L)$, then the endofunctor $\mathcal{X}_* \mathcal{X}^*$ is computed by sending an anima $A \in \mathrm{An}$ first to the constant presheaf $\underline{A} : \mathcal{O}(L)^\mathrm{op} \rightarrow \mathrm{An}$, then applying sheafification and evaluating at the top element, i.e.
\[
\mathcal{X}_* \mathcal{X}^*(A) = \underline{A}^{\mathrm{sh}}(1).
\]
Thus the endofunctor corresponding to the shape of $L$ should be thought of as the unstable analogue of sheaf cohomology of $L$. As a slogan: \emph{shape measures the effect of sheafification on constant presheaves.}
\end{example}

\begin{example}
Let $C$ be a small $\infty$-category and consider the associated topos $\mathrm{PSh}(C)$. Then the associated geometric morphism to $\mathrm{An}$ is given by
\[
\begin{tikzcd}
	{\mathrm{PSh}(C)} & {\mathrm{An}}
	\arrow[""{name=0, anchor=center, inner sep=0}, "{\lim_{C^\mathrm{op}}}"', curve={height=12pt}, from=1-1, to=1-2]
	\arrow[""{name=1, anchor=center, inner sep=0}, "{\mathrm{const}}"', from=1-2, to=1-1]
	\arrow["\dashv"{anchor=center, rotate=-90}, draw=none, from=1, to=0]
\end{tikzcd}
\]
Therefore the endofunctor corresponding to the shape of $\mathrm{PSh}(C)$ is computed as
\[
\mathcal{X}_* \mathcal{X}^*(A)
= \lim_{C^\mathrm{op}} \mathrm{const}(A)
\simeq \mathrm{Map}(|C|, A).
\]
We thus see that the pro-object corresponding to the endofunctor is in fact constant, and given by
\[ \Pi_{\infty}( \mathrm{PSh}(C) ) \simeq |C|. \]
\end{example}

Let us talk about functoriality of the shape. If $f : \mathcal{X} \rightarrow \mathcal{Y}$, one has a commuting triangle
\[
\begin{tikzcd}
	{\mathcal{X}} && {\mathcal{Y}} \\
	& {\mathrm{An}}
	\arrow["{f_*}", from=1-1, to=1-3]
	\arrow["{\mathcal{X}_*}"', from=1-1, to=2-2]
	\arrow["{\mathcal{Y}_*}", from=1-3, to=2-2]
\end{tikzcd}
\]
Thus we can take the unit $ \mathrm{id}_\mathcal{Y} \implies f_* f^* $ to get a natural transformation
\[
\mathcal{Y}_* \mathcal{Y}^*
\implies
\mathcal{Y}_* f_* f^* \mathcal{Y}^*
\simeq
\mathcal{X}_* \mathcal{X}^*.
\]
Under the equivalence $ \mathrm{Fun}^{\mathrm{lex, acc}}( \mathrm{An}, \mathrm{An} )^{\op} \simeq \mathrm{Pro}( \mathrm{An} )$, we thus obtain a map of pro-anima
\[ \Pi_\infty(f) : \Pi_\infty(\mathcal{X}) \rightarrow \Pi_\infty(\mathcal{Y}). \]
We can actually see that this provides us with a covariant functor
\[ \Pi_\infty : \mathrm{RTop} \rightarrow \mathrm{Pro}( \mathrm{An} ), \]
a fact that we can make precise with the following remark.

\begin{remark}
We briefly sketch how to actually construct the shape of topoi as a functor. There exists a universal evaluation functor
\[
\begin{array}{rcl}
     \mathrm{Fun}( \Delta^1, \mathrm{Cat}_\infty ) \times_{\mathrm{Cat}_\infty} {\mathrm{Cat}_\infty}_{*/} & \rightarrow & {\mathrm{Cat}_\infty}_{*/}  \\
     ( F : \mathcal{C} \rightarrow \mathcal{D}, c \in \mathcal{C} ) & \mapsto & ( \mathcal{D}, F(c) )
\end{array}
\]
constructed from the universal cocartesian fibration $(\mathrm{Cat}_\infty)_{*//} \rightarrow \mathrm{Cat}_\infty$. From this observation, we can use the fact that $\mathrm{An}$ is terminal in $\mathrm{RTop}$ to construct a functor
\[
\begin{array}{rcl}
     \mathrm{RTop}^{\mathrm{op}} \times \mathrm{An} & \rightarrow & {\mathrm{Cat}_\infty}_{*/}  \\
     ( \mathcal{X}, K ) & \mapsto & ( \mathcal{X}, \mathcal{X}^*(K) \in  \mathcal{X} )
\end{array}
\]
Applying the mapping space functor, we get
\[
\begin{array}{rcl}
     \mathrm{RTop}^{\mathrm{op}} \times \mathrm{An} & \rightarrow & \mathrm{An}  \\
     ( \mathcal{X}, K ) & \mapsto & \mathrm{Map}_{\mathcal{X}}( 1_\mathcal{X}, \mathcal{X}^*(K) )
\end{array}
\]
Under currying, we observe that the resulting functor lands in the full subcategory $\mathrm{Fun}^{\mathrm{lex, acc}}(\mathrm{An}, \mathrm{An})$.
\end{remark}

Now that we have the shape functor, it makes sense to give the following definition.

\begin{definition}
Let $f : \mathcal{X} \rightarrow \mathcal{Y}$ be a geometric morphism between topoi. We call $f$ a \emph{shape equivalence} if $\Pi_\infty(f)$ is an equivalence of pro-anima.
\end{definition}

The following lemma is an easy but very useful observation.

\begin{lemma} \label{contractibleshapeequivalence}
Let $f : \mathcal{X} \rightarrow \mathcal{Y}$ be a geometric morphism between topoi such that $f^*$ is fully faithful. Then $f$ is a shape equivalence.
\end{lemma}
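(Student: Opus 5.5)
The plan is to work directly with the description of the shape as a left-exact accessible endofunctor of $\mathrm{An}$. By construction, $\Pi_\infty(f)$ corresponds under $\mathrm{Fun}^{\mathrm{lex, acc}}(\mathrm{An}, \mathrm{An})^{\op} \simeq \mathrm{Pro}(\mathrm{An})$ to the natural transformation
\[ \mathcal{Y}_* \mathcal{Y}^* \implies \mathcal{Y}_* f_* f^* \mathcal{Y}^* \simeq \mathcal{X}_* \mathcal{X}^* \]
obtained by whiskering the unit $\eta : \mathrm{id}_{\mathcal{Y}} \implies f_* f^*$ with $\mathcal{Y}^*$ on the right and $\mathcal{Y}_*$ on the left. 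Since an equivalence of categories induces equivalences on morphisms, it suffices to show that this natural transformation is an equivalence of functors $\mathrm{An} \rightarrow \mathrm{An}$.

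The key step is the standard fact that a left adjoint $f^*$ is fully faithful if and only if the unit $\eta : \mathrm{id}_{\mathcal{Y}} \implies f_* f^*$ is a natural equivalence. I would recall or quickly verify this. For $Y, Y' \in \mathcal{Y}$, the composite
\[ \mathrm{Map}_{\mathcal{Y}}(Y', Y) \xrightarrow{f^*} \mathrm{Map}_{\mathcal{X}}(f^* Y', f^* Y) \simeq \mathrm{Map}_{\mathcal{Y}}(Y', f_* f^* Y) \]
is postcomposition with $\eta_Y$. Full faithfulness of $f^*$ therefore makes postcomposition with $\eta_Y$ an equivalence for every $Y'$, and Yoneda then gives that $\eta_Y$ is an equivalence.

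With this in hand, the whiskered transformation $\mathcal{Y}_* \eta \mathcal{Y}^*$ is a componentwise equivalence, hence an equivalence. The identification $\mathcal{Y}_* f_* f^* \mathcal{Y}^* \simeq \mathcal{X}_* \mathcal{X}^*$ comes from the canonical equivalences $\mathcal{Y}_* f_* \simeq \mathcal{X}_*$ and $f^* \mathcal{Y}^* \simeq \mathcal{X}^*$ (uniqueness of the geometric morphism to $\mathrm{An}$), so the whole composite is an equivalence and $\Pi_\infty(f)$ is an isomorphism of pro-anima.

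I do not expect a serious obstacle. The only point requiring care is bookkeeping: checking that the map defining $\Pi_\infty(f)$ in the functorial construction of the preceding remark really is this whiskered unit. The discussion of functoriality before the lemma takes this as the definition, so I would simply refer to it.
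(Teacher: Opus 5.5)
Your proposal is correct and follows the paper's own argument: full faithfulness of $f^*$ makes the unit $\mathrm{id}_{\mathcal{Y}} \implies f_* f^*$ an equivalence, so the whiskered transformation $\mathcal{Y}_*\mathcal{Y}^* \implies \mathcal{Y}_* f_* f^* \mathcal{Y}^* \simeq \mathcal{X}_*\mathcal{X}^*$ is an equivalence. Your Yoneda check of the unit criterion, and your remark that this equivalence is precisely the map defining $\Pi_\infty(f)$ (not merely some abstract isomorphism $\Pi_\infty(\mathcal{X}) \simeq \Pi_\infty(\mathcal{Y})$), are details the paper leaves implicit.
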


\begin{proof}
Full faithfulness of $f^*$ is equivalent to the unit
\[  \mathrm{id}_\mathcal{Y} \implies f_* f^* \]
being a natural equivalence. But this means that we have equivalences
\[
\mathcal{Y}_* \mathcal{Y}^*
\simeq
\mathcal{Y}_* f_* f^* \mathcal{Y}^*
\simeq
\mathcal{X}_* \mathcal{X}^*,
\]
in other words, $\Pi_\infty(\mathcal{X}) \simeq \Pi_\infty(\mathcal{Y})$.
\end{proof}

This observation motivates the following terminology.

\begin{definition}
A geometric morphism $f : \mathcal{X} \rightarrow \mathcal{Y}$ is called \emph{contractible} if $f^*$ is fully faithful. A topos $\mathcal{X}$ is called \emph{contractible} if the unique geometric morphism $\mathcal{X} \rightarrow \mathrm{An}$ is contractible.
\end{definition}

\begin{example}
Let $C$ be a small $\infty$-category. Then the geometric morphism induced by $\mathrm{can} : C \rightarrow |C| = C[C^{-1}]$,
\[ \mathrm{PSh}(C) \rightarrow \mathrm{PSh}(|C|), \]
is contractible. This can be seen by observing that $\mathrm{can}^*$ identifies
\[ \mathrm{PSh}(|C|) = \mathrm{Fun}(|C|^{\op}, \mathrm{An} ) \]
with the full subcategory of
\[ \mathrm{PSh}(C) = \mathrm{Fun}(C^{\op}, \mathrm{An} ) \]
spanned by those presheaves $F : C^{\op} \rightarrow \mathrm{An}$ that send all arrows to equivalences.
\end{example}

In case we are talking about contractibility of a topos $\mathcal{X}$, Lemma \ref{contractibleshapeequivalence} becomes an if and only if statement.

\begin{proposition}
A topos $\mathcal{X}$ is contractible iff $\Pi_\infty( \mathcal{X} ) \simeq \mathrm{pt}$.
\end{proposition}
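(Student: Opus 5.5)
The forward direction follows from Lemma \ref{contractibleshapeequivalence} applied to the unique geometric morphism $\mathcal{X} \rightarrow \mathrm{An}$. If $\mathcal{X}^*$ is fully faithful, then $\Pi_\infty(\mathcal{X}) \simeq \Pi_\infty(\mathrm{An})$. Since $\mathrm{An} \simeq \mathrm{PSh}(\ast)$, the example computing the shape of presheaf topoi gives $\Pi_\infty(\mathrm{An}) \simeq |\ast| = \mathrm{pt}$.

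For the converse, suppose $\Pi_\infty(\mathcal{X}) \simeq \mathrm{pt}$. Under the equivalence $\mathrm{Fun}^{\mathrm{lex,acc}}(\mathrm{An},\mathrm{An})^{\op} \simeq \mathrm{Pro}(\mathrm{An})$, the constant pro-anima $\mathrm{pt}$ corresponds to the corepresentable functor $\mathrm{Map}(\mathrm{pt},-) \simeq \mathrm{id}_{\mathrm{An}}$. Hence there is some natural equivalence $\mathcal{X}_*\mathcal{X}^* \simeq \mathrm{id}$. The task is to show that the unit $\eta : \mathrm{id} \Rightarrow \mathcal{X}_*\mathcal{X}^*$ is itself an equivalence, since that is exactly full faithfulness of $\mathcal{X}^*$.

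The main obstacle is that an abstract equivalence need not be the unit. I will handle it by a Yoneda argument. The unit $\eta$ corresponds to a map $\Pi_\infty(\mathcal{X}) \rightarrow \Pi_\infty(\mathrm{An}) = \mathrm{pt}$ in $\mathrm{Pro}(\mathrm{An})$; this is exactly $\Pi_\infty$ applied to $\mathcal{X} \rightarrow \mathrm{An}$, by the construction of functoriality. Equivalently, by Yoneda, $\eta$ is determined by the image of $\mathrm{id}_{\mathrm{pt}}$, i.e.\ by its component at $\mathrm{pt}$. Now $\mathrm{pt}$ is terminal in $\mathrm{Pro}(\mathrm{An})$, since it corresponds to the initial object $\mathrm{id}$ among left exact accessible functors, as $\mathrm{Nat}(\mathrm{Map}(\mathrm{pt},-),F) \simeq F(\mathrm{pt}) \simeq \mathrm{pt}$ by left exactness. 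Therefore any map between pro-anima both equivalent to $\mathrm{pt}$ is an equivalence. In particular the map $\Pi_\infty(\mathcal{X}) \rightarrow \mathrm{pt}$ is an equivalence, because its source is equivalent to $\mathrm{pt}$ and any map between terminal objects is an equivalence.

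Translating back, $\eta$ is a natural equivalence $\mathrm{id}_{\mathrm{An}} \Rightarrow \mathcal{X}_*\mathcal{X}^*$. This is the statement that $\mathcal{X}^*$ is fully faithful, i.e.\ that $\mathcal{X}$ is contractible. The only point needing care is identifying the map induced by the unit with $\Pi_\infty(\mathcal{X}\rightarrow\mathrm{An})$, which holds by the definition of functoriality given above.
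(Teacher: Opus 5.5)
Your proof is correct and follows the same route as the paper. The paper observes that full faithfulness of $\mathcal{X}^*$ means the unit $\mathrm{id}_{\mathrm{An}} \Rightarrow \mathcal{X}_*\mathcal{X}^*$ is an equivalence, and translates this into $\Pi_\infty(\mathcal{X}) \simeq \mathrm{pt}$; your terminality argument (since $\mathrm{pt}$ is terminal in $\mathrm{Pro}(\mathrm{An})$, the map $\Pi_\infty(\mathcal{X}) \to \mathrm{pt}$ induced by the unit is automatically an equivalence once $\Pi_\infty(\mathcal{X}) \simeq \mathrm{pt}$ abstractly) simply makes explicit a step the paper leaves implicit.
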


\begin{proof}
The functor $\mathcal{X}^*$ is fully faithful iff the unit map $\mathrm{id}_{\mathrm{An}} \implies \mathcal{X}_* \mathcal{X}^*$ is an equivalence. Translating to pro-anima, this just means $\Pi_\infty( \mathcal{X} ) \simeq \mathrm{pt}$.
\end{proof}

\begin{remark}
It is actually true that the shape
\[ \Pi_\infty : \mathrm{RTop} \rightarrow \mathrm{Pro}(\mathrm{An}) \]
is a $2$-functor, with $\mathrm{RTop}$ viewed as an $(\infty,2)$-category, and with $2$-cells given by natural transformations. Here, $\mathrm{Pro}(\mathrm{An})$ is viewed as a truncated $(\infty,2)$-category. Since we do not want to set up the theory of $(\infty,2)$-categories for the present article, let us show how this functoriality looks like on natural transformations. If $f, g : \mathcal{X} \rightarrow \mathcal{Y}$ are two geometric morphisms, and $\alpha^* : f^* \implies g^*$ is a natural transformation with mate $\alpha_* : g_* \implies f_*$, we have the commutative square
\[ \begin{tikzcd}
	{\mathrm{id}_\mathcal{Y}} & {f_*f^*} \\
	{g_*g^*} & {f_*g^*}
	\arrow[from=1-1, to=1-2]
	\arrow[from=1-1, to=2-1]
	\arrow["{f_*( \alpha^*)}", from=1-2, to=2-2]
	\arrow["{(\alpha_*)_{g^*}}"', from=2-1, to=2-2]
\end{tikzcd} \]
of natural transformations. Pre- and post-composing with $\mathcal{Y}^*$, respectively $\mathcal{Y}_*$, we obtain the commutative diagram
\[\begin{tikzcd}
	{\mathcal{Y}_*\mathcal{Y}^*} & {\mathcal{Y}_*f_*f^*\mathcal{Y}^*} & \\
	{\mathcal{Y}_*g_*g^*\mathcal{Y}^*} & {\mathcal{Y}_*f_*g^*\mathcal{Y}^*} \\
	&& {\mathcal{X}_* \mathcal{X}^*}
	\arrow[from=1-1, to=1-2]
	\arrow[from=1-1, to=2-1]
	\arrow["\sim", from=1-2, to=2-2]
	\arrow["\sim", curve={height=-12pt}, from=1-2, to=3-3]
	\arrow["\sim"', from=2-1, to=2-2]
	\arrow["\sim"', curve={height=12pt}, from=2-1, to=3-3]
	\arrow["\sim"', from=2-2, to=3-3]
\end{tikzcd}\]
But this means that the induced morphisms
\[ f, g : \Pi_\infty(\mathcal{X}) \rightarrow \Pi_\infty(\mathcal{Y}) \]
are homotopic in $\mathrm{Pro}(\mathrm{An})$. In particular, we see that the functor $\Pi_\infty$ inverts internal adjunctions in $\mathrm{RTop}$, in other words, internal adjoints in $\mathrm{RTop}$ are automatically shape equivalences. Of course, a precise $2$-categorical formulation of the functoriality of the shape should be addressed in future work.\footnote{We thank Thorger Geiß for a clarifying discussion on this point.}
\end{remark}

\subsection{The local vs.\ global picture on shape}

The adjoint functor theorem states that if $R : \mathcal{D} \rightarrow \mathcal{C}$ is an accessible and limit-preserving functor between presentable $\infty$-categories $\mathcal{C}$ and $\mathcal{D}$, then $R$ has a left adjoint $L$. So what should one do if $R$ does not preserve all limits, yet one still wants to use a potential left adjoint?

\begin{definition} \label{proleft}
Suppose $\mathcal{C}$ and $\mathcal{D}$ are presentable $\infty$-categories, and let $F : \mathcal{D} \rightarrow \mathcal{C}$ be an accessible and finite-limit-preserving functor. The pro-left adjoint to $F$ is defined as the functor
\[
\begin{array}{rcl}
F^{\mathrm{pro}\text{-}L} : \mathcal{C} & \rightarrow & \mathrm{Fun}^{\mathrm{lex, acc}}( \mathcal{D}, \mathrm{An} )^{\op} \simeq \mathrm{Pro}(\mathcal{D}) \\
c & \mapsto & \mathrm{Map}_\mathcal{C}( c, F(-)).
\end{array}
\]
\end{definition}

It is straightforward to construct an actual functor that realizes the assignment of pro-left adjoints. Crucially, since $\mathcal{C}$ is presentable, $c$ is $\kappa$-compact for some cardinal $\kappa$, so the expression does in fact land in accessible functors. Let us justify the terminology \emph{pro-left adjoint}.

\begin{proposition}
Let $F : \mathcal{D} \rightarrow \mathcal{C}$ be a functor satisfying the conditions of Definition \ref{proleft}. Then there exists an adjunction
\[
\begin{tikzcd}
	{\mathcal{C}} & {\mathrm{Pro}(\mathcal{D})}
	\arrow[""{name=0, anchor=center, inner sep=0}, "{F^{\mathrm{pro}\text{-}L}}", curve={height=-12pt}, from=1-1, to=1-2]
	\arrow[""{name=1, anchor=center, inner sep=0}, curve={height=-12pt}, from=1-2, to=1-1]
	\arrow["\dashv"{anchor=center, rotate=-90}, draw=none, from=0, to=1]
\end{tikzcd}
\]
with the right adjoint given by the pro-extension of $F$.
\end{proposition}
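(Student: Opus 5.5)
The plan is to define the right adjoint as the pro-extension $\mathrm{Pro}(F) : \mathrm{Pro}(\mathcal{D}) \rightarrow \mathcal{C}$. It sends $\text{``}\lim_{i\in I}\text{''} d_i$ to $\lim_{i \in I} F(d_i)$, with the limit computed in $\mathcal{C}$; this exists because $\mathcal{C}$ is presentable, hence complete. Concretely, $\mathrm{Pro}(F)$ is the unique cofiltered-limit-preserving extension of $F$ along $\mathcal{D} \hookrightarrow \mathrm{Pro}(\mathcal{D})$. As in the paper, size issues are handled by working with $\mathrm{Pro}(\mathcal{D}^\kappa)$ for a large enough cardinal $\kappa$. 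I would then verify the adjunction objectwise, by exhibiting natural equivalences
\[ \mathrm{Map}_{\mathrm{Pro}(\mathcal{D})}\bigl(F^{\mathrm{pro}\text{-}L}(c), \text{``}\lim_{i}\text{''} d_i\bigr) \simeq \mathrm{Map}_\mathcal{C}\bigl(c, \lim_i F(d_i)\bigr), \]
natural in $c$ and in the pro-object. Once these are in place, the standard criterion that a functor is a left adjoint iff each corepresented functor $\mathrm{Map}(c, G(-))$ is representable gives the adjunction.

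To compute the left side, I use the identification $\mathrm{Pro}(\mathcal{D}) \simeq \mathrm{Fun}^{\mathrm{lex,acc}}(\mathcal{D},\mathrm{An})^{\op}$. Under it, the pro-object $\text{``}\lim_i\text{''} d_i$ corresponds to $\colim_i \mathrm{Map}_\mathcal{D}(d_i,-)$, and $F^{\mathrm{pro}\text{-}L}(c)$ corresponds to $\mathrm{Map}_\mathcal{C}(c,F(-))$. The mapping space is therefore
\[ \mathrm{Nat}\bigl(\colim_i \mathrm{Map}_\mathcal{D}(d_i,-), \mathrm{Map}_\mathcal{C}(c,F(-))\bigr) \simeq \lim_i \mathrm{Nat}\bigl(\mathrm{Map}_\mathcal{D}(d_i,-), \mathrm{Map}_\mathcal{C}(c,F(-))\bigr). \]
The first step pulls the colimit out of the first variable. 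This is legitimate because the colimit of functors is computed pointwise, and because a filtered colimit of left exact accessible functors is again left exact and accessible, so it is also the colimit in the full subcategory. By the Yoneda lemma, each term in the limit is $\mathrm{Map}_\mathcal{C}(c,F(d_i))$. Hence the whole expression is $\lim_i \mathrm{Map}_\mathcal{C}(c,F(d_i)) \simeq \mathrm{Map}_\mathcal{C}(c,\lim_i F(d_i))$, which is the right side.

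Naturality in $c$ is clear, since every step is functorial in the second argument. Naturality in the pro-object follows because each identification used is natural: the Yoneda lemma, and the universal property of the limit. A cleaner way to package this is to observe that $F^{\mathrm{pro}\text{-}L}$, viewed as a functor $\mathcal{C}^{\op} \rightarrow \mathrm{Fun}^{\mathrm{lex,acc}}(\mathcal{D},\mathrm{An})$, is just the restricted Yoneda embedding composed with $F$. From that viewpoint the equivalence above is the Yoneda lemma in families, restricted along the inclusion $\mathcal{D} \hookrightarrow \mathrm{Pro}(\mathcal{D})$ and then extended by cofiltered limits.

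I expect the main obstacle to be bookkeeping rather than mathematics. One point is set-theoretic: $\mathrm{Map}_\mathcal{C}(c,F(-))$ must be accessible. This holds because $c$ is $\kappa$-compact for some $\kappa$ and $F$ is accessible, so the composite preserves $\lambda$-filtered colimits for a suitable $\lambda$. All the relevant functors then live in the small category $\mathrm{Pro}(\mathcal{D}^\lambda)$, and I will choose $\lambda$ uniformly enough that the two sides are compared inside the same category. The other point is checking that the colimit in the first variable may be computed in all functors rather than in the lex accessible subcategory; I will handle it using closure of left exact functors under filtered colimits in $\mathrm{An}$.
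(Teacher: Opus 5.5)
Your proposal is correct and follows the paper's argument: identify $\mathrm{Pro}(\mathcal{D})$ with $\mathrm{Fun}^{\mathrm{lex,acc}}(\mathcal{D},\mathrm{An})^{\op}$ and apply the Yoneda lemma to the corepresentables $\mathrm{Map}_\mathcal{D}(d,-)$. The only difference is that you spell out the step the paper calls ``cofiltered-limit extension'', namely that $\mathrm{Nat}$ out of $\colim_i \mathrm{Map}_\mathcal{D}(d_i,-)$ is $\lim_i \mathrm{Map}_\mathcal{C}(c,F(d_i)) \simeq \mathrm{Map}_\mathcal{C}(c,\lim_i F(d_i))$, and you treat the size bookkeeping explicitly (a small correction there: $\mathrm{Pro}(\mathcal{D}^\lambda)$ is only locally small, not small).
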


\begin{proof}
Fix $c \in \mathcal{C}$ and $d \in \mathcal{D}$. Using the equivalence
\[
\mathrm{Pro}( \mathcal{D} ) \simeq \mathrm{Fun}^{\mathrm{lex, acc}}( \mathcal{D}, \mathrm{An} )^{\op}
\]
together with the Yoneda lemma, we see that we have a natural equivalence
\[
\mathrm{Map}_{\mathrm{Pro}( \mathcal{D} )}( F^{\mathrm{pro}\text{-}L}(c), d )
\simeq
\mathrm{Nat}\bigl( y_d, \mathrm{Map}_\mathcal{C}( c, F(-)) \bigr)
\simeq
\mathrm{Map}_{\mathcal{C}}(c, F(d)).
\]
The same equivalence for an arbitrary pro-object $\text{``}\lim_{i \in I}\text{''} d_i \in \mathrm{Pro}( \mathcal{D} )$ follows formally by cofiltered-limit extension.
\end{proof}

\begin{example} \label{partitions}
Consider the discrete topology functor 
\[ (-)^{\mathrm{disc}} : \mathrm{Set} \rightarrow \mathrm{Top}. \]
This functor has a pro-left adjoint
\[ \pi_0 : \mathrm{Top} \rightarrow \mathrm{Pro}( \mathrm{Set} ) \]
which sends a topological space $X$ to the pro-set
\[ \text{``} \mathrm{lim}_{P \in \mathrm{Part}(X)} \text{''} P \]
where the poset $\mathrm{Part}(X)$ is the set of \emph{partitions} of $X$. A partition $P = \{ U_i \}_{i \in I}$ of $X$ is a set of pairwise disjoint open sets such that $X = \bigcup_{ i\in I} U_i$. We note that the analogous construction also works for locales.
\end{example}

The notion of pro-left adjoint is a conservative extension of the notion of left adjoint, in the following sense.

\begin{lemma} \label{actualleftadjoint}
Let $F : \mathcal{D} \rightarrow \mathcal{C}$ be a functor satisfying the conditions of Definition \ref{proleft}. Then $F$ admits a left adjoint iff the pro-left adjoint $F^{\mathrm{pro}\text{-}L} : \mathcal{C} \rightarrow \mathrm{Pro}(\mathcal{D})$ factors through $y : \mathcal{D} \rightarrow \mathrm{Pro}(\mathcal{D})$, in which case the factorization $F^{\mathrm{pro}\text{-}L} : \mathcal{C} \rightarrow \mathcal{D}$ is the left adjoint to $F$. Moreover, this is the case iff $F$ preserves all limits, iff $F$ preserves products of arbitrary infinite cardinality.
\end{lemma}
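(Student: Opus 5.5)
The plan is to prove a cycle of implications: (a) $F$ has a left adjoint $\Rightarrow$ (b) $F$ preserves all limits $\Rightarrow$ (c) $F$ preserves arbitrary products $\Rightarrow$ (d) $F^{\mathrm{pro}\text{-}L}$ factors through $y$ $\Rightarrow$ (a). Along the way we identify the factorization with the left adjoint.

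The implications (a) $\Rightarrow$ (b) $\Rightarrow$ (c) are formal, since right adjoints preserve limits. For (d) $\Rightarrow$ (a), suppose $F^{\mathrm{pro}\text{-}L}(c) \simeq y(L c)$ for each $c$. The adjunction of the preceding proposition, restricted to constant pro-objects $d \in \mathcal{D}$, gives
\[ \mathrm{Map}_\mathcal{D}(Lc, d) \simeq \mathrm{Map}_{\mathrm{Pro}(\mathcal{D})}(y(Lc), y(d)) \simeq \mathrm{Map}_\mathcal{C}(c, F(d)), \]
naturally in $c$ and $d$, using that $y$ is fully faithful. Hence $c \mapsto Lc$ assembles to a functor left adjoint to $F$. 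Concretely, $L = y^{-1}\circ F^{\mathrm{pro}\text{-}L}$, where $y^{-1}$ is defined on the essential image of $y$. Conversely, if $F$ has a left adjoint $L$, then $\mathrm{Map}_\mathcal{C}(c,F(-)) \simeq \mathrm{Map}_\mathcal{D}(Lc,-)$ is corepresentable. This corepresentable functor corresponds to $y(Lc)$, so (a) $\Rightarrow$ (d), and the factorization is the left adjoint.

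The main step is (c) $\Rightarrow$ (d). Since $F$ preserves finite limits and arbitrary products, it preserves all small limits, because every small limit is built from products and pullbacks (equalizers). Fix $c$. The functor $G = \mathrm{Map}_\mathcal{C}(c,F(-)) : \mathcal{D} \to \mathrm{An}$ is accessible by hypothesis and preserves all small limits. Since $\mathcal{D}$ is presentable, the representability criterion for such functors (the corepresentable case of the adjoint functor theorem, \cite[Proposition 5.5.2.7]{Lurie2009HTT} applied to $\mathcal{D}^{\op}$-valued Yoneda, or equivalently the adjoint functor theorem applied to $F$ directly) shows that $G \simeq \mathrm{Map}_\mathcal{D}(d_c,-)$ for some $d_c$. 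Under $\mathrm{Fun}^{\mathrm{lex,acc}}(\mathcal{D},\mathrm{An})^{\op} \simeq \mathrm{Pro}(\mathcal{D})$, corepresentable functors correspond exactly to the image of $y$. This gives (d).

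I expect the main point of care to be this last step: checking that the representability theorem applies. This needs accessibility of $G$, which holds because $F$ is accessible and $c$ is $\kappa$-compact for large $\kappa$. Alternatively, one can simply invoke the adjoint functor theorem for $F$ itself, which requires exactly that $F$ is accessible and limit-preserving, giving (c) $\Rightarrow$ (a) directly. Either way, the reduction from ``arbitrary products'' to ``all limits'' uses the finite-limit hypothesis essentially.
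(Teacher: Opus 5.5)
Your proof is correct and is essentially the paper's argument. The equivalence between $F$ having a left adjoint and $F^{\mathrm{pro}\text{-}L}$ factoring through $y$ is the same Yoneda observation. The equivalence with preserving all limits, equivalently arbitrary products, is the adjoint functor theorem, together with the fact that finite limits and arbitrary products generate all small limits.

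One small correction: the representability criterion you cite (HTT, Proposition 5.5.2.7) concerns limit-preserving presheaves $\mathcal{D}^{\mathrm{op}} \to \mathrm{An}$, not corepresentable functors $\mathcal{D} \to \mathrm{An}$. The correct justification is your fallback of applying the adjoint functor theorem, either to $F$ or to the accessible limit-preserving functor $G$.
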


\begin{proof}
Suppose $F$ admits a left adjoint $L : \mathcal{C} \rightarrow \mathcal{D}$. Then there is a natural equivalence
\[
\mathrm{Map}_\mathcal{C}( c, F(-)) \simeq \mathrm{Map}_\mathcal{D}( L(c), -)
\]
for all $c \in \mathcal{C}$, hence we see that $F^{\mathrm{pro}\text{-}L}$ factors through $y$ as $L$.

Conversely, assume $F^{\mathrm{pro}\text{-}L}$ factors through $y$ as a functor $L : \mathcal{C} \rightarrow \mathcal{D}$, or equivalently, there is a natural equivalence in $c \in \mathcal{C}$
\[
\mathrm{Map}_\mathcal{C}( c, F(-)) \simeq \mathrm{Map}_\mathcal{D}( L(c), -).
\]
But this is just saying that $L$ is left adjoint to $F$.

The claim that this is equivalent to $F$ preserving all limits follows by the adjoint functor theorem. Since $F$ already preserves finite limits by assumption, preserving all limits is equivalent to preserving products of arbitrary infinite cardinality.
\end{proof}

\begin{definition}
Let $\mathcal{X}$ be a topos. The \emph{local shape} $\Pi_\infty^\mathcal{X} : \mathcal{X} \rightarrow \mathrm{Pro}(\mathrm{An})$ is defined to be the pro-left adjoint to $\mathcal{X}^* : \mathrm{An} \rightarrow \mathcal{X}$.
\end{definition}

By definition, plugging in the terminal object $1 \in \mathcal{X}$, we see that
\[
\mathrm{Map}_\mathcal{X}( 1, \mathcal{X}^*(-)) \simeq \mathcal{X}_* \mathcal{X}^*,
\]
hence $\Pi_\infty^\mathcal{X}( 1 ) \simeq \Pi_\infty(\mathcal{X})$. Thus the local shape contains the shape of $\mathcal{X}$ as a special case, namely the global value.

If $U$ is an object of $\mathcal{X}$, there is yet another way to define the shape of $U$, namely via $\Pi_\infty(\mathcal{X}_{/U})$. Note that we have the adjunctions
\[
\begin{tikzcd}
	{\mathcal{X}_{/U}} & {\mathcal{X}}
	\arrow[""{name=0, anchor=center, inner sep=0}, "{U_*}"', curve={height=12pt}, from=1-1, to=1-2]
	\arrow[""{name=1, anchor=center, inner sep=0}, "{U_!}", curve={height=-12pt}, from=1-1, to=1-2]
	\arrow[""{name=2, anchor=center, inner sep=0}, "{U^*}"{description}, from=1-2, to=1-1]
	\arrow["\dashv"{anchor=center, rotate=-89}, draw=none, from=1, to=2]
	\arrow["\dashv"{anchor=center, rotate=-91}, draw=none, from=2, to=0]
\end{tikzcd}
\]
where $U_!$ is the forgetful functor from the overcategory, and $U^* = (-) \times U$. By composing the associated pro-left adjoints to
\[
\mathrm{An} \xrightarrow{ \mathcal{X}^* } \mathcal{X} \xrightarrow{ U^* } \mathcal{X}_{/U},
\]
we see that
\[
\Pi_\infty(\mathcal{X}_{/U})
\simeq
\Pi_\infty^{\mathcal{X}_{/U}}(1)
\simeq
\Pi_\infty^\mathcal{X}(U),
\]
so also in this case the local and global versions of shape match.

The fact that $\Pi_\infty$ can be locally described as a left adjoint implies the following immediate useful consequence.

\begin{theorem}[Descent for shape] \label{shapemayervietorisdescent}
Let $\mathcal{X}$ be a topos and suppose $U \twoheadrightarrow 1$ is an effective epimorphism. Then
\[
\Pi_\infty(\mathcal{X}) \simeq \colim_{[n] \in \Delta^{\mathrm{op}}} \Pi^\mathcal{X}_\infty(U^{\times n+1}),
\]
where the colimit in question is taken in the $\infty$-category $\mathrm{Pro}(\mathrm{An})$.
\end{theorem}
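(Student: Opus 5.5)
The plan is to exploit that the local shape $\Pi_\infty^\mathcal{X} : \mathcal{X} \rightarrow \mathrm{Pro}(\mathrm{An})$ preserves all small colimits, since it is a left adjoint. The preceding proposition gives the adjunction $\Pi_\infty^\mathcal{X} \dashv \mathrm{Pro}(\mathcal{X}^*)$, where the right adjoint is the pro-extension of $\mathcal{X}^*$. Because $\mathcal{X}$ is a topos, the effective epimorphism $U \twoheadrightarrow 1$ exhibits $1$ as the geometric realization of its \v{C}ech nerve:
\[ 1 \simeq \colim_{[n] \in \Delta^{\mathrm{op}}} U^{\times n+1}. \]
Here the $(n+1)$-fold fibre products over the terminal object are just products. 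Applying the left adjoint $\Pi_\infty^\mathcal{X}$ then gives
\[ \Pi_\infty^\mathcal{X}(1) \simeq \colim_{[n] \in \Delta^{\mathrm{op}}} \Pi_\infty^\mathcal{X}(U^{\times n+1}) \]
in $\mathrm{Pro}(\mathrm{An})$. Finally, the observation made just before the theorem identifies $\Pi_\infty^\mathcal{X}(1) \simeq \Pi_\infty(\mathcal{X})$.

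In order, the steps are as follows.
\begin{enumerate}
\item Record that $\Pi_\infty^\mathcal{X}$ is left adjoint to the pro-extension of $\mathcal{X}^*$, so it commutes with colimits in $\mathcal{X}$. Colimits in $\mathrm{Pro}(\mathrm{An})$ exist because it is the opposite of accessible left-exact functors, which is closed under limits; alternatively, the left adjoint simply preserves whatever colimits exist.
\item Use the definition of effective epimorphism to write $1$ as the realization of the \v{C}ech nerve.
\item Combine these and identify the global value.
\end{enumerate}
If desired, one can also rewrite each term as $\Pi_\infty(\mathcal{X}_{/U^{\times n+1}})$ via the earlier identification $\Pi_\infty(\mathcal{X}_{/V}) \simeq \Pi_\infty^\mathcal{X}(V)$, which recovers the form stated in the introduction.

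The only point needing care is the adjunction step. One must make sure that the pro-extension of $\mathcal{X}^*$ is really a right adjoint defined on all of $\mathrm{Pro}(\mathrm{An})$, so that left-adjointness yields preservation of colimits and not just of those colimits computed objectwise. This is exactly the preceding proposition: its Yoneda argument gives $\mathrm{Map}(\Pi_\infty^\mathcal{X}(c), \text{``}\lim\text{''} d_i) \simeq \lim_i \mathrm{Map}(c, \mathcal{X}^* d_i)$ naturally in $c$. Given that, preservation of the simplicial colimit is formal. As a sanity check, one can verify directly with the equivalence $\mathrm{Pro}(\mathrm{An}) \simeq \mathrm{Fun}^{\mathrm{lex,acc}}(\mathrm{An},\mathrm{An})^{\op}$ that the colimit on the right corresponds to the functor
\[ \lim_{\Delta} \mathrm{Map}(U^{\times n+1}, \mathcal{X}^*(-)) \simeq \mathrm{Map}(1, \mathcal{X}^*(-)). \]
This functor is left exact and accessible, so the colimit in $\mathrm{Pro}(\mathrm{An})$ is computed pointwise in this instance.
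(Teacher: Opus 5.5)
Your proposal is correct and is essentially the paper's argument: the local shape $\Pi_\infty^\mathcal{X}$ is the pro-left adjoint to $\mathcal{X}^*$, so it preserves the geometric realization $1 \simeq \colim_{[n] \in \Delta^{\op}} U^{\times n+1}$ coming from the effective epimorphism, and $\Pi_\infty^\mathcal{X}(1) \simeq \Pi_\infty(\mathcal{X})$. Your pointwise check in $\mathrm{Fun}^{\mathrm{lex,acc}}(\An,\An)^{\op}$ is a valid self-contained confirmation of the colimit-preservation step, but the paper does not need it.
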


This follows immediately from the fact that, for an effective epimorphism $U \twoheadrightarrow 1$,
\[
1 \simeq  \colim_{[n] \in \Delta^{\mathrm{op}}} U^{\times n+1}
\]
holds in $\mathcal{X}$. This has an immediate application. Let $L$ be a locale. Let us write $\Pi_\infty(L) = \Pi_\infty( \mathrm{Sh}(L) )$ for the shape of $L$.

\begin{corollary}[Shape-excision for open coverings] \label{shapemayervietorisopens}
Let $X$ be a topological space or a locale, and let $(U_i)_{i \in I}$ be an open cover of $X$. Then
\[
\Pi_\infty(X) \simeq \colim \Pi_\infty(U_{i_0} \cap \dots \cap U_{i_n}),
\]
where the colimit in question ranges over the \v{C}ech nerve of the cover, and is taken in the $\infty$-category $\mathrm{Pro}(\mathrm{An})$.
\end{corollary}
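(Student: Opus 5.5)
We apply Theorem \ref{shapemayervietorisdescent} to the topos $\mathcal{X} = \mathrm{Sh}(X)$, where $X$ is a topological space or a locale. Write $y_U$ for the sheaf represented by an open $U$, i.e.\ the subterminal object corresponding to $U$. The preliminaries record that $V := \coprod_{i \in I} y_{U_i} \to 1$ is an effective epimorphism in $\mathrm{Sh}(X)$. Theorem \ref{shapemayervietorisdescent} then gives
\[ \Pi_\infty(X) \simeq \colim_{[n]\in\Delta^{\mathrm{op}}} \Pi^{\mathcal{X}}_\infty(V^{\times n+1}). \]
What remains is to identify the terms of this simplicial object with the \v{C}ech nerve of the cover.

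First, universality of colimits in the topos gives
\[ V^{\times n+1} \simeq \coprod_{(i_0,\dots,i_n)\in I^{n+1}} y_{U_{i_0}} \times \cdots \times y_{U_{i_n}}. \]
Since the $y_{U_i}$ are subterminal, their product is the meet, so $y_{U_{i_0}} \times \cdots \times y_{U_{i_n}} \simeq y_{U_{i_0}\cap\cdots\cap U_{i_n}}$. The face and degeneracy maps of the \v{C}ech nerve of $V\to 1$ are the projections and diagonals. On each summand they are the evident inclusions of intersections, indexed exactly as in the \v{C}ech nerve of the cover.

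Second, $\Pi^{\mathcal{X}}_\infty$ is a pro-left adjoint. By the Yoneda description, $\mathrm{Map}_{\mathcal{X}}(\coprod_\alpha W_\alpha, \mathcal{X}^*(-)) \simeq \prod_\alpha \mathrm{Map}(W_\alpha,\mathcal{X}^*(-))$, so it sends coproducts in $\mathcal{X}$ to coproducts in $\mathrm{Pro}(\mathrm{An})$. The same argument shows it preserves all colimits, which is also what underlies Theorem \ref{shapemayervietorisdescent}. It therefore suffices to show $\Pi^{\mathcal{X}}_\infty(y_W) \simeq \Pi_\infty(W)$ for an open $W$.

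For this we use the local/global comparison established before the theorem: $\Pi^{\mathcal{X}}_\infty(y_W) \simeq \Pi_\infty(\mathcal{X}_{/y_W})$. It remains to identify $\mathrm{Sh}(X)_{/y_W} \simeq \mathrm{Sh}(W)$, compatibly with the geometric morphisms to $\mathrm{An}$. This is the standard fact that slicing a sheaf topos over an open gives sheaves on the open sublocale, since $\mathcal{O}(X)_{/W} = \mathcal{O}(W)$ and the sheaf condition restricts. For a topological space, the open subspace $W$ has locale $\mathcal{O}(W)$, so $\Pi_\infty(W)$ means the same thing in either reading.

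Assembling these steps, the simplicial pro-anima becomes $[n]\mapsto \coprod_{I^{n+1}} \Pi_\infty(U_{i_0}\cap\cdots\cap U_{i_n})$. Its colimit is the colimit over the \v{C}ech nerve of the cover, which is the claimed formula. The main obstacle is the bookkeeping of naturality: the identifications $\mathcal{X}_{/y_W}\simeq \mathrm{Sh}(W)$ must be made functorially in $W$, so that the simplicial structure maps correspond. This follows because both sides are computed from the functor $\mathcal{O}(X)\to \mathrm{RTop}$, $W\mapsto \mathcal{X}_{/y_W}$, together with the compatibility of pro-left adjoints with composition already used above.
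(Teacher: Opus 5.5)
Your proposal is correct and follows the paper's route: the corollary is obtained by applying the descent theorem to the effective epimorphism $\coprod_{i \in I} y_{U_i} \to 1$ in $\mathrm{Sh}(X)$. Your extra steps make explicit what the paper leaves implicit, namely expanding the \v{C}ech nerve into intersections, using that the local shape $\Pi^{\mathcal{X}}_\infty$ preserves colimits, and identifying $\mathrm{Sh}(X)_{/y_W} \simeq \mathrm{Sh}(W)$.
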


\begin{definition}
Let $L$ be a locale. An open cover $(U_i)_{i \in I}$ of $L$ is called \emph{shape-good}, or simply \emph{good}, if for all non-zero finite intersections
\[
U_{i_0} \cap \dots \cap U_{i_n} \neq 0,
\]
the topoi $\mathrm{Sh}(U_{i_0} \cap \dots \cap U_{i_n})$ have trivial shape.
\end{definition}

Given an open cover $\mathcal{U} = (U_i)_{i \in I}$ of a locale $L$, define an abstract simplicial complex $\check{C}(\mathcal{U})$, called the \emph{\v{C}ech complex} of $\mathcal{U}$, as follows: the underlying set of vertices is the set $I$, and the faces are the finite subsets $\{ i_0, \dots, i_n \}$ such that
\[
U_{i_0} \cap \dots \cap U_{i_n} \neq 0.
\]
The following is an immediate application of Corollary \ref{shapemayervietorisopens}.

\begin{proposition} \label{goodopencover}
Suppose $L$ is a locale admitting a good open cover $(U_i)_{i \in I}$. Then the shape of $L$ is constant, and there is an equivalence
\[
\Pi_\infty( L ) \simeq | \check{C}(\mathcal{U}) |.
\]
\end{proposition}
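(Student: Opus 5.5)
We apply Corollary \ref{shapemayervietorisopens} to the good cover $\mathcal{U} = (U_i)_{i \in I}$. This gives
\[
\Pi_\infty(L) \simeq \colim_{[n] \in \Delta^{\mathrm{op}}} \coprod_{(i_0,\dots,i_n) \in I^{n+1}} \Pi_\infty(U_{i_0} \cap \dots \cap U_{i_n}).
\]
Here we use that the \v{C}ech nerve of $\coprod_i y_{U_i} \twoheadrightarrow 1$ has $n$-th term $\coprod_{I^{n+1}} y_{U_{i_0}\cap\dots\cap U_{i_n}}$, and that the local shape $\Pi^\mathcal{X}_\infty$ is a left adjoint, so it sends this coproduct to a coproduct in $\mathrm{Pro}(\mathrm{An})$.

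We identify each term using goodness. If $U_{i_0}\cap\dots\cap U_{i_n} \neq 0$, the shape is $\mathrm{pt}$. If the intersection is $0$, then $\mathrm{Sh}(0)$ is the trivial topos, and its shape is the initial pro-anima $\emptyset$: the functor $\mathcal{X}_*\mathcal{X}^*$ is constant at $\mathrm{pt}$, which corresponds to $\emptyset$. Hence the $n$-th term is the constant pro-anima given by the set
\[
N_n := \{(i_0,\dots,i_n) \in I^{n+1} : U_{i_0}\cap\dots\cap U_{i_n}\neq 0\}.
\]
These sets assemble into a simplicial set $N_\bullet$, the ``ordered'' nerve of the cover. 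Faces and degeneracies are induced by the \v{C}ech nerve, because inclusions of intersections go to the identity of $\mathrm{pt}$ and nonempty sets never map to empty ones.

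Next, the constant-pro-object embedding $y : \mathrm{An} \to \mathrm{Pro}(\mathrm{An})$ preserves colimits. Indeed, it is left adjoint to the materialization functor $\mathrm{Pro}(\mathrm{An}) \to \mathrm{An}$, $\text{``}\lim\text{''} X_i \mapsto \lim X_i$. This is a direct check with $\mathrm{Map}(y(A), \text{``}\lim\text{''}X_i) \simeq \lim \mathrm{Map}(A, X_i)$. It also follows from the pro-left adjoint formalism: colimits of constant pro-objects are computed via limits of corepresentables $\mathrm{Map}(A_k,-)$, and $\lim_k \mathrm{Map}(A_k,-) \simeq \mathrm{Map}(\colim_k A_k, -)$, which is again corepresentable. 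Therefore
\[
\Pi_\infty(L) \simeq y\bigl(\colim_{\Delta^{\mathrm{op}}} N_\bullet\bigr) = y(|N_\bullet|),
\]
and in particular the shape is constant.

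The main remaining step, and the one we expect to be the obstacle, is to identify $|N_\bullet|$ with $|\check{C}(\mathcal{U})|$. The simplicial set $N_\bullet$ has as $n$-simplices the ordered tuples, with repetitions, whose underlying set is a face of $\check{C}(\mathcal{U})$. This is the nerve construction of the abstract simplicial complex viewed as the poset of its faces, made ``ordered''. The standard argument goes through the poset $F$ of nonempty faces. First, $|\mathrm{N}(F)|$ is the barycentric subdivision and is homeomorphic to $|\check{C}(\mathcal{U})|$. Second, there is a comparison showing $|N_\bullet| \simeq |\mathrm{N}(F)|$. For the second step we would compute $\colim_{\sigma \in F} |E\sigma|$, where $E\sigma$ is the contractible simplicial set of all tuples in $\sigma$ (the nerve of the indiscrete groupoid on $\sigma$). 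Since $N_\bullet = \colim_{\sigma \in F} E\sigma$ is a union along inclusions and the diagram is cofibrant (Reedy), the colimit is a homotopy colimit. Each $E\sigma$ is contractible, so the colimit is $\colim_F \mathrm{pt} \simeq |F| \simeq |\check{C}(\mathcal{U})|$. The point to verify carefully is that this union of subcomplexes computes the $\infty$-categorical colimit, and we would do this by a projective/Reedy cofibrancy argument.
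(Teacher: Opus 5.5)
Your proposal is correct and follows the same route as the paper, which states the result as an immediate application of Corollary \ref{shapemayervietorisopens} combined with goodness of the cover. You also fill in the steps the paper leaves implicit, and they are sound: empty intersections contribute the initial pro-anima, the constant embedding $y\colon \mathrm{An}\to\mathrm{Pro}(\mathrm{An})$ preserves colimits because it is left adjoint to materialization, and the ordered \v{C}ech nerve $N_\bullet$ has the same realization as $\check{C}(\mathcal{U})$ via the Reedy-cofibrant diagram $\sigma\mapsto E\sigma$ over the face poset.
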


\begin{example}
Let $M$ be an $n$-dimensional smooth and paracompact manifold. Then $M$ admits a Riemannian structure, for which it makes sense to talk about geodesics. It is then possible to cover $M$ by open, geodesically convex subsets, which produces a good open cover.
\end{example}

\begin{example} \label{abstractsimplicialcomplex}
Let $K$ be an abstract simplicial complex. Then the set of open stars $\mathrm{st}(\sigma, K)$ for all faces $\sigma$ of $K$ forms a good open cover of $|K|$; see \cite[§62, Lemma 62.6]{MunkresEAT} and
\cite[pp.~8--9]{HainSimplicialComplexes}.
\end{example}

\section{Locally contractible topoi}

Let us discuss a class of topoi that behave particularly well for the purposes of shape theory. The main reference for this section is \cite[Appendix A.1]{Lurie2017HA}.

\begin{definition}
A geometric morphism $f : \mathcal{X} \rightarrow \mathcal{Y}$ is called \emph{essential} if $f^*$ has a further left adjoint $f_!$,
\[
\begin{tikzcd}
	{\mathcal{X}} & {\mathcal{Y}.}
	\arrow[""{name=0, anchor=center, inner sep=0}, "{f_*}"', curve={height=12pt}, from=1-1, to=1-2]
	\arrow[""{name=1, anchor=center, inner sep=0}, "{f_!}", curve={height=-12pt}, from=1-1, to=1-2]
	\arrow[""{name=2, anchor=center, inner sep=0}, "{f^*}"{description}, from=1-2, to=1-1]
	\arrow["\dashv"{anchor=center, rotate=-90}, draw=none, from=1, to=2]
	\arrow["\dashv"{anchor=center, rotate=-90}, draw=none, from=2, to=0]
\end{tikzcd}
\]
\end{definition}

We observe that essential geometric morphisms are closed under composition.

\begin{example}
Let $f : C \rightarrow D$ be a functor between small $\infty$-categories. Then the induced geometric morphism on presheaf topoi is essential, as we have the adjunctions
\[
\begin{tikzcd}
	{\mathrm{PSh}(C)} & {\mathrm{PSh}(D).}
	\arrow[""{name=0, anchor=center, inner sep=0}, "{\mathrm{Lan}_f}", curve={height=-12pt}, from=1-1, to=1-2]
	\arrow[""{name=1, anchor=center, inner sep=0}, "{\mathrm{Ran}_f}"', curve={height=12pt}, from=1-1, to=1-2]
	\arrow[""{name=2, anchor=center, inner sep=0}, "{f^\circ}"{description}, from=1-2, to=1-1]
	\arrow["\dashv"{anchor=center, rotate=-90}, draw=none, from=0, to=2]
	\arrow["\dashv"{anchor=center, rotate=-90}, draw=none, from=2, to=1]
\end{tikzcd}
\]
\end{example}

For our purposes, we are mostly interested in the following special case.

\begin{definition}
A topos $\mathcal{X}$ is called locally contractible if the unique geometric morphism $\mathcal{X} \rightarrow \mathrm{An}$ is essential.
\end{definition}

\begin{example}
Presheaf topoi $\mathrm{PSh}(C)$, for $C$ a small $\infty$-category, are locally contractible. This follows directly from the previous observation applied to the unique functor $C \rightarrow \mathrm{pt}$.
\end{example}

\begin{example}
As a special case of the previous example, we see that for an anima $A \in \mathrm{An}$, the overtopos $\mathrm{An}_{/A} \simeq \mathrm{Fun}(A, \mathrm{An})$ is locally contractible. This is actually also a special case of the following example.
\end{example}

\begin{example}
Let $\mathcal{X}$ be locally contractible, and let $U \in \mathcal{X}$ be an object. Then the étale topos $\mathcal{X}_{/U}$ is again locally contractible. This follows since the unique geometric morphism $\mathcal{X}_{/U} \rightarrow \mathrm{An}$ factors as the composition
\[
\mathcal{X}_{/U} \rightarrow \mathcal{X} \rightarrow \mathrm{An}
\]
of two essential geometric morphisms.
\end{example}

The following is a direct application of Lemma \ref{actualleftadjoint}.

\begin{proposition}
Let $\mathcal{X}$ be a topos. The following are equivalent.
\begin{enumerate}
\item $\mathcal{X}$ is locally contractible.
\item The local shape functor $\Pi_\infty^\mathcal{X} : \mathcal{X} \rightarrow \mathrm{Pro}(\mathrm{An})$ takes values in constant pro-objects.
\item The constant object functor $\mathcal{X}^* : \mathrm{An} \rightarrow \mathcal{X}$ preserves limits.
\item The constant object functor $\mathcal{X}^* : \mathrm{An} \rightarrow \mathcal{X}$ preserves arbitrary products.
\item The functor $\mathcal{X}_* \mathcal{X}^* : \mathrm{An} \rightarrow \mathrm{An}$ is corepresentable.
\end{enumerate}
\end{proposition}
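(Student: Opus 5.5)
The plan is to reduce everything to Lemma \ref{actualleftadjoint}, applied to the accessible, left exact functor $F = \mathcal{X}^* : \mathrm{An} \rightarrow \mathcal{X}$. By definition, the pro-left adjoint of this functor is the local shape $\Pi_\infty^\mathcal{X}$.

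\textbf{The cycle (1) through (4).} Lemma \ref{actualleftadjoint} says directly that $\mathcal{X}^*$ has a left adjoint $\mathcal{X}_!$ if and only if $\Pi_\infty^\mathcal{X}$ factors through the constant pro-objects $y : \mathrm{An} \hookrightarrow \mathrm{Pro}(\mathrm{An})$. It also says this happens if and only if $\mathcal{X}^*$ preserves all limits, and if and only if it preserves arbitrary products. Since (1) is by definition the existence of $\mathcal{X}_!$, this gives (1)$\Leftrightarrow$(2)$\Leftrightarrow$(3)$\Leftrightarrow$(4) at once.

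\textbf{The implication (2)$\Rightarrow$(5).} Evaluate (2) at the terminal object. We know $\Pi_\infty^\mathcal{X}(1) \simeq \Pi_\infty(\mathcal{X})$, and the pro-object attached to $\mathcal{X}_*\mathcal{X}^* \simeq \mathrm{Map}_\mathcal{X}(1,\mathcal{X}^*(-))$ is exactly this global value. Under the equivalence $\mathrm{Fun}^{\mathrm{lex,acc}}(\mathrm{An},\mathrm{An})^{\op} \simeq \mathrm{Pro}(\mathrm{An})$, constant pro-objects correspond to corepresentable functors. So if (2) holds, $\mathcal{X}_*\mathcal{X}^* \simeq \mathrm{Map}(\mathcal{X}_!1,-)$ is corepresentable.

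\textbf{The converse (5)$\Rightarrow$(1), which I expect to be the main obstacle.} Assume $\mathcal{X}_*\mathcal{X}^* \simeq \mathrm{Map}_{\mathrm{An}}(A,-)$. Then $\mathcal{X}_*\mathcal{X}^*$ preserves all limits, in particular products. The goal is to upgrade this to $\mathcal{X}^*$ preserving products, i.e.\ to (4). That amounts to showing that
\[
\mathrm{Map}_\mathcal{X}\bigl(U, \mathcal{X}^*(\textstyle\prod_i K_i)\bigr) \rightarrow \prod_i \mathrm{Map}_\mathcal{X}(U,\mathcal{X}^*K_i)
\]
is an equivalence for every $U \in \mathcal{X}$. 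It suffices to check this for $U$ ranging over a generating family.

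The first attempt is the identification
\[
\mathrm{Map}_\mathcal{X}(U,\mathcal{X}^*(-)) \simeq (\mathcal{X}_{/U})_*(\mathcal{X}_{/U})^*(-) ,
\]
which reduces the claim to corepresentability of the shape endofunctor of each slice. Here is the gap. Hypothesis (5) concerns only the global value $U = 1$, so the argument needs a way to transport corepresentability from $1$ to an arbitrary $U$.

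The only tools available in the paper are descent (Theorem \ref{shapemayervietorisdescent}) and the fact that $\Pi_\infty^\mathcal{X}$ preserves colimits. Both move information from local pieces to global ones, not conversely. I therefore expect this step to need either an additional input or a reading of (5) as the statement for the local shape, namely $\Pi_\infty^\mathcal{X}(U)$ constant for all $U$, under which it coincides with (2).

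I do not currently have an argument for (5)$\Rightarrow$(1) with (5) read purely globally, and I suspect it may be false in that form. A space whose shape is constant but which is not locally contractible would give a counterexample. I would therefore check the intended reading of (5) before attempting to close this step.
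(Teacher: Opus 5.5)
Your argument for (1)$\Leftrightarrow$(2)$\Leftrightarrow$(3)$\Leftrightarrow$(4) matches the paper's. You apply Lemma \ref{actualleftadjoint} to $\mathcal{X}^*$, and you get (2)$\Rightarrow$(5) by evaluating the local shape at the terminal object. The paper's entire proof is the remark that the proposition is a direct application of that lemma. That lemma concerns the pro-left adjoint on all of $\mathcal{X}$, not its global value, so the paper gives no argument for (5)$\Rightarrow$(1) either. Your suspicion is correct: read literally, i.e.\ as ``$\mathcal{X}$ has constant shape'', (5) does not imply (1). The gap you identified cannot be closed; the statement itself is wrong as written.

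A concrete counterexample is the cone on the Cantor space, $X=\mathrm{Cone}(C)=C\times[0,1]/C\times\{1\}$, with $\mathcal{X}=\mathrm{Sh}(X)$.
\begin{itemize}
\item $X$ is contractible, so by homotopy invariance (Theorem \ref{homotopyinvariance} and its corollary for $\mathrm{Top}$) $\Pi_\infty(X)\simeq\ast$. Hence $\mathcal{X}_*\mathcal{X}^*\simeq\mathrm{Map}(\ast,-)$ is corepresentable and (5) holds; $\mathcal{X}$ is even a contractible topos.
\item The open subset $U=C\times[0,\tfrac12)$ has non-constant shape $\Pi^{\mathcal{X}}_\infty(U)\simeq\Pi_\infty(U)$. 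Every clopen subset of $U$ has the form $V\times[0,\tfrac12)$ with $V\subset C$ clopen.
\item By the compatibility of shape with connected components, $\mathrm{Pro}(\pi_0)(\Pi_\infty(U))\simeq\pi_0(U)\simeq\text{``}\lim_n\text{''}\,\mathbf{2}^n$. This is not isomorphic to a constant pro-set, since the transition maps are surjections onto sets of unbounded size. A constant pro-anima $A$ would instead give the constant pro-set $\pi_0(A)$.
\item By the lemma that a topos is locally contractible iff all its objects have constant shape, $\mathcal{X}$ is not locally contractible.
\end{itemize}
So the correct statement is (1)$\Leftrightarrow$(2)$\Leftrightarrow$(3)$\Leftrightarrow$(4)$\Rightarrow$(5). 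Alternatively, (5) must be read locally, as corepresentability of $\mathrm{Map}_{\mathcal{X}}(U,\mathcal{X}^*(-))\simeq(\mathcal{X}_{/U})_*(\mathcal{X}_{/U})^*$ for every $U\in\mathcal{X}$. That is just condition (2) unpacked, which is the reading you proposed.
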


Note that if condition (5) holds, it follows that the representing object is automatically $\Pi_\infty(\mathcal{X}) \in \mathrm{An}$. Let us justify the terminology \emph{locally} contractible.

\begin{definition}
Let $\mathcal{X}$ be a topos. We say that $\mathcal{X}$ has \emph{constant shape} if $\Pi_\infty(\mathcal{X}) \in \mathrm{An}$. More generally, an object $U \in \mathcal{X}$ is said to have constant shape if $\Pi^\mathcal{X}_\infty(U) \in \mathrm{An}$.
\end{definition}

We immediately see the following.

\begin{lemma}
A topos $\mathcal{X}$ is locally contractible if and only if all objects $U \in \mathcal{X}$ have constant shape.
\end{lemma}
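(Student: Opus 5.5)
The plan is to show both implications directly, using the identification $\Pi_\infty^\mathcal{X}(U) \simeq \Pi_\infty(\mathcal{X}_{/U})$ established above. We also use the preceding proposition, which characterizes local contractibility by the condition that $\Pi_\infty^\mathcal{X}$ takes values in constant pro-objects.

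For the forward direction, assume $\mathcal{X}$ is locally contractible. By condition (2) of the preceding proposition, $\Pi_\infty^\mathcal{X}(U)$ lies in the essential image of $\mathrm{An} \hookrightarrow \mathrm{Pro}(\mathrm{An})$ for every $U \in \mathcal{X}$. This is exactly the statement that $U$ has constant shape. Concretely, Lemma \ref{actualleftadjoint} identifies $\Pi_\infty^\mathcal{X}(U)$ with $\mathcal{X}_!(U)$, viewed as a constant pro-anima. Equivalently, $\mathcal{X}_{/U}$ is again locally contractible, as in the example above, so by (5) its shape is corepresentable.

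For the converse, assume every $U \in \mathcal{X}$ has constant shape. Then the pro-left adjoint $\Pi_\infty^\mathcal{X} : \mathcal{X} \to \mathrm{Pro}(\mathrm{An})$ takes values in the full subcategory $\mathrm{An} \subset \mathrm{Pro}(\mathrm{An})$. Since $y : \mathrm{An} \to \mathrm{Pro}(\mathrm{An})$ is fully faithful, this means $\Pi_\infty^\mathcal{X}$ factors through $y$ by a functor $L : \mathcal{X} \to \mathrm{An}$. The factorization is determined up to equivalence objectwise, and full faithfulness upgrades it to an actual functor. Lemma \ref{actualleftadjoint} then says that $L$ is left adjoint to $\mathcal{X}^*$, so $\mathcal{X} \to \mathrm{An}$ is essential, i.e.\ $\mathcal{X}$ is locally contractible.

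The only point needing a moment's care is the passage from ``each value is constant'' to ``the functor factors through $\mathrm{An}$''. This is handled by full faithfulness of $y$: a functor whose values land objectwise in the essential image of a fully faithful functor factors through it. Everything else is a direct appeal to the preceding proposition and Lemma \ref{actualleftadjoint}.
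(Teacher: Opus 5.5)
Your proof is correct and matches the paper. The paper treats this lemma as an immediate restatement of (1)$\Leftrightarrow$(2) in the preceding proposition, since ``$U$ has constant shape'' is defined to mean $\Pi_\infty^\mathcal{X}(U) \in \mathrm{An}$, and your converse just re-derives (2)$\Rightarrow$(1) from Lemma \ref{actualleftadjoint}, exactly as the paper obtains that proposition (the detour through $\Pi_\infty(\mathcal{X}_{/U})$ is harmless but unnecessary).
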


The main usefulness of this notion comes from the following lemma.

\begin{lemma}[{\cite[Proposition A.1.6]{Lurie2017HA}}]
Let $\mathcal{X}$ be a topos. The full subcategory $\mathcal{X}'$ of $\mathcal{X}$ spanned by objects of constant shape is closed under colimits.
\end{lemma}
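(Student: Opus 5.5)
The plan is to work with the local shape functor $\Pi_\infty^\mathcal{X} : \mathcal{X} \rightarrow \mathrm{Pro}(\mathrm{An})$. Since it is defined as the pro-left adjoint of $\mathcal{X}^*$, it is itself a left adjoint (its right adjoint is the pro-extension of $\mathcal{X}^*$), so it preserves all small colimits. An object $U$ has constant shape exactly when $\Pi_\infty^\mathcal{X}(U)$ lies in the essential image of $y : \mathrm{An} \hookrightarrow \mathrm{Pro}(\mathrm{An})$. So given a small diagram $U_\bullet : K \rightarrow \mathcal{X}'$ with colimit $U$ in $\mathcal{X}$, we get
\[ \Pi_\infty^\mathcal{X}(U) \simeq \colim_{k \in K}^{\mathrm{Pro}(\mathrm{An})} y(A_k), \qquad A_k := \Pi_\infty^\mathcal{X}(U_k) \in \mathrm{An}. \]
It then suffices to show that this colimit of constant pro-objects, computed in $\mathrm{Pro}(\mathrm{An})$, is again constant. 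The natural candidate is $y(\colim_K A_k)$, with the colimit now taken in $\mathrm{An}$.

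The key step is therefore that $y : \mathrm{An} \rightarrow \mathrm{Pro}(\mathrm{An})$ preserves small colimits. I would check this in the model $\mathrm{Pro}(\mathrm{An}) \simeq \mathrm{Fun}^{\mathrm{lex,acc}}(\mathrm{An},\mathrm{An})^{\op}$. Colimits in $\mathrm{Pro}(\mathrm{An})$ correspond to limits of functors, and limits of left exact accessible functors are computed pointwise. Pointwise limits of left exact functors are left exact. For accessibility, I would argue that a small limit of accessible functors is accessible, by choosing a common regular cardinal $\kappa$ large enough. Under this model $y(A)$ corresponds to $\mathrm{Map}(A,-)$, and
\[ \lim_{k} \mathrm{Map}(A_k, -) \simeq \mathrm{Map}(\colim_k A_k, -), \]
which is corepresentable by $\colim_k A_k$. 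This shows $\colim_k y(A_k) \simeq y(\colim_k A_k)$, so $\Pi_\infty^\mathcal{X}(U)$ is constant and $U \in \mathcal{X}'$.

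A more direct route avoids pro-objects. For $V \in \mathcal{X}$, write $F_V := \mathrm{Map}_\mathcal{X}(V, \mathcal{X}^*(-))$. Then $F_U \simeq \lim_k F_{U_k}$, since $\mathrm{Map}_\mathcal{X}(-, \mathcal{X}^*B)$ turns colimits into limits. Each $F_{U_k} \simeq \mathrm{Map}(A_k,-)$ by hypothesis, so $F_U \simeq \mathrm{Map}(\colim A_k, -)$ is corepresentable. The one point needing care is naturality: the equivalences $F_{U_k} \simeq \mathrm{Map}(A_k,-)$ must assemble into a diagram. This holds because the Yoneda embedding of $\mathrm{An}^{\op}$ into functors is fully faithful, so the diagram $k \mapsto F_{U_k}$ lifts uniquely to a diagram $k \mapsto A_k$ in $\mathrm{An}$.

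I expect the main obstacle to be the set-theoretic bookkeeping around accessibility and the correct identification of colimits in $\mathrm{Pro}(\mathrm{An})$ with pointwise limits in $\mathrm{Fun}^{\mathrm{lex,acc}}$. The direct corepresentability argument sidesteps most of this, since the limit it produces is visibly corepresentable and hence accessible, so I would present that version.
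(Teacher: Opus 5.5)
Your proposal is correct and is essentially the argument behind the cited result. The paper gives no proof beyond citing Lurie, Higher Algebra, Proposition A.1.6, and that proof is your direct route: $U \mapsto \mathrm{Map}_{\mathcal{X}}(U,\mathcal{X}^*(-))$ sends colimits in $\mathcal{X}$ to limits of functors, and corepresentable functors are closed under small limits because $\lim_k \mathrm{Map}(A_k,-) \simeq \mathrm{Map}(\mathrm{colim}_k A_k,-)$.

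Your first route, using that the local shape is a left adjoint and that $y\colon \mathrm{An} \to \mathrm{Pro}(\mathrm{An})$ preserves colimits, is the same argument transported through $\mathrm{Pro}(\mathrm{An}) \simeq \mathrm{Fun}^{\mathrm{lex,acc}}(\mathrm{An},\mathrm{An})^{\mathrm{op}}$. Your use of the full faithfulness of the Yoneda embedding to lift the diagram correctly handles the one coherence point.
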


\begin{proposition}[{\cite[Corollary A.1.7]{Lurie2017HA}}]
Let $\mathcal{X}$ be a topos. Then $\mathcal{X}$ is locally contractible if and only if there exists an effective epimorphism
\[
\begin{tikzcd}
	{\coprod_{i \in I} U_i} & 1
	\arrow[two heads, from=1-1, to=1-2]
\end{tikzcd}
\]
such that the topoi $\mathcal{X}_{/U_i}$ are locally contractible for all $i \in I$.
\end{proposition}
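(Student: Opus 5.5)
The forward direction is immediate: if $\mathcal{X}$ is locally contractible, take the trivial effective epimorphism $1 \twoheadrightarrow 1$ (a one-element index set with $U = 1$). Then $\mathcal{X}_{/1} \simeq \mathcal{X}$ is locally contractible by hypothesis. The content therefore lies in the converse.

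For the converse, suppose $\coprod_{i} U_i \twoheadrightarrow 1$ is an effective epimorphism with every $\mathcal{X}_{/U_i}$ locally contractible. By the lemma characterizing local contractibility through constant shape, it suffices to show that every object $V \in \mathcal{X}$ has constant shape, i.e.\ $\Pi^\mathcal{X}_\infty(V) \in \mathrm{An}$. Write $U = \coprod_i U_i$ and form the \v{C}ech nerve $U^{\times \bullet+1}$. Pulling back the effective epimorphism along $V \to 1$ and using that colimits in a topos are universal, we obtain
\[ V \simeq \colim_{[n]\in\Delta^{\mathrm{op}}} V \times U^{\times n+1}. \]
Each term decomposes as a coproduct of objects $V \times U_{i_0}\times\cdots\times U_{i_n}$. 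By the lemma that the full subcategory $\mathcal{X}'$ of constant-shape objects is closed under colimits, it therefore suffices to show that each object $W = V \times U_{i_0} \times \cdots \times U_{i_n}$ has constant shape in $\mathcal{X}$.

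Such a $W$ admits a map $W \to U_{i_0}$ (projection to the first factor), so $W$ underlies an object $\widetilde W$ of $\mathcal{X}_{/U_{i_0}}$ with $(U_{i_0})_!\widetilde W = W$. The key compatibility is
\[ \Pi^\mathcal{X}_\infty(W) \simeq \Pi^{\mathcal{X}_{/U_{i_0}}}_\infty(\widetilde W), \]
which I would verify from the mapping-space formula. We have
\[ \mathrm{Map}_{\mathcal{X}}(W, \mathcal{X}^*A) \simeq \mathrm{Map}_{\mathcal{X}_{/U_{i_0}}}(\widetilde W, U_{i_0}^*\mathcal{X}^*A), \]
using the adjunction $(U_{i_0})_! \dashv U_{i_0}^*$. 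Moreover, $U_{i_0}^*\mathcal{X}^*$ is the constant-object functor of $\mathcal{X}_{/U_{i_0}}$, since pullback functors compose. This is the same composition of pro-left adjoints the paper used to identify $\Pi_\infty(\mathcal{X}_{/U}) \simeq \Pi^\mathcal{X}_\infty(U)$. Since $\mathcal{X}_{/U_{i_0}}$ is locally contractible, the right-hand side $\Pi^{\mathcal{X}_{/U_{i_0}}}_\infty(\widetilde W)$ is constant, and hence so is $\Pi^\mathcal{X}_\infty(W)$.

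The main point needing care is this identification of local shapes along the étale morphism, in particular that it holds for an arbitrary object over $U_{i_0}$ and not only for the terminal one. The adjunction argument above handles it formally. The remaining ingredients are universality of colimits and closure of $\mathcal{X}'$ under colimits, both available. Note that no hypothesis on the higher intersections $U_{i_0}\times\cdots\times U_{i_n}$ is needed beyond their mapping to some $U_{i_0}$, which is why the statement only requires the individual $\mathcal{X}_{/U_i}$ to be locally contractible.
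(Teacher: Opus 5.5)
Your proof is correct and follows essentially the argument behind the cited result \cite[Corollary A.1.7]{Lurie2017HA}, which the paper quotes without proof: resolve an arbitrary $V$ by the \v{C}ech nerve of the pulled-back cover, note that every term is a coproduct of objects lying over some $U_i$ and hence of constant shape, and conclude by closure of constant-shape objects under colimits. Your mapping-space check that $\Pi^{\mathcal{X}}_\infty(W) \simeq \Pi^{\mathcal{X}_{/U_{i_0}}}_\infty(\widetilde W)$ is the local-shape form of Lurie's observation $\mathcal{X}_{/W} \simeq (\mathcal{X}_{/U_{i_0}})_{/\widetilde W}$.
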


\begin{example}
Let $L$ be a locale. We see from the above proposition that if $(U_i)_{i \in I}$ is an open covering of $L$ such that each $\mathrm{Sh}(U_i)$ is a locally contractible topos, then $\mathrm{Sh}(L)$ is locally contractible as well. Therefore, local contractibility of certain classes of locales, for example manifolds, which are locally discs in $\mathbb{R}^n$, or CW complexes, can be shown by reducing to local contractibility for a basic class of spaces.
\end{example}

We still have not fully justified the terminology \emph{locally contractible}. To do so, consider the following useful theorem.

\begin{theorem}
Let $\mathcal{X}$ be a topos and assume there exists a collection of objects $\{ U_i \in \mathcal{X} \}_{ i \in I }$ generating $\mathcal{X}$ under colimits, such that $\mathcal{X}_{/U_i}$ is a contractible topos for all $i \in I$. Then $\mathcal{X}$ is a locally contractible topos.
\end{theorem}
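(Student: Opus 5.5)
The plan is to show that every object of $\mathcal{X}$ has constant shape, and then apply the lemma that characterizes local contractibility by all objects $U \in \mathcal{X}$ having constant shape.

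First, each generator $U_i$ has constant shape. We established above that $\Pi_\infty^\mathcal{X}(U_i) \simeq \Pi_\infty(\mathcal{X}_{/U_i})$, by composing the pro-left adjoints of $\mathcal{X}^*$ and $U_i^*$. Since $\mathcal{X}_{/U_i}$ is a contractible topos, the proposition characterizing contractible topoi gives $\Pi_\infty(\mathcal{X}_{/U_i}) \simeq \mathrm{pt}$. In particular $\Pi_\infty^\mathcal{X}(U_i) \simeq \mathrm{pt}$, which is a constant pro-object, so each $U_i$ lies in the full subcategory $\mathcal{X}' \subseteq \mathcal{X}$ of objects of constant shape.

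Second, we pass from the generators to all objects. By \cite[Proposition A.1.6]{Lurie2017HA}, the subcategory $\mathcal{X}'$ is closed under colimits in $\mathcal{X}$. The hypothesis says the $U_i$ generate $\mathcal{X}$ under colimits, meaning the smallest full subcategory containing all $U_i$ and closed under small colimits is all of $\mathcal{X}$. Since $\mathcal{X}'$ is such a subcategory, we get $\mathcal{X}' = \mathcal{X}$. The lemma then shows that $\mathcal{X}$ is locally contractible.

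The main obstacle is only the interpretation of ``generating under colimits''. If it instead means that every object is a colimit of a diagram of the $U_i$, the argument is unchanged. If it means something weaker, such as the $U_i$ generating $\mathcal{X}$ as a localization of presheaves on them, I would reduce to the closure statement as follows. In a topos every object $X$ receives an effective epimorphism $\coprod_{j} U_{i_j} \twoheadrightarrow X$ from a coproduct of generators, and $X$ is the colimit of the Čech nerve of this map. The terms of the Čech nerve are fibre products of generators over $X$, so they are not themselves generators, and closure under colimits alone would not handle them. To deal with this, I would use the étale equivalence $(\mathcal{X}_{/X})_{/V} \simeq \mathcal{X}_{/V}$ together with \cite[Corollary A.1.7]{Lurie2017HA}. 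Concretely, each $\mathcal{X}_{/U_i}$ is contractible, so $\mathcal{X}_{/U_i} \to \mathrm{An}$ factors through $\mathcal{X}_{/U_i}$ with $\mathcal{X}_{/U_i}^*$ fully faithful. Hence $\mathcal{X}_{/U_i}$ has constant, trivial shape, and the same holds locally over each $U_i$. The effective epimorphism $\coprod_i U_i \twoheadrightarrow 1$ then places us in the situation of Corollary A.1.7, provided the slices $\mathcal{X}_{/U_i}$ are themselves locally contractible.
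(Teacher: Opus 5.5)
Your first two paragraphs give a complete and correct proof, but by a different route than the paper.

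The paper does not use the colimit-closure lemma \cite[Proposition A.1.6]{Lurie2017HA}. Instead it checks directly that $\mathcal{X}^*$ preserves limits. Generation under colimits makes the functors $\mathrm{Map}_\mathcal{X}(U_i,-)$ jointly conservative. Contractibility of $\mathcal{X}_{/U_i}$ gives $\mathrm{Map}_\mathcal{X}(U_i,\mathcal{X}^*(-)) \simeq (U_i)_*U_i^* \simeq \mathrm{id}_{\mathrm{An}}$. So applying $\mathrm{Map}_\mathcal{X}(U_i,-)$ turns both $\mathcal{X}^*(\lim_c A_c)$ and $\lim_c \mathcal{X}^*(A_c)$ into $\lim_c A_c$, and the comparison map is an equivalence.

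You work on the pro-left-adjoint side instead. From $\Pi^\mathcal{X}_\infty(U_i)\simeq\Pi_\infty(\mathcal{X}_{/U_i})\simeq\mathrm{pt}$, the generators lie in the colimit-closed subcategory of constant-shape objects, which must then be all of $\mathcal{X}$. Both arguments rest on the same mechanism: $U\mapsto\mathrm{Map}_\mathcal{X}(U,\mathcal{X}^*(-))$ turns colimits into pointwise limits, and limits of corepresentable functors $\mathrm{An}\to\mathrm{An}$ are corepresentable. Your version is shorter and makes it transparent that the $U_i$ need only have constant, not trivial, shape. The paper's version avoids the cited black box and shows the limit-preservation of $\mathcal{X}^*$ by an explicit computation.

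You should drop the hedging third paragraph. Presenting $\mathcal{X}$ as a localization of presheaves on the $U_i$ is not a weaker hypothesis, since it exhibits every object as a colimit of $U_i$'s. Moreover, the fallback you sketch only concludes under the extra assumption that the slices $\mathcal{X}_{/U_i}$ are locally contractible. Contractibility does not give this: sheaves on the cone over the Cantor set form a contractible topos that is not locally contractible.
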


\begin{proof}
We need to show that $\mathcal{X}^* : \mathrm{An} \rightarrow \mathcal{X}$ preserves limits. Let $A_\bullet : C \rightarrow \mathrm{An}$ be a diagram and consider the natural comparison map
\[ \mathcal{X}^*( \mathrm{lim}_{c \in C} A_c ) \rightarrow \mathrm{lim}_{c \in C} \mathcal{X}^*(A_c). \]
Since $\mathcal{X}$ is generated by the objects $U_i$ under colimits, it follows that the functors $\mathrm{Map}_\mathcal{X}(U_i, -)$  are jointly conservative, hence it suffices to check the above map is an equivalence after applying $\mathrm{Map}_\mathcal{X}(U_i, -)$ for all $i \in I$. Write $q_i : \mathcal{X}_{/U_i} \rightarrow \mathcal{X}$ for the associated étale geometric morphism. We see that
\[ \begin{array}{rcl}
\mathrm{Map}_\mathcal{X}(U_i, \mathcal{X}^*( \mathrm{lim}_{c \in C} A_c )) & \cong & \mathrm{Map}_{\mathcal{X}_{/U_i}}(1_{U_i}, q_i^* \mathcal{X}^*( \mathrm{lim}_{c \in C} A_c ))  \\ 
& \cong & \mathrm{Map}_{\mathcal{X}_{/U_i}}(1_{U_i}, U_i^*( \mathrm{lim}_{c \in C} A_c )) \\
& \cong & (U_i)_* U_i^*( \mathrm{lim}_{c \in C} A_c ) \\
& \cong & \mathrm{lim}_{c \in C} A_c , \\
\end{array} \]
where the last line follows since the shape of $U_i$ is $\ast$, equivalently $(U_i)_* U_i^* \cong \mathrm{id}_{\mathrm{An}}$. Similarly, we see on the other side that we have
\[ \begin{array}{rcl}
\mathrm{Map}_\mathcal{X}(U_i, \mathrm{lim}_{c \in C} \mathcal{X}^*(A_c) ) & \cong & \mathrm{lim}_{c \in C} \mathrm{Map}_\mathcal{X}(U_i, \mathcal{X}^*(A_c) ) \\
& \cong & \mathrm{lim}_{c \in C} \mathrm{Map}_{\mathcal{X}_{/U_i}}(1_{U_i}, q_i^*\mathcal{X}^*(A_c) ) \\
& \cong & \mathrm{lim}_{c \in C} (U_i)_* U_i^*(A_c)  \\
& \cong & \mathrm{lim}_{c \in C} A_c .
\end{array} \]
\end{proof}

\begin{corollary}
Let $(C,\tau)$ be a Grothendieck topology on an $\infty$-category $C$. Suppose that for each $c \in C$ the topos $\mathrm{Sh}(C,\tau)_{/j(c)}$ is contractible. Then
\[ \mathrm{Sh}(C,\tau) \]
is a locally contractible topos.
\end{corollary}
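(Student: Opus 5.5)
The corollary should follow directly from the preceding theorem once we know that the objects $j(c)$, for $c \in C$, generate $\mathrm{Sh}(C,\tau)$ under colimits. Here $j : C \rightarrow \mathrm{Sh}(C,\tau)$ denotes the composite of the Yoneda embedding $y : C \hookrightarrow \mathrm{PSh}(C)$ with sheafification $L : \mathrm{PSh}(C) \rightarrow \mathrm{Sh}(C,\tau)$. We may take $C$ to be small, as is implicit in the notion of a Grothendieck topology defining a topos. The plan is to verify this generation property and then invoke the theorem with the family $\{ j(c) \}_{c \in C}$.

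For generation we argue as follows. Every presheaf $F \in \mathrm{PSh}(C)$ is canonically a colimit of representables, namely
\[ F \simeq \colim_{(c, y(c) \rightarrow F)} y(c), \]
indexed over the category of elements. A sheaf $F$ satisfies $F \simeq L(F)$, where we regard $F$ as a presheaf via the inclusion. Since $L$ is a left adjoint, it preserves colimits, so
\[ F \simeq L(F) \simeq \colim L(y(c)) = \colim j(c), \]
with the colimit computed in $\mathrm{Sh}(C,\tau)$. Hence every object of $\mathrm{Sh}(C,\tau)$ is a colimit of objects of the form $j(c)$.

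The hypothesis of the corollary says precisely that each $\mathrm{Sh}(C,\tau)_{/j(c)}$ is a contractible topos. The theorem therefore applies and shows that $\mathrm{Sh}(C,\tau)$ is locally contractible.

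The only point needing care is the meaning of ``generating under colimits'' in the proof of the theorem. There it is used to conclude that the functors $\mathrm{Map}(U_i,-)$ are jointly conservative. With our explicit colimit presentation this is immediate: given a map $g : X \rightarrow Y$ such that $\mathrm{Map}(j(c), g)$ is an equivalence for all $c$, the map $\mathrm{Map}(F, g)$ is an equivalence for every $F$, because $\mathrm{Map}(-, X)$ and $\mathrm{Map}(-, Y)$ send colimits to limits. Taking $F = Y$ and then $F = X$ shows that $g$ is an equivalence by the Yoneda lemma. So no obstacle is expected beyond this routine check.
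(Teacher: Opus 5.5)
Your proposal is correct and is essentially the paper's argument. The paper states the corollary without a written proof, as an immediate consequence of the preceding theorem applied to the family $\{j(c)\}_{c \in C}$, which generates $\mathrm{Sh}(C,\tau)$ under colimits. Your sheafification argument establishes that generation, and your check of joint conservativity makes explicit the one property of generation that the theorem's proof actually uses.
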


\begin{corollary} \label{locallycontractibleislocallycontractible}
Let $X$ be a topological space or a locale. Assume $\mathcal{B} \subset \mathcal{O}(X)$ is a basis (closed under binary intersection!) such that $U$ has contractible shape for all $U \in \mathcal{B}$. Then $\mathrm{Sh}(X)$ is a locally contractible topos.
\end{corollary}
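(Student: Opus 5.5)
This follows from the preceding theorem, applied with the generating family $\{ y_U \}_{U \in \mathcal{B}}$ in $\mathrm{Sh}(X)$. Two things need checking: that these objects generate $\mathrm{Sh}(X)$ under colimits, and that each overtopos $\mathrm{Sh}(X)_{/y_U}$ is contractible.

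For contractibility, note that $y_U$ is an open of $\mathrm{Sh}(X)$, i.e.\ a subobject of $1$. The étale topos over it is identified with $\mathrm{Sh}(U)$, the sheaves on the open sublocale $U$:
\[ \mathrm{Sh}(X)_{/y_U} \simeq \mathrm{Sh}(U). \]
This is the standard fact that slicing a localic topos over an open gives the sheaf topos of that open. If one wants to avoid quoting it, it follows from the comparison $\Pi_\infty(\mathcal{X}_{/U}) \simeq \Pi^\mathcal{X}_\infty(U)$ established earlier, together with the interpretation of $\Pi_\infty(U)$ as $\Pi_\infty(\mathrm{Sh}(U))$. By hypothesis $\Pi_\infty(\mathrm{Sh}(U)) \simeq \mathrm{pt}$. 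By the proposition characterizing contractible topoi as those with trivial shape, $\mathrm{Sh}(X)_{/y_U}$ is then a contractible topos.

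For generation, note that $\mathrm{Sh}(X)$ is equivalent to sheaves on the site $\mathcal{B}$ with the induced covering topology. This is the comparison lemma for a basis, and this is where closure of $\mathcal{B}$ under binary intersection is used: it makes $\mathcal{B}$ a site with fiber products. Alternatively, argue directly. Every $F \in \mathrm{Sh}(X)$ is a colimit of representables $y_V$ with $V \in \mathcal{O}(X)$, since sheafification preserves colimits and every presheaf is a colimit of representables. Each $V$ is covered by basic opens $U_i \in \mathcal{B}$, so $y_V$ is the colimit of the \v{C}ech nerve of $\coprod y_{U_i} \twoheadrightarrow y_V$. The terms of that nerve are coproducts of $y_{U_{i_0} \cap \dots \cap U_{i_n}}$. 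These intersections lie in $\mathcal{B}$ by closure under intersections, and the empty intersection gives the initial object, an empty colimit. Hence the $y_U$, $U \in \mathcal{B}$, generate under colimits.

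Applying the preceding theorem then shows that $\mathrm{Sh}(X)$ is locally contractible. I expect the main obstacle to be the identification $\mathrm{Sh}(X)_{/y_U} \simeq \mathrm{Sh}(U)$ in the $\infty$-setting for arbitrary locales. I would cite it from Lurie's treatment of open subtopoi, HTT §6.3.5 together with §7.3.2. The only other delicate point is the generation argument, and it is routine once binary intersections stay in $\mathcal{B}$.
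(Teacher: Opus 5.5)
Your proposal is correct and follows the paper's route: the paper's proof just invokes the comparison lemma $\mathrm{Sh}(X)\simeq\mathrm{Sh}(\mathcal{B},\tau)$ and then the preceding site-theoretic corollary (an instance of the generation theorem), tacitly using the identification $\mathrm{Sh}(X)_{/y_U}\simeq\mathrm{Sh}(U)$ that you make explicit. One small point: your suggested way of avoiding that identification via $\Pi_\infty(\mathcal{X}_{/U})\simeq\Pi^{\mathcal{X}}_\infty(U)$ is circular, because matching $\Pi^{\mathrm{Sh}(X)}_\infty(y_U)$ with $\Pi_\infty(\mathrm{Sh}(U))$ is exactly the content of that identification, so just cite it (and apply the hypothesis only to nonempty $U\in\mathcal{B}$, since $\mathrm{Sh}(\emptyset)$ has empty rather than contractible shape; your remark that $y_\emptyset$ is an empty colimit already makes this harmless for generation).
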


\begin{proof}
By the comparison lemma \cite[Lemma C.3]{Hoyois_2014}, for a basis $\mathcal{B}$ of $X$ one has
\[ \mathrm{Sh}(X) \simeq \mathrm{Sh}(\mathcal{B},\tau), \]
where $\tau$ is the induced Grothendieck topology on $\mathcal{B}$.
\end{proof}

Any locally contractible topos admits a canonical geometric morphism to an étale topos over $\mathrm{An}$, as laid out in \cite[Appendix A.1, p.\ 1405--1407]{Lurie2017HA}. Let us give a quick overview. If $\mathcal{X}$ is locally contractible, then by formal nonsense the functor
\[
\Pi^\mathcal{X}_\infty : \mathcal{X} \rightarrow \mathrm{An}
\]
lifts to a functor
\[
\psi_! : \mathcal{X} \rightarrow \mathrm{An}_{/ \Pi^\mathcal{X}_\infty(1)} = \mathrm{An}_{/ \Pi_\infty(\mathcal{X})}.
\]
This functor admits, again purely formally, a right adjoint
\[
\psi^* : \mathrm{An}_{/ \Pi_\infty(\mathcal{X})} \rightarrow \mathcal{X}
\]
which sends a map $Z \rightarrow \Pi_\infty(\mathcal{X})$ of anima to the pullback
\[
\begin{tikzcd}
	{\psi^*(Z)} & {\mathcal{X}^*(Z)} \\
	{1_\mathcal{X}} & {\mathcal{X}^*( \Pi_\infty(\mathcal{X}) )}
	\arrow[from=1-1, to=1-2]
	\arrow[from=1-1, to=2-1]
	\arrow["\lrcorner"{anchor=center, pos=0.125}, draw=none, from=1-1, to=2-2]
	\arrow[from=1-2, to=2-2]
	\arrow[from=2-1, to=2-2]
\end{tikzcd}
\]
Since colimits in $\mathcal{X}$ satisfy descent, we see that $\psi^*$ preserves colimits, and thus admits a further right adjoint $\psi_*$. We therefore obtain the sequence of adjoints
\[
\begin{tikzcd}
	{\mathcal{X}} & {\mathrm{An}_{/ \Pi_\infty( \mathcal{X}) }}
	\arrow[""{name=0, anchor=center, inner sep=0}, "{\psi_*}"', curve={height=12pt}, from=1-1, to=1-2]
	\arrow[""{name=1, anchor=center, inner sep=0}, "{\psi_!}", curve={height=-12pt}, from=1-1, to=1-2]
	\arrow[""{name=2, anchor=center, inner sep=0}, "{\psi^*}"{description}, from=1-2, to=1-1]
	\arrow["\dashv"{anchor=center, rotate=-90}, draw=none, from=1, to=2]
	\arrow["\dashv"{anchor=center, rotate=-90}, draw=none, from=2, to=0]
\end{tikzcd}
\]
in other words, an essential geometric morphism
\[
\psi : \mathcal{X} \rightarrow \mathrm{An}_{/ \Pi_\infty( \mathcal{X}) }.
\]

\begin{theorem}[Galois theory of locally contractible topoi, {\cite[Theorem A.1.15 and Corollary A.1.16]{Lurie2017HA}}]  \label{galoistheory}
Let $\mathcal{X}$ be a locally contractible topos. Then the functor
\[
\Pi^\mathcal{X}_\infty : \mathcal{X} \rightarrow \mathrm{An}
\]
induces a canonical geometric morphism
\[
\psi : \mathcal{X} \rightarrow \mathrm{An}_{/ \Pi_\infty( \mathcal{X} ) }.
\]
The geometric morphism $\psi$ is essential and contractible, hence in particular induces a shape equivalence.
\end{theorem}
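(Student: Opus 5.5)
We show that $\psi^*$ is fully faithful and that $\psi_!$ exists; the shape equivalence then follows from Lemma \ref{contractibleshapeequivalence}. Essentiality is already built into the construction above, since $\psi_!$ is defined first and $\psi^*$ is its right adjoint. We should still check that $\psi^*$ is left exact, so that $\psi$ is a genuine geometric morphism. Write $P := \Pi_\infty(\mathcal{X})$. Then $\psi^*$ is the composite of $\mathcal{X}^*$ on overcategories,
\[ \mathrm{An}_{/P} \rightarrow \mathcal{X}_{/\mathcal{X}^*(P)}, \]
followed by pullback along the unit $1 \rightarrow \mathcal{X}^*(P)$. The first functor is left exact because $\mathcal{X}^*$ is, and pullback along a map preserves limits. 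Hence $\psi^*$ is left exact.

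For contractibility we must show that the unit $Z \rightarrow \psi_*\psi^*(Z)$ is an equivalence. Equivalently, by adjunction, the counit $\psi_!\psi^*(Z) \rightarrow Z$ is an equivalence for every $Z \rightarrow P$. Two observations reduce this to a special case.
\begin{itemize}
\item Every object of $\mathrm{An}_{/P}$ is a colimit of points $\ast \rightarrow P$.
\item $\psi_!$ preserves colimits, being a left adjoint, and $\psi^*$ preserves colimits by descent.
\end{itemize}
Because both sides of the counit commute with colimits in $Z$, it suffices to treat $Z = (\ast \xrightarrow{p} P)$.

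In that case $\psi^*(Z)$ is the fibre $F_p := 1 \times_{\mathcal{X}^*P} \mathcal{X}^*(\ast)$ of the unit $1 \rightarrow \mathcal{X}^*P$ over $p$. We need $\Pi^{\mathcal{X}}_\infty(F_p) \simeq \ast$, lying over $p$. Here Lurie's key trick applies. The functor $\Pi^{\mathcal{X}}_\infty = \mathcal{X}_!$ preserves colimits. By descent in $\mathcal{X}$, pulling back the unit along all points of $P$ exhibits
\[ 1 \simeq \colim_{p \in P} F_p. \]
This colimit is indexed by the anima $P$ and is obtained from $1 \simeq 1 \times_{\mathcal{X}^*P} \mathcal{X}^*P$ with $\mathcal{X}^*P \simeq \colim_{p \in P}\mathcal{X}^*(\ast)$. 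Applying $\mathcal{X}_!$ gives
\[ P = \mathcal{X}_!(1) \simeq \colim_{p\in P}\mathcal{X}_!(F_p), \]
compatibly with the maps to $P$. In $\mathrm{An}$, a family $E_p \rightarrow P$ indexed by $P$ whose colimit is $\mathrm{id}_P$ must consist of contractible fibres. To make this precise, we use the straightening equivalence $\mathrm{An}_{/P} \simeq \mathrm{Fun}(P,\mathrm{An})$, under which $\psi_!\psi^*$ of the universal family is identified with $\mathrm{id}_P$. Thus $\mathcal{X}_!(F_p) \simeq \ast$.

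The main obstacle is this last identification. One has to organise the family $p \mapsto (\psi_!\psi^*(p) \rightarrow P)$ coherently, as a functor $P \rightarrow \mathrm{An}_{/P}$, and check that its colimit is exactly $\mathrm{id}_P$ rather than merely some object equivalent to $P$. I would do this by writing $\psi_!\psi^*$ restricted to points as the composite
\[ P \rightarrow \mathrm{An}_{/P} \xrightarrow{\ \psi^*\ } \mathcal{X} \xrightarrow{\ \mathcal{X}_!\ } \mathrm{An}, \]
and using the fact that $\mathcal{X}_!(1) \rightarrow P$ is the identity by construction of $\psi_!$. Fibrewise, for each $q \in P$, the $\colim_p$ of the fibres over $q$ is then contractible. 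Combined with the transport structure of $\mathrm{Fun}(P,\mathrm{An})$, this forces each $\psi_!\psi^*(p)$ to be the point $p$.

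Once contractibility is established, Lemma \ref{contractibleshapeequivalence} shows that $\psi$ is a shape equivalence. Since $\Pi_\infty(\mathrm{An}_{/P}) \simeq P$ (the presheaf example with $|P| = P$), this is consistent with $\Pi_\infty(\mathcal{X}) \simeq P$.
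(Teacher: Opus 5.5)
Your reductions are sound, and they follow the standard route to the result the paper quotes from Lurie; the paper itself only constructs $\psi$ and defers the proof. The following steps are all correct:
\begin{itemize}
\item left exactness of $\psi^*$;
\item reducing full faithfulness to the counit $\psi_!\psi^*\Rightarrow\mathrm{id}$;
\item reducing to points $j(p)=(\ast\xrightarrow{p}P)$;
\item the decomposition $1\simeq\colim_{p\in P}F_p$.
\end{itemize}

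The gap is the last step. You flag it, but then close it with reasoning that does not work. Suppose you know that the family $p\mapsto\psi_!\psi^*(j(p))$ in $\mathrm{An}_{/P}$ has colimit exactly $\mathrm{id}_P$, and hence that for each $q$ the colimit over $p$ of the fibres over $q$ is a point. This does not force $\psi_!\psi^*(j(p))\simeq j(p)$. Take $P=\mathbb{Z}$ discrete, $E(0)=(\mathbb{Z}\xrightarrow{\mathrm{id}}\mathbb{Z})$, and $E(n)=(\emptyset\rightarrow\mathbb{Z})$ for $n\neq 0$. The coproduct is exactly $\mathrm{id}_{\mathbb{Z}}$, every fibrewise colimit is a point, and there is no transport to exploit. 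Yet $E(0)\not\simeq j(0)$. So the claim that ``a family $E_p\rightarrow P$ indexed by $P$ whose colimit is $\mathrm{id}_P$ must consist of contractible fibres'' is false if the maps to $P$ are the structure maps $\mathcal{X}_!(F_p)\rightarrow\mathcal{X}_!(1)$. Appealing to the transport structure of $\mathrm{Fun}(P,\mathrm{An})$ does not rescue it.

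What is missing is to use the counit itself as a comparison map of diagrams. Restricting the counit along $j\colon P\rightarrow\mathrm{An}_{/P}$ gives a natural transformation $\varepsilon_j\colon\psi_!\psi^*\circ j\Rightarrow j$ of $P$-indexed diagrams.
\begin{itemize}
\item Since $\psi_!\psi^*$ preserves colimits, the colimit of $\varepsilon_j$ is the counit at $\colim_p j(p)=\mathrm{id}_P$. This is a map from $\psi_!\psi^*(\mathrm{id}_P)=\psi_!(1)=(\mathcal{X}_!(1)\xrightarrow{\mathrm{id}}P)$ to $\mathrm{id}_P$. Both are terminal in $\mathrm{An}_{/P}$, so this map is an equivalence.
\item Because $P$ is an anima, every naturality square of $\varepsilon_j$ is cartesian.
\item By descent in the topos $\mathrm{An}_{/P}$, each component $\varepsilon_{j(p)}$ is therefore a base change of that colimit map, hence an equivalence.
\end{itemize}
Equivalently, in your straightening language, the map to $P$ that must be identified with $\mathrm{id}_P$ is the index map $\colim_p\mathcal{X}_!(F_p)\rightarrow\colim_p\ast=P$, not the structure map. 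The index map factors as $\mathcal{X}_!(1)\xrightarrow{\mathcal{X}_!(\eta)}\mathcal{X}_!\mathcal{X}^*\mathcal{X}_!(1)\xrightarrow{\epsilon}\mathcal{X}_!(1)$, which is the identity by a triangle identity. So $p\mapsto\mathcal{X}_!(F_p)$ unstraightens to the terminal object and is constant at $\ast$. With this step supplied, your argument is complete.
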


Objects in the essential image of $
\psi^* : \mathrm{An}_{/ \Pi_\infty( \mathcal{X} )} \rightarrow \mathcal{X} $
are called \emph{locally constant} objects.

\begin{example}
Let $M$ be an \emph{aspherical} manifold, that is, $M$ is connected and $M \simeq B \pi_1(M)$ is a model for the classifying space of its fundamental group. A particular example is the torus $T^2 = S^1 \times S^1$, with fundamental group $\pi_1(T^2) \simeq \mathbb{Z}^2$.

We then obtain that the full subcategory of $\mathrm{Sh}(M)$ spanned by the locally constant sheaves agrees with the category $\mathrm{Fun}(B \pi_1(M), \mathrm{An})$ of anima with a $\pi_1(M)$-action. This is the $\infty$-topos version of a more classical statement in covering theory that relates the category of covering spaces over $M$ with the category of $\pi_1(M)$-sets.
\end{example}

Let us give an identification of the global sections functor when applied to locally constant objects. Recall the equivalence
\[ \mathrm{An}_{/ \Pi_\infty( \mathcal{X} ) } \simeq \mathrm{Fun}(  \Pi_\infty( \mathcal{X} ) , \mathrm{An}). \]

\begin{proposition} \label{globalsectionslocallycontractible}
Let $\mathcal{X}$ be a locally contractible topos. Then there exists a commuting triangle
\[ \begin{tikzcd}
	{\mathcal{X}} & {\mathrm{An}.} \\
	{\mathrm{Fun}(\Pi_\infty(\mathcal{X}),\mathrm{An})}
	\arrow["{\mathcal{X}_*}", from=1-1, to=1-2]
	\arrow["{\psi^*}", from=2-1, to=1-1]
	\arrow["{\mathrm{lim}_{\Pi_\infty(\mathcal{X})}}"', from=2-1, to=1-2]
\end{tikzcd} \]
\end{proposition}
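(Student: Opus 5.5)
We want to show $\mathcal{X}_* \psi^* \simeq \lim_{\Pi_\infty(\mathcal{X})}$ as functors $\mathrm{Fun}(\Pi_\infty(\mathcal{X}),\mathrm{An}) \rightarrow \mathrm{An}$. The plan is to pass to left adjoints and check the resulting identification there, where everything is explicit. Write $P := \Pi_\infty(\mathcal{X})$. The functor $\mathcal{X}_* \psi^*$ has a left adjoint, namely $\psi_! \mathcal{X}^*$, using that $\psi^*$ has the left adjoint $\psi_!$ by Theorem \ref{galoistheory}. The functor $\lim_P$ is right adjoint to the constant-diagram functor $\mathrm{const}_P : \mathrm{An} \rightarrow \mathrm{Fun}(P,\mathrm{An})$. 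Under $\mathrm{Fun}(P,\mathrm{An}) \simeq \mathrm{An}_{/P}$, $\mathrm{const}_P(A)$ corresponds to the projection $A \times P \rightarrow P$. By uniqueness of adjoints, it therefore suffices to produce a natural equivalence
\[ \psi_!(\mathcal{X}^* A) \simeq (A \times P \rightarrow P) \]
in $\mathrm{An}_{/P}$, natural in $A \in \mathrm{An}$.

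To compute $\psi_!(\mathcal{X}^*A)$, we recall that $\psi_!$ is the lift of $\Pi^\mathcal{X}_\infty$ along $\mathrm{An}_{/P} \rightarrow \mathrm{An}$. It sends $U$ to $\Pi^\mathcal{X}_\infty(U) \rightarrow \Pi^\mathcal{X}_\infty(1) = P$, the map induced by $U \rightarrow 1$. Now $\mathcal{X}^*A \simeq \mathcal{X}^*A \times 1$. Since $\mathcal{X}$ is locally contractible, $\Pi^\mathcal{X}_\infty = \mathcal{X}_!$ is an honest left adjoint of $\mathcal{X}^*$. The key input is then a projection formula
\[ \mathcal{X}_!( \mathcal{X}^*A \times U ) \simeq A \times \mathcal{X}_!(U), \]
natural in $A$ and $U$. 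Applied with $U = 1$, it gives $\Pi^\mathcal{X}_\infty(\mathcal{X}^* A) \simeq A \times P$. Naturality in $U$, applied to $U \rightarrow 1$, shows that this equivalence is compatible with the structure maps, and the structure map becomes the projection.

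The projection formula is where the content lies, and I would prove it as follows. There is a canonical comparison map $\mathcal{X}_!(\mathcal{X}^*A \times U) \rightarrow \mathcal{X}_!\mathcal{X}^*A \times \mathcal{X}_! U \rightarrow A \times \mathcal{X}_!U$, built from the counit. Both sides preserve colimits in $A$: on the left because $\mathcal{X}^*$ is a left adjoint, products in a topos are universal, and $\mathcal{X}_!$ is a left adjoint; on the right because colimits in $\mathrm{An}$ are universal. Every anima is a colimit of the point, so it suffices to check $A = \ast$, where both sides are $\mathcal{X}_!(U)$. Alternatively, one can check the formula on mapping spaces: $\mathrm{Map}(\mathcal{X}_!(\mathcal{X}^*A \times U), B) \simeq \mathrm{Map}(U, \mathcal{X}^*B^{\mathcal{X}^*A})$, and then use $\mathcal{X}^*(B^A) \simeq (\mathcal{X}^*B)^{\mathcal{X}^*A}$. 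That identity holds because $\mathcal{X}^*$ preserves limits (local contractibility) and $B^A = \lim_A B$.

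The main obstacle I expect is bookkeeping rather than substance. One has to check that the equivalence of left adjoints is natural in $A$ and genuinely lives over $P$, so that uniqueness of adjoints yields a commuting triangle rather than just an objectwise equivalence. The colimit argument in $A$ takes care of this, since the comparison map is defined naturally before being shown to be an equivalence.
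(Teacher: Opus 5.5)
Your proposal is correct and is essentially the paper's argument. Both pass to the triangle of left adjoints and use that the colimit-preserving functor $\psi_!\mathcal{X}^*$ out of $\mathrm{An}$ is determined by its value at the point; your projection formula is only ever used at $U=1$, where its proof is exactly this observation. You are in fact more careful than the paper about the structure map to $\Pi_\infty(\mathcal{X})$, since the paper only identifies the composite down to $\mathrm{An}$. That compatibility, however, comes from the second component of your comparison map being $\mathcal{X}_!$ of the projection (or from naturality in $A$ along $A \rightarrow \ast$), not from naturality in $U$, which is vacuous at $U=1$.
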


\begin{proof}
It suffices to show that the triangle of left adjoints
\[\begin{tikzcd}
	{\mathcal{X}} & {\mathrm{An}} \\
	{\mathrm{An}_{/ \Pi_\infty(\mathcal{X})}}
	\arrow["{\psi_!}"', from=1-1, to=2-1]
	\arrow["{\mathcal{X}^*}"', from=1-2, to=1-1]
	\arrow["{- \times \Pi_\infty(\mathcal{X})}"{pos=0.3}, from=1-2, to=2-1]
\end{tikzcd}\]
commutes. To see this, note that the composition
\[ \mathrm{An} \xrightarrow{\mathcal{X}^*} \mathcal{X} \xrightarrow{ \psi_! } \mathrm{An}_{/\Pi_\infty(\mathcal{X})} \rightarrow \mathrm{An} \]
consists of three left adjoints. It is thus colimit preserving and hence determined by its value on the point $\ast$, which is given as
\[ \Pi_\infty^\mathcal{X}( \mathcal{X}^*(\ast) ) \cong \Pi_\infty^\mathcal{X}( 1 ) \cong \Pi_\infty( \mathcal{X} ). \] 
\end{proof}

\section{The shape adjunction}

The following formula describes geometric morphisms into étale topoi.
\begin{theorem}[{\cite[Proposition 4.34]{CnossenHigherToposTheory}}]
Let $\mathcal{X}$ be a topos, and $f : \mathcal{Y} \rightarrow \mathcal{X}$ a geometric morphism. For all $X \in \mathcal{X}$ it holds that
\[ \mathrm{Map}_{\mathrm{RTop}_{/\mathcal{X}}}( \mathcal{Y}, \mathcal{X}_{/X} ) \cong \mathrm{Map}_{\mathcal{Y}}( 1, f^*(X) ). \] 
\end{theorem}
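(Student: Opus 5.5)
The plan is to reduce the statement to the universal property of the étale geometric morphism $\mathcal{X}_{/X} \rightarrow \mathcal{X}$ as a ``classifying topos for global sections of $f^*(X)$''. A point of the left-hand side is a geometric morphism $g : \mathcal{Y} \rightarrow \mathcal{X}_{/X}$ together with an identification $X \circ g \simeq f$, where $X : \mathcal{X}_{/X} \rightarrow \mathcal{X}$ is the étale morphism. On the level of pullbacks, this is a left exact, colimit preserving functor $g^* : \mathcal{X}_{/X} \rightarrow \mathcal{Y}$ together with an equivalence $g^* \circ X^* \simeq f^*$, where $X^* = (-) \times X$. The key observation is that $\mathcal{X}_{/X} \simeq (\mathcal{X}_{/X})$ is the slice, so such functors should be determined by where they send the terminal object of $\mathcal{X}_{/X}$, i.e.\ $\mathrm{id}_X$, viewed as living over $g^* X^*(X) \simeq f^*(X)$.

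Concretely, I would carry out the following steps. First, construct the map from left to right: given $g$ over $\mathcal{X}$, apply $g^*$ to the diagonal section $\Delta : 1_{\mathcal{X}_{/X}} \rightarrow X^*(X)$ (i.e.\ $X \rightarrow X \times X$ over $X$). Left exactness gives $g^*(1) \simeq 1_\mathcal{Y}$, so we obtain a global section $1 \rightarrow g^* X^*(X) \simeq f^*(X)$. Second, construct the inverse: given $s : 1 \rightarrow f^*(X)$, note that $f^*$ induces a functor $f^*_{/X} : \mathcal{X}_{/X} \rightarrow \mathcal{Y}_{/f^*X}$, and pulling back along $s$ gives $s^* : \mathcal{Y}_{/f^* X} \rightarrow \mathcal{Y}_{/1} \simeq \mathcal{Y}$. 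Set $g^* := s^* \circ f^*_{/X}$. This is left exact (composite of left exact functors, since $f^*$ is lex and pullback is lex) and colimit preserving (since $f^*$ preserves colimits and colimits in $\mathcal{Y}$ are universal, so $s^*$ preserves colimits). Moreover $g^*(X^* Z) = s^*(f^*Z \times f^*X) \simeq f^* Z$, giving the required factorization over $\mathcal{X}$.

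Third, check the two constructions are mutually inverse. One direction is direct: starting from $s$, the diagonal of $X$ is sent under $f^*_{/X}$ to the diagonal of $f^*X$, whose pullback along $s$ recovers $s$. For the other direction, starting from $g$, I would compare $g^*$ with $s^* f^*_{/X}$ by writing any object $(E \rightarrow X)$ of $\mathcal{X}_{/X}$ as the pullback of $X^*(E) \rightarrow X^*(X)$ along the diagonal $\Delta : 1 \rightarrow X^*(X)$ in $\mathcal{X}_{/X}$; since $g^*$ is lex it preserves this pullback, and $g^* X^* \simeq f^*$ identifies the result with $s^* f^*_{/X}(E)$. This shows $g^*$ is determined by $s$ together with the given equivalence $g^*X^* \simeq f^*$.

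The main obstacle is making this an equivalence of mapping \emph{spaces} rather than a bijection on objects: the data of $g$ over $\mathcal{X}$ includes the homotopy $g^*X^* \simeq f^*$, and one must check the pullback formula above is natural and coherent. I would handle this by phrasing it as: restriction along $X^*$ identifies $\mathrm{Fun}^{\mathrm{lex},L}(\mathcal{X}_{/X}, \mathcal{Y})$, fibred over $\mathrm{Fun}^{\mathrm{lex},L}(\mathcal{X},\mathcal{Y})$, with the space of pairs $(f^*, s)$, using that $\mathcal{X}_{/X}$ is the free lex-cocontinuous extension of $\mathcal{X}$ by a global section of $X$; alternatively, I would simply cite the descent/universality of colimits to reduce to the representable case, as in \cite[Proposition 4.34]{CnossenHigherToposTheory}.
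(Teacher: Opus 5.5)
Your first two steps are the standard construction, and the objectwise checks in your third step are correct. For comparison, the paper gives no argument here and only cites \cite[Proposition 4.34]{CnossenHigherToposTheory}.

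The gap is the step you flag as the main obstacle and then do not carry out. Call your left-to-right map $\Phi$ and your inverse $\Psi$. A point of $\mathrm{Map}_{\mathrm{RTop}_{/\mathcal{X}}}(\mathcal{Y},\mathcal{X}_{/X})$ is a pair $(g,\alpha)$ with $\alpha\colon g^*X^*\simeq f^*$. Your third step produces, one pair at a time, an equivalence $g^*\simeq s^*f^*_{/X}$. This does not yet make $\Phi$ and $\Psi$ inverse equivalences of spaces, for two reasons. First, the identifications must be natural in the point to assemble into homotopies $\Psi\Phi\simeq\mathrm{id}$ and $\Phi\Psi\simeq\mathrm{id}$. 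Pointwise paths prove nothing: for the constant map $c\colon S^1\to S^1$, every $c(x)$ is joined to $x$ by a path, yet $c$ is not an equivalence. Second, for $\Psi\Phi(g,\alpha)\simeq(g,\alpha)$ to be a path in the fibre over $f$, your equivalence $g^*\simeq s^*f^*_{/X}$ must also match $\alpha$ with the canonical identification $s^*f^*_{/X}X^*\simeq f^*$, and you never check this. Both of your remedies are circular. That $\mathcal{X}_{/X}$ is ``the free lex-cocontinuous extension of $\mathcal{X}$ by a global section of $X$'' restates the theorem, and citing Cnossen cites the result being proved.

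The total-space framing in your remedy (a) is nonetheless the right one, and your own steps prove it once they are made natural. Set $B=\mathrm{Map}_{\mathrm{RTop}}(\mathcal{Y},\mathcal{X})$ and $T=\mathrm{Map}_{\mathrm{RTop}}(\mathcal{Y},\mathcal{X}_{/X})$, with $T\to B$ given by composition with $X\colon\mathcal{X}_{/X}\to\mathcal{X}$. Let $P\to B$ be the map of spaces classified by $k\mapsto\mathrm{Map}_{\mathcal{Y}}(1,k^*X)$. Define $R\colon T\to P$ by $h\mapsto(X\circ h,\,h^*(\Delta))$. It commutes with the projections to $B$ and restricts on the fibres over $f$ to your $\Phi$. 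So it suffices to show that $R$ is an equivalence of total spaces: it is then an equivalence over $B$, hence on every fibre, and compatibility with $\alpha$ comes for free.

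Take $L(k,t)$ to be the geometric morphism with pullback $t^*\circ(k^*)_{/X}$, which is legitimate by your second step. Then:
\begin{itemize}
\item $LR\simeq\mathrm{id}$: apply the lex functor $h^*$ to the natural equivalence $(E\xrightarrow{p}X)\simeq\Delta^*\bigl(X^*E\xrightarrow{X^*p}X^*X\bigr)$ of functors on $\mathcal{X}_{/X}$, where $\Delta^*$ is pullback along the diagonal section. This gives $h^*\simeq h^*(\Delta)^*\circ(h^*X^*)_{/X}$, naturally in $h$.
\item $RL\simeq\mathrm{id}$: use $t^*\bigl(k^*Z\times k^*X\to k^*X\bigr)\simeq k^*Z$, natural in $Z$ and $(k,t)$. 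Under it, $t^*\bigl((k^*)_{/X}(\Delta)\bigr)$ corresponds to $t$.
\end{itemize}
With these two natural equivalences in place, the argument is complete.
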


This formula is the key input for the following theorem.

\begin{theorem} \label{mapstoovertoposanima}
Let $\mathcal{X}$ be a topos, and let $A \in \mathrm{An}$. Then there exists a natural equivalence
\[
\mathrm{Map}_{\mathrm{RTop}}(\mathcal{X}, \mathrm{An}_{/A})
\simeq
\mathrm{Map}_{\mathrm{Pro}(\mathrm{An})}( \Pi_\infty(\mathcal{X}), A ).
\]
\end{theorem}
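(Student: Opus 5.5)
We specialize the preceding formula of \cite[Proposition 4.34]{CnossenHigherToposTheory} to the case where the base topos is $\mathrm{An}$, and then identify the resulting mapping space with a mapping space of pro-anima.

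First, since $\mathrm{An}$ is terminal in $\mathrm{RTop}$, the forgetful functor $\mathrm{RTop}_{/\mathrm{An}} \rightarrow \mathrm{RTop}$ is an equivalence. In particular
\[ \mathrm{Map}_{\mathrm{RTop}}(\mathcal{X}, \mathrm{An}_{/A}) \simeq \mathrm{Map}_{\mathrm{RTop}_{/\mathrm{An}}}(\mathcal{X}, \mathrm{An}_{/A}), \]
where $\mathcal{X}$ lies over $\mathrm{An}$ via its unique geometric morphism and $\mathrm{An}_{/A}$ via the étale morphism $A : \mathrm{An}_{/A} \rightarrow \mathrm{An}$. Applying the cited formula with $f = \mathcal{X} : \mathcal{X} \rightarrow \mathrm{An}$ and $X = A$, we get
\[ \mathrm{Map}_{\mathrm{RTop}}(\mathcal{X}, \mathrm{An}_{/A}) \simeq \mathrm{Map}_{\mathcal{X}}(1, \mathcal{X}^*(A)) \simeq \mathcal{X}_*\mathcal{X}^*(A). \]

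Second, we identify the right-hand side. Under the equivalence $\mathrm{Pro}(\mathrm{An}) \simeq \mathrm{Fun}^{\mathrm{lex,acc}}(\mathrm{An},\mathrm{An})^{\op}$, a constant pro-object $A$ corresponds to the corepresentable functor $\mathrm{Map}(A,-)$. The shape $\Pi_\infty(\mathcal{X})$ corresponds to $\mathcal{X}_*\mathcal{X}^*$. The Yoneda lemma then gives
\[ \mathrm{Map}_{\mathrm{Pro}(\mathrm{An})}(\Pi_\infty(\mathcal{X}), A) \simeq \mathrm{Nat}(\mathrm{Map}(A,-), \mathcal{X}_*\mathcal{X}^*) \simeq \mathcal{X}_*\mathcal{X}^*(A). \]
Equivalently, this is the statement $\Pi^\mathcal{X}_\infty(1) \simeq \Pi_\infty(\mathcal{X})$ combined with the adjunction for the pro-left adjoint of $\mathcal{X}^*$. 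Composing the two displayed equivalences yields the claimed equivalence for each pair $(\mathcal{X}, A)$.

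The main obstacle is naturality, in both $\mathcal{X} \in \mathrm{RTop}$ and $A \in \mathrm{An}$. Naturality in $A$ is straightforward: both sides are functors of $A$ via postcomposition, and the cited formula is natural in $X$. Naturality in $\mathcal{X}$ requires checking that, for a geometric morphism $g : \mathcal{Y} \rightarrow \mathcal{X}$, precomposition with $g$ on the left side corresponds to the map
\[ \mathcal{X}_*\mathcal{X}^*(A) \rightarrow \mathcal{X}_* g_* g^* \mathcal{X}^*(A) \simeq \mathcal{Y}_*\mathcal{Y}^*(A) \]
induced by the unit of $g^* \dashv g_*$, which is exactly how $\Pi_\infty(g)$ was defined. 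To verify this, I would unwind the equivalence of the cited formula: a geometric morphism $h : \mathcal{X} \rightarrow \mathrm{An}_{/A}$ over $\mathrm{An}$ is sent to the global section $1 = h^*(\mathrm{id}_A) \rightarrow h^* A^* A = \mathcal{X}^*(A)$, obtained by applying $h^*$ to the diagonal section of $A^*A$ in $\mathrm{An}_{/A}$. This construction is visibly compatible with pulling back along $g^*$, and pulling back a global section along $g^*$ is precisely the unit map. For a fully coherent statement, I would instead observe that both sides are functors $\mathrm{RTop}^{\op} \times \mathrm{An} \rightarrow \mathrm{An}$ built from the construction $(\mathcal{X},K) \mapsto \mathrm{Map}_\mathcal{X}(1_\mathcal{X},\mathcal{X}^*K)$ of the earlier remark on the functoriality of shape, and appeal to the naturality in $f$ of the cited formula.
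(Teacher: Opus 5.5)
Your proposal is correct and follows the same route as the paper: you specialize the cited formula to the terminal topos $\mathrm{An}$ to get $\mathcal{X}_*\mathcal{X}^*(A)$, then identify this with $\mathrm{Map}_{\mathrm{Pro}(\mathrm{An})}(\Pi_\infty(\mathcal{X}),A)$ via the Yoneda lemma. Your extra check of naturality in $\mathcal{X}$, showing that precomposition with $g$ corresponds to the unit-induced map defining $\Pi_\infty(g)$, goes beyond what the paper spells out and is a sound supplement.
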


\begin{proof}
Note that $\mathrm{An}$ is the terminal topos. Hence applying the formula from above, we obtain
\[
\mathrm{Map}_{\mathrm{RTop}}(\mathcal{X}, \mathrm{An}_{/A})
\simeq
\mathrm{Map}_\mathcal{X}( 1 , \mathcal{X}^*(A) )
\simeq
\mathcal{X}_* \mathcal{X}^*(A)
\simeq
\mathrm{Map}_{\mathrm{Pro}(\mathrm{An})}( \Pi_\infty(\mathcal{X}), A ),
\]
where in the last equivalence we used the Yoneda lemma.
\end{proof}

\begin{theorem}[The shape-local systems adjunction, see {\cite[p.\ 4]{Hoyois2018HigherGaloisTheory}}] \label{shapeadjunction}
There exists an adjunction
\[
\begin{tikzcd}
	{\mathrm{RTop}} & {\mathrm{Pro}(\mathrm{An})}
	\arrow[""{name=0, anchor=center, inner sep=0}, "{\Pi_\infty}", curve={height=-12pt}, from=1-1, to=1-2]
	\arrow[""{name=1, anchor=center, inner sep=0}, "\beta", curve={height=-12pt}, from=1-2, to=1-1]
	\arrow["\dashv"{anchor=center, rotate=-90}, draw=none, from=0, to=1]
\end{tikzcd}
\]
where $\beta : \mathrm{Pro}(\mathrm{An}) \rightarrow \mathrm{RTop}$ is the pro-extension of the functor
\[
\begin{array}{rcl}
\mathrm{An} & \rightarrow & \mathrm{RTop} \\
A & \mapsto & \mathrm{An}_{/A}.
\end{array}
\]
\end{theorem}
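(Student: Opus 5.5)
The plan is to define $\beta$ on a pro-anima $\text{``}\lim_{i\in I}\text{''} A_i$ (with $I$ cofiltered) by
\[ \beta(\text{``}\lim_{i\in I}\text{''} A_i) := \lim_{i \in I} \mathrm{An}_{/A_i}, \]
with the limit taken in $\mathrm{RTop}$. Cofiltered limits exist in $\mathrm{RTop}$, so this is the right Kan extension of $A \mapsto \mathrm{An}_{/A}$ along $\mathrm{An} \hookrightarrow \mathrm{Pro}(\mathrm{An})$. By the universal property of $\mathrm{Pro}(\mathrm{An})$, it is the unique functor that extends $A \mapsto \mathrm{An}_{/A}$ and preserves cofiltered limits. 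Functoriality of $A \mapsto \mathrm{An}_{/A}$ comes from $\mathrm{An}_{/A} \simeq \mathrm{Fun}(A,\mathrm{An})$, with a map $A \to B$ inducing the étale-type geometric morphism $\mathrm{An}_{/A} \to \mathrm{An}_{/B}$ given by postcomposition.

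The adjunction is then verified on mapping spaces, using Theorem \ref{mapstoovertoposanima} as the key input. For $\mathcal{X} \in \mathrm{RTop}$ and $\text{``}\lim_i\text{''}A_i$ we chain
\[ \mathrm{Map}_{\mathrm{RTop}}(\mathcal{X}, \lim_i \mathrm{An}_{/A_i}) \simeq \lim_i \mathrm{Map}_{\mathrm{RTop}}(\mathcal{X}, \mathrm{An}_{/A_i}) \simeq \lim_i \mathrm{Map}_{\mathrm{Pro}(\mathrm{An})}(\Pi_\infty(\mathcal{X}), A_i) \simeq \mathrm{Map}_{\mathrm{Pro}(\mathrm{An})}(\Pi_\infty(\mathcal{X}), \text{``}\lim_i\text{''}A_i). \]
The first step is the definition of a limit. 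The second is Theorem \ref{mapstoovertoposanima}, applied levelwise. The third holds because $\text{``}\lim_i\text{''}A_i$ is the limit of the $A_i$ in $\mathrm{Pro}(\mathrm{An})$. Since the composite equivalence is natural in $\mathcal{X}$ and in the pro-object, the standard pointwise criterion for adjoints shows that $\Pi_\infty$ is left adjoint to $\beta$. Preservation of colimits then follows formally.

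The main obstacle is the naturality of the equivalence in Theorem \ref{mapstoovertoposanima}. It must be natural in $A$ (with respect to maps $A \to B$ acting on $\mathrm{An}_{/A}$ by postcomposition) and in $\mathcal{X}$ (compatible with the functoriality of $\Pi_\infty$ built from units $\mathrm{id} \Rightarrow f_*f^*$). The equivalence factors through $\mathrm{Map}_{\mathcal{X}}(1,\mathcal{X}^*(A))$, so I would check naturality at that middle term, in two parts.

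\begin{itemize}
\item In $A$, the equivalence from \cite[Proposition 4.34]{CnossenHigherToposTheory} sends a geometric morphism $g:\mathcal{X} \to \mathrm{An}_{/A}$ to the global section of $g^*(A\to A)$. This is evidently natural under postcomposition.
\item In $\mathcal{X}$, a morphism $f:\mathcal{Y}\to\mathcal{X}$ acts on the middle term by $f^*$ on global sections. Under the Yoneda identification with $\mathcal{X}_*\mathcal{X}^*(A)$, this is exactly the unit map $\mathcal{X}_*\mathcal{X}^* \Rightarrow \mathcal{X}_*f_*f^*\mathcal{X}^*$ that defines $\Pi_\infty(f)$.
\end{itemize}

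A cleaner route to full coherence would be to present both sides as functors $\mathrm{RTop}^{\op}\times \mathrm{An} \to \mathrm{An}$. The right side is the functor $(\mathcal{X},K)\mapsto \mathrm{Map}_{\mathcal{X}}(1,\mathcal{X}^*K)$ constructed in the functoriality remark for $\Pi_\infty$. The left side is $(\mathcal{X},A)\mapsto\mathrm{Map}(\mathcal{X},\mathrm{An}_{/A})$, which corresponds under straightening to the universal property of étale topoi. One would then identify these two functors. I would accept the objectwise formula together with these naturality checks as sufficient.
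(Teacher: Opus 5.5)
Your proposal is correct and is essentially the paper's argument: the paper just says the adjunction follows immediately from Theorem~\ref{mapstoovertoposanima}, and your chain of equivalences (maps into a cofiltered limit in $\mathrm{RTop}$, that theorem applied levelwise, and $\text{``}\lim_i\text{''}A_i$ being the limit in $\mathrm{Pro}(\mathrm{An})$), together with your naturality checks, spells out that step. One small slip: $A\to A$ is the terminal object of $\mathrm{An}_{/A}$, so $g^*$ of it has only the trivial global section. The tautological section you want is $g^*$ applied to the diagonal section of $A^*(A)=(A\times A\to A)$, which lands in $g^*A^*(A)\simeq\mathcal{X}^*(A)$.
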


The proof follows immediately from Theorem \ref{mapstoovertoposanima}. We can also restrict this adjunction to obtain the following adjunction.

\begin{theorem}[The shape-local systems adjunction for locally contractible topoi]
The adjunction from Theorem \ref{shapeadjunction} restricts to an adjunction
\[
\begin{tikzcd}
	{\mathrm{RTop}^{\mathrm{lc}}} & {\mathrm{An}}
	\arrow[""{name=0, anchor=center, inner sep=0}, "{\Pi_\infty}", curve={height=-12pt}, from=1-1, to=1-2]
	\arrow[""{name=1, anchor=center, inner sep=0}, curve={height=-12pt}, hook, from=1-2, to=1-1]
	\arrow["\dashv"{anchor=center, rotate=-90}, draw=none, from=0, to=1]
\end{tikzcd}
\]
where $\mathrm{RTop}^{\mathrm{lc}}$ is the full subcategory of $\mathrm{RTop}$ spanned by locally contractible topoi. The right adjoint
\[
\mathrm{An} \rightarrow \mathrm{RTop}^{\mathrm{lc}},
\qquad
A \mapsto \mathrm{An}_{/A}
\]
is fully faithful.
\end{theorem}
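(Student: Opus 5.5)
The plan is to show two things: that the right adjoint $\beta$ of Theorem \ref{shapeadjunction}, restricted to constant pro-objects, lands in $\mathrm{RTop}^{\mathrm{lc}}$; and that $\Pi_\infty$ sends locally contractible topoi into $\mathrm{An} \subset \mathrm{Pro}(\mathrm{An})$. Since $y : \mathrm{An} \hookrightarrow \mathrm{Pro}(\mathrm{An})$ and $\mathrm{RTop}^{\mathrm{lc}} \hookrightarrow \mathrm{RTop}$ are full subcategory inclusions, an adjunction between full subcategories preserved by both functors restricts formally. So for $\mathcal{X}$ locally contractible and $A \in \mathrm{An}$ we get
\[
\mathrm{Map}_{\mathrm{RTop}}(\mathcal{X}, \mathrm{An}_{/A}) \simeq \mathrm{Map}_{\mathrm{Pro}(\mathrm{An})}(\Pi_\infty(\mathcal{X}), A) \simeq \mathrm{Map}_{\mathrm{An}}(\Pi_\infty(\mathcal{X}), A),
\]
using Theorem \ref{mapstoovertoposanima} and full faithfulness of $y$.

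For the first point, $\beta(A) = \mathrm{An}_{/A}$ for constant $A$, since $\beta$ is the pro-extension of $A \mapsto \mathrm{An}_{/A}$. We observed earlier that $\mathrm{An}_{/A} \simeq \mathrm{Fun}(A,\mathrm{An})$ is a presheaf topos, hence locally contractible. For the second point, we use the proposition characterising local contractibility: condition (5) states that $\mathcal{X}_*\mathcal{X}^*$ is corepresentable, and its corepresenting object is $\Pi_\infty(\mathcal{X}) \in \mathrm{An}$.

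It remains to show that $A \mapsto \mathrm{An}_{/A}$ is fully faithful. I would verify this by showing that the counit $\Pi_\infty(\mathrm{An}_{/A}) \to A$ is an equivalence. By the example computing the shape of presheaf topoi, $\Pi_\infty(\mathrm{PSh}(A)) \simeq |A| \simeq A$, because $A$ is already an $\infty$-groupoid and $\mathrm{An}_{/A}\simeq \mathrm{PSh}(A)$ (note $A \simeq A^{\mathrm{op}}$). The main obstacle is checking that this abstract equivalence really is the counit, not just some equivalence. To handle this, I would trace the identity of $\mathrm{An}_{/A}$ through Theorem \ref{mapstoovertoposanima}. The identity geometric morphism corresponds to the point of $\mathrm{Map}_{\mathrm{An}_{/A}}(1, \mathcal{X}^*A)$ given by the diagonal section $A \to A \times A$. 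Under the identification $\mathcal{X}_*\mathcal{X}^*(B) = \mathrm{Map}(A,B)$, this diagonal section is $\mathrm{id}_A$. Hence the counit, viewed as a natural transformation $\mathrm{Map}(A,-) \Rightarrow \mathrm{Map}(\Pi_\infty,-)$, is the Yoneda map, which is an equivalence.

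Equivalently, I could compute $\mathrm{Map}_{\mathrm{RTop}}(\mathrm{An}_{/A}, \mathrm{An}_{/B}) \simeq \mathrm{Map}(\Pi_\infty(\mathrm{An}_{/A}), B) \simeq \mathrm{Map}(A,B)$ and check compatibility with the functor on morphisms. This amounts to the same naturality check.
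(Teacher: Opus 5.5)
Your proposal is correct and follows the same route as the paper. You restrict the adjunction of Theorem \ref{shapeadjunction} using that locally contractible topoi have constant shape and that $\mathrm{An}_{/A}$ is locally contractible, and then deduce full faithfulness from $\Pi_\infty(\mathrm{An}_{/A}) \simeq A$, which is exactly what the paper does; your extra check that this equivalence is the counit (the diagonal section corresponds to $\mathrm{id}_A$) fills in a point the paper leaves implicit.
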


The claim about full faithfulness is straightforward since $\Pi_\infty( \mathrm{An}_{/A} ) \simeq A$, as we have shown earlier.

\begin{remark}
Unlike in the locally contractible case, the more general functor $\beta : \mathrm{Pro}(\mathrm{An}) \rightarrow \mathrm{RTop}$ is \emph{not} fully faithful; \cite[Remark 2.8]{Hoyois2018HigherGaloisTheory}. The central issue is that the shape of a topos is not just a pro-anima. Since for a topos $\mathcal{X}$ the shape is given as the endofunctor
\[
\mathcal{X}_* \mathcal{X}^* : \mathrm{An} \rightarrow \mathrm{An}
\]
which is induced from an adjunction between $\mathcal{X}$ and $\mathrm{An}$, we see that this endofunctor is canonically equipped with the structure of a monad. It is perhaps not straightforward to see that for shapes that are either constant or pro-compact, this monad structure is unique, which is why it will not enter the arguments in these cases. However, for general shape theory it would need to be taken into account.
\end{remark}

\subsection{Shape and connected components}

Recall from Example \ref{partitions} that there exist adjunctions
\[ \begin{tikzcd}
	{\mathrm{Top}} & {\mathrm{Loc}} & {\mathrm{Pro}(\mathrm{Set})}
	\arrow[""{name=0, anchor=center, inner sep=0}, "\Omega", curve={height=-12pt}, from=1-1, to=1-2]
	\arrow[""{name=1, anchor=center, inner sep=0}, "{\mathrm{pts}}", curve={height=-12pt}, from=1-2, to=1-1]
	\arrow[""{name=2, anchor=center, inner sep=0}, "{\pi_0}", curve={height=-12pt}, from=1-2, to=1-3]
	\arrow[""{name=3, anchor=center, inner sep=0}, curve={height=-12pt}, from=1-3, to=1-2]
	\arrow["\dashv"{anchor=center, rotate=-90}, draw=none, from=0, to=1]
	\arrow["\dashv"{anchor=center, rotate=-90}, draw=none, from=2, to=3]
\end{tikzcd} \]

Let us highlight another adjunction, namely
\[\begin{tikzcd}
	{\mathrm{An}} & {\mathrm{Set}}
	\arrow[""{name=0, anchor=center, inner sep=0}, "{\pi_0}", curve={height=-12pt}, from=1-1, to=1-2]
	\arrow[""{name=1, anchor=center, inner sep=0}, "{\mathrm{inc}}", curve={height=-12pt}, hook, from=1-2, to=1-1]
	\arrow["\dashv"{anchor=center, rotate=-90}, draw=none, from=0, to=1]
\end{tikzcd}\]
This induces, by virtue of $\mathrm{Pro}$ being a $2$-functor, the adjunction
\[ \begin{tikzcd}
	{\mathrm{Pro}(\mathrm{An})} & {\mathrm{Pro}(\mathrm{Set})}
	\arrow[""{name=0, anchor=center, inner sep=0}, "{\mathrm{Pro}(\pi_0)}", curve={height=-12pt}, from=1-1, to=1-2]
	\arrow[""{name=1, anchor=center, inner sep=0}, "{\mathrm{Pro}(\mathrm{inc})}", curve={height=-12pt}, hook, from=1-2, to=1-1]
	\arrow["\dashv"{anchor=center, rotate=-90}, draw=none, from=0, to=1]
\end{tikzcd} \]
With this out of the way, we can now explain the relation between shape and connected components.

\begin{theorem}
The square of left adjoints
\[\begin{tikzcd}
	{\mathrm{RTop}} & {\mathrm{Pro}(\mathrm{An})} \\
	{\mathrm{Loc}} & {\mathrm{Pro}(\mathrm{Set})}
	\arrow["{\Pi_\infty}", from=1-1, to=1-2]
	\arrow["{\mathrm{Loc}}"', from=1-1, to=2-1]
	\arrow["{\mathrm{Pro}(\pi_0)}", from=1-2, to=2-2]
	\arrow["{\pi_0}", from=2-1, to=2-2]
\end{tikzcd}\]
commutes via a natural isomorphism. Moreover, if $L$ is a locale, then
\[ \pi_0(L) \simeq \mathrm{Pro}(\pi_0)( \Pi_\infty(L) ). \]
\end{theorem}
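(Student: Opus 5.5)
Since all four functors in the square are left adjoints, I will compare their right adjoints and invoke uniqueness of adjoints. The right adjoint of $\Pi_\infty$ is $\beta$ from Theorem \ref{shapeadjunction}, and the right adjoint of $\mathrm{Pro}(\pi_0)$ is $\mathrm{Pro}(\mathrm{inc})$. The right adjoint of $\mathrm{Loc} = \mathrm{Open}$ is $\mathrm{Sh}$. The right adjoint of $\pi_0 : \mathrm{Loc} \to \mathrm{Pro}(\mathrm{Set})$ is the pro-extension of the discrete-locale functor $S \mapsto S^{\mathrm{disc}}$ (Example \ref{partitions}, read for locales). It therefore suffices to produce a natural equivalence
\[ \beta \circ \mathrm{Pro}(\mathrm{inc}) \simeq \mathrm{Sh} \circ \mathrm{Pro}(S \mapsto S^{\mathrm{disc}}) : \mathrm{Pro}(\mathrm{Set}) \to \mathrm{RTop}. \]

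The first step is to check this on constant objects. For a set $S$ we have $\beta(S) = \mathrm{An}_{/S} \simeq \mathrm{Fun}(S,\mathrm{An}) \simeq \prod_S \mathrm{An}$. On the other side, $\mathrm{Sh}(S^{\mathrm{disc}})$ is also $\prod_S \mathrm{An}$, because sheaves on a discrete locale are just families indexed by the points. This identification is natural in maps of sets $S \to T$: both sides send such a map to the geometric morphism whose pullback is restriction along $S \to T$.

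The second step is to pass to pro-objects. Both composites are pro-extensions, i.e.\ they send formal cofiltered limits to cofiltered limits in $\mathrm{RTop}$. For $\beta \circ \mathrm{Pro}(\mathrm{inc})$ this holds by definition. For $\mathrm{Sh} \circ \mathrm{Pro}(\mathrm{disc})$ it requires that $\mathrm{Sh}$ preserve cofiltered limits, which holds because $\mathrm{Sh} : \mathrm{Loc} \to \mathrm{RTop}$ is a right adjoint (to $\mathrm{Open}$). I will also need that the right adjoint of $\pi_0 : \mathrm{Loc} \to \mathrm{Pro}(\mathrm{Set})$ really is the pro-extension of $\mathrm{disc}$, i.e.\ that it sends $\text{``}\lim\text{''} S_i$ to $\lim S_i^{\mathrm{disc}}$ in $\mathrm{Loc}$. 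This follows from the pro-left adjoint formalism: $\mathrm{Map}(\pi_0 L, \text{``}\lim\text{''} S_i) = \lim \mathrm{Map}(L, S_i^{\mathrm{disc}})$. Given these two facts, the natural equivalence on constant objects extends uniquely to a natural equivalence on all of $\mathrm{Pro}(\mathrm{Set})$, and passing to left adjoints yields the commuting square.

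The statement $\pi_0(L) \simeq \mathrm{Pro}(\pi_0)(\Pi_\infty(L))$ is then the square evaluated at $\mathrm{Sh}(L)$, using $\mathrm{Open}(\mathrm{Sh}(L)) \simeq L$, which holds since $\mathrm{Sh}$ is fully faithful. The main obstacle is the naturality bookkeeping in the first step, namely making the equivalence $\mathrm{An}_{/S} \simeq \mathrm{Sh}(S^{\mathrm{disc}})$ coherent as functors $\mathrm{Set} \to \mathrm{RTop}$ rather than objectwise. I would handle this with the mapping-space characterization: the maps $\mathcal{X} \to \mathrm{An}_{/S}$ are $\mathrm{Map}(1, \mathcal{X}^*S)$ by the Cnossen formula. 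Likewise $\mathrm{Map}(\mathcal{X}, \mathrm{Sh}(S^{\mathrm{disc}})) = \mathrm{Map}_{\mathrm{Loc}}(\mathrm{Open}\,\mathcal{X}, S^{\mathrm{disc}})$, which is the set of partitions of $\mathcal{X}$ into opens indexed by $S$. Both are identified with maps $1 \to \mathcal{X}^*S$, since $\mathcal{X}^* S = \coprod_S 1$ is $0$-truncated, so a global section is exactly a clopen decomposition of $1$ indexed by $S$. The Yoneda lemma in $\mathrm{RTop}$ then supplies the required naturality.
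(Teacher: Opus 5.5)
Your proposal is correct and follows the paper's proof. In both, you pass to the square of right adjoints and use that both composites are cofiltered-limit-preserving extensions from $\mathrm{Set}$, which reduces the claim to $\mathrm{Sh}(S^{\mathrm{disc}}) \simeq \mathrm{An}_{/S}$; the statement for locales then follows from full faithfulness of $\mathrm{Sh}$. Your extra remarks fill in details the paper leaves implicit: that $\mathrm{Sh}$ preserves cofiltered limits because it is a right adjoint, and the Yoneda argument via $\mathrm{Map}_{\mathcal{X}}(1,\coprod_S 1)$ that makes the identification natural in $S$.
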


\begin{proof}
To show that the square of left adjoints commutes, it suffices to show that the corresponding square of right adjoints
\[ \begin{tikzcd}
	{\mathrm{RTop}} & {\mathrm{Pro}(\mathrm{An})} \\
	{\mathrm{Loc}} & {\mathrm{Pro}(\mathrm{Set})}
	\arrow[from=1-2, to=1-1]
	\arrow["{\mathrm{Sh}}", from=2-1, to=1-1]
	\arrow["{\mathrm{Pro}(\mathrm{inc})}"', from=2-2, to=1-2]
	\arrow[from=2-2, to=2-1]
\end{tikzcd} \]
commutes, which, since the two horizontal functors are obtained as pro-extensions, reduces to showing that the square
\[ \begin{tikzcd}
	{\mathrm{RTop}} & {\mathrm{An}} \\
	{\mathrm{Loc}} & {\mathrm{Set}}
	\arrow["{\mathrm{\acute{e}t}}"', from=1-2, to=1-1]
	\arrow["{\mathrm{Sh}}", from=2-1, to=1-1]
	\arrow["{\mathrm{inc}}"', from=2-2, to=1-2]
	\arrow["{\mathrm{disc}}"', from=2-2, to=2-1]
\end{tikzcd} \]
commutes. But this follows from the Basis Theorem, as the discrete topology on a set $S$ admits a basis given by the set of singletons $\{s\}$ together with the empty set $\emptyset$, hence we see
\[ \mathrm{Sh}(S^\mathrm{disc}) \simeq \mathrm{Fun}(S, \mathrm{An}) \simeq \mathrm{An}_{/S}. \]
The statement about $\pi_0$ of a locale now follows from the fully faithfulness of the sheaves functor $\mathrm{Loc} \rightarrow \mathrm{RTop}$.
\end{proof}

\begin{remark}
We remark here that without much additional effort one can analogously show the existence of adjunctions
\[ \begin{tikzcd}
	{\mathrm{RTop}_n} & {\mathrm{Pro}(\mathrm{An}_{\leq n})}
	\arrow[""{name=0, anchor=center, inner sep=0}, "{\Pi_{\leq n}}", curve={height=-12pt}, from=1-1, to=1-2]
	\arrow[""{name=1, anchor=center, inner sep=0}, curve={height=-12pt}, from=1-2, to=1-1]
	\arrow["\dashv"{anchor=center, rotate=-90}, draw=none, from=0, to=1]
\end{tikzcd} \]
where $\mathrm{RTop}_n$ is the $(n+1,1)$-category of $n$-topoi, and $\mathrm{An}_{\leq n}$ is the $\infty$-category of $n$-truncated spaces, see \cite[p.\ 5]{Hoyois2018HigherGaloisTheory}.
\end{remark}

\section{Shape and inverse limits}

While so far, the functor that assigns to a topos its shape appears like a fairly straightforward generalization of the functor that assigns to a (reasonable) topological space its homotopy type, the shape functor is much better behaved with respect to certain classes of limits. We will go into this here. This will also allow us to give comparison theorems between the topos-theoretic approach to shape and more classical approaches to the shape of compact Hausdorff spaces.

\subsection{Compact topoi, perfect and proper geometric morphisms}

\begin{definition}
Let $\mathcal{X}$ be a topos. We call $\mathcal{X}$ \emph{compact} if the terminal object $1 \in \mathcal{X}$ is a compact object.
\end{definition}

Equivalently, a topos $\mathcal{X}$ is compact iff the canonical functor 
\[\mathcal{X}_* = \mathrm{Map}_\mathcal{X}(1,-) : \mathcal{X} \rightarrow \mathrm{An} \] preserves filtered colimits.

Compactness of a topos has an immediate consequence for its shape. Recall that we have the full subcategory
\[ \mathrm{An}^\omega \hookrightarrow \mathrm{An} \]
spanned by compact anima, which gives a fully faithful embedding
\[ \mathrm{Pro}(\mathrm{An}^\omega) \hookrightarrow \mathrm{Pro}(\mathrm{An}). \]
We call objects in the essential image \emph{pro-compact} anima.

\begin{remark}
There is an equivalence of $\infty$-categories $\mathrm{Pro}(\mathrm{An}^\omega) \simeq \mathrm{Pro}(\mathrm{An}^\mathrm{fin})$ since all compact anima arise as retracts of finite anima.
\end{remark}

\begin{lemma}
A pro-anima $X$ is pro-compact iff the corresponding functor $F \in \mathrm{Fun}^{\mathrm{lex},\mathrm{acc}}( \mathrm{An}, \mathrm{An} )$ preserves filtered colimits.
\end{lemma}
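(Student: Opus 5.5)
One direction is immediate. If $X \simeq \text{``}\lim_{i \in I}\text{''} X_i$ with every $X_i \in \mathrm{An}^\omega$ and $I$ cofiltered, the corresponding functor is
\[ F = \mathrm{colim}_{i \in I} \mathrm{Map}(X_i, -). \]
Each $\mathrm{Map}(X_i,-)$ preserves filtered colimits because $X_i$ is compact. Filtered colimits commute with the colimit over $I^{\mathrm{op}}$, since colimits commute with colimits. Hence $F$ preserves filtered colimits.

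For the converse, suppose $F \in \mathrm{Fun}^{\mathrm{lex},\mathrm{acc}}(\mathrm{An},\mathrm{An})$ preserves filtered colimits. Since $\mathrm{An} \simeq \mathrm{Ind}(\mathrm{An}^\omega)$, the universal property of $\mathrm{Ind}$ recalled in the preliminaries gives an equivalence between filtered-colimit-preserving functors $\mathrm{An} \to \mathrm{An}$ and arbitrary functors $\mathrm{An}^\omega \to \mathrm{An}$. So $F$ is the left Kan extension of its restriction $F_0 := F|_{\mathrm{An}^\omega}$. The category $\mathrm{An}^\omega$ is small and closed under finite limits in $\mathrm{An}$: compact anima are retracts of finite ones, and finite limits of such are again compact (fibre products of finite anima are not finite in general but are compact? — this needs care; see below). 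Granting this, $F_0$ is left exact, so $F_0 \in \mathrm{Fun}^{\mathrm{lex}}(\mathrm{An}^\omega,\mathrm{An})$. By the equivalence $\mathrm{Fun}^{\mathrm{lex}}(C,\mathrm{An})^{\op} \simeq \mathrm{Pro}(C)$ for small $C$ with finite limits, stated at the start of Section 3, we get
\[ F_0 \simeq \mathrm{colim}_{i} \mathrm{Map}_{\mathrm{An}^\omega}(X_i,-) \]
for a cofiltered system $X_i \in \mathrm{An}^\omega$. Left Kan extending both sides, and using the first direction, $F \simeq \mathrm{colim}_i \mathrm{Map}(X_i,-)$. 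This exhibits the pro-anima corresponding to $F$ as $\text{``}\lim\text{''} X_i \in \mathrm{Pro}(\mathrm{An}^\omega)$.

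The main obstacle is exactly the finite-limit closure. In fact $\mathrm{An}^\omega$ is \emph{not} closed under pullbacks: the loop space of a finite complex need not be compact. So I would avoid that route and use the dual description instead: $F$ preserves finite limits, and left exactness of a filtered-colimit-preserving functor on $\mathrm{Ind}(\mathrm{An}^\omega)$ is equivalent to flatness of $F_0$. The concrete step is to show that the category of elements $\int F_0$, i.e. pairs $(K \in \mathrm{An}^\omega,\ x \in F(K))$, is cofiltered. Cofilteredness follows from left exactness of $F$ on $\mathrm{An}$:
\begin{itemize}
\item a finite diagram in $\int F_0$ has a limit cone in $\int F$ over the full $\mathrm{An}$, using that $F$ preserves finite limits;
\item that limit, together with its element, receives a map from some compact object with an element, using that $F$ preserves filtered colimits and every anima is a filtered colimit of compact ones.
\end{itemize}
Then
\[ F_0 \simeq \mathrm{colim}_{(K,x) \in (\int F_0)^{\op}} \mathrm{Map}(K,-) \]
by the co-Yoneda lemma. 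Left Kan extending gives the desired cofiltered presentation of $X$ by compact anima.
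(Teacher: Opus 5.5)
Your proof is correct and has the same skeleton as the paper's: via $\mathrm{An}\simeq\mathrm{Ind}(\mathrm{An}^\omega)$, a filtered-colimit-preserving $F$ is $\mathrm{Lan}_i$ of its restriction $F_0$, and pro-compact objects correspond to left Kan extensions of filtered colimits of corepresentables. The paper passes directly from the first fact to the second, whereas you actually verify that $F_0$ is a filtered colimit of corepresentables, by showing $\int F_0$ is cofiltered (take the limit cone in $\int F$, then lift the element to a compact stage). That is the right fix, since, as you note, $\mathrm{An}^\omega$ is not closed under pullbacks (e.g.\ $\Omega S^1\simeq\mathbb{Z}$); the abandoned first route should simply be deleted from your write-up.
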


\begin{proof}
Note that $\mathrm{Pro}(\mathrm{An}^\omega)^{\mathrm{op}}$ is by definition the closure under filtered colimits of the subcategory of co-representables in $\mathrm{Fun}(\mathrm{An}^\omega, \mathrm{An})$. The identification of $\mathrm{Pro}(\mathrm{An}^\omega)^{\mathrm{op}}$ as a full subcategory of $\mathrm{Pro}(\mathrm{An})^{\mathrm{op}}$ corresponds to left Kan extension of a functor $f : \mathrm{An}^\omega \rightarrow \mathrm{An}$ along the canonical inclusion $i : \mathrm{An}^\omega \hookrightarrow \mathrm{An}$,
\[\begin{tikzcd}
	{\mathrm{An}^\omega} & {\mathrm{An}.} \\
	{\mathrm{An}}
	\arrow["f", from=1-1, to=1-2]
	\arrow["i"', hook, from=1-1, to=2-1]
	\arrow["{\mathrm{Lan}_i(f)}"', from=2-1, to=1-2]
\end{tikzcd}\]
But since $\mathrm{Ind}(\mathrm{An}^\omega) \simeq \mathrm{An}$, by the universal property of the $\mathrm{Ind}$-extension, a functor $F : \mathrm{An} \rightarrow \mathrm{An}$ arises as a left Kan extension along $i$ iff it preserves filtered colimits. 
\end{proof}

From this it is not hard to see the following.

\begin{proposition}
Let $\mathcal{X}$ be a topos. If $\mathcal{X}$ is compact then $\Pi_\infty(\mathcal{X})$ is a pro-compact anima.
\end{proposition}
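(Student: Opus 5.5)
By the preceding lemma, it suffices to show that the functor $\mathcal{X}_* \mathcal{X}^* : \mathrm{An} \rightarrow \mathrm{An}$ corresponding to $\Pi_\infty(\mathcal{X})$ preserves filtered colimits. The plan is to factor this functor and check the two factors separately.

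First, $\mathcal{X}^* : \mathrm{An} \rightarrow \mathcal{X}$ is a left adjoint, so it preserves all colimits, and in particular filtered colimits. Second, by the reformulation of compactness stated right after the definition, $\mathcal{X}$ is compact exactly when $\mathcal{X}_* = \mathrm{Map}_\mathcal{X}(1,-)$ preserves filtered colimits. A composite of two filtered-colimit-preserving functors preserves filtered colimits, so $\mathcal{X}_*\mathcal{X}^*$ does too.

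It remains only to check that the filtered-colimit condition in the lemma refers to the same functor we have been working with. The shape is defined as the pro-object associated to $\mathcal{X}_*\mathcal{X}^*$ under the equivalence $\mathrm{Fun}^{\mathrm{lex,acc}}(\mathrm{An},\mathrm{An})^{\op} \simeq \mathrm{Pro}(\mathrm{An})$, which is exactly the correspondence the lemma uses, so the lemma applies directly.

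There is no substantial obstacle here. The one point that needs care is the equivalence between compactness of the terminal object and $\mathcal{X}_*$ preserving filtered colimits. This holds because $\mathcal{X}_*$ is corepresented by $1$ and filtered colimits in $\mathcal{X}$ are computed as usual, so the paper's "equivalently" remark covers it.
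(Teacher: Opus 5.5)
Your proposal is correct and matches the paper's proof exactly: reduce via the preceding lemma to showing that $\mathcal{X}_*\mathcal{X}^*$ preserves filtered colimits. Then note that $\mathcal{X}^*$ preserves all colimits as a left adjoint, while $\mathcal{X}_* = \mathrm{Map}_\mathcal{X}(1,-)$ preserves filtered colimits by compactness.
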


\begin{proof}
By the previous lemma it suffices to show that if $\mathcal{X}$ is compact then
\[ \mathcal{X}_* \mathcal{X}^* : \mathrm{An} \rightarrow \mathrm{An} \]
preserves filtered colimits. But this is clear: $\mathcal{X}^*$ is a left adjoint and thus preserves all colimits, and $\mathcal{X}_*$ preserves filtered colimits by compactness.
\end{proof}

Let us generalize from topoi to geometric morphisms.

\begin{definition}
A geometric morphism $f : \mathcal{X} \rightarrow \mathcal{Y}$ is called \emph{perfect} if the pushforward part $f_*$ preserves filtered colimits.
\end{definition}

Of course, a topos $\mathcal{X}$ is compact iff the unique geometric morphism $\mathcal{X} \rightarrow \mathrm{An}$ is perfect. The name perfect suggests a relation with perfect maps in topology, which we come to in section \ref{stablycompact}. A special case is that of a proper map. Recall that a map of topological spaces $f : X \rightarrow Y$ is called \emph{proper} if $f$ is closed and all fibers $f^{-1}(\{p\})$ are compact for $p \in Y$. If $Y$ is locally compact Hausdorff, this agrees with the condition that $f^{-1}(K)$ is compact whenever $K \subset Y$ is compact. Note that all continuous maps between compact Hausdorff spaces are automatically proper. Let us introduce the topos-theoretic version of this notion. 
	
\begin{definition} \label{definitionproper}
Let $f : \mathcal{X} \rightarrow \mathcal{Y}$ be a geometric morphism. We call $f$ \emph{proper} if for every diagram of pullbacks
\[ \begin{tikzcd}
	{\mathcal{X}''} & {\mathcal{X}'} & {\mathcal{X}} \\
	{\mathcal{Y}''} & {\mathcal{Y}'} & {\mathcal{Y}}
	\arrow[from=1-1, to=1-2]
	\arrow[from=1-1, to=2-1]
	\arrow["\lrcorner"{anchor=center, pos=0.125}, draw=none, from=1-1, to=2-2]
	\arrow[from=1-2, to=1-3]
	\arrow[from=1-2, to=2-2]
	\arrow["\lrcorner"{anchor=center, pos=0.125}, draw=none, from=1-2, to=2-3]
	\arrow["f", from=1-3, to=2-3]
	\arrow[from=2-1, to=2-2]
	\arrow[from=2-2, to=2-3]
\end{tikzcd} \]	
the left square satisfies base-change.
\end{definition}

Note that in particular, if $f : \mathcal{X} \rightarrow \mathcal{Y}$ is proper, and we have a pullback of topoi
\[\begin{tikzcd}
	{\mathcal{X}'} & {\mathcal{X}} \\
	{\mathcal{Y}'} & {\mathcal{Y}}
	\arrow["{g'}", from=1-1, to=1-2]
	\arrow["{f'}"', from=1-1, to=2-1]
	\arrow["\lrcorner"{anchor=center, pos=0.125}, draw=none, from=1-1, to=2-2]
	\arrow["f", from=1-2, to=2-2]
	\arrow["g"', from=2-1, to=2-2]
\end{tikzcd} \]
then the square
\[ \begin{tikzcd}
	{\mathcal{X}'} & {\mathcal{X}} \\
	{\mathcal{Y}'} & {\mathcal{Y}}
	\arrow["{(f')_*}"', from=1-1, to=2-1]
	\arrow["{(g')^*}"', from=1-2, to=1-1]
	\arrow["{f_*}", from=1-2, to=2-2]
	\arrow["{g^*}", from=2-2, to=2-1]
\end{tikzcd} \]
commutes. This is referred to as \emph{proper base-change}.

\begin{remark} \label{propercontinuousispropergeometric}
If $f : X \rightarrow Y$ is a proper map of topological spaces, then the corresponding geometric morphism
\[ f : \mathrm{Sh}(X) \rightarrow \mathrm{Sh}(Y) \]
is proper, see \cite[Theorem 7.3.1.16]{Lurie2009HTT}. We remark that this includes closed embeddings.
\end{remark}

\begin{proposition}[{\cite[Remark 7.3.1.5]{Lurie2009HTT}}]
Proper maps of topoi are perfect.
\end{proposition}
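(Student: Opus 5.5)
The plan is to use the proper base-change property for a well-chosen pullback square, and to recognise $f_*$ of a filtered colimit as the global sections of a pulled-back object. Let $f : \mathcal{X} \rightarrow \mathcal{Y}$ be proper and let $Y_\bullet : J \rightarrow \mathcal{Y}$ be a filtered diagram with colimit $Y$. We must show that the canonical map $\mathrm{colim}_j f_* X_j \rightarrow f_*(\mathrm{colim}_j X_j)$ is an equivalence for any filtered diagram $X_\bullet : J \rightarrow \mathcal{X}$. Since $\mathcal{Y}$ is generated under colimits by a set of objects $U$, it suffices to check this after applying $\mathrm{Map}_{\mathcal{Y}}(U,-)$ for each such $U$. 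Replacing $\mathcal{Y}$ by $\mathcal{Y}_{/U}$ and $\mathcal{X}$ by the pullback $\mathcal{X}_{/f^*U}$ (étale base change; properness is stable under this by definition, since the left square in a composite of pullbacks still satisfies base change), we reduce to showing that $\Gamma(\mathcal{Y}, f_*(-)) = \Gamma(\mathcal{X},-)$ commutes with filtered colimits of this form. Hence we reduce to understanding filtered colimits in global sections.

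The key construction is to encode a filtered diagram $J$ as a topos. Take the presheaf topos $\mathrm{PSh}(J^{\mathrm{op}}) = \mathrm{Fun}(J,\mathrm{An})$ and adjoin a ``point at infinity''. Concretely, let $J^\triangleright$ be $J$ with a cone point $\infty$, and let $\mathcal{T} = \mathrm{Fun}(J^\triangleright,\mathrm{An})$, or better a suitable sheaf topos on it, so that its two pieces behave as follows. An object of $\mathcal{X}$ varying over $J$ is then an object of $\mathcal{T} \otimes \mathcal{X} \simeq \mathcal{X} \times^{\mathrm{RTop}} \mathcal{T}$. The ``stalk at $\infty$'' geometric morphism $i : \mathrm{An} \rightarrow \mathcal{T}$ should have $i^*$ computing $\mathrm{colim}_J$; this is where filteredness enters, since it makes $\mathrm{colim}_J$ left exact, so that $i$ is a genuine point. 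Form the pullback square of $f \times \mathrm{id}_{\mathcal{T}} : \mathcal{X}\otimes\mathcal{T} \rightarrow \mathcal{Y}\otimes\mathcal{T}$ along $\mathrm{id}\otimes i : \mathcal{Y} \rightarrow \mathcal{Y}\otimes\mathcal{T}$. This is part of a two-square pullback diagram over $f$, since $\mathcal{Y}\otimes\mathcal{T} \rightarrow \mathcal{Y}$ is itself a base change of $\mathcal{T}\rightarrow\mathrm{An}$ along $\mathcal{Y}\to\mathrm{An}$. Properness of $f$ therefore gives the base-change equivalence $(\mathrm{id}\otimes i)^*(f\times\mathrm{id})_* \simeq f_*(\mathrm{id}\otimes i)^*$. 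Evaluated on the object of $\mathcal{X}\otimes\mathcal{T}$ encoding $X_\bullet$, the left side is $\mathrm{colim}_j f_* X_j$, because $(f\times \mathrm{id})_*$ acts levelwise. The right side is $f_*(\mathrm{colim}_j X_j)$. This is exactly the comparison map.

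I expect the main obstacle to be the identification of $\mathcal{X}\otimes\mathcal{T}$ with $J^\triangleright$-indexed diagrams in $\mathcal{X}$, and of the pulled-back morphisms with levelwise pushforward and colimit. If $\mathcal{T}$ is a presheaf topos this is clean, because $\mathcal{X}\otimes\mathrm{PSh}(C) \simeq \mathrm{Fun}(C^{\mathrm{op}},\mathcal{X})$. One must then make sure that the stalk at $\infty$ really computes $\mathrm{colim}_J$ rather than evaluation at the cone point. The first thing I would try is to take $\mathcal{T}=\mathrm{Fun}(J,\mathrm{An})$ itself, with the point $i$ given by the left exact colimit functor $\mathrm{colim}_J : \mathrm{Fun}(J,\mathrm{An}) \rightarrow \mathrm{An}$. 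This is a geometric morphism precisely because $J$ is filtered. With this choice no cone point is needed, and the argument above goes through verbatim.
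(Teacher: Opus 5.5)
Your main argument is correct. It is essentially the standard proof behind the result the paper cites (HTT, Remark 7.3.1.5); the paper itself gives no argument beyond that citation.

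Concretely, your proof runs as follows.
\begin{itemize}
\item For a small filtered $J$, the left exact functor $\mathrm{colim}_J$ defines a point $i\colon \mathrm{An}\to\mathrm{Fun}(J,\mathrm{An})$.
\item One identifies $\mathcal{X}\otimes\mathrm{Fun}(J,\mathrm{An})\simeq\mathrm{Fun}(J,\mathcal{X})$. Under this identification $(f\times\mathrm{id})_*$ acts levelwise and $(\mathrm{id}\otimes i)^*=\mathrm{colim}_J$.
\item One applies properness to the two-square diagram $\mathcal{X}\to\mathrm{Fun}(J,\mathcal{X})\to\mathcal{X}$ over $\mathcal{Y}\to\mathrm{Fun}(J,\mathcal{Y})\to\mathcal{Y}$. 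The right square is a pullback because products are taken over the terminal topos. The left square is then a pullback by pasting, since both rows compose to the identity.
\item Unwinding units and counits, the resulting mate is exactly the canonical map $\mathrm{colim}_j f_*X_j\to f_*\mathrm{colim}_j X_j$.
\end{itemize}

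You should delete your first paragraph. It is never used, and as stated it is wrong.
\begin{itemize}
\item After applying $\mathrm{Map}_{\mathcal{Y}}(U,-)$, you cannot move it past $\mathrm{colim}_j f_*X_j$ unless $U$ is compact. So there is no reduction to global sections commuting with filtered colimits.
\item That claimed reduction would force every $\mathcal{X}_{/f^*U}$ to be compact. This already fails for the identity of $\mathrm{Sh}(\mathbb{R})$, which is proper but not compact.
\end{itemize}
The detour through $J^{\triangleright}$ and a cone point is also unnecessary, as you eventually notice yourself. Your final choice $\mathcal{T}=\mathrm{Fun}(J,\mathrm{An})$ works for an arbitrary base $\mathcal{Y}$ with no preliminary reduction.
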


\begin{warning}
The reverse implication is not true. A counterexample is given by the inclusion of the non-closed point $1 \in \mathbf{S}$, where $\mathbf{S} = \{0,1\}$ is the Sierpinski space, with only non-trivial open given by $\{1\}$. The induced geometric morphism is perfect, but not proper.
\end{warning}

\begin{theorem}[{\cite[Theorem 3.1.8]{martini2025propermorphismsinftytopoi}}] \label{propercompact}
A topos is compact iff it is proper.
\end{theorem}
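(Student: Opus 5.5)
The statement is that a topos $\mathcal{X}$ is compact if and only if the unique geometric morphism $p : \mathcal{X} \rightarrow \mathrm{An}$ is proper. One direction is already available: proper geometric morphisms are perfect by the cited \cite[Remark 7.3.1.5]{Lurie2009HTT}, and perfectness of $p$ is precisely compactness of $\mathcal{X}$. So the plan concentrates on the converse. Assuming $1 \in \mathcal{X}$ is compact, we must verify that for every pair of composable geometric morphisms $\mathcal{Y}'' \rightarrow \mathcal{Y}' \rightarrow \mathrm{An}$ the pulled-back square satisfies base change.

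First, I will reduce to base changes along a single morphism. If $g : \mathcal{Y}' \rightarrow \mathrm{An}$ is the unique map, then $\mathcal{X}' = \mathcal{X} \times \mathcal{Y}' \simeq \mathcal{X} \otimes \mathcal{Y}'$ is a Lurie tensor product. The projection $f' : \mathcal{X}\otimes\mathcal{Y}' \rightarrow \mathcal{Y}'$ has pushforward $p_* \otimes \mathrm{id}$. This identification uses $\mathcal{X} \otimes \mathcal{Y}' \simeq \mathrm{Fun}^R(\mathcal{X}^{\mathrm{op}}, \mathcal{Y}')$, or $\mathcal{Y}'$-valued sheaves on $\mathcal{X}$, with $f'_*$ given by evaluation at $1$. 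For a general $h : \mathcal{Y}'' \rightarrow \mathcal{Y}'$, the outer pullback is $\mathcal{X} \otimes \mathcal{Y}''$, and base change amounts to commuting $h^*$ past evaluation at $1$ on $\mathcal{X}$-sheaves.

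Concretely, $\mathcal{X} \otimes \mathcal{Y} \simeq \mathrm{Sh}_{\mathcal{Y}}(\mathcal{X})$, where $\mathcal{X}$ is presented as a left exact accessible localization of $\mathrm{PSh}(C)$ and the sheaves take values in $\mathcal{Y}$. The map $h^*$ acts pointwise, followed by sheafification. We need to show that evaluating at $1$ commutes with this, i.e. that $\Gamma(1, L(h^* F)) \simeq h^* \Gamma(1, F)$. Since $h^*$ preserves colimits and finite limits, it commutes with any sheaf condition expressed through finite limits and filtered colimits.

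The key step, and the main obstacle, is to express global sections via compactness. Because $1$ is compact, I expect to write $\Gamma(1,-)$ on $\mathcal{Y}$-valued sheaves as a filtered colimit of finite limits of evaluations. The idea is that the covering sieves of $1$ can be refined to finite (hyper)covers in a controlled way, or equivalently that $\Gamma(1,-)$ is the restriction of the left exact, filtered-colimit-preserving functor $p_*$ tensored with $\mathcal{Y}$. Here I would try to use that $p_*$ preserves filtered colimits and finite limits, and is therefore left exact and accessible, so it extends uniquely to an $\mathrm{An}$-linear functor on $\mathcal{X} \otimes \mathcal{Y}$ that commutes with any $h^*$ which is itself left exact and colimit preserving.

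The subtle point is that $\mathcal{X} \otimes -$ is only functorial in left adjoints. Establishing $(p_*\otimes \mathrm{id}) \circ (\mathrm{id}\otimes h^*) \simeq h^* \circ (p_* \otimes \mathrm{id})$ therefore requires showing that $p_*$, although it is only a filtered-colimit- and finite-limit-preserving functor, still tensors well. My approach is to present the tensor product as filtered colimits of finite-limit diagrams in $\mathcal{Y}$ and appeal to the fact that left exact, filtered-colimit-preserving functors between topoi commute with such formulas. If this fails in full generality, I would instead cite the fibrational argument of \cite{martini2025propermorphismsinftytopoi}.
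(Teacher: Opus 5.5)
Your direction proper $\Rightarrow$ compact is fine. It is exactly what the paper relies on: its cited proposition that proper geometric morphisms are perfect, applied to $\mathcal{X}\rightarrow\mathrm{An}$. Note that the paper proves nothing further here; it simply imports the theorem from Martini--Wolf.

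Your reduction of the converse is also correct. In the model $\mathcal{X}\otimes\mathcal{Y}\simeq\mathrm{Fun}^R(\mathcal{X}^{\mathrm{op}},\mathcal{Y})$, base change along $h\colon\mathcal{Y}''\rightarrow\mathcal{Y}'$ says exactly that $h^*(F(1))\rightarrow L(h^*\circ F)(1)$ is an equivalence for every $\mathcal{Y}'$-valued sheaf $F$ on $\mathcal{X}$. After that, however, there is a genuine gap, and it is the entire content of the theorem:
\begin{enumerate}
\item Your remark that $h^*$ commutes with sheaf conditions expressed through finite limits and filtered colimits does not apply. Being a sheaf means sending colimits of $\mathcal{X}$ to limits, including infinite products and limits over covering sieves, and $h^*$ does not preserve these. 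This is precisely why $h^*\circ F$ has to be resheafified.
\item Compactness of $1$ is not the statement that covers of $1$ refine to finite ones; that is only quasi-compactness. Even finite hypercovers would leave totalizations, which $h^*$ does not preserve. Moreover, $L(h^*\circ F)(1)$ depends on the sheaf condition at all objects, not only on covers of $1$.
\item Bifunctoriality of the Lurie tensor product gives $(\phi\otimes\mathrm{id})(\mathrm{id}\otimes h^*)\simeq(\mathrm{id}\otimes h^*)(\phi\otimes\mathrm{id})$ only when $\phi$ is a morphism of $\mathrm{Pr}^L$. For the right adjoint $p_*$ this interchange \emph{is} the base-change condition. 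Knowing that $p_*$ is left exact and preserves filtered colimits does not by itself produce it.
\end{enumerate}
A description in which global sections is visibly built from finite limits and filtered colimits does exist when $\mathcal{Y}$ is compactly generated, namely $\mathcal{X}\otimes\mathcal{Y}\simeq\mathrm{Fun}^{\mathrm{lex}}((\mathcal{Y}^\omega)^{\mathrm{op}},\mathcal{X})$, as in the proof of Theorem~\ref{shapeandcohomology}. But properness quantifies over arbitrary topoi $\mathcal{Y}'$, $\mathcal{Y}''$, which are in general neither compactly generated nor compactly assembled.

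The missing idea is to reverse the roles. The functor that must commute with the construction is $p_*$, which preserves all limits and, by compactness, filtered colimits; $h^*$ is what should be decomposed. Use the other model $\mathcal{X}\otimes\mathcal{Y}'\simeq\mathrm{Fun}^R((\mathcal{Y}')^{\mathrm{op}},\mathcal{X})$. There the pushforward to $\mathcal{Y}'$ is postcomposition with $p_*$. On small sites, $\mathrm{id}_{\mathcal{X}}\otimes h^*$ is computed by left Kan extension along the left exact functor $h^*$, which is pointwise a filtered colimit by flatness, followed by sheafification on $\mathcal{Y}''$. The first step commutes with $p_*$ exactly because $\mathcal{X}$ is compact. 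Controlling the second step for an arbitrary topos $\mathcal{Y}''$ is the real work, and it is where the cited reference brings in internal (fibrational) higher category theory.

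As written, your proof of compact $\Rightarrow$ proper therefore rests entirely on the fallback citation. To be fair, that citation is also all the paper offers for this statement.
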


\subsection{Shape of inverse limits}

The following is the key theorem that controls the shape of compact topoi. Recall that cofiltered limits of topoi are computed as limits of $\infty$-categories along the pushforward parts.

\begin{theorem} \label{shapeinverselimits}
Let $\mathcal{X}_\bullet : I \rightarrow \mathrm{RTop}$ be a diagram of topoi, with $I$ cofiltered, such that:
\begin{itemize}
\item for all $i \in I$ the topos $\mathcal{X}_i$ is compact, and 
\item for all maps $i \rightarrow j$ in $I$ the geometric morphism $\mathcal{X}_i \rightarrow \mathcal{X}_j$ is perfect.
\end{itemize}
Let $\mathcal{X} = \mathrm{lim}_{i \in I} \mathcal{X}_i$. Then
\[ \Pi_\infty( \mathcal{X} ) \cong \mathrm{lim}_{i \in I} \Pi_\infty( \mathcal{X}_i ), \]
with the limit taken in $\mathrm{Pro}(\mathrm{An})$.
\end{theorem}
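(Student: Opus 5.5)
Under the equivalence $\mathrm{Pro}(\mathrm{An}) \simeq \mathrm{Fun}^{\mathrm{lex,acc}}(\mathrm{An},\mathrm{An})^{\mathrm{op}}$, cofiltered limits of pro-anima correspond to filtered colimits of left-exact accessible functors, computed pointwise; filtered colimits in $\mathrm{An}$ commute with finite limits, so the pointwise colimit is again left exact. The claim is therefore that for every $A \in \mathrm{An}$ the canonical map
\[ \mathrm{colim}_{i \in I^{\mathrm{op}}} (\mathcal{X}_i)_*(\mathcal{X}_i)^*(A) \longrightarrow \mathcal{X}_*\mathcal{X}^*(A) \]
is an equivalence, naturally in $A$. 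Write $p_i : \mathcal{X} \to \mathcal{X}_i$ for the projections. Since $\mathcal{X}^* \simeq p_i^*\mathcal{X}_i^*$, we have $\mathcal{X}_*\mathcal{X}^*(A) = \mathrm{Map}_\mathcal{X}(1, p_i^*\mathcal{X}_i^*A)$. So I would first prove a general statement. For compatible objects $E_i = \mathcal{X}_i^*(A)$, with $E_i \simeq f^*E_j$ along each transition $f : \mathcal{X}_i \to \mathcal{X}_j$, the map $\mathrm{colim}_i \mathrm{Map}_{\mathcal{X}_i}(1,E_i) \to \mathrm{Map}_\mathcal{X}(1,p^*E)$ is an equivalence.

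The key step is to describe $p_i^*$ concretely. Cofiltered limits in $\mathrm{RTop}$ are computed along pushforwards, so $\mathcal{X} \simeq \lim_i \mathcal{X}_i$ in $\widehat{\mathrm{Cat}}_\infty$ along the $f_*$. Equivalently, after passing to left adjoints, it is the colimit in $\mathrm{Pr}^L$ of the $\mathcal{X}_i$ along the pullbacks $f^*$. Because every transition $f_*$ preserves filtered colimits, the diagram of right adjoints lives in $\mathrm{Pr}^L$ as well, indeed in $\mathrm{Pr}^L_\omega$-style diagrams of filtered-colimit-preserving functors. I will then use the standard formula for a limit of presentable categories along functors that preserve filtered colimits. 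Under it, $p_i^* E_i$ is the object whose $j$-component, for $j \to i$, is $\mathrm{colim}_{k \to j} (f_{kj})_*(f_{ki})^* E_i$, the filtered colimit taken over the cofiltered slice. Perfectness is exactly what makes this colimit again a compatible family for the pushforwards. This is Lurie's description of filtered colimits in $\mathrm{Pr}^L$ via $\mathrm{Pr}^R$ when the right adjoints are accessible and preserve filtered colimits. Granting it,
\[ \mathrm{Map}_\mathcal{X}(1,p_i^*E_i) \simeq \mathrm{Map}_{\mathcal{X}_i}(1, (p_i)_*p_i^*E_i), \qquad (p_i)_*p_i^* E_i \simeq \mathrm{colim}_{k \to i} (f_{ki})_*f_{ki}^*E_i. \]

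Now apply $\mathrm{Map}_{\mathcal{X}_i}(1,-) = (\mathcal{X}_i)_*$. Since $\mathcal{X}_i$ is compact, this commutes with the filtered colimit, which gives
\[ \mathrm{colim}_{k \to i}\, \mathrm{Map}_{\mathcal{X}_k}(1, f_{ki}^*E_i) = \mathrm{colim}_{k\to i}\,(\mathcal{X}_k)_*\mathcal{X}_k^*(A). \]
By cofinality of $I_{/i} \to I$ this equals the colimit over all of $I^{\mathrm{op}}$. One must still check that the composite is the canonical comparison map induced by the units; both are built from the units $\mathrm{id} \Rightarrow f_*f^*$, so this should be bookkeeping.

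The main obstacle is the identification $(p_i)_*p_i^* \simeq \mathrm{colim}_{k\to i}(f_{ki})_*f_{ki}^*$, i.e.\ controlling the pullback to the limit topos. I would prove it directly. The family $j \mapsto \mathrm{colim}_{k\to j}(f_{kj})_*f_{ki}^*E_i$ is compatible under the $f_*$ precisely because pushforwards commute with these filtered colimits (perfectness). One then verifies the universal property of the left adjoint by computing maps out of it into an arbitrary compatible family, reducing to the adjunctions $f^* \dashv f_*$ at each stage.
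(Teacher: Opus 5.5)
Your proposal is essentially the paper's proof. Both arguments use the pointwise description of cofiltered limits in $\mathrm{Fun}^{\mathrm{lex,acc}}(\mathrm{An},\mathrm{An})^{\mathrm{op}}$ to reduce to the push-pull formula, which the paper cites as Lemma~\ref{pushpull} (Geiß) rather than proving it. The only difference is that the paper adjoins $\mathrm{An}$ as a cone point to $I$, so compactness becomes part of the filtered-colimit hypothesis and the lemma gives $\mathcal{X}_*\mathcal{X}^*\simeq\mathrm{colim}_i(\mathcal{X}_i)_*\mathcal{X}_i^*$ directly; you instead apply it at a fixed $i$ and use compactness of $\mathcal{X}_i$ to pull $(\mathcal{X}_i)_*$ through the colimit.

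You should cite that lemma rather than rely on your sketched direct proof of $(p_i)_*p_i^*\simeq\mathrm{colim}_{k\to i}(f_{ki})_*f_{ki}^*$, which is not a proof as written. An object of the $\infty$-categorical limit needs coherent compatibility data, not just objectwise equivalences. Also, mapping out of $\mathrm{colim}_k(f_{kj})_*f_{ki}^*E$ does not reduce to the adjunctions $f^*\dashv f_*$, because the pushforwards sit in the source of the mapping spaces.
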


\begin{remark}
Compactness cannot simply be dropped as an assumption. A counterexample is given by
\[\mathrm{lim}_{n \rightarrow \infty} [n, \infty) \cong  \emptyset,  \]
which induces, when taking sheaves, a diagram of non-compact topoi with perfect transition morphisms, but
\[ \mathrm{lim}_{n \rightarrow \infty} \Pi_\infty( [n, \infty) ) \cong  \mathrm{lim}_{n \rightarrow \infty} \ast \cong \ast \neq  \emptyset \cong \Pi_\infty( \emptyset )  \]
\end{remark}

Recall that cofiltered limits in $\mathrm{RTop}$ are computed as cofiltered limits in $\mathrm{Pr}^R$, equivalently as cofiltered limits in $\widehat{\mathrm{Cat}}_\infty$. The proof of Theorem \ref{shapeinverselimits} rests on the following technical push-pull lemma.

\begin{lemma}[Push-pull formula for inverse limits, {\cite{geiß2026cofilteredlimitsinftycategoriesadjunctions}}] \label{pushpull}
Let $\mathcal{C}_\bullet : I \rightarrow \mathrm{Pr}^R$ be a diagram with $I$ cofiltered. Assume that for all $\alpha : i \rightarrow j$ in $I$ the right adjoint transition functor
\[ \mathcal{C}(\alpha)^R : \mathcal{C}_i \rightarrow \mathcal{C}_j \]
preserves filtered colimits. Denote by $\mathcal{C}(\alpha)^L$ the left adjoint to $\mathcal{C}(\alpha)^R$, by $\mathcal{C} = \mathrm{lim}_{i \in I} \mathcal{C}_i$ the limit, and by
\[ p^R_i : \mathcal{C} \rightarrow \mathcal{C}_i \]
the canonical projection right adjoint, and by  
\[ p^L_i : \mathcal{C}_i \rightarrow \mathcal{C} \]
the corresponding left adjoint. Then for every pair $i,j \in I$, the composite
\[ p^R_j p^L_i : \mathcal{C}_i \rightarrow \mathcal{C}_j \]
is naturally isomorphic to the functor
\[ \mathrm{colim}_{ (k, \alpha, \beta) \in I_{/\{i,j\}} }  \mathcal{C}(\beta)^R  \mathcal{C}(\alpha)^L \]
\end{lemma}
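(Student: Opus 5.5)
The plan is to identify $p^R_j p^L_i$ objectwise by first writing down an explicit candidate for $p^L_i(x)$, for $x \in \mathcal{C}_i$, as an object of $\mathcal{C} = \mathrm{lim}_{j} \mathcal{C}_j$. Then I would check that this candidate corepresents $\mathrm{Map}_{\mathcal{C}_i}(x, p^R_i(-))$.

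\textbf{Step 1: the candidate.} For each $j$, note that $I_{/\{i,j\}}$ is cofiltered because $I$ is, so its opposite is filtered. The assignment $(k,\alpha,\beta) \mapsto \mathcal{C}(\beta)^R \mathcal{C}(\alpha)^L x$ is contravariant in $(k,\alpha,\beta)$: a map $\delta : k' \to k$ over $\{i,j\}$ induces
\[ \mathcal{C}(\beta)^R\mathcal{C}(\alpha)^L x \to \mathcal{C}(\beta)^R\mathcal{C}(\delta)^R\mathcal{C}(\delta)^L\mathcal{C}(\alpha)^L x \]
via the unit of $\mathcal{C}(\delta)^L \dashv \mathcal{C}(\delta)^R$. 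This makes
\[ F_j(x) := \mathrm{colim}_{(k,\alpha,\beta) \in I_{/\{i,j\}}} \mathcal{C}(\beta)^R \mathcal{C}(\alpha)^L x \]
a filtered colimit. Making this functorial coherently is most cleanly done with a cocartesian/cartesian fibration model of the diagram $\mathcal{C}_\bullet$, and I would set that up first.

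\textbf{Step 2: compatibility.} I need the $F_j(x)$ to assemble into an object of the limit, i.e. $\mathcal{C}(\gamma)^R F_j(x) \simeq F_{j'}(x)$ for every $\gamma : j \to j'$. Here the hypothesis enters: $\mathcal{C}(\gamma)^R$ preserves filtered colimits, so it passes inside the colimit. This yields the colimit of $\mathcal{C}(\gamma\beta)^R\mathcal{C}(\alpha)^L x$ over $I_{/\{i,j\}}$. The functor $I_{/\{i,j\}} \to I_{/\{i,j'\}}$, $(k,\alpha,\beta)\mapsto(k,\alpha,\gamma\beta)$, is initial (its opposite is cofinal), which I would verify by Quillen's Theorem A using cofilteredness of $I$: each comma category is again cofiltered, hence weakly contractible. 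This gives an object $F(x) \in \mathcal{C}$ with $p^R_j F(x) \simeq F_j(x)$.

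\textbf{Step 3: universal property.} For $y = (y_j) \in \mathcal{C}$, write
\[ \mathrm{Map}_{\mathcal{C}}(F(x), y) \simeq \mathrm{lim}_j \mathrm{Map}(F_j(x), y_j) \simeq \mathrm{lim}_{j}\,\mathrm{lim}_{(k,\alpha,\beta)} \mathrm{Map}(\mathcal{C}(\beta)^R\mathcal{C}(\alpha)^L x, y_j). \]
This is a limit over the total category of the fibration $(j,k,\alpha,\beta) \to j$. I would show that the subcategory of tuples with $j = k$ and $\beta = \mathrm{id}$, which is $I_{/i}$, is initial. On that subcategory the terms become
\[ \mathrm{Map}(\mathcal{C}(\alpha)^L x, y_k) \simeq \mathrm{Map}(x, \mathcal{C}(\alpha)^R y_k) \simeq \mathrm{Map}(x, y_i). \]
The limit over the cofiltered, hence weakly contractible, category $I_{/i}$ of this essentially constant system is $\mathrm{Map}_{\mathcal{C}_i}(x, y_i)$. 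Hence $F \simeq p^L_i$, and applying $p^R_j$ gives the formula; naturality in $x$ is inherited from Step 1.

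\textbf{Main obstacle.} I expect the main difficulty to be Step 3. The restriction to the ``diagonal'' $I_{/i}$ must be compatible with the structure maps $y_j \simeq \mathcal{C}(\beta)^R y_k$. The identification of the diagonal terms with the constant system also has to be shown to be coherent rather than merely objectwise. My first attempt would be to rewrite everything as mapping spaces in the total (lax) limit and use that the adjunction units and counits are compatible with composition in $I$. If that becomes unwieldy, I would fall back on checking the counit $p^L_i p^R_i \to \mathrm{id}$ against the jointly conservative family $p^R_j$.
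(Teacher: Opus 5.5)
The paper gives no proof of this lemma. It quotes it from Geiß, remarking only that the formula had been used informally before. So there is nothing to match, and your argument is an independent, self-contained route: construct the candidate, then verify that it corepresents $\mathrm{Map}_{\mathcal{C}_i}(x,p^R_i(-))$. It is correct in substance.

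What your route buys over the citation is transparency about the hypotheses. Preservation of filtered colimits by the $\mathcal{C}(\gamma)^R$ is used only in Step 2, to see that the candidate lies in the limit rather than merely in the lax limit. Cofilteredness makes $I_{/\{i,j\}}^{\mathrm{op}}$ filtered and drives the cofinality arguments. What the citation buys is the coherence, and two places deserve more precision.

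First, $T(k,\alpha,\beta)=\mathcal{C}(\beta)^R\mathcal{C}(\alpha)^L x$ is contravariant in $k$ (via units) but covariant in $j$ (via the $\mathcal{C}(\gamma)^R$). So it naturally lives on a twisted-arrow-type category: the cocartesian fibration $\mathcal{T}'\to I$ with fibres $I_{/\{i,j\}}^{\mathrm{op}}$. Define $F(x)$ as the relative left Kan extension of $T$ along $\mathcal{T}'\to I$, into the cocartesian fibration classifying the $\mathcal{C}(\gamma)^R$. This gives a lax section with values $F_j(x)$, because fibres of a cocartesian fibration are cofinal in $\mathcal{T}'\times_I I_{/j}$. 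Its structure maps are the canonical comparisons $\mathcal{C}(\gamma)^R F_j(x)\to F_{j'}(x)$, and your Step 2 is then exactly the statement that this section is cocartesian.

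Second, in Step 3 the rewriting of $\lim_j\lim_{I_{/\{i,j\}}}$ as a limit over the total category $\mathcal{T}$ is not formal. Here $\mathcal{T}$ consists of pairs $\alpha\colon k\to i$, $\beta\colon k\to j$, projected to $j$, and $\mathcal{T}\to I$ is only a cocartesian fibration. The rewriting holds because $I_{/\{i,j\}}$ is initial in $\mathcal{T}\times_I I_{j/}$. The relevant comma categories are cones over cospans in $I$, hence cofiltered, by the same argument as in your Step 2.

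After that, initiality of the diagonal $I_{/i}\hookrightarrow\mathcal{T}$ is immediate, since this inclusion is left adjoint to $(k,\alpha,\beta)\mapsto(k,\alpha)$. The restricted diagram consists of adjunction transposes, hence of equivalences.

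One caution about your fallback: it should not be read as checking that the counit $p^L_ip^R_i\to\mathrm{id}$ is an equivalence. That fails unless $p^R_i$ is fully faithful; one would instead have to verify the triangle identities.
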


This formula has been used informally for quite some time in the field. To our knowledge, the first clean proof is in the cited reference due to Thorger Geiß.

\begin{example}
We have 
\[ \mathrm{Sp} = \mathrm{lim}_{n \in \mathbb{N}}( \dots \rightarrow \mathrm{An}_{\ast /} \xrightarrow{ \Omega } \mathrm{An}_{\ast /} ). \]
The formula applied to this situation recovers the formula
\[ \Omega^\infty \Sigma^\infty X \cong \mathrm{colim}_{n \in \mathbb{N}} \Omega^n \Sigma^n X \]
for a pointed anima $X$.
\end{example}

From this we can directly prove Theorem \ref{shapeinverselimits}. Under stricter hypotheses, this statement appears in \cite[Proposition 2.11]{Hoyois2018HigherGaloisTheory}.

\begin{proof}[Proof of Theorem \ref{shapeinverselimits}]
Let $\mathcal{X}_\bullet : I \rightarrow \mathrm{RTop}$ be a cofiltered diagram of topoi, satisfying the two given conditions. First note that $\mathrm{RTop} \simeq \mathrm{RTop}_{/\mathrm{An}}$ since $\mathrm{An}$ is the terminal topos. Hence the diagram $I$ extends naturally to 
\[\mathcal{X}_\bullet : I^\triangleright \rightarrow \mathrm{RTop},\]
where $I^\triangleright = I \star \{ \top \}$ is the diagram that adds an additional cone point $\top$, and $\mathcal{X}_\bullet( \top ) = \mathrm{An}$. Note that $I \star \{ \top \}$ is again cofiltered. Since for any arrow $i \rightarrow \top$ in $I^\triangleright$ the induced right adjoint is given by
\[ (\mathcal{X}_i)_* : \mathcal{X}_i \rightarrow \mathrm{An}, \]
we see that the conditions that all the $\mathcal{X}_i$ are compact and transition functors preserve filtered colimits simply translate into the statement that all transition functors of the extended diagram $\mathcal{X}_\bullet : I^\triangleright \rightarrow \mathrm{RTop}$ preserve filtered colimits. Thus we can apply Lemma \ref{pushpull}. Let $\mathcal{X} = \mathrm{lim}_{i \in I} \mathcal{X}_i$. Then
\[ \mathcal{X}_* \mathcal{X}^* \cong \mathrm{colim}_{i \in I} (\mathcal{X}_i)_* (\mathcal{X}_i)^* \]
But under the equivalence $\mathrm{Pro}(\mathrm{An}) \simeq \mathrm{Fun}^\mathrm{lex,acc}( \mathrm{An}, \mathrm{An})^\mathrm{op}$, cofiltered limits in $\mathrm{Pro}(\mathrm{An})$ are computed as filtered colimits in  $\mathrm{Fun}^\mathrm{lex,acc}( \mathrm{An}, \mathrm{An})$, which are computed objectwise, as filtered colimits commute with finite limits in $\mathrm{An}$. Hence we have verified that
\[ \Pi_\infty( \mathcal{X} ) \cong \mathrm{lim}_{i \in I} \Pi_\infty( \mathcal{X}_i ). \]
\end{proof}

A \emph{finite-dimensional polytope} is a subspace $P \subset \mathbb{R}^n$ for some $n \geq 0$ such that $P$ is a finite union of \emph{nondegenerate simplices} $\Delta \subset \mathbb{R}^n$, where with a nondegenerate simplex we mean a convex hull of $n+1$-many points with non-empty interior.

\begin{corollary} \label{shapeinversepolytopes}
Let $X$ be a compact Hausdorff space, and assume that
\[ X \cong \lim_{i \in I} P_i \]
is a cofiltered inverse limit where $P_i$ are finite-dimensional polytopes. Then
\[ \Pi_\infty(X) \simeq \text{``}\lim_{i \in I} \text{''} |P_i| \in \mathrm{Pro}(\mathrm{An}). \]
\end{corollary}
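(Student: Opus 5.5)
The plan is to apply Theorem \ref{shapeinverselimits} to the diagram $i \mapsto \mathrm{Sh}(P_i)$. This requires three inputs:
\begin{itemize}
\item[(a)] the limit of sheaf topoi $\lim_{i} \mathrm{Sh}(P_i)$ in $\mathrm{RTop}$ agrees with $\mathrm{Sh}(X)$;
\item[(b)] each $\mathrm{Sh}(P_i)$ is compact and every transition morphism is perfect;
\item[(c)] $\Pi_\infty(\mathrm{Sh}(P_i)) \simeq |P_i|$ as a constant pro-anima.
\end{itemize}
Granting these, Theorem \ref{shapeinverselimits} gives $\Pi_\infty(X) \simeq \lim_i \Pi_\infty(P_i) \simeq \lim_i |P_i|$ in $\mathrm{Pro}(\mathrm{An})$. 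A cofiltered limit of constant pro-objects is the formal limit, so this is $\text{``}\lim_{i}\text{''} |P_i|$.

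For (b), each $P_i$ is compact Hausdorff, being a finite union of compact simplices. Every continuous map between compact Hausdorff spaces is proper, so by Remark \ref{propercontinuousispropergeometric} the induced geometric morphisms $\mathrm{Sh}(P_i) \to \mathrm{Sh}(P_j)$ are proper and hence perfect. Compactness of each $\mathrm{Sh}(P_i)$ follows in the same way from properness of $P_i \to \mathrm{pt}$ (equivalently Theorem \ref{propercompact}), or from the classical fact that the global sections of sheaves on a compact Hausdorff space commute with filtered colimits.

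For (c), triangulate $P_i$ as a finite simplicial complex $K$; a finite union of simplices in $\mathbb{R}^n$ admits a common subdivision into a simplicial complex. Example \ref{abstractsimplicialcomplex} supplies a good open cover by open stars, and Proposition \ref{goodopencover} then gives $\Pi_\infty(P_i) \simeq |\check{C}|$. The nerve theorem, or directly the fact that the star cover has nerve equal to the barycentric subdivision, identifies this with $|K| \simeq |P_i|$.

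The main obstacle is (a): the functor $\mathrm{Sh}\colon \mathrm{Loc} \to \mathrm{RTop}$ is a right adjoint, but there is no a priori guarantee that cofiltered limits of locales go to limits of $\infty$-topoi, since they are computed differently. My first attempt would use compactness. The limit $\lim_i \mathrm{Sh}(P_i)$ is computed along pushforwards, and by Lemma \ref{pushpull} its objects are controlled by pullbacks $p_i^*$ of sheaves on the $P_i$. Since $X$ is compact Hausdorff, opens of the form $p_i^{-1}(U)$ with $U \subset P_i$ open form a basis closed under finite intersections, and the locale of $X$ is the cofiltered limit of the locales of the $P_i$.

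From there I would compare the two topoi in one of two ways. One route is to show that $\mathrm{Sh}(X)$ satisfies the universal property of the limit by testing against geometric morphisms from an arbitrary topos $\mathcal{Y}$, using the comparison lemma \cite[Lemma C.3]{Hoyois_2014} for this basis. The other is to invoke the known result for compact Hausdorff spaces in \cite[Theorem 7.3.3.9]{Lurie2009HTT}, which states that for inverse limits of compact Hausdorff spaces $\mathrm{Sh}(\lim X_i) \simeq \lim \mathrm{Sh}(X_i)$ in $\mathrm{RTop}$. I would cite the latter if it is available, and otherwise carry out the basis comparison. Hypercompleteness issues do not arise, because we work with non-hypercomplete sheaves throughout.
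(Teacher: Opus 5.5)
Your proposal is correct and follows the route the paper intends: the paper gives no separate proof and states the corollary as a direct application of Theorem \ref{shapeinverselimits}, with compactness and perfectness coming from properness of maps between compact Hausdorff spaces, and with polytopes having constant shape $|P_i|$. Two small corrections: step (a) is not an obstacle, since $\mathrm{Sh}\colon \mathrm{Loc}\to\mathrm{RTop}$ is a right adjoint and therefore preserves all limits (the paper uses exactly this for products), so (a) reduces to the fact you already state, that the locale of $X$ is the limit in $\mathrm{Loc}$ of the locales of the $P_i$; and the nerve of the cover by open stars of all faces is not the barycentric subdivision (the cover by open stars of vertices has nerve exactly $K$), although the nerve theorem still gives $|\check{C}|\simeq |K|$.
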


\begin{remark}
In fact, any compact Hausdorff space does arise as an inverse limit of polytopes, see \cite[Chapter I, \S5.2, Theorem 7]{mardesic1982shape}. In some earlier approaches to shape theory, Corollary \ref{shapeinversepolytopes} was used as a definition of the shape of a compact Hausdorff space.
\end{remark}

\subsection{Stably compact spaces} \label{stablycompact}

The conditions for Theorem \ref{shapeinverselimits} fit particularly well for a class of spaces, which is a useful enlargement of that of compact Hausdorff spaces, namely the so-called \emph{stably compact spaces}. As an overview, see \cite[Section 9]{GoubaultLarrecq2013NonHausdorff}, as well as \cite[Section 3.2]{Lehner2026KTheoryStablyCompact}. Before we do so, let us recall some basics about the Alexandroff topology.

\begin{definition}
Let $(P, \leq)$ be a poset. The \emph{Alexandroff topology} on $P$ is given by declaring the open sets to be upward closed sets with respect to $\leq$, i.e. $U \subset P$ is open iff for all $p \leq q$ in $P$ it holds that $p \in U \implies q \in U$. We denote the resulting space by $P_\mathrm{Alex}$.

If $X$ is a topological space, then a \emph{$P$-indexed stratification} is defined to be a continuous map $X \rightarrow P_\mathrm{Alex}$.
\end{definition}

Note that any poset $P$ has a canonical open covering, given by the principal opens $P_{p \geq}$ for $p \in P$. Hence if $f :  X \rightarrow P_\mathrm{Alex}$ is a continuous map, we get an induced cover $U_p := f^{-1}(P_{p \geq})$ of $X$.

\begin{proposition}[{\cite[Proposition 5.2]{Raptis2010HomotopyPosets}}]
The datum of a continuous map $X \rightarrow P_\mathrm{Alex}$ is equivalently described by a $P$-indexed collection of open sets $(U_p)_{p \in P}$ of $X$ such that:
\begin{itemize}
\item $p \leq q$ implies $U_q \subset U_p$.
\item $\bigcup_{p \in P} U_p = X$.
\item For every $x \in X$ it holds that the poset $\{ p \in P ~|~ x \in U_p \}$ has a maximal element. 
\end{itemize}
\end{proposition}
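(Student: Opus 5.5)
The plan is to make both directions of the correspondence explicit and check that they are mutually inverse. The first step is to describe a continuous map $f : X \rightarrow P_\mathrm{Alex}$ through its preimages. Since open sets of $P_\mathrm{Alex}$ are upward closed sets, and every upward closed set is the union of the principal opens $P_{p\geq} = \{ q ~|~ q \geq p\}$ it contains, continuity of a set map $f : X \rightarrow P$ is equivalent to each $U_p := f^{-1}(P_{p \geq})$ being open. So from $f$ we define $(U_p)_{p \in P}$ and verify the three conditions.
\begin{itemize}
\item \emph{Monotonicity.} If $p \leq q$ then $P_{q\geq} \subset P_{p\geq}$, hence $U_q \subset U_p$.
\item \emph{Covering.} Each $x$ lies in $U_{f(x)}$, so $\bigcup_p U_p = X$.
\item \emph{Maximal element.} The set $\{p ~|~ x \in U_p\} = \{ p ~|~ p \leq f(x)\}$ has the maximum $f(x)$.
\end{itemize}

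For the converse, take a family $(U_p)$ satisfying the three conditions. Define $f(x)$ to be the maximal element of $S_x := \{ p ~|~ x \in U_p \}$. The delicate point is that the statement only says ``a maximal element'', which need not be unique. I will read the condition as requiring a greatest element. I expect the intended meaning is greatest, with maximal used in that sense, since this is exactly what makes $f$ well-defined and recovers $U_p$. If one insists on the weaker reading, the equivalence fails. For example, take $X$ a point, $P=\{a,b\}$ discrete and $U_a=U_b=X$: then both $a$ and $b$ are maximal in $S_x$, there is no greatest element, and the family does not come from any map $X \rightarrow P_\mathrm{Alex}$. So the first thing to do is to pin down this interpretation.

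With $f(x) = \max S_x$, we need $f^{-1}(P_{p\geq}) = U_p$. If $x \in U_p$, then $p \in S_x$, so $p \leq f(x)$. Conversely, if $p \leq f(x)$, then monotonicity together with $x \in U_{f(x)}$ gives $x \in U_p$. Hence $f^{-1}(P_{p\geq}) = U_p$, which is open, so $f$ is continuous by the first step.

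Finally, we check that the two constructions are inverse. The composite family-to-map-to-family is the identity by the computation $f^{-1}(P_{p\geq}) = U_p$ just made. The composite map-to-family-to-map is the identity because $\max\{p ~|~ p\le f(x)\} = f(x)$. Since both sides are sets (a poset of maps versus a set of families), no higher coherence is needed. The only genuine obstacle is the maximal-versus-greatest issue above; everything else is bookkeeping.
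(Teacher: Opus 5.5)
Your proof is correct and complete. The paper gives no argument of its own (it only cites Raptis), and your direct verification is the standard one: continuity is tested on the principal up-sets $P_{p\geq}$, $U_p = f^{-1}(P_{p\geq})$, and the inverse is $x \mapsto \max\{p \mid x \in U_p\}$. You are also right that ``maximal'' must be read as ``greatest'': your example, a two-element antichain with $U_a = U_b = X$ over a point, satisfies all three conditions as literally stated but comes from no continuous map, so the statement as worded in the paper needs this correction even for finite $P$.
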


Such $P$-indexed collections of opens are also referred to as well-indexed coverings. 

\begin{definition}
Let $X$ be a topological space. Then 
\[ \gamma(X) := \mathrm{lim}_{ X \rightarrow P_\mathrm{Alex}, P \text{ finite poset}} P_\mathrm{Alex} \in \mathrm{Top}\]
\end{definition}

The space $\gamma(X)$ is always a coherent space and should be understood as the universal way to resolve $X$ via finite, well-indexed coverings. The assignment $X \mapsto \gamma(X)$ is in fact a functor, given by the right Kan extension
\[\begin{tikzcd}
	{\mathrm{FinPoset}} & {\mathrm{Top}.} \\
	{\mathrm{Top}}
	\arrow["{\mathrm{Alex}}", hook, from=1-1, to=1-2]
	\arrow["{\mathrm{Alex}}"', hook', from=1-1, to=2-1]
	\arrow["\gamma"', from=2-1, to=1-2]
\end{tikzcd}\]
It is thus the \emph{codensity} monad for the inclusion of finite posets equipped with the Alexandroff topology into $\mathrm{Top}$. 

A topological space $X$ is called \emph{stably compact} if the canonical map $i : X \rightarrow \gamma(X)$ admits a retraction $p : \gamma(X) \rightarrow X$. We note here that this retraction is unique if it exists, and gives an internal adjunction on the level of locales
\[ \begin{tikzcd}
	{\mathcal{O}(X)} & {\mathcal{O}(\gamma(X))}
	\arrow[""{name=0, anchor=center, inner sep=0}, "{p^*}", curve={height=-12pt}, hook, from=1-1, to=1-2]
	\arrow[""{name=1, anchor=center, inner sep=0}, "{i^*}", curve={height=-12pt}, from=1-2, to=1-1]
	\arrow["\dashv"{anchor=center, rotate=-90}, draw=none, from=0, to=1]
\end{tikzcd} \]
with $p^*$ being fully faithful. A map $f : X \rightarrow Y$ between stably compact spaces is called perfect if the square
\[ \begin{tikzcd}
	{\gamma(X)} & X \\
	{\gamma(Y)} & Y
	\arrow["{p_X}", from=1-1, to=1-2]
	\arrow["{\gamma(f)}"', from=1-1, to=2-1]
	\arrow["f", from=1-2, to=2-2]
	\arrow["{p_Y}"', from=2-1, to=2-2]
\end{tikzcd} \]
commutes. We note that compact Hausdorff spaces are in particular stably compact, and all maps between compact Hausdorff spaces are automatically perfect.
	
\begin{theorem}[{\cite[Theorem 4.29]{aoki2025schwartzcoidempotentscontinuousspectrum}}, also {\cite[Theorem 3.50]{Lehner2026KTheoryStablyCompact}}]
Let $f : X \rightarrow Y$ be a perfect map between stably compact spaces. Then $f : \mathrm{Sh}(X) \rightarrow \mathrm{Sh}(Y)$ is a perfect geometric morphism.
\end{theorem}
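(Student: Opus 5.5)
The plan is to reduce to coherent spaces by means of the canonical retraction $p_X : \gamma(X) \rightarrow X$. There are three ingredients: $(p_X)_*$ preserves all colimits, $(p_X)^*$ is fully faithful, and the map $\gamma(f) : \gamma(X) \rightarrow \gamma(Y)$ between coherent spaces induces a perfect geometric morphism. Perfectness of $f$, meaning commutativity of the square $f \circ p_X = p_Y \circ \gamma(f)$, then lets us write
\[ f_* \simeq f_* (p_X)_* (p_X)^* \simeq (p_Y)_* \, \gamma(f)_* \, (p_X)^* . \]
Here the first equivalence holds because $(p_X)^*$ is fully faithful. The right-hand side is a composite of a left adjoint, a filtered-colimit-preserving functor, and a colimit-preserving functor, so $f_*$ preserves filtered colimits.

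\emph{Step 1: the retraction.} On locales we have the adjunction $p^* \dashv i^*$ with $p^*$ fully faithful. Since $\mathrm{Sh} : \mathrm{Loc} \rightarrow \mathrm{RTop}$ is fully faithful and should be compatible with $2$-cells (natural transformations of geometric morphisms correspond to inequalities of the frame maps), I expect this to upgrade to an adjunction $p^* \dashv i^*$ of inverse image functors $\mathrm{Sh}(X) \rightleftarrows \mathrm{Sh}(\gamma(X))$. One can also check this directly: both functors are left exact left adjoints determined by their values on representables of opens, and the unit and counit inequalities give the natural transformations. Uniqueness of right adjoints then yields $(p_X)_* \simeq i^*$, which preserves all colimits. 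Full faithfulness of $(p_X)^*$ should follow from $p \circ i = \mathrm{id}$ together with this adjunction: the unit $\mathrm{id} \Rightarrow i^* p^* = (pi)^* = \mathrm{id}$ is invertible.

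\emph{Step 2: the coherent part.} By definition, $\gamma(X) = \lim P_{\mathrm{Alex}}$ is a cofiltered limit of finite Alexandroff posets. We have $\mathrm{Sh}(P_{\mathrm{Alex}}) \simeq \mathrm{Fun}(P, \mathrm{An})$ up to variance, and this topos is compact: global sections are the value at the minimal elements, a finite limit, which commutes with filtered colimits. Every transition map is induced by a map of finite posets, so its pushforward is a right Kan extension along a functor of finite posets. Such a pushforward is a finite limit pointwise and hence preserves filtered colimits. I would like to conclude, via the cofiltered limit description and the push–pull Lemma \ref{pushpull}, that $\mathrm{Sh}(\gamma(X)) \simeq \lim \mathrm{Sh}(P_{\mathrm{Alex}})$ and that $\gamma(f)_*$ preserves filtered colimits. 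Concretely, $\gamma(f)$ is compatible with the two pro-systems, and on each stage $\gamma(f)_*$ is computed as a filtered colimit of pushforwards between finite poset topoi. Alternatively, I can cite that quasi-compact maps of coherent spaces give coherent, hence perfect, geometric morphisms of coherent topoi.

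\emph{Main obstacle.} I expect the hardest part to be Step 2, namely identifying $\mathrm{Sh}(\lim P_{\mathrm{Alex}})$ with the limit of topoi, so that sheaves commute with this particular cofiltered limit of spaces, and then controlling $\gamma(f)_*$. My first attempt would go through coherence: $\mathrm{Sh}(\gamma(X))$ is generated by the quasi-compact opens pulled back from finite stages, and the sheaf condition only involves finite covers among these. Hence sections over a quasi-compact open commute with filtered colimits. Moreover, $\gamma(f)^{-1}$ sends quasi-compact opens to quasi-compact opens, since they are pulled back from finite stages. So $(\gamma(f)_* F)(V) = F(\gamma(f)^{-1}V)$ commutes with filtered colimits for $V$ in a basis, and that suffices.
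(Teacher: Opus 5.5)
Your argument is correct. The paper gives no proof of its own here; it imports the statement from Aoki and Lehner. Your reduction, however, uses exactly the structure the paper records right after the statement. There, $\mathrm{Sh}(X)$ is a retract of the coherent topos $\mathrm{Sh}(\gamma(X))$ via $p_X^* \dashv i^*$ with $p_X^*$ fully faithful, so $(p_X)_* \simeq i^*$ is cocontinuous. The identity $f_* \simeq (p_Y)_*\,\gamma(f)_*\,(p_X)^*$ is precisely where the perfectness square $f \circ p_X = p_Y \circ \gamma(f)$ is used.

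The one step that needs an extra sentence is in Step 2. For $\infty$-sheaves, ``the sheaf condition only involves finite covers'' does not by itself give commutation with filtered colimits, because \v{C}ech descent along a finite cover is a totalization. For the same reason, citing ``coherent, hence perfect'' needs care in the $\infty$-setting. What makes your argument work is that on the lattice of quasi-compact opens, descent along $U = U_1 \cup \dots \cup U_n$ is the finite limit $F(U) \simeq \lim_{\emptyset \neq S \subseteq \{1,\dots,n\}} F(U_S)$, where $U_S = \bigcap_{i \in S} U_i$. This holds because the sieve $\bigcup_i h_{U_i}$ is the colimit of the $h_{U_S}$ over the punctured cube of nonempty subsets. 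With the comparison lemma, this shows that $\mathrm{Sh}(\gamma(X))$ is compactly generated by the $h_V$ with $V$ quasi-compact, and that evaluation at such $V$ commutes with filtered colimits. Your basis argument then shows that $\gamma(f)_*$ preserves filtered colimits; equivalently, $\gamma(f)^*$ preserves these compact generators because $\gamma(f)$ is spectral. The detour through $\mathrm{Sh}(\lim P_{\mathrm{Alex}}) \simeq \lim \mathrm{Sh}(P_{\mathrm{Alex}})$ and the push--pull lemma is not needed.
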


\begin{corollary} \label{stablycompactspacecompacttopos}
Let $X$ be a stably compact space. Then $\mathrm{Sh}(X)$ is a compact topos.
\end{corollary}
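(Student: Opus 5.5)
The plan is to apply the preceding theorem to the unique map $X \rightarrow \ast$. The one-point space $\ast$ is compact Hausdorff, hence stably compact. Moreover $\mathrm{Sh}(\ast) \simeq \mathrm{An}$, so the induced geometric morphism $\mathrm{Sh}(X) \rightarrow \mathrm{Sh}(\ast) \simeq \mathrm{An}$ is the unique geometric morphism to the terminal topos. Compactness of $\mathrm{Sh}(X)$ means exactly that this morphism is perfect. It therefore suffices to check that $X \rightarrow \ast$ is a perfect map of stably compact spaces in the sense defined above, i.e.\ that the square with $p_X : \gamma(X) \rightarrow X$ and $p_\ast : \gamma(\ast) \rightarrow \ast$ commutes.

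First I would compute $\gamma(\ast)$. The indexing category consists of continuous maps $\ast \rightarrow P_{\mathrm{Alex}}$ with $P$ finite, i.e.\ pairs $(P,p)$ with $p \in P$. This category has the initial object $(\{\ast\}, \ast)$, since every $(P,p)$ receives a unique map from it. Hence the limit defining $\gamma(\ast)$ is $\{\ast\}_{\mathrm{Alex}} = \ast$, and $i_\ast$ and $p_\ast$ are both the identity. Since $\ast$ is terminal in $\mathrm{Top}$, the two composites $\gamma(X) \rightarrow \ast$ agree automatically, so the square commutes and $X \rightarrow \ast$ is perfect. The cited theorem then shows that $\mathrm{Sh}(X) \rightarrow \mathrm{An}$ is perfect, i.e.\ $\mathrm{Sh}(X)$ is compact.

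There is no real obstacle here. The only point needing care is identifying $\mathrm{Sh}(\ast)$ with $\mathrm{An}$, so that "perfect geometric morphism to $\mathrm{Sh}(\ast)$" really is the definition of compactness. This holds because the locale of a point has opens $\{\emptyset, \ast\}$, and the sheaf condition forces value $\ast$ on $\emptyset$; hence sheaves are just anima, and the global sections functor is evaluation, $\mathrm{Map}(1,-)$.
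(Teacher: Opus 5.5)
Your proposal is correct and matches how the paper intends the corollary: it is stated without a separate proof, as the special case of the preceding theorem applied to $X \rightarrow \ast$, using $\mathrm{Sh}(\ast) \simeq \mathrm{An}$ and the fact that compactness of $\mathrm{Sh}(X)$ means exactly that $\mathrm{Sh}(X) \rightarrow \mathrm{An}$ is perfect. Your explicit computation $\gamma(\ast) \simeq \ast$ is harmless but not needed: perfectness of $X \rightarrow \ast$ is automatic because $\ast$ is terminal, and $\ast$ is already stably compact because it is compact Hausdorff.
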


Since the functor of taking sheaves, as a functor from locales to topoi, is a $2$-functor, it preserves internal adjunctions. Hence we obtain the adjunction 
\[\begin{tikzcd}
	{\mathrm{Sh}(X)} & {\mathrm{Sh}(\gamma(X))}
	\arrow[""{name=0, anchor=center, inner sep=0}, "{p^*}", curve={height=-12pt}, hook, from=1-1, to=1-2]
	\arrow[""{name=1, anchor=center, inner sep=0}, curve={height=12pt}, hook, from=1-1, to=1-2]
	\arrow[""{name=2, anchor=center, inner sep=0}, "{i^*}"{description}, from=1-2, to=1-1]
	\arrow["\dashv"{anchor=center, rotate=-90}, draw=none, from=0, to=2]
	\arrow["\dashv"{anchor=center, rotate=-90}, draw=none, from=2, to=1]
\end{tikzcd}\]
Since then $p^*$ is fully faithful, we obtain the following.
	
\begin{proposition} \label{coveringmonadshapeequivalence}
Let $X$ be stably compact. Then the structure map $\gamma(X) \rightarrow X$ is a shape equivalence.
\end{proposition}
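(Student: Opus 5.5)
The plan is to apply Lemma \ref{contractibleshapeequivalence} to the geometric morphism $p : \mathrm{Sh}(\gamma(X)) \rightarrow \mathrm{Sh}(X)$ induced by the structure map $p : \gamma(X) \rightarrow X$. That lemma says it suffices to show that $p^* : \mathrm{Sh}(X) \rightarrow \mathrm{Sh}(\gamma(X))$ is fully faithful. So the whole task is to transport the full faithfulness of $p^*$ on frames of opens to full faithfulness on topoi of sheaves.

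First, I record the locale-level data. Since $p \circ i = \mathrm{id}_X$ and the retraction is part of an internal adjunction in $\mathrm{Loc}$, we have $i^* \circ p^* \simeq \mathrm{id}_{\mathcal{O}(X)}$ on frames. The adjunction $p^* \dashv i^*$ on $\mathcal{O}$ means that, in the $2$-category $\mathrm{Loc}$, the morphism $p$ is left adjoint to $i$ (or the reverse, depending on conventions). Its unit $\mathrm{id} \Rightarrow i^* p^*$ is an identity because $p^*$ is a fully faithful embedding of posets.

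Second, I apply the $2$-functor $\mathrm{Sh} : \mathrm{Loc} \rightarrow \mathrm{RTop}$. It sends the internal adjunction between $p$ and $i$ to an internal adjunction in $\mathrm{RTop}$, so on pullback functors $p^* \dashv i^*$ as functors between $\mathrm{Sh}(X)$ and $\mathrm{Sh}(\gamma(X))$. Its unit is the image of the unit on locales. Concretely, the unit $\mathrm{id}_{\mathrm{Sh}(X)} \Rightarrow i^* p^*$ is the image under $\mathrm{Sh}$ of the equality $\mathrm{id}_X = p \circ i$, so it is an equivalence. This is precisely what is displayed just before the statement.

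Third, I conclude full faithfulness. For a left adjoint $p^* \dashv i^*$, the functor $p^*$ is fully faithful if and only if the unit is an equivalence. Since $i^* p^* = (p i)^* = \mathrm{id}^*$ and the unit is the canonical identification, $p^*$ is fully faithful. Lemma \ref{contractibleshapeequivalence} then gives that $\Pi_\infty(p)$ is an equivalence.

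The main obstacle is the second step: making precise that $\mathrm{Sh}$ preserves $2$-cells and adjunctions, and identifying the unit correctly. The paper only sketches the $2$-functoriality. If needed, I would instead argue directly. Because $p^*$ on opens has right adjoint $i^*$ on opens, the pushforward $p_*$ on sheaves can be computed as $p_*F(U) = F(p^{-1}U)$, and likewise $i^*$ is computed by restricting along $i^*$ on the basis. This gives an explicit equivalence $i^* \simeq p_*$, i.e.\ $p_* \simeq i^*$. Then $p_* p^* \simeq i^* p^* \simeq (p \circ i)^* \simeq \mathrm{id}$, so $p^*$ is fully faithful, which finishes the proof.
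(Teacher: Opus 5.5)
Your proposal is correct and follows the paper's argument: the $2$-functor $\mathrm{Sh}$ carries the frame-level adjunction $p^* \dashv i^*$, whose unit is the identity because $p \circ i = \mathrm{id}_X$, to an adjunction $p^* \dashv i^*$ of pullback functors with invertible unit, so $p^*$ is fully faithful and Lemma \ref{contractibleshapeequivalence} applies. Your fallback computation $i^* \simeq p_*$ is also sound, since the presheaf pullback along $i$ is precomposition with $p^*$ (the left adjoint of $i^*$ on frames) and this is already a sheaf; there, however, the abstract equivalence $p_* p^* \simeq \mathrm{id}$ gives full faithfulness only via the standard fact that any natural equivalence $RL \simeq \mathrm{id}$ forces the unit of $L \dashv R$ to be invertible.
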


The remaining theorem is an identification of the shape of finite posets equipped with the Alexandroff topology.

\begin{theorem}[{\cite[Example A.12]{Aoki_2023}}] \label{sheavesonfiniteposets}
Let $P$ be a finite poset. Then there exists a natural equivalence $\mathrm{Sh}(P_\mathrm{Alex}) \simeq \mathrm{Fun}(P,\mathrm{An})$. In particular, it holds that $\Pi_\infty( P_\mathrm{Alex} ) \cong |P| \in \mathrm{Pro}(\mathrm{An}).$
\end{theorem}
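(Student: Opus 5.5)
We need to prove: for a finite poset $P$, $\mathrm{Sh}(P_\mathrm{Alex}) \simeq \mathrm{Fun}(P,\mathrm{An})$, and hence $\Pi_\infty(P_\mathrm{Alex}) \cong |P|$.

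The plan is to use the basis comparison lemma (as in Corollary \ref{locallycontractibleislocallycontractible}) with the basis of principal opens $\mathcal{B} = \{P_{p\geq}\}_{p \in P}$, together with the empty set.

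\emph{Step 1: the basis and its topology.} The principal opens $P_{p\geq}$ form a basis of $P_\mathrm{Alex}$, since every upward closed set is the union of the principal opens of its elements. Finiteness of $P$ is used here: every point $p$ has a \emph{smallest} open neighbourhood, namely $P_{p\geq}$. The assignment $p \mapsto P_{p\geq}$ is an order-reversing embedding, so $\mathcal{B}\setminus\{\emptyset\} \cong P^{\op}$ as a poset.

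The basis is not closed under binary intersections: $P_{p\geq}\cap P_{q\geq}$ is generally a union of several principal opens. I would therefore use the version of the comparison lemma \cite[Lemma C.3]{Hoyois_2014} that applies to any basis, with the induced Grothendieck topology. The key observation is that any covering sieve of $P_{p\geq}$ by basic opens must contain $P_{p\geq}$ itself. Indeed, the point $p$ lies in some member $P_{q\geq}$ of the cover, so $q \le p$ and hence $P_{p\geq}\subset P_{q\geq}$; since the member is also contained in $P_{p\geq}$, it equals $P_{p\geq}$. Consequently the only covering sieves on nonempty basic opens are the maximal ones. The empty open is covered by the empty family, which forces $F(\emptyset)\simeq\ast$.

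\emph{Step 2: identifying the sheaves.} A sheaf on $\mathcal{B}$ is therefore exactly a presheaf $F$ on $\mathcal{B}$ with $F(\emptyset)\simeq \ast$, with no further condition. Such presheaves are the same as functors $(P^{\op})^{\op} = P \to \mathrm{An}$. This gives $\mathrm{Sh}(P_\mathrm{Alex}) \simeq \mathrm{PSh}(P^{\op}) \simeq \mathrm{Fun}(P,\mathrm{An})$.

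This step is where I expect the main care to be needed. The comparison lemma should be stated for bases that are not closed under intersection, or else the argument has to be run directly. Alternatively, one can verify directly that restriction to $\mathcal{B}$ is an equivalence: a sheaf on $P_\mathrm{Alex}$ is the right Kan extension of its restriction to $\mathcal{B}$, because every open $U$ is canonically the colimit of the principal opens contained in it. Conversely, for any functor on $\mathcal{B}$, its right Kan extension is a sheaf, since each point has a minimal neighbourhood. Naturality in $P$ is straightforward, since monotone maps pull principal opens back to upward closed sets.

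\emph{Step 3: the shape.} $\mathrm{Fun}(P,\mathrm{An}) = \mathrm{PSh}(P^{\op})$ is a presheaf topos. The global sections functor is $\lim_P$, computed by evaluating the right Kan extension at the top open $P$ itself. By the earlier example computing the shape of presheaf topoi, $\Pi_\infty \simeq |P^{\op}| \simeq |P|$, because geometric realization is invariant under taking opposites. Alternatively, one can observe that $\mathrm{Map}(1,\mathcal{X}^*A)=\lim_{P} \mathrm{const}A \simeq \mathrm{Map}(|P|,A)$ directly.
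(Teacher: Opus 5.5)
\textbf{There is a genuine gap in the essential-surjectivity half of Step 2.}

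Your Steps 1 and 3 are fine, and so is half of Step 2. The right Kan extension $G\mapsto \bigl(U\mapsto \lim_{p\in U}G(p)\bigr)$ from the principal opens is fully faithful and lands in sheaves, because $P_{p\geq}$ is the smallest open containing $p$. Together with restriction $F\mapsto \bigl(p\mapsto F(P_{p\geq})\bigr)$, this exhibits $\mathrm{Fun}(P,\mathrm{An})$ as a left exact localization of $\mathrm{Sh}(P_\mathrm{Alex})$.

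What is missing is that \emph{every} $\infty$-sheaf is right Kan extended from the principal opens. The comparison lemma does not give you this. The paper uses \cite[Lemma C.3]{Hoyois_2014} only for bases closed under binary intersections (Corollary~\ref{locallycontractibleislocallycontractible}), and the principal opens are not closed under intersection. Without that closure, the \v{C}ech nerve of a basic cover contains opens such as $P_{p\geq}\cap P_{q\geq}$. These must be covered again, and so on, so you only get hypercovers. In general the comparison therefore holds only at the level of hypersheaves. Here the induced topology on the basis is trivial, so the basis site gives a presheaf topos, which is automatically hypercomplete.

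Concretely, $F(P_{p\geq})$ is the stalk $F_p$. So your claim says that the points of $P$ jointly detect equivalences of $\infty$-sheaves. That fails for $\infty$-connective maps whenever $\mathrm{Sh}(P_\mathrm{Alex})$ is not hypercomplete. Your alternative justification has the same problem. The equality $U=\bigcup_{p\in U}P_{p\geq}$ holds as a join in the frame. What you need is $y(U)\simeq\colim_{p\in U}y(P_{p\geq})$ in $\mathrm{Sh}(P_\mathrm{Alex})$, and by Yoneda this is equivalent to the statement you are trying to prove.

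A clear symptom is that your argument never uses finiteness of $P$. Your remark that finiteness is needed for smallest neighbourhoods is incorrect: $P_{p\geq}$ is open in the Alexandroff topology of any poset. Yet the Warning after the statement, citing \cite[Example A.13]{Aoki_2023}, says the equivalence fails for arbitrary posets. So the argument as written proves too much.

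To repair it, use finiteness in the essential-surjectivity step. For example, prove $F(U)\simeq\lim_{q\in U}F(P_{q\geq})$ for every sheaf $F$ and every open $U$ by induction on $|U|$, with $U=\emptyset$ as the base case.
\begin{itemize}
\item For $U\neq\emptyset$, choose $p$ minimal in $U$. Then $U'=U\setminus\{p\}$ and $P_{p>}=P_{p\geq}\setminus\{p\}$ are open, $U=P_{p\geq}\cup U'$, and $P_{p\geq}\cap U'=P_{p>}$.
\item The sheaf condition gives $F(U)\simeq F(P_{p\geq})\times_{F(P_{p>})}F(U')$.
\item On the other side, the nerve of the poset $U$ is the union of the nerves of $P_{p\geq}$ and $U'$ along that of $P_{p>}$, since every chain starting at $p$ stays in $P_{p\geq}$. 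Also $p$ is initial in $P_{p\geq}$. Hence $\lim_U$ decomposes as the same pullback.
\item The induction hypothesis applies to $U'$ and $P_{p>}$.
\end{itemize}
Alternatively, a finite space is Noetherian of finite Krull dimension, so $\mathrm{Sh}(P_\mathrm{Alex})$ is hypercomplete by Lurie's results on homotopy dimension \cite[Section 7.2]{Lurie2009HTT}. Stalks at the points of $P$ then detect equivalences.
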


\begin{warning}
Theorem \ref{sheavesonfiniteposets} contains a subtle point, in that it is false for arbitrary posets, see \cite[Example A.13]{Aoki_2023}.
\end{warning}
	
\begin{theorem} \label{shapeofstablycompact}
Let $X$ be a stably compact space. Then
\[ \Pi_\infty(X) \simeq \text{``}\lim_{(P,\mathcal{U}) \emph{\text{ finite }P\text{-indexed stratification of }}X} \text{''} ~|P| \]
where the limit in question goes over the category of finite stratifications of $X$.
\end{theorem}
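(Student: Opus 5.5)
The plan is to combine Proposition \ref{coveringmonadshapeequivalence}, Theorem \ref{shapeinverselimits} and Theorem \ref{sheavesonfiniteposets}. By Proposition \ref{coveringmonadshapeequivalence}, the retraction $p : \gamma(X) \rightarrow X$ is a shape equivalence, so it suffices to compute $\Pi_\infty(\gamma(X))$. By definition, $\gamma(X) = \lim P_\mathrm{Alex}$, where the limit runs over the category $J$ of finite stratifications $X \rightarrow P_\mathrm{Alex}$. This category is cofiltered:
\begin{itemize}
\item two stratifications $X \rightarrow P_\mathrm{Alex}$ and $X \rightarrow Q_\mathrm{Alex}$ factor through the common refinement $(P\times Q)_\mathrm{Alex}$, since the Alexandroff topology on a finite product of posets is the product topology;
\item parallel arrows are equalized by passing to the image poset, i.e.\ the subposet through which the map from $X$ factors, which is again finite.
\end{itemize}

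The main step is to pass from the limit of spaces to a limit of topoi. We need
\[ \mathrm{Sh}(\gamma(X)) \simeq \lim_{J} \mathrm{Sh}(P_\mathrm{Alex}) \quad \text{in } \mathrm{RTop}. \]
I would argue at the level of locales. Each $P_\mathrm{Alex}$ is a finite, hence spectral, space, and the transition maps are spectral. The cofiltered limit of spectral spaces in $\mathrm{Top}$ is spectral and agrees with the limit in $\mathrm{Loc}$: its frame of opens is the ideal completion of the filtered colimit of the finite distributive lattices $\mathcal{O}(P_\mathrm{Alex})$, which is exactly how limits of coherent locales are computed. It remains to see that $\mathrm{Sh} : \mathrm{Loc} \rightarrow \mathrm{RTop}$ preserves this particular cofiltered limit. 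This is the expected obstacle, since $\mathrm{Sh}$ is only a right adjoint to $\mathrm{Open}$, which goes the wrong way for limits.

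I would handle this obstacle by checking the universal property directly, using coherent bases. The quasi-compact opens of $\gamma(X)$ are pulled back from some stage $P_\mathrm{Alex}$, and they form a basis closed under finite intersections. By the comparison lemma, $\mathrm{Sh}(\gamma(X))$ is then sheaves on the filtered union of the finite lattices $\mathcal{O}(P_\mathrm{Alex})$ with the finite-cover topology. The limit of topoi is, via left adjoints and Lurie's construction of cofiltered limits in $\mathrm{RTop}$, the topos of sheaves on the colimit of these sites. If this identification resists, a fallback is to cite the standard fact that cofiltered limits of coherent topoi along coherent morphisms are computed by the colimit of sites.

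Next, the hypotheses of Theorem \ref{shapeinverselimits} hold for this diagram:
\begin{itemize}
\item each $\mathrm{Sh}(P_\mathrm{Alex}) \simeq \mathrm{Fun}(P,\mathrm{An})$ is compact, because its terminal object is the colimit over $P$ of the representables, and the representable at a minimal element is compact in the presheaf category, as is any finite colimit of representables;
\item the transition maps $P_\mathrm{Alex} \rightarrow Q_\mathrm{Alex}$ are maps between finite (hence stably compact) spaces that are perfect in the stably compact sense, since finite $T_0$ spaces are their own $\gamma$, so the earlier cited theorem (Aoki) shows that the induced geometric morphisms are perfect.
\end{itemize}

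Finally, Theorem \ref{shapeinverselimits} gives
\[ \Pi_\infty(\gamma(X)) \simeq \lim_J \Pi_\infty(P_\mathrm{Alex}), \]
and Theorem \ref{sheavesonfiniteposets} identifies each term as $|P|$. Since each $|P|$ is a constant pro-object, the limit in $\mathrm{Pro}(\mathrm{An})$ of these constant objects is the formal limit $\text{``}\lim_J\text{''}|P|$, which is the claimed formula.
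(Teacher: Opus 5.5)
Your proposal is correct and follows the paper's proof, which simply combines Proposition \ref{coveringmonadshapeequivalence} with Theorem \ref{shapeinverselimits} (and, implicitly, Theorem \ref{sheavesonfiniteposets}); you also supply the checks the paper leaves tacit, namely cofilteredness of the indexing category, compactness of each $\mathrm{Sh}(P_\mathrm{Alex})$, and perfectness of the transition maps. One correction: the step you flag as the main obstacle is not one, because $\mathrm{Sh} : \mathrm{Loc} \rightarrow \mathrm{RTop}$ is the \emph{right} adjoint of $\mathrm{Open}$ (the localic reflection) and so preserves all limits, as the paper itself notes in the homotopy-invariance corollary; the only genuine point is that the limit $\gamma(X)$ computed in $\mathrm{Top}$ agrees with the limit in $\mathrm{Loc}$, and your spectral-space and ideal-completion argument establishes this correctly.
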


\begin{proof}
This now follows directly from combining Proposition \ref{coveringmonadshapeequivalence} with Theorem \ref{shapeinverselimits}.
\end{proof}

More generally, a space $X$ will be called \emph{stably locally compact} if it arises as an open subspace of a stably compact space. This includes for instance locally compact Hausdorff spaces. Again, for more information see \cite[Section 3.2]{Lehner2026KTheoryStablyCompact}. A compact subspace $K \subset X$ is called \emph{saturated} if it holds that
\[ K = \bigcap_{K \subset U \text{ open}} U. \]
Saturated compact subspaces of stably locally compact spaces are automatically stably compact.

\begin{proposition} \label{openapproximation}
Let $X$ be a stably locally compact space, and $K \subset X$ a saturated compact subspace. Then
\[ \Pi_\infty(K) \cong \mathrm{lim}_{K \subset U \emph{\text{ open}}}  \Pi_\infty(U) \]
with the limit taken in $\mathrm{Pro}(\mathrm{An})$.
\end{proposition}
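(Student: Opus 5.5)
The plan is to reduce to Theorem \ref{shapeinverselimits} by realizing $\mathrm{Sh}(K)$ as a cofiltered limit of topoi, and then to repair the failure of compactness of the $U$'s by inserting compact neighbourhoods.

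First, I would show that $\mathrm{Sh}(K) \simeq \lim_{K \subset U} \mathrm{Sh}(U)$ in $\mathrm{RTop}$. Since $\mathrm{Sh} : \mathrm{Loc} \to \mathrm{RTop}$ is a right adjoint, it preserves limits. It therefore suffices to show that the sublocale $K$ is the intersection of the open sublocales $U \supset K$. On the level of spaces this is the saturation condition $K = \bigcap_{K \subset U} U$. On the level of locales, compactness of $K$ is what makes this intersection spatial, as in the Hofmann--Mislove correspondence between saturated compacta and Scott-open filters. I expect this localic identification to be the main technical obstacle. The first thing I would try is to check that restriction $\mathcal{O}(X) \to \mathcal{O}(K)$ exhibits $\mathcal{O}(K)$ as the colimit of the frames $\mathcal{O}(U)$.

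Second, the $U$ are not compact, so Theorem \ref{shapeinverselimits} does not apply directly, and the remark after it shows that compactness cannot simply be dropped. To get around this, I would use that $X$ is stably locally compact. For every open $U \supset K$ there is then an open $V$ and a saturated compact $L$ with $K \subset V \subset L \subset U$. Consequently the opens $U \supset K$ and the saturated compacta $L$ that are neighbourhoods of $K$ are mutually cofinal. Cofinality gives equivalences in $\mathrm{Pro}(\mathrm{An})$
\[ \lim_{U} \Pi_\infty(U) \simeq \lim_{L} \Pi_\infty(L), \]
because the interleaved maps $\Pi_\infty(V) \to \Pi_\infty(L) \to \Pi_\infty(U)$ let each pro-system factor through the other. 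The same interleaving gives $K = \bigcap L$, so $\mathrm{Sh}(K) \simeq \lim_L \mathrm{Sh}(L)$ by the first step.

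Third, I would apply Theorem \ref{shapeinverselimits} to the diagram $L \mapsto \mathrm{Sh}(L)$. Each $L$ is stably compact, so $\mathrm{Sh}(L)$ is compact by Corollary \ref{stablycompactspacecompacttopos}. The inclusions $L' \subset L$ of saturated compacta are perfect maps of stably compact spaces, so the cited theorem of Aoki and Lehner shows that the induced geometric morphisms are perfect. Verifying this perfectness is a secondary check, via the patch topology or $\gamma$. Combining the three steps yields $\Pi_\infty(K) \cong \lim_L \Pi_\infty(L) \cong \lim_U \Pi_\infty(U)$.
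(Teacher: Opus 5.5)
Your proposal is correct and follows the paper's proof: interpolate $K \subset V \subset L \subset U$ to make the open and the saturated compact neighbourhoods of $K$ mutually cofinal, then apply Theorem~\ref{shapeinverselimits} to the compact topoi $\mathrm{Sh}(L)$. The paper leaves implicit the checks you spell out, namely compactness of each $\mathrm{Sh}(L)$, perfectness of the inclusions, and $\mathrm{Sh}(K)\simeq\lim_L \mathrm{Sh}(L)$. Your Step~1 does hold here: for $x$ in an open $W_0$ with $W_0\cap K\subseteq W$, choose a compact saturated neighbourhood $Q\subseteq W_0$ of $x$ and separate $K$ from $Q\setminus W$ in the patch topology to get an open $U\supseteq K$ with $U\cap Q\subseteq W$. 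This shows that the restriction nucleus of $K$ agrees with the nucleus of the meet of the open sublocales $U\supseteq K$.
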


\begin{proof}
Since $X$ is stably locally compact, we can find for every $K \subset U$ with $U$ open, an open $V$ and saturated compact $K'$ such that
\[ K \subset V \subset K' \subset U, \]
see \cite[Proposition 4.8.14]{GoubaultLarrecq2013NonHausdorff}. Hence the systems of open, respectively saturated compact, neighbourhoods of $K$ are mutually cofinal. Therefore 
\[ \mathrm{lim}_{K \subset U \text{ open}}  \Pi_\infty(U) \cong \mathrm{lim}_{K \ll K'}  \Pi_\infty(K') \cong \Pi_\infty(K), \]
where the second limit goes over all saturated compact neighborhoods of $K$, and the last isomorphism follows from Theorem~\ref{shapeinverselimits}. 
\end{proof}

\begin{theorem}
Let $X \subset [0,1]^{S}$ be a closed subset of a Tychonoff cube of cardinality $S$. Then
\[ \Pi_\infty(X) \cong \text{``}\lim_{X \subset U \emph{\text{ open}}} \text{''} \Pi_\infty(U) \in \mathrm{Pro}(\mathrm{An}), \]
where the open sets $U$ have constant shapes.

If $S$ is either finite or $S \cong \mathbb{N}$ is countable, and $X \subset [0,1]^{S}$ is a closed subset, then it also holds that
\[ \Pi_\infty(X) \cong \text{``}\lim_{\varepsilon \rightarrow 0} \text{''} \Pi_\infty(U_\varepsilon(X)) \in \mathrm{Pro}(\mathrm{An}), \]
where the open $\varepsilon$-neighbourhoods $U_\varepsilon(X)$ are taken with respect to a compatible metric and have constant shapes.
\end{theorem}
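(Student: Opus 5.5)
The first claim follows from Proposition \ref{openapproximation}, applied inside the ambient space $[0,1]^S$. Two further facts are needed: a basis of opens of $X$'s neighbourhoods with constant shape, and the fact that every open subset of a Tychonoff cube has constant shape.

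\textbf{Step 1: setting up Proposition \ref{openapproximation}.} The cube $[0,1]^S$ is compact Hausdorff, hence stably compact, and in particular stably locally compact. A closed subset $X$ of it is compact. In a Hausdorff space every compact set is saturated, since it is the intersection of its open neighbourhoods. So Proposition \ref{openapproximation} gives
\[ \Pi_\infty(X) \cong \lim_{X \subset U \text{ open}} \Pi_\infty(U) \]
in $\mathrm{Pro}(\mathrm{An})$.

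\textbf{Step 2: the limit is a formal pro-limit.} The poset of open neighbourhoods of $X$ is cofiltered under intersection. Once each $\Pi_\infty(U)$ is known to be constant, the limit in $\mathrm{Pro}(\mathrm{An})$ of a cofiltered diagram of constant pro-objects is the formal pro-object $\text{``}\lim\text{''}\,\Pi_\infty(U)$. This is because constant objects are cofiltered-limit generators and $\mathrm{Pro}$ freely adjoins cofiltered limits.

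\textbf{Step 3: open sets have constant shape.} I do not expect every open $U$ to be handled directly. Instead I would restrict to a cofinal family of neighbourhoods, which does not change the pro-limit. By compactness of $X$, a cofinal family is given by finite unions of basic open boxes $\prod_{s \in F} (a_s,b_s) \times [0,1]^{S \setminus F}$ with $F$ finite. Any such union has the form $V \times [0,1]^{S\setminus F}$ with $V$ open in $[0,1]^F$. By homotopy invariance, Theorem \ref{homotopyinvariance}, extended to infinite products of intervals via Theorem \ref{shapeinverselimits}, its shape agrees with that of $V$. Now $V$ is an open subset of a finite-dimensional cube, hence a manifold with corners, hence locally contractible with a good cover, so by Proposition \ref{goodopencover} its shape is constant. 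In fact, for the statement as phrased (all opens $U$ have constant shape), I would instead invoke Corollary \ref{locallycontractibleislocallycontractible} together with Theorem \ref{galoistheory}. The product of the locally contractible cube-factors gives a basis of convex boxes, each contractible in shape by homotopy invariance and inverse limits. This makes $\mathrm{Sh}([0,1]^S)$, and therefore each open slice $\mathrm{Sh}(U)$, locally contractible, so every $\Pi_\infty(U)$ is constant.

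\textbf{Step 4: the metrizable case.} For finite or countable $S$, the cube is metrizable. By compactness of $X$, the $\varepsilon$-neighbourhoods $U_\varepsilon(X)$ are cofinal among open neighbourhoods: any open $U \supset X$ contains some $U_\varepsilon(X)$, using the positive distance from $X$ to the complement of $U$. Cofinality then turns the pro-limit of Step 2 into $\text{``}\lim_{\varepsilon\to 0}\text{''}\,\Pi_\infty(U_\varepsilon(X))$.

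\textbf{Main obstacle.} The hardest step is proving that a basic box in $[0,1]^S$ has contractible shape. I would do this by writing the box as a cofiltered limit of finite-dimensional boxes along projection maps. These maps are proper maps between compact Hausdorff spaces, hence perfect, so Theorem \ref{shapeinverselimits} applies. The catch is that open boxes are not compact. I would therefore first treat closed boxes this way, and then pass to open boxes using convexity and a shrinking-box homotopy argument.
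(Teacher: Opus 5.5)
Your proposal is correct and is essentially the paper's proof. It uses Proposition~\ref{openapproximation} in the compact Hausdorff cube, gets constancy of every $\Pi_\infty(U)$ from local contractibility of $\mathrm{Sh}([0,1]^S)$ via a basis of contractible boxes (your second route in Step 3; Theorem~\ref{galoistheory} is not needed, since local contractibility already means every object has constant shape), and uses cofinality of the $U_\varepsilon(X)$ in the metrizable case.

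What you call the main obstacle is immediate. A basic box is convex, hence contractible by the straight-line homotopy (continuous coordinatewise in the product topology), so homotopy invariance for topological spaces gives it trivial shape directly. This also makes your first route unnecessary, which is just as well, because Theorem~\ref{shapeinverselimits} does not apply as stated to the non-compact spaces $V \times [0,1]^G$.
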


\begin{proof}
Note first that the Tychonoff cube is locally contractible, since it has a basis of contractible opens, hence every open $U \subset [0,1]^{S}$ has constant shape. The claim thus immediately follows from Proposition \ref{openapproximation}.

In the specific case where $S$ is either finite or countable, the Tychonoff cube $[0,1]^{S}$ is metrizable. Fix a compatible metric. Since $X$ is a compact subset, it follows that for every open $U$ such that $X \subset U$ there exists an $\varepsilon > 0$ such that $X \subset U_\varepsilon(X) \subset U$, hence the system of $\varepsilon$-neighbourhoods sits cofinally in the system of all open neighbourhoods.
\end{proof}

\subsection{Homotopy invariance of shape}

A perhaps surprising fact is that the compatibility of shape with inverse limits of compact Hausdorff spaces by itself already implies homotopy invariance of the shape. Let us first give a general, well-known lemma regarding homotopy invariance.

\begin{lemma}
Let $\mathcal{C}$ be a full subcategory of either the category of topological spaces or the category of locales that is closed under finite products and contains the interval $[0,1]$. Let $F\colon\mathcal{C}^{\op}\rightarrow\mathrm{Set}$ be a functor. Then the following are equivalent.
\begin{enumerate}
\item For every $X \in \mathcal{C}$ the map $p : [0,1] \rightarrow \mathrm{pt}$ induces a bijection
\[F(\mathrm{id}_X \times p) : F(X) \xrightarrow{\cong}  F(X \times [0,1]). \] 
\item For every $X \in \mathcal{C}$ the maps $i_0 : \mathrm{pt} \hookrightarrow [0,1], i_1 : \mathrm{pt} \hookrightarrow [0,1]$ give the same function
\[ F( \mathrm{id}_X \times i_0 ) = F( \mathrm{id}_X \times i_1 ) : F(X \times [0,1]) \rightarrow  F(X). \]
\end{enumerate}
\end{lemma}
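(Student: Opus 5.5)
We prove the two implications separately. Throughout we use only that $\mathcal{C}$ is closed under finite products and contains $[0,1]$, so that $X \times [0,1]$ and $X \times [0,1] \times [0,1]$ again lie in $\mathcal{C}$. We also use the basic relations $p \circ i_0 = p \circ i_1 = \mathrm{id}_{\mathrm{pt}}$. Since $F$ is contravariant, these give
\[ F(\mathrm{id}_X \times i_0) \circ F(\mathrm{id}_X \times p) = \mathrm{id}_{F(X)} = F(\mathrm{id}_X \times i_1) \circ F(\mathrm{id}_X \times p). \]

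\emph{(1) $\Rightarrow$ (2).} By assumption $F(\mathrm{id}_X \times p)$ is a bijection. By the display above, both $F(\mathrm{id}_X \times i_0)$ and $F(\mathrm{id}_X \times i_1)$ are left inverses of this bijection. A bijection has a unique left inverse, namely its inverse, so both maps equal $F(\mathrm{id}_X \times p)^{-1}$ and in particular coincide.

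\emph{(2) $\Rightarrow$ (1).} The display already shows that $F(\mathrm{id}_X \times p)$ is injective, with left inverse $F(\mathrm{id}_X \times i_0)$. It remains to show that
\[ F(\mathrm{id}_X \times p) \circ F(\mathrm{id}_X \times i_0) = F\bigl(\mathrm{id}_X \times (i_0 \circ p)\bigr) \]
is the identity of $F(X \times [0,1])$. For this we use the contraction $h : [0,1] \times [0,1] \to [0,1]$, $h(s,t) = st$, which is a continuous map of spaces and hence also a morphism of locales. Restricting $h$ along the two inclusions of the second factor gives
\[ h \circ (\mathrm{id} \times i_0) = i_0 \circ p, \qquad h \circ (\mathrm{id} \times i_1) = \mathrm{id}_{[0,1]}. \]
Now put $H = \mathrm{id}_X \times h : (X \times [0,1]) \times [0,1] \to X \times [0,1]$ and apply hypothesis (2) to the object $Y = X \times [0,1] \in \mathcal{C}$. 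This gives $F(\mathrm{id}_Y \times i_0) = F(\mathrm{id}_Y \times i_1)$. Precomposing both sides with $F(H)$ and using the two restriction identities yields
\[ F\bigl(\mathrm{id}_X \times (i_0 \circ p)\bigr) = F(\mathrm{id}_Y \times i_0) \circ F(H) = F(\mathrm{id}_Y \times i_1) \circ F(H) = F(\mathrm{id}_{X \times [0,1]}) = \mathrm{id}. \]
Hence $F(\mathrm{id}_X \times p)$ is also surjective, so it is a bijection.

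The only step needing care arises in the case of locales. There one must check that $h$ is a genuine locale morphism, which holds because $[0,1]$ is a spatial locale. One must also check that the identities $h \circ (\mathrm{id} \times i_\epsilon) = \ldots$ hold in $\mathrm{Loc}$, where products are localic products. Since $[0,1]$ is locally compact, the localic product $[0,1] \times [0,1]$ is spatial and agrees with the topological square, and the identities hold there. For general $X$, the identities then follow by functoriality of $X \times -$. Everything else is formal.
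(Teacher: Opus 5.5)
Your proof is correct and follows the paper's argument essentially step for step. In both, $F(\mathrm{id}_X\times i_0)$ and $F(\mathrm{id}_X\times i_1)$ are left inverses of $F(\mathrm{id}_X\times p)$, so uniqueness of inverses gives (1)$\Rightarrow$(2); for the converse, the multiplication map $(s,t)\mapsto st$ combined with hypothesis (2) at $X\times[0,1]$ shows that $F(\mathrm{id}_X\times i_0)$ is also a right inverse. Your extra remark that this map is a genuine locale morphism on the localic square, because $[0,1]$ is locally compact, is a point the paper leaves implicit.
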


\begin{proof}
First note that $\mathrm{id}_X \times i_0$ and $\mathrm{id}_X \times i_1$ are both right inverses to the map $\mathrm{id}_X \times p : X \times [0,1] \rightarrow X$. When applying the contravariant functor $F$, therefore the functions $F( \mathrm{id}_X \times i_0 )$ and $F( \mathrm{id}_X \times i_1 )$ are left inverses to $F(\mathrm{id}_X \times p)$.

First assume (1) holds. If the map $F(\mathrm{id}_X \times p)$ is invertible, its left inverses are unique, hence (2) follows.

Now write 
\[ \begin{array}{rcl}
 m : [0,1]^2 & \rightarrow & [0,1] \\
 	(s,t) & \mapsto & st.
\end{array} \]
Then it holds that
\[m ( \mathrm{id}_{[0,1]} \times i_0 ) =  i_0 p  \qquad \text{ and } \qquad m ( \mathrm{id}_{[0,1]} \times i_1 ) =  \mathrm{id}_{[0,1]}. \]
Assume (2) holds, which implies that
\[ F( \mathrm{id}_X \times \mathrm{id}_{[0,1]} \times  i_0 ) = F( \mathrm{id}_X \times \mathrm{id}_{[0,1]} \times  i_1 ) : F(X \times [0,1] \times [0,1]) \rightarrow  F(X \times [0,1]). \]
Therefore
\[ \begin{array}{rcl}
F( \mathrm{id}_X \times p ) F( \mathrm{id}_X \times i_0 ) & = & F( \mathrm{id}_X \times i_0 p ) \\
& = & F( \mathrm{id}_X \times m ( \mathrm{id}_{[0,1]} \times i_0 ) ) \\
& = & F( \mathrm{id}_X \times \mathrm{id}_{[0,1]} \times i_0  ) F( \mathrm{id}_X \times m ) \\
& = & F( \mathrm{id}_X \times \mathrm{id}_{[0,1]} \times i_1  ) F( \mathrm{id}_X \times m ) \\
& = & F( \mathrm{id}_X \times m ( \mathrm{id}_{[0,1]} \times i_1 ) ) \\
& = & F( \mathrm{id}_X \times \mathrm{id}_{[0,1]} )
\end{array} \]
hence $F( \mathrm{id}_X \times i_0 )$ is also a right inverse to $F( \mathrm{id}_X \times p )$, and thus $F( \mathrm{id}_X \times p )$ is bijective.
\end{proof}

The following lemma is due to Hoyois; see \cite[Proposition 4.7]{arnone2026bredonsheafcohomology} for an alternative reference.\footnote{We thank Thorger Geiß for informing us about this lemma, and its potential application to homotopy invariance of shape.}

\begin{lemma}[Hoyois]\label{HoyoisLemma}
Let $\mathcal{C}$ be a full subcategory of either the category of topological spaces or the category of locales that is closed under finite products and contains the interval $[0,1]$. Let $F\colon\mathcal{C}^{\op}\rightarrow\mathcal{E}$ be a functor, where $\mathcal{E}$ is a compactly assembled presentable $\infty$-category. If $F$ sends cofiltered intersections of closed subspaces to filtered colimits, then $F$ is homotopy-invariant.
\end{lemma}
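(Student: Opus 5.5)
The plan is to reduce the statement, in three moves, to the set-valued criterion of the preceding lemma, and then to prove condition (2) of that lemma by a point-set argument on $[0,1]$.

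\emph{Step 1: pass to a compactly generated target.} Since $\mathcal{E}$ is compactly assembled, the colimit functor $\mathrm{colim} : \mathrm{Ind}(\mathcal{E}) \rightarrow \mathcal{E}$ has a fully faithful left adjoint $\hat{y}$. Being a left adjoint, $\hat{y}$ preserves all colimits, in particular filtered ones. Hence $F' := \hat{y} \circ F$ again sends cofiltered intersections of closed subspaces to filtered colimits. By full faithfulness of $\hat{y}$, the map $F(\mathrm{id}_X \times p)$ is an equivalence iff $F'(\mathrm{id}_X \times p)$ is. So we may assume $\mathcal{E}$ is compactly generated; this needs some care with size, but it suffices to work in $\mathrm{Ind}(\mathcal{E}^{\kappa})$ for a large enough $\kappa$.

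\emph{Step 2: reduce to set-valued functors.} In a compactly generated $\infty$-category, a map is an equivalence iff it induces bijections on $\pi_0 \mathrm{Map}(c,-)$ for all compact $c$. Compact objects are closed under finite colimits, and $c \otimes S^n$ is compact whenever $c$ is, so these functors also see all higher homotopy groups. For each compact $c$ we set
\[ G_c := \pi_0 \mathrm{Map}(c, F'(-)) : \mathcal{C}^{\op} \rightarrow \mathrm{Set}. \]
Since $c$ is compact and $\pi_0$ commutes with filtered colimits, $G_c$ still sends cofiltered intersections of closed subspaces to filtered colimits of sets. It therefore suffices to prove the statement for set-valued functors $G$ with this continuity property. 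By the preceding lemma, this amounts to showing $G(\mathrm{id}_X \times i_0) = G(\mathrm{id}_X \times i_1)$ on $G(X \times [0,1])$ for every $X \in \mathcal{C}$.

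\emph{Step 3: the interval argument, which I expect to be the main obstacle.} Fix $z \in G(X \times [0,1])$ and write $r_t(z) \in G(X)$ for its restriction along $X \times \{t\} \hookrightarrow X \times [0,1]$. The continuity hypothesis gives
\[ G(X \times \{t\}) \cong \mathrm{colim}_{\varepsilon \rightarrow 0} G(X \times [t-\varepsilon, t+\varepsilon]). \]
Combined with the self-similarity $X \times [a,b] \cong X \times [0,1]$, this says that $r_t$ depends only on the germ of $z$ near $t$.

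I would apply this to the rescaled element $y := G(\mathrm{id}_X \times m)(z) \in G(X \times [0,1] \times [0,1])$, where $m(s,t) = st$ as in the previous lemma. In the last variable, $y$ interpolates between $p^* r_0(z)$ at $0$ and $z$ at $1$. The aim is to show that
\[ T := \{ t \in [0,1] ~|~ \text{the restrictions of } y \text{ at } 0 \text{ and at } t \text{ agree, uniformly in } X \} \]
is open and closed, using compactness of $[0,1]$ and the colimit description around each point (openness from the germ statement, closedness from a finite subdivision / supremum argument), so that $T = [0,1]$.

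The delicate point is that continuity only gives information about germs at single points. Agreement at two nearby points $s, t$ must come from a single element on a small interval $J \ni s,t$, which after rescaling $J \cong [0,1]$ reproduces the original problem. I would first try to break this circularity using the uniformity over all $X \in \mathcal{C}$ (closure under products, so $X$ may be replaced by $X \times [0,1]$) together with the multiplication map $m$. If that does not close up, I would instead present $\{t\}$ as an intersection of the nested closed intervals and use a dyadic subdivision argument.
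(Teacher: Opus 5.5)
Step 1 is fine; it is the paper's retract argument in different clothing. The real gap is Step 3, which carries all the content: you do not carry it out, and the route you sketch misses the idea that makes it work. You only use the trivial half of the hypothesis, namely that $r_t(z)$ depends on the germ of $z$ at $t$; that is just functoriality. The hypothesis gives the converse. With $J_\varepsilon=[t-\varepsilon,t+\varepsilon]\cap[0,1]$, the map $\mathrm{colim}_{\varepsilon>0}\, G(X\times J_\varepsilon)\to G(X\times\{t\})$ is a bijection, and in particular injective, so the germ of $z$ at $t$ is \emph{determined} by $r_t(z)$. Apply this to two elements of $G(X\times J_\varepsilon)$: $z|_{X\times J_\varepsilon}$, and the pulled-back element $\mathrm{pr}^*r_t(z)$ along $\mathrm{pr}\colon X\times J_\varepsilon\to X$. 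Both restrict to $r_t(z)$ on $X\times\{t\}$. A filtered colimit of sets identifies two elements only if they already agree at some stage, so there is $\varepsilon_t>0$ with $z|_{X\times J_{\varepsilon_t}}=\mathrm{pr}^*r_t(z)$. Now restrict to $X\times\{s\}$ for $s\in J_{\varepsilon_t}$. Since $X\times\{s\}\hookrightarrow X\times J_{\varepsilon_t}\to X$ is the canonical identification, this gives $r_s(z)=r_t(z)$. Hence $t\mapsto r_t(z)$ is locally constant on $[0,1]$, so it is constant by connectedness (or by compactness and a chain of overlapping intervals), and $r_0(z)=r_1(z)$, which is condition (2). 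This is the paper's argument; there one first replaces $G$ by $G(X\times -)$ to reduce to $X=\mathrm{pt}$. The circularity you worry about never arises: on the small interval the element is not arbitrary but equals a pulled-back one, and for such elements all point restrictions agree tautologically. No rescaling is needed, nor the map $m$ (which only enters the preceding lemma), nor an open--closed set $T$ ``uniformly in $X$''.

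Second, the detection claim in Step 2 is false as stated. $\pi_0\mathrm{Map}(c\otimes S^n,E)\cong\pi_0\mathrm{Map}(S^n,\mathrm{Map}(c,E))$ consists of \emph{free} homotopy classes, i.e.\ $\pi_n$ modulo the $\pi_1$-action. The functors $\pi_0\mathrm{Map}(c,-)$ for compact $c$ do not detect equivalences even in $\mathrm{An}$. For example, take the inclusion of finitary permutation groups $\mathrm{FSym}(\mathbb{N}\setminus\{0\})\hookrightarrow\mathrm{FSym}(\mathbb{N})$. It induces a bijection on homomorphisms from any finitely generated group modulo conjugation, hence on $\pi_0\mathrm{Map}(K,-)$ of the classifying spaces for every compact anima $K$. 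Yet the induced map of classifying spaces is not an equivalence. The repair is what the paper means by ``being slightly careful about choosing basepoints''. For $x\in\mathrm{Map}(c,F'(X))$, apply your set-valued result to $Y\mapsto\pi_n\bigl(\mathrm{Map}(c,F'(X\times Y)),\mathrm{pr}^*x\bigr)$. This is a functor on $\mathcal{C}^{\mathrm{op}}$ with the same continuity property, since $\pi_n$ of pointed anima commutes with filtered colimits. Then conclude with the $\pi_0$-bijection and Whitehead's theorem.
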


\begin{proof}
First, write $\mathcal{E}$ as a retract $\mathcal{E}\stackrel{i}{\rightarrow}\mathcal{F}\stackrel{r}{\rightarrow}\mathcal{E}$ of a compactly generated category $\mathcal{F}$ in $\Pr^L$. Then $iF\colon\mathcal{C}^{\op}\rightarrow\mathcal{F}$ still satisfies the hypothesis and if $iF$ is homotopy-invariant, then so is $F\simeq riF$, so assume w.l.o.g.\ that $\mathcal{E}$ is compactly generated. Then, picking a set $X_i,\,i\in I$ of compact generators of $\mathcal{E}$, the functors $\Map_{\mathcal{E}}(X_i,-)\colon\mathcal{E}\rightarrow\An$ preserve filtered colimits and are jointly conservative, so assume w.l.o.g.\ that $\mathcal{E}=\An$. Applying the filtered colimit-preserving functor $\pi_0\colon\An\rightarrow\Set$ and the filtered colimit-preserving functors $\pi_n\colon\An_{\ast}\rightarrow\Set$ (being slightly careful about choosing basepoints), the J.H.C. Whitehead theorem allows us to assume w.l.o.g.\ $\mathcal{E}=\Set$.

We claim that $\mathrm{const} : F( X ) \rightarrow F(X \times [0,1])$ is a bijection for all $X \in \mathcal{C}$. Note that the functor $F^{\prime}\coloneqq F(X\times(-))$ satisfies the very same hypothesis as $F$, hence using the previous lemma, w.l.o.g.\ it suffices to show that the inclusions $i_0, i_1 : \mathrm{pt} \hookrightarrow [0,1]$ induce the same map after applying $F$. Let us fix notation. 

Consider the map $p : [0,1] \rightarrow \mathrm{pt}$, which induces the function
\[ \begin{array}{rcl}
\mathrm{const} : F( \mathrm{pt} ) & \rightarrow & F([0,1]) \\
c & \mapsto & \mathrm{const}_c
\end{array} \]
and the inclusions $\{s\} \hookrightarrow [0,1]$, which induce the functions
\[ \begin{array}{rcl}
F([0,1]) & \rightarrow & F( \mathrm{pt} )  \\
\alpha & \mapsto & \alpha(s).
\end{array} \]

Let $\alpha \in F([0,1])$. We want to argue that $\alpha(0) = \alpha(1) \in F(\mathrm{pt})$. Fix $s \in [0,1]$. Since 
\[ \mathrm{colim}_{ \varepsilon > 0} F([s-\varepsilon, s + \varepsilon] \cap [0,1]) \cong F(\{s\}), \]
there exists some $\varepsilon_s > 0$ such that 
\[ \alpha|_{[s-\varepsilon_s, s + \varepsilon_s] \cap [0,1]} = \mathrm{const}_{c_s} \in F([s-\varepsilon_s, s + \varepsilon_s] \cap [0,1]) \]
where $c_s = \alpha(s)$. The sets $((s-\varepsilon_s, s + \varepsilon_s) \cap [0,1])_{s \in [0,1]}$ form an open cover of $[0,1]$, hence a finite set $\{s_1, \cdots, s_n\} \subset [0,1]$ suffices for the corresponding open intervals to cover $[0,1]$. Since the interval is connected, it thus follows that
\[ c_{s_i} = c_{s_j} \]
for all $1 \leq i, j \leq n$, and thus $\alpha(0) = \alpha(1)$.
\end{proof}

\begin{corollary}
The canonical map $[0,1] \rightarrow \mathrm{pt}$ is a shape equivalence.
\end{corollary}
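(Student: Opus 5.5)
The plan is to deduce the corollary from Hoyois' Lemma \ref{HoyoisLemma}, applied to the corepresented pieces of the shape. Take $\mathcal{C}$ to be the category of compact Hausdorff spaces, which is closed under finite products and contains $[0,1]$. For a fixed anima $A$, consider the functor
\[ F_A : \mathcal{C}^{\op} \rightarrow \mathrm{An}, \qquad X \mapsto \mathrm{Map}_{\mathrm{Pro}(\mathrm{An})}(\Pi_\infty(X), A) \simeq \mathcal{X}_*\mathcal{X}^*(A), \qquad \mathcal{X} = \mathrm{Sh}(X). \]
The target $\mathrm{An}$ is compactly generated, hence compactly assembled. If each $F_A$ is homotopy invariant, then $F_A(\mathrm{pt}) \rightarrow F_A([0,1])$ is an equivalence for every $A$. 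The corresponding map of functors $\mathrm{An} \rightarrow \mathrm{An}$ is then a natural equivalence, and by the equivalence $\mathrm{Pro}(\mathrm{An}) \simeq \mathrm{Fun}^{\mathrm{lex,acc}}(\mathrm{An},\mathrm{An})^{\op}$ this says exactly that $\Pi_\infty([0,1]) \rightarrow \Pi_\infty(\mathrm{pt}) = \ast$ is an equivalence.

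The key step is to verify the hypothesis of Lemma \ref{HoyoisLemma}: $F_A$ must send cofiltered intersections of closed subspaces to filtered colimits.
\begin{enumerate}
\item Let $K = \bigcap_i K_i$ be a cofiltered intersection of closed subspaces of some $Y \in \mathcal{C}$. Then $K \cong \lim_i K_i$ in $\mathrm{Top}$, and all the spaces involved are compact Hausdorff.
\item I will check that this limit is also a limit of locales. That $\mathrm{Sh}$ preserves it is then automatic, since $\mathrm{Sh} : \mathrm{Loc} \rightarrow \mathrm{RTop}$ is a right adjoint (to $\mathrm{Open}$) and so preserves all limits.
\item By Remark \ref{propercontinuousispropergeometric}, each $\mathrm{Sh}(K_i)$ is compact and each transition morphism is proper, hence perfect.
\item Theorem \ref{shapeinverselimits} then gives $\Pi_\infty(K) \simeq \lim_i \Pi_\infty(K_i)$ in $\mathrm{Pro}(\mathrm{An})$.
\item Mapping into the constant object $A$ turns this cofiltered limit into a filtered colimit: $F_A(K) \simeq \colim_i F_A(K_i)$, since cofiltered limits in $\mathrm{Pro}(\mathrm{An})$ are filtered colimits of the corresponding functors, computed objectwise.
\end{enumerate}

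The step needing the most care is item 2, identifying the topological limit with the localic one. Compact Hausdorff spaces embed fully faithfully into locales, since they are sober and spatial. Cofiltered limits of compact Hausdorff locales are again compact regular, hence spatial, and therefore agree with the topological limit. If this is unclear, I would instead take $\mathcal{C}$ to be compact Hausdorff locales from the start. Lemma \ref{HoyoisLemma} explicitly allows locales, and then only the closed-sublocale intersection needs matching, which is formal.
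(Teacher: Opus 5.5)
Your proposal is correct and follows the paper's proof. Both apply Lemma \ref{HoyoisLemma} on compact Hausdorff spaces to $X \mapsto \mathrm{Map}_{\mathrm{Pro}(\mathrm{An})}(\Pi_\infty(X), A)$, take the required hypothesis from Theorem \ref{shapeinverselimits}, and let $A$ vary to conclude $\Pi_\infty([0,1]) \simeq \ast$; your extra steps (matching the topological intersection with the localic and topos-theoretic limit, and checking compactness and perfectness of the transition maps) just fill in details the paper leaves implicit.
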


\begin{proof}
Let $A \in \mathrm{An}$. We can apply Lemma \ref{HoyoisLemma} to the functor
\[ \begin{array}{rcl}
\mathrm{CHaus}^{\mathrm{op}} & \rightarrow & \mathrm{An} \\
X & \mapsto & \mathrm{Map}_{\mathrm{Pro}(\mathrm{An})}( \Pi_\infty(X), A ) 
\end{array} \]
which by Theorem \ref{shapeinverselimits} satisfies the conditions and is thus homotopy invariant. Hence
\[ \mathrm{Map}_{\mathrm{Pro}(\mathrm{An})}( \Pi_\infty([0,1]), A ) \cong A \]
for all animae $A$, and therefore $\Pi_\infty([0,1]) \cong \ast$.
\end{proof}

\begin{corollary}
The topological spaces $[0,1)$ as well as $(0,1)$ have contractible shape.
\end{corollary}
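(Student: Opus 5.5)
The idea is to deduce this from the shape-excision statement, Corollary \ref{shapemayervietorisopens}, together with the contractibility of $[0,1]$ from the preceding corollary. The only extra input needed is that the basic building blocks, namely half-open and open intervals, are shape-equivalent to closed intervals.

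\emph{Step 1: the half-open interval.} Write $[0,1)$ as the increasing union of the open subsets $U_n = [0, 1-1/n)$. Each $U_n$ is homeomorphic to $[0,1)$ itself, so a direct appeal to excision alone would be circular. I would therefore use a different cover. Consider the closed subspaces $K_n = [0,1-1/n]$, each homeomorphic to $[0,1]$ and hence of contractible shape. The inclusions $[0,1-1/n) \subset K_n \subset [0,1-1/(n+1))$ interleave the open and closed exhaustions. This lets me show directly that $\mathcal{X}^* : \mathrm{An} \rightarrow \mathrm{Sh}([0,1))$ is fully faithful, i.e.\ that $\underline{A}^{\mathrm{sh}}([0,1)) \simeq A$:
\begin{itemize}
\item The sheaf condition for the increasing open cover $(U_n)$ gives $\underline{A}^{\mathrm{sh}}([0,1)) \simeq \lim_n \underline{A}^{\mathrm{sh}}(U_n)$.
\item The restriction maps $\underline{A}^{\mathrm{sh}}(U_{n+1}) \rightarrow \underline{A}^{\mathrm{sh}}(U_n)$ factor through sections over $K_n$, that is, through global sections of the pullback to $\mathrm{Sh}(K_n)$, which equal $A$ because $K_n \cong [0,1]$ has contractible shape.
\item Hence the tower $\underline{A}^{\mathrm{sh}}(U_n)$ is pro-isomorphic to the constant tower at $A$, and its limit is $A$.
\end{itemize}
Naturality in $A$ then gives $\Pi_\infty([0,1)) \simeq \ast$.

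\emph{Step 2: the open interval.} This is similar, or can be reduced to Step 1 by excision. Cover $(0,1)$ by $(0,2/3)$ and $(1/3,1)$. Both are homeomorphic to $(0,1)$, so excision is again circular, and I would instead run the same interleaving argument with the exhaustions $(1/n, 1-1/n)$ and $[1/n, 1-1/n]$, for $n \geq 3$.

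\emph{Main obstacle.} The step I expect to need care is the second bullet of Step 1: identifying the sections of $\underline{A}^{\mathrm{sh}}$ over the closed subset $K_n$ with those of the sheafified constant sheaf on $K_n$. One needs that restriction along the closed embedding $i : K_n \hookrightarrow [0,1)$ satisfies $i^*\mathcal{X}^* = \mathcal{K}_n^*$. This is formal from the compatibility of pullbacks of geometric morphisms. What is not formal is the factorization $\Gamma(U_{n+1}, F) \rightarrow \Gamma(K_n, i^*F) \rightarrow \Gamma(U_n, F)$. The first map is simply restriction. The second map does not exist in general. It has to come from the colimit formula $\Gamma(K_n, i^*F) \simeq \mathrm{colim}_{K_n \subset V} \Gamma(V,F)$, valid since $K_n$ is compact in a locally compact Hausdorff space; this follows from proper base change (Remark \ref{propercontinuousispropergeometric}), or from Proposition \ref{openapproximation}. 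With that formula, $\Gamma(K_n, i^*F)$ maps to $\Gamma(U_n,F)$ compatibly, and the tower argument goes through.
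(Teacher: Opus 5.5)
Your argument is correct and is the paper's proof read off on global sections: the paper writes $\Pi_\infty([0,1)) \simeq \colim_{a<1}\Pi_\infty([0,a))$ by excision for the directed open cover, then uses the interleaving $[0,a)\subset[0,a]\subset[0,b)$ and $\Pi_\infty([0,a])\simeq\ast$. Under $\mathrm{Pro}(\mathrm{An})\simeq\mathrm{Fun}^{\mathrm{lex,acc}}(\mathrm{An},\mathrm{An})^{\op}$ this is exactly your interleaving of the towers $\underline{A}^{\mathrm{sh}}(U_n)$ and $\Gamma(K_n,i^*\underline{A}^{\mathrm{sh}})\simeq A$. Your \emph{main obstacle} is not one, though: for $j\colon U_n\hookrightarrow K_n$, the map $\Gamma(K_n,i^*F)\rightarrow F(U_n)$ is simply pullback along $j$, using $j^*i^*\simeq(ij)^*$, which is functoriality of shape along geometric morphisms. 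So the colimit formula, while valid, is not needed.
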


\begin{proof}
We will prove the case for $[0,1)$; the statement for $(0,1)$ follows analogously. Since we have the open cover $[0,a)_{a < 1}$ of $[0,1)$ it holds that
\[ \Pi_\infty([0,1)) \cong \mathrm{colim}_{a < 1} \Pi_\infty([0,a)) \]
Note that for any $a < 1$ there exists $b < 1$ such that $a < b < 1$, hence we have the mutual inclusions
\[ [0,a) \subset [0,a] \subset [0,b) \subset [0,b]. \]
Hence by mutual cofinality it holds that
\[ \mathrm{colim}_{a < 1} \Pi_\infty([0,a)) \cong \mathrm{colim}_{a < 1} \Pi_\infty([0,a]) \cong \mathrm{colim}_{a < 1} \ast \cong \ast. \]
\end{proof}

Since the interval $[0,1]$ admits a basis consisting of open sets homeomorphic to spaces of the above form, Corollary \ref{locallycontractibleislocallycontractible} implies the following.

\begin{corollary}
The topos $\mathrm{Sh}([0,1])$ is contractible as well as locally contractible.
\end{corollary}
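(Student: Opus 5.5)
The statement combines two earlier results: the corollary that $[0,1]\rightarrow \mathrm{pt}$ is a shape equivalence, and the criterion for local contractibility via a generating family of contractible slices. I would prove the two claims separately.

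\emph{Contractibility.} By the corollary following Lemma \ref{HoyoisLemma}, $\Pi_\infty([0,1]) \simeq \ast$. The proposition stating that a topos $\mathcal{X}$ is contractible iff $\Pi_\infty(\mathcal{X}) \simeq \mathrm{pt}$ then says that $\mathrm{Sh}([0,1])^*$ is fully faithful. Nothing else is needed for this part.

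\emph{Local contractibility.} Let $\mathcal{B}$ be the collection of \emph{nonempty} open subintervals of $[0,1]$: those of the form $[0,a)$, $(a,b)$, $(b,1]$, together with $[0,1]$ itself. Each is homeomorphic to $[0,1)$, $(0,1)$ or $[0,1]$; for $(b,1]$ one uses the reflection $t \mapsto 1-t$. By the two preceding corollaries, each of these has contractible shape. The set $\mathcal{B}$ is a basis of the topology of $[0,1]$. It is closed under binary intersection \emph{except} that two disjoint intervals intersect in $\emptyset$, whose shape is $\emptyset$ rather than $\ast$.

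Because of this, I would not invoke Corollary \ref{locallycontractibleislocallycontractible} verbatim. Instead I would apply the theorem preceding it directly, using the objects $y_U \in \mathrm{Sh}([0,1])$ for $U \in \mathcal{B}$. Two facts need checking.
\begin{enumerate}
\item These objects generate $\mathrm{Sh}([0,1])$ under colimits. Every sheaf is a colimit of representables $y_V$ with $V$ open. Every open $V$ is a union of basic opens, so by descent $y_V$ is the colimit of the \v{C}ech nerve of a cover by elements of $\mathcal{B}$. The nonempty finite intersections occurring there are again in $\mathcal{B}$. The empty intersections contribute the initial object, which is the empty colimit and so imposes no condition.
\item For each $U \in \mathcal{B}$ there is an equivalence $\mathrm{Sh}([0,1])_{/y_U} \simeq \mathrm{Sh}(U)$, the standard identification of the étale topos of an open. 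This topos is contractible by the shape computation above together with the contractibility criterion.
\end{enumerate}
The theorem then yields that $\mathrm{Sh}([0,1])$ is locally contractible.

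The only genuine subtlety is the handling of the empty intersection in step (1). If one prefers to stay with Corollary \ref{locallycontractibleislocallycontractible} as stated, one should read it as allowing empty intersections, precisely because the initial sheaf is the empty colimit of generators; otherwise everything is a direct assembly of the earlier results.
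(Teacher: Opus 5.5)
Your argument is correct and follows the paper's route. Contractibility comes from the shape equivalence $[0,1]\to\mathrm{pt}$ together with the criterion $\Pi_\infty(\mathcal{X})\simeq\mathrm{pt}$, and local contractibility from a basis of open intervals homeomorphic to $[0,1)$ or $(0,1)$, where the paper simply cites Corollary \ref{locallycontractibleislocallycontractible}.

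Your detour through the generation theorem preceding that corollary is a legitimate refinement. Read literally, the hypothesis ``closed under binary intersection'' would force $\emptyset$ into the basis, and its shape is $\emptyset$ rather than $\ast$. Your observation that $y_\emptyset$ is the empty colimit is exactly what makes the argument go through.
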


We can use this to argue homotopy invariance of shape for arbitrary topoi. To do so, let us remark on another $2$-functor, namely the Lurie tensor product. From the formula
\[ \mathcal{C} \otimes \mathcal{D} \cong \mathrm{Fun}^{R}( \mathcal{C}^\mathrm{op}, \mathcal{D} ) \]
it is apparent that for fixed $\mathcal{C}$ the functor
\[ \mathcal{C} \otimes - : \mathrm{Pr}^L \rightarrow \mathrm{Pr}^L \]
is a $2$-functor, i.e.\ also functorial in natural transformations. Hence just as before we see that $\mathcal{C} \otimes - $ preserves internal embeddings. This has the following immediate consequence.

\begin{proposition} \label{essentialandcontractible}
Let $f : \mathcal{X} \rightarrow \mathcal{Y}$ be an essential and contractible geometric morphism, and $\mathcal{Z}$ a topos. Then
\[ f \times \mathrm{id}_\mathcal{Z} : \mathcal{X} \otimes \mathcal{Z} \rightarrow \mathcal{Y} \otimes \mathcal{Z} \]
is again essential and contractible.
\end{proposition}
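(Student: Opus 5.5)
The plan is to read both properties of $f \times \mathrm{id}_\mathcal{Z}$ off the $2$-functoriality of $- \otimes \mathcal{Z}$ on $\mathrm{Pr}^L$, just established. The key input is that contractibility (full faithfulness of $f^*$) and essentiality (existence of $f_!$) are both statements about adjunctions and invertible $2$-cells among left adjoints. These are exactly the data a $2$-functor preserves.

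First, we set up the relevant functors. Since products in $\mathrm{RTop}$ are Lurie tensor products, the pullback $(f\times\mathrm{id}_\mathcal{Z})^*$ is the left adjoint functor $f^*\otimes \mathrm{id}_\mathcal{Z} : \mathcal{Y}\otimes\mathcal{Z}\to\mathcal{X}\otimes\mathcal{Z}$ in $\mathrm{Pr}^L$. Because $f$ is essential, $f^*$ has a left adjoint $f_!$, and $f_!$ is itself a morphism in $\mathrm{Pr}^L$. The adjunction $f_!\dashv f^*$ is therefore an internal adjunction in the $(\infty,2)$-category $\mathrm{Pr}^L$, whose $2$-cells are natural transformations. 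Its unit $\eta:\mathrm{id}\Rightarrow f^*f_!$ and counit $\epsilon: f_!f^*\Rightarrow \mathrm{id}$ satisfy the triangle identities.

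Next, we apply the $2$-functor $-\otimes\mathcal{Z}$. It sends this internal adjunction to an internal adjunction $f_!\otimes\mathrm{id}\dashv f^*\otimes\mathrm{id}$ in $\mathrm{Pr}^L$, with unit $\eta\otimes\mathrm{id}$ and counit $\epsilon\otimes\mathrm{id}$. Hence $(f\times\mathrm{id}_\mathcal{Z})^*$ has a left adjoint, so $f\times\mathrm{id}_\mathcal{Z}$ is essential.

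For contractibility, observe that $f^*$ fully faithful means the counit $\epsilon:f_!f^*\Rightarrow\mathrm{id}_\mathcal{X}$ of $f_!\dashv f^*$ is an equivalence. Since a $2$-functor sends invertible $2$-cells to invertible $2$-cells, $\epsilon\otimes\mathrm{id}$ is invertible, and so $f^*\otimes\mathrm{id}$ is fully faithful. One could alternatively use the unit of $f^*\dashv f_*$. Tensoring that adjunction is less clean, though, because $f_*$ need not lie in $\mathrm{Pr}^L$; this is precisely why the essentiality hypothesis is used, so we work with $f_!\dashv f^*$.

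The main obstacle is making the $2$-functoriality precise: one must check that the equivalence $\mathcal{C}\otimes\mathcal{D}\simeq\mathrm{Fun}^R(\mathcal{C}^{\mathrm{op}},\mathcal{D})$ makes $\mathcal{Z}\otimes-$ act on natural transformations compatibly with composition, so that triangle identities are preserved. Concretely, under this formula $f^*\otimes\mathrm{id}$ corresponds to postcomposition with $f^*$ on $\mathrm{Fun}^R(\mathcal{Z}^{\mathrm{op}},-)$. Postcomposition with an adjunction gives an adjunction on functor categories; this works since $f_!$ and $f^*$ both preserve limits, $f^*$ being a right adjoint and $f_!$ only appearing after swapping the roles. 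To avoid this subtlety, I would use the form $\mathrm{Fun}^R(\mathcal{Z}^{\mathrm{op}},\mathcal{Y})$ and note that a right-adjoint-preserving description exists via $(\mathcal{X}\otimes\mathcal{Z})\simeq \mathrm{Fun}^{L}$-type duals. If that proves delicate, I would instead invoke the general fact that $\mathrm{Pr}^L$ is $\mathrm{Cat}_\infty$-enriched and $\otimes$ is an enriched functor, as the paper implicitly does.
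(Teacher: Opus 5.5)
Your proposal is correct and follows the paper's own proof. Both apply the $2$-functor $-\otimes\mathcal{Z}$ to the internal adjunction $f_!\dashv f^*$ in $\mathrm{Pr}^L$: this produces the left adjoint $f_!\otimes\mathrm{id}_\mathcal{Z}$, so $f\times\mathrm{id}_\mathcal{Z}$ is essential, and it keeps the counit invertible, so $f^*\otimes\mathrm{id}_\mathcal{Z}$ is fully faithful.

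Two harmless slips: the counit is $f_!f^*\Rightarrow\mathrm{id}_\mathcal{Y}$, not $\mathrm{id}_\mathcal{X}$, and $f_!$ need not preserve limits. Your concrete check in the last paragraph only needs that $f^*$ preserves limits, so that $f^*\otimes\mathrm{id}_\mathcal{Z}$ is postcomposition with the fully faithful functor $f^*$ on $\mathrm{Fun}^R(\mathcal{Z}^{\mathrm{op}},-)$.
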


\begin{proof}
By assumption we have the adjunctions
\[\begin{tikzcd}
	{\mathcal{X}} & {\mathcal{Y}}
	\arrow[""{name=0, anchor=center, inner sep=0}, "{f_*}"', curve={height=12pt}, from=1-1, to=1-2]
	\arrow[""{name=1, anchor=center, inner sep=0}, "{f_!}", curve={height=-12pt}, from=1-1, to=1-2]
	\arrow[""{name=2, anchor=center, inner sep=0}, "{f^*}"{description}, hook', from=1-2, to=1-1]
	\arrow["\dashv"{anchor=center, rotate=-90}, draw=none, from=1, to=2]
	\arrow["\dashv"{anchor=center, rotate=-90}, draw=none, from=2, to=0]
\end{tikzcd}\]
with $f^*$ fully faithful. By applying the $2$-functor $- \otimes \mathcal{Z}$ we therefore get the adjunctions
\[ \begin{tikzcd}
	{\mathcal{X}\otimes \mathcal{Z}} & {\mathcal{Y}\otimes \mathcal{Z}}
	\arrow[""{name=0, anchor=center, inner sep=0}, "{(f^* \otimes \mathrm{id}_\mathcal{Z})_*}"', curve={height=12pt}, from=1-1, to=1-2]
	\arrow[""{name=1, anchor=center, inner sep=0}, "{f_! \otimes \mathrm{id}_\mathcal{Z}}", curve={height=-24pt}, from=1-1, to=1-2]
	\arrow[""{name=2, anchor=center, inner sep=0}, "{f^* \otimes \mathrm{id}_\mathcal{Z}}"', hook', from=1-2, to=1-1]
	\arrow["\dashv"{anchor=center, rotate=-89}, draw=none, from=1, to=2]
	\arrow["\dashv"{anchor=center, rotate=-91}, draw=none, from=2, to=0]
\end{tikzcd} \]
with $f^* \otimes \mathrm{id}_\mathcal{Z}$ fully faithful.
\end{proof}

\begin{theorem} \label{homotopyinvariance}
The functor $\Pi_\infty : \mathrm{RTop} \rightarrow \mathrm{Pro}(\mathrm{An})$ is homotopy invariant. Concretely, for any topos $\mathcal{X}$, the projection $\mathcal{X} \otimes \mathrm{Sh}([0,1]) \rightarrow \mathcal{X}$ induces an isomorphism in shape
\[ \Pi_\infty(\mathcal{X} \otimes \mathrm{Sh}([0,1])) \cong \Pi_\infty(\mathcal{X}). \]
In particular, homotopy equivalent topoi have isomorphic shape.
\end{theorem}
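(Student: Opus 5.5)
The plan is to reduce the statement to Lemma \ref{contractibleshapeequivalence} via Proposition \ref{essentialandcontractible}. By the preceding corollary, $\mathrm{Sh}([0,1])$ is contractible and locally contractible. In other words, the unique geometric morphism $q : \mathrm{Sh}([0,1]) \rightarrow \mathrm{An}$ is essential, since local contractibility gives the extra left adjoint $q_!$. It is also contractible, since $q^*$ is fully faithful.

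Next we tensor $q$ with $\mathcal{X}$. Proposition \ref{essentialandcontractible} then shows that
\[ \mathrm{id}_\mathcal{X} \times q : \mathcal{X} \otimes \mathrm{Sh}([0,1]) \rightarrow \mathcal{X} \otimes \mathrm{An} \]
is essential and contractible. It remains to identify the target. $\mathrm{An}$ is the unit of the Lurie tensor product, so $\mathcal{X} \otimes \mathrm{An} \simeq \mathcal{X}$, and equivalently $\mathrm{An}$ is terminal in $\mathrm{RTop}$ and products are tensor products. Under this identification, $\mathrm{id}_\mathcal{X} \times q$ is the projection $\mathcal{X} \otimes \mathrm{Sh}([0,1]) \rightarrow \mathcal{X}$. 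Its pullback functor is therefore fully faithful, and Lemma \ref{contractibleshapeequivalence} gives $\Pi_\infty(\mathcal{X} \otimes \mathrm{Sh}([0,1])) \cong \Pi_\infty(\mathcal{X})$.

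The main point needing care is this identification. We must check that the geometric morphism obtained from the $2$-functor $\mathcal{X} \otimes -$ applied to the adjunction $q^* \dashv q_*$ is indeed the product projection in $\mathrm{RTop}$. Concretely, its pullback is $\mathrm{id}_\mathcal{X} \otimes q^* : \mathcal{X} \simeq \mathcal{X} \otimes \mathrm{An} \rightarrow \mathcal{X} \otimes \mathrm{Sh}([0,1])$. This is the coproduct inclusion in $\mathrm{LTop}$, which is the pullback along the projection, by the description of products in $\mathrm{RTop}$ via the Lurie tensor product.

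For the final claim, suppose topoi are homotopy equivalent. This means there are geometric morphisms $f, g$ whose composites are connected to the identities by homotopies $H : \mathcal{X} \otimes \mathrm{Sh}([0,1]) \rightarrow \mathcal{X}$ restricting to the given maps at the endpoints $i_0, i_1 : \mathcal{X} \rightarrow \mathcal{X} \otimes \mathrm{Sh}([0,1])$. Both $i_0$ and $i_1$ are sections of the projection, and the projection is a shape equivalence by the first part. Hence $\Pi_\infty(i_0) = \Pi_\infty(i_1)$, both being the inverse of $\Pi_\infty$ of the projection. Applying $\Pi_\infty(H)$ shows that homotopic maps induce equal maps on shape, so $\Pi_\infty(f)$ and $\Pi_\infty(g)$ are mutually inverse.
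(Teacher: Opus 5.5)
Your proposal is correct and follows the route the paper sets up. It combines the corollary that $\mathrm{Sh}([0,1])$ is contractible and locally contractible with Proposition \ref{essentialandcontractible} to get a projection with fully faithful pullback, then concludes by Lemma \ref{contractibleshapeequivalence}. Your extra argument for the ``in particular'' clause, using that $i_0, i_1$ are sections of the projection, is also sound and fills in a step the paper leaves implicit.
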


We can also specialize to the situation of locales and topological spaces.

\begin{corollary}
The functors
\[ \Pi_\infty : \mathrm{Loc} \rightarrow \mathrm{Pro}(\mathrm{An}) \]
as well as 
\[ \Pi_\infty : \mathrm{Top} \rightarrow \mathrm{Pro}(\mathrm{An}) \]
are homotopy invariant.
\end{corollary}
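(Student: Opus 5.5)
The plan is to reduce the statement to Theorem \ref{homotopyinvariance}. The one real input is that, for a locale $X$, the sheaf topos of the product locale $X \times [0,1]$ is the product topos $\mathrm{Sh}(X) \otimes \mathrm{Sh}([0,1])$. For topological spaces we need one more step, identifying the spatial product with the localic product.

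\textbf{Locales.} Recall from the preliminaries that $\mathrm{Sh} : \mathrm{Loc} \rightarrow \mathrm{RTop}$ is right adjoint to $\mathrm{Open}$. It therefore preserves all limits, in particular binary products. Since products in $\mathrm{RTop}$ are computed by the Lurie tensor product, we get a natural equivalence
\[ \mathrm{Sh}(X \times^{\mathrm{Loc}} [0,1]) \simeq \mathrm{Sh}(X) \times^{\mathrm{RTop}} \mathrm{Sh}([0,1]) \simeq \mathrm{Sh}(X) \otimes \mathrm{Sh}([0,1]). \]
Under this equivalence, the geometric morphism induced by the projection $X \times [0,1] \rightarrow X$ is the projection $\mathrm{Sh}(X) \otimes \mathrm{Sh}([0,1]) \rightarrow \mathrm{Sh}(X)$. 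This compatibility holds because $\mathrm{Sh}$ preserves the product cone, not merely the product object. Theorem \ref{homotopyinvariance} then says that $\Pi_\infty(X \times [0,1]) \rightarrow \Pi_\infty(X)$ is an isomorphism.

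\textbf{From the projection to homotopies.} Homotopy invariance in the usual sense now follows formally. Let $H : X \times [0,1] \rightarrow Y$ be a homotopy between $f_0 = H \circ (\mathrm{id}_X \times i_0)$ and $f_1 = H \circ (\mathrm{id}_X \times i_1)$. Both $\mathrm{id}_X \times i_0$ and $\mathrm{id}_X \times i_1$ are sections of the projection $p$. Since $\Pi_\infty(p)$ is invertible, we get
\[ \Pi_\infty(\mathrm{id}_X \times i_0) = \Pi_\infty(p)^{-1} = \Pi_\infty(\mathrm{id}_X \times i_1). \]
Hence $\Pi_\infty(f_0) = \Pi_\infty(f_1)$. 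In particular, homotopy equivalences are sent to isomorphisms.

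\textbf{Topological spaces.} Here $\Pi_\infty(X) := \Pi_\infty(\mathrm{Sh}(\mathrm{Loc}(X)))$. The issue is that $\mathrm{Loc} : \mathrm{Top} \rightarrow \mathrm{Loc}$ is a left adjoint, so it need not preserve products in general. We would use the classical fact that for a locally compact space $K$ (here $K = [0,1]$) and any space $X$, the canonical comparison map
\[ \mathrm{Loc}(X \times K) \rightarrow \mathrm{Loc}(X) \times^{\mathrm{Loc}} \mathrm{Loc}(K) \]
is an isomorphism. This is the standard result that products with locally compact spaces are spatial; see e.g.\ \cite{PicadoPultr}. Alternatively one can argue directly that the frame $\mathcal{O}(X) \otimes \mathcal{O}(K)$ is $\mathcal{O}(X \times K)$ when $\mathcal{O}(K)$ is continuous.

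Granting this fact, the topological case reduces to the localic case and the same formal argument applies. This step is the only nonformal ingredient, and I expect it to be the main obstacle, since it is the one point where one must leave the general adjunction formalism and use a specific property of the interval.
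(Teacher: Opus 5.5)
Your proposal is correct and matches the paper's argument. For locales, $\mathrm{Sh} : \mathrm{Loc} \rightarrow \mathrm{RTop}$ is a right adjoint, so it preserves the product with $[0,1]$, and Theorem \ref{homotopyinvariance} applies. For spaces, the paper uses that $[0,1]$ is core-compact, so that $\Omega(X \times [0,1]) \cong \Omega(X) \times \Omega([0,1])$ in $\mathrm{Loc}$; your appeal to local compactness is enough, since locally compact spaces are core-compact. Your extra step, deducing $\Pi_\infty(f_0) = \Pi_\infty(f_1)$ for homotopic maps from invertibility of $\Pi_\infty(p)$, is correct and simply makes explicit what ``homotopy invariant'' means.
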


\begin{proof}
The claim about the restriction to locales is immediate, since the inclusion $\mathrm{Loc} \hookrightarrow \mathrm{RTop}$ is a right adjoint and thus preserves products. The statement about topological spaces is somewhat more subtle and holds because the interval $[0,1]$ is core-compact, hence
\[ \Omega( X \times [0,1] ) \cong \Omega( X ) \times \Omega( [0,1] ) \]
holds on the level of locales.
\end{proof}

\begin{remark}
The above techniques illustrate that the interval $[0,1]$ could be replaced by any topos $\mathcal{X}$ that is contractible and locally contractible to get a corresponding version of $\mathcal{X}$-homotopy invariance. One other example is the directed interval $[0,1]_\leq$.
\end{remark}

Since we now finally have verified that contractible topological spaces actually have trivial shape, it follows that the shape of a CW complex is indeed what we had claimed earlier.

\begin{corollary}
Let $X$ be a topological space. An open cover $\mathcal{U} = \{ U_i \}_{i \in I}$ of $X$ such that all non-empty finite intersections
\[ U_{i_0} \cap U_{i_1} \cap \dots \cap U_{i_n} \neq \emptyset \]
are contractible is shape-good. Hence if $X$ admits such a cover, then the shape of $X$ is constant and isomorphic to the geometric realization of the \v{C}ech-nerve of the cover.
\end{corollary}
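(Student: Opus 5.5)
The plan is to show that the cover is shape-good and then appeal to Proposition \ref{goodopencover}. By the definition of a shape-good cover, this means checking that for every non-empty finite intersection $V = U_{i_0} \cap \dots \cap U_{i_n}$ the topos $\mathrm{Sh}(V)$ has trivial shape, that is $\Pi_\infty(V) \simeq \mathrm{pt}$. The hypothesis gives that each such $V$ is contractible as a topological space. So the real content is a single claim: a contractible topological space has contractible shape.

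To prove this claim, let $V$ be contractible. Then there is a homotopy $h : V \times [0,1] \rightarrow V$ with $h_0 = \mathrm{id}_V$ and $h_1$ constant at a point $v \in V$. I will use the homotopy invariance of $\Pi_\infty : \mathrm{Top} \rightarrow \mathrm{Pro}(\mathrm{An})$ from the corollary after Theorem \ref{homotopyinvariance}. It gives that the projection $\mathrm{pr} : V \times [0,1] \rightarrow V$ is a shape equivalence. The two inclusions $j_0, j_1 : V \rightarrow V \times [0,1]$ are both sections of $\mathrm{pr}$, so $\Pi_\infty(j_0) = \Pi_\infty(\mathrm{pr})^{-1} = \Pi_\infty(j_1)$. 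It follows that
\[ \Pi_\infty(\mathrm{id}_V) = \Pi_\infty(h)\Pi_\infty(j_0) = \Pi_\infty(h)\Pi_\infty(j_1) = \Pi_\infty(c_v), \]
where $c_v$ denotes the constant map at $v$. Since $c_v$ factors through $\mathrm{pt}$, the identity of $\Pi_\infty(V)$ factors through $\Pi_\infty(\mathrm{pt}) = \mathrm{pt}$. Hence $\Pi_\infty(V)$ is a retract of $\mathrm{pt}$ in $\mathrm{Pro}(\mathrm{An})$. Moreover, $\mathrm{pt} \rightarrow \Pi_\infty(V) \rightarrow \mathrm{pt}$ is the identity because $\mathrm{pt}$ is terminal. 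So $\Pi_\infty(V) \simeq \mathrm{pt}$.

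With all non-empty finite intersections having trivial shape, the cover is shape-good. Proposition \ref{goodopencover}, which is Corollary \ref{shapemayervietorisopens} applied to this cover, then shows that $\Pi_\infty(X)$ is constant and equivalent to $|\check{C}(\mathcal{U})|$, the realization of the \v{C}ech nerve.

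The main subtlety is in using homotopy invariance. It is stated for the product $\mathrm{Sh}(V) \otimes \mathrm{Sh}([0,1])$ of topoi, while the homotopy $h$ lives on the topological product $V \times [0,1]$. The corollary after Theorem \ref{homotopyinvariance} bridges this: since $[0,1]$ is core-compact, the locale of $V \times [0,1]$ is the locale product, and $\mathrm{Sh}$ preserves products of locales. So the topological homotopy does induce the needed maps of topoi. I expect this identification to be the only point that requires care.
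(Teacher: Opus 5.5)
Your proposal is correct and follows the paper's route. The paper gives no separate proof: it deduces the corollary from homotopy invariance of $\Pi_\infty$ on topological spaces (which gives contractible spaces shape $\mathrm{pt}$) together with Proposition~\ref{goodopencover}, and you just spell out the retract argument via the two sections $j_0, j_1$ of the projection that the paper leaves implicit.
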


\section{Künneth theorems for shape}

Classically, the Künneth theorem gives a computation of the homology of a product $X \times Y$ in terms of the homologies of $X$ and $Y$. A similar result holds for shape, but some restrictions apply. For one, the functor
\[ \Pi_\infty : \mathrm{RTop} \rightarrow \mathrm{Pro}(\mathrm{An}) \] 
does not preserve all binary products, see the following counterexample.

\begin{example} \label{counterexample}
Consider $\mathcal{X} = \mathrm{Sh}(\mathbb{N}^\mathrm{disc})$ and $\mathcal{Y} = \mathrm{Sh}(C)$, where $C = \prod_{\mathbb{N}} \mathbf{2}^\mathrm{disc}$ is Cantor space. The space $\mathbb{N}^\mathrm{disc}$ is locally contractible. Since $\mathbb{N} \times C = \coprod_{\mathbb{N}} C$, we see that pullback and pushforward for the canonical geometric morphism are given by
\[
\begin{tikzcd}
	{\mathrm{Sh}(\mathbb{N} \times C)} & {\prod_{\mathbb{N}}\mathrm{Sh}( C)} & {\mathrm{Sh}(C)} & {\mathrm{An}}
	\arrow["\simeq"{description}, draw=none, from=1-1, to=1-2]
	\arrow[""{name=0, anchor=center, inner sep=0}, "{\prod_\mathbb{N}}"', curve={height=12pt}, from=1-2, to=1-3]
	\arrow[""{name=1, anchor=center, inner sep=0}, "{\mathrm{const}}"', curve={height=12pt}, from=1-3, to=1-2]
	\arrow[""{name=2, anchor=center, inner sep=0}, "{C_*}"', curve={height=12pt}, from=1-3, to=1-4]
	\arrow[""{name=3, anchor=center, inner sep=0}, "{C^*}"', curve={height=12pt}, from=1-4, to=1-3]
	\arrow["\dashv"{anchor=center, rotate=-90}, draw=none, from=1, to=0]
	\arrow["\dashv"{anchor=center, rotate=-90}, draw=none, from=3, to=2]
\end{tikzcd} 
\]
and hence the functor for $\Pi_\infty( \mathbb{N} \times C )$ is given by
\[ A \mapsto \prod_\mathbb{N} \mathrm{colim}_{k \in \mathbb{N}} \mathrm{Map}( \mathbf{2}^k, A ) \cong \mathrm{Map}( \mathbb{N}, \mathrm{colim}_{k \in \mathbb{N}} \mathrm{Map}( \mathbf{2}^k, A ) ). \]
On the other hand, the product of the shapes $\Pi_\infty( \mathbb{N})$ and $\Pi_\infty( C )$ is given by the pro-object
\[ \text{``} \mathrm{lim}_{k \in \mathbb{N}}\text{''} \mathbb{N} \times \mathbf{2}^k \]
which corresponds to the functor
\[ A \mapsto \mathrm{colim}_{k \in \mathbb{N}} \mathrm{Map}( \mathbb{N} \times \mathbf{2}^k, A ). \]
These two functors do not agree as $\mathbb{N}$ is not a compact anima.
\end{example}

Nonetheless, for certain classes of topoi product preservation is in fact true. In this section, we will give the Künneth theorem for two such classes, with one of them being the class of locally contractible topoi. Let us set up some definitions.

Suppose that we have a commutative square
\[ \begin{tikzcd}
	{\mathcal{X}'} & {\mathcal{Y}'} \\
	{\mathcal{X}} & {\mathcal{Y}}
	\arrow["{f'}", from=1-1, to=1-2]
	\arrow["{g'}"', from=1-1, to=2-1]
	\arrow["g", from=1-2, to=2-2]
	\arrow["f", from=2-1, to=2-2]
\end{tikzcd} \]
of geometric morphisms. We therefore have a natural isomorphism
\[ (g')^* f^* \cong (f')^* g^*. \]
Taking the adjoint, we get a natural transformation
\[ f^* \implies (g')_* (f')^* g^*. \]
We can compose this with $g_*$ and use the counit to get a natural transformation
\[ f^* g_* \implies  (g')_* (f')^* g^* g_* \implies (g')_* (f')^* \]
called the \emph{mate} of the commuting square of left adjoints.

\begin{definition}
Let
\[ \begin{tikzcd}
	{\mathcal{X}'} & {\mathcal{Y}'} \\
	{\mathcal{X}} & {\mathcal{Y}}
	\arrow["{f'}", from=1-1, to=1-2]
	\arrow["{g'}"', from=1-1, to=2-1]
	\arrow["g", from=1-2, to=2-2]
	\arrow["f", from=2-1, to=2-2]
\end{tikzcd} \]
be a commutative square of geometric morphisms. We say that the square satisfies \emph{base-change}, or equivalently that it is \emph{left-adjointable}, equivalently that it satisfies the \emph{Beck-Chevalley condition}, if the mate is an equivalence.
\end{definition}
In diagrams, this means the square
\[ \begin{tikzcd}
	{\mathcal{X}'} & {\mathcal{Y}'} \\
	{\mathcal{X}} & {\mathcal{Y}}
	\arrow["{(g')_*}"', from=1-1, to=2-1]
	\arrow["{(f')^*}"', from=1-2, to=1-1]
	\arrow["{g_*}", from=1-2, to=2-2]
	\arrow["{f^*}"', from=2-2, to=2-1]
\end{tikzcd} \]
commutes. Let us give an easy example of this phenomenon.

\begin{lemma}[Essential cartesian base-change] \label{essentialbasechange}
Suppose $f : \mathcal{X} \rightarrow \mathcal{Y}$ is an essential geometric morphism, and let $g : \mathcal{W} \rightarrow \mathcal{Z}$ be another geometric morphism. Then the natural square
\[ \begin{tikzcd}
	{\mathcal{X} \times \mathcal{W}} & {\mathcal{Y} \times \mathcal{W}} \\
	{\mathcal{X} \times \mathcal{Z}} & {\mathcal{Y} \times \mathcal{Z}}
	\arrow["{f \times \mathrm{id}_\mathcal{W}}", from=1-1, to=1-2]
	\arrow["{\mathrm{id}_\mathcal{X} \times g}"', from=1-1, to=2-1]
	\arrow["{\mathrm{id}_\mathcal{Y} \times g}", from=1-2, to=2-2]
	\arrow["{f \times \mathrm{id}_\mathcal{Z}}", from=2-1, to=2-2]
\end{tikzcd} \]
of geometric morphisms is left adjointable, i.e. there exists a commutative square
\[
\begin{tikzcd}
	{\mathcal{X} \otimes \mathcal{W}} &  {\mathcal{Y} \otimes \mathcal{W}} \\
	{\mathcal{X} \otimes \mathcal{Z}}  & {\mathcal{Y} \otimes \mathcal{Z}}
	\arrow["{(\mathrm{id}_\mathcal{X} \otimes g)_*}"', from=1-1, to=2-1]
	\arrow["{f^* \otimes \mathrm{id}_\mathcal{W}}"', from=1-2, to=1-1]
	\arrow["{(\mathrm{id}_\mathcal{Y} \otimes g)_*}", from=1-2, to=2-2]
	\arrow["{f^* \otimes \mathrm{id}_\mathcal{Z}}"', from=2-2, to=2-1]
\end{tikzcd}
\]
via the mate.
\end{lemma}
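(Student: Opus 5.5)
We reduce the statement to a formal manipulation of adjunctions in $\mathrm{Pr}^L$, using the $2$-functoriality of the Lurie tensor product $-\otimes -$ already exploited in Proposition \ref{essentialandcontractible}. The square of geometric morphisms in question is the image of the square of pullback functors
\[ f^* \otimes g^* \text{-type square: } (f^*\otimes \mathrm{id}_\mathcal{W}) \circ (\mathrm{id}_\mathcal{Y} \otimes g^*) \simeq (\mathrm{id}_\mathcal{X}\otimes g^*) \circ (f^* \otimes \mathrm{id}_\mathcal{Z}), \]
both being $f^* \otimes g^*$ by bifunctoriality of $\otimes$ on $\mathrm{Pr}^L$. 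The pullback of $\mathrm{id}\times g$ is $\mathrm{id}\otimes g^*$ and of $f \times \mathrm{id}$ is $f^*\otimes \mathrm{id}$, since products in $\mathrm{RTop}$ are computed by $\otimes$.

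The key step is to exhibit the mate $(f^*\otimes \mathrm{id}_\mathcal{Z})(\mathrm{id}_\mathcal{Y}\otimes g)_* \Rightarrow (\mathrm{id}_\mathcal{X}\otimes g)_*(f^*\otimes \mathrm{id}_\mathcal{W})$ as an equivalence. I would use that, since $f$ is essential, $f^*$ is itself a left adjoint in $\mathrm{Pr}^L$ with right adjoint $f_*$ (which preserves colimits? no) — better: $f_! \dashv f^*$ is an adjunction \emph{inside} $\mathrm{Pr}^L$, because both $f_!$ and $f^*$ are colimit preserving. Applying the $2$-functor $-\otimes \mathcal{W}$ and $-\otimes\mathcal{Z}$ yields adjunctions $f_!\otimes \mathrm{id} \dashv f^*\otimes\mathrm{id}$ internal to $\mathrm{Pr}^L$, and these are natural in the second variable: for the colimit-preserving $g^*$ the square with $f_!\otimes\mathrm{id}$ and $\mathrm{id}\otimes g^*$ commutes strictly by bifunctoriality, and the mate of the $f^*$-square with respect to the $f_!$-adjunctions is the identity-type equivalence $(f_!\otimes \mathrm{id}_\mathcal{W})(\mathrm{id}_\mathcal{X}\otimes g^*) \simeq (\mathrm{id}_\mathcal{Y}\otimes g^*)(f_!\otimes\mathrm{id}_\mathcal{Z})$, because the unit and counit of $f_!\otimes\mathrm{id}_\mathcal{W}\dashv f^*\otimes \mathrm{id}_\mathcal{W}$ are $\eta\otimes\mathrm{id}$ and $\epsilon\otimes\mathrm{id}$, which are compatible with $\mathrm{id}\otimes g^*$ by the interchange law.

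Having a commutative square of left adjoints $(f_!\otimes\mathrm{id})\circ(\mathrm{id}\otimes g^*) \simeq (\mathrm{id}\otimes g^*)\circ(f_!\otimes \mathrm{id})$, I pass to right adjoints on both sides: right adjoint of $f_!\otimes\mathrm{id}$ is $f^*\otimes\mathrm{id}$, of $\mathrm{id}\otimes g^*$ is $(\mathrm{id}\otimes g)_*$. Taking right adjoints of this commuting square gives exactly the desired commuting square $(f^*\otimes\mathrm{id}_\mathcal{Z})(\mathrm{id}_\mathcal{Y}\otimes g)_* \simeq (\mathrm{id}_\mathcal{X}\otimes g)_*(f^*\otimes\mathrm{id}_\mathcal{W})$. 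The main obstacle is the bookkeeping that the equivalence so produced is indeed the mate from the definition (not merely some abstract equivalence). For this I would invoke the standard compatibility of mates: for a square with horizontal adjunctions $f_!\dashv f^*$ and vertical adjunctions $g^*\dashv g_*$, the mate of the $g^*$/$f^*$ square in one direction is invertible iff the doubly-transposed mate involving $f_!$ and $g_*$ is, as they are conjugate under the adjunctions (the mate calculus is functorial and the two mates correspond under passage to total adjoints). Since the $f_!$-square commutes via an invertible $2$-cell that itself is the mate of the original identification $f^*\otimes g^*$, conjugation gives invertibility of the required mate.
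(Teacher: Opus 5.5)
Your proposal is correct and follows the paper's proof. The square of left adjoints built from $f_!\otimes\mathrm{id}$ and $\mathrm{id}\otimes g^*$ commutes by bifunctoriality, the $2$-functor $-\otimes\mathcal{E}$ carries the internal adjunction $f_!\dashv f^*$ in $\mathrm{Pr}^L$ to $f_!\otimes\mathrm{id}_\mathcal{E}\dashv f^*\otimes\mathrm{id}_\mathcal{E}$, and one passes to right adjoints.

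Your extra check makes explicit a point the paper leaves implicit: the commuting equivalence of the $f_!$-square is the horizontal mate of the $f^*\otimes g^*$ square, so its conjugate is exactly the base-change mate. Only the phrase ``doubly-transposed mate involving $f_!$ and $g_*$'' is misworded; the fact you actually use, correctly, is that the vertical mate is the conjugate of the horizontal one.
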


\begin{proof}
Consider the commuting square of left adjoints
\[ \begin{tikzcd}
	{\mathcal{X} \otimes \mathcal{W}} & {\mathcal{Y} \otimes \mathcal{W}} \\
	{\mathcal{X} \otimes \mathcal{Z}} & {\mathcal{Y} \otimes \mathcal{Z}}
	\arrow["{f_! \otimes \mathrm{id}_\mathcal{W}}", from=1-1, to=1-2]
	\arrow["{\mathrm{id}_\mathcal{X} \otimes g^*}", from=2-1, to=1-1]
	\arrow["{f_! \otimes \mathrm{id}_\mathcal{Z}}"', from=2-1, to=2-2]
	\arrow["{\mathrm{id}_\mathcal{Y} \otimes g^*}"', from=2-2, to=1-2]
\end{tikzcd} \]
We obtain the wanted commuting square by passing to right adjoints, by noting that we have the internal adjunction in $\mathrm{Pr}^L$,
\[\begin{tikzcd}
	{\mathcal{X}} & {\mathcal{Y}.}
	\arrow[""{name=0, anchor=center, inner sep=0}, "{f_!}", curve={height=-12pt}, from=1-1, to=1-2]
	\arrow[""{name=1, anchor=center, inner sep=0}, "{f^*}", curve={height=-12pt}, from=1-2, to=1-1]
	\arrow["\dashv"{anchor=center, rotate=-90}, draw=none, from=0, to=1]
\end{tikzcd}\]
and $-\otimes-$ is a $2$-functor in each variable, hence $ f_! \otimes \mathrm{id}_{\mathcal{E}} \dashv f^* \otimes  \mathrm{id}_{\mathcal{E}}$ holds for each presentable $\infty$-category $\mathcal{E}$.
\end{proof}

Note that the $\infty$-category $\mathrm{Fun}^{\mathrm{lex,acc}}(\mathrm{An}, \mathrm{An})$ has a natural product, namely composition $f \circ g$ of functors $f, g : \mathrm{An} \rightarrow \mathrm{An}$. This is a natural example of an \emph{$E_1$-monoidal structure}. By the equivalence with $\mathrm{Pro}(\mathrm{An})$, this gives a (non-symmetric!) product of pro-anima.

\begin{definition} Let $X,Y \in \mathrm{Pro}(\mathrm{An})$. Let $f_X,f_Y : \mathrm{An} \rightarrow \mathrm{An}$ be the corresponding functors. The \emph{substitution product}
\[ X \triangleright Y \]
is defined to be the pro-anima corresponding to the composition $f_X \circ f_Y : \mathrm{An} \rightarrow \mathrm{An}$.
\end{definition}

Let us spell out what this looks like concretely. If $X = \text{``} \mathrm{lim}_{i \in I} \text{''} X_i$ and $Y = \text{``} \mathrm{lim}_{j \in J} \text{''} Y_j$, we see that the functor for $X \triangleright Y$ is given by the formula
\[ A \mapsto \mathrm{colim}_{i \in I} \mathrm{Map}(X_i, \mathrm{colim}_{j \in J} \mathrm{Map}(Y_j, A) ). \]

\begin{proposition} \label{substitutiontensor}
Let $X,Y \in \mathrm{Pro}(\mathrm{An})$. Suppose that either:
\begin{enumerate}
\item $X$ is pro-compact, i.e. $X \in \mathrm{Pro}(\mathrm{An}^\omega)$, or
\item $Y$ is constant, i.e. $Y \in \mathrm{An} \subset \mathrm{Pro}(\mathrm{An})$.
\end{enumerate}
Then
\[ X \triangleright Y \cong X \times Y \]
holds in $\mathrm{Pro}(\mathrm{An})$.
\end{proposition}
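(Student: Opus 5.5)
We compare the two functors $\mathrm{An} \rightarrow \mathrm{An}$ directly, using the equivalence $\mathrm{Pro}(\mathrm{An}) \simeq \mathrm{Fun}^{\mathrm{lex,acc}}(\mathrm{An},\mathrm{An})^{\op}$. Write $X = \text{``}\lim_{i\in I}\text{''} X_i$ and $Y = \text{``}\lim_{j \in J}\text{''} Y_j$. First compute the functor corresponding to $X \times Y$. Products in $\mathrm{Pro}(\mathrm{An})$ are coproducts in the functor category, so the product is the pro-object $\text{``}\lim_{(i,j) \in I \times J}\text{''} X_i \times Y_j$, indexed by the cofiltered category $I \times J$. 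It corresponds to the functor
\[ A \mapsto \mathrm{colim}_{(i,j)\in I\times J} \mathrm{Map}(X_i \times Y_j, A) \simeq \mathrm{colim}_{i\in I}\,\mathrm{colim}_{j \in J} \mathrm{Map}(X_i, \mathrm{Map}(Y_j, A)), \]
using the cartesian closedness of $\mathrm{An}$ and the fact that a colimit over $I \times J$ may be computed iteratively. The functor of $X \triangleright Y$ is instead
\[ A \mapsto \mathrm{colim}_{i \in I} \mathrm{Map}\bigl(X_i, \mathrm{colim}_{j\in J}\mathrm{Map}(Y_j,A)\bigr). \]
There is a canonical comparison map from the first to the second, induced by the maps $\mathrm{Map}(Y_j,A) \rightarrow \mathrm{colim}_j \mathrm{Map}(Y_j,A)$. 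It is natural in $A$ and hence a map of pro-objects. The task reduces to showing it is an equivalence, which amounts to checking that $\mathrm{Map}(X_i,-)$ commutes with the filtered colimit over $J$ for each $i$.

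For case (1), we may choose a presentation with all $X_i \in \mathrm{An}^\omega$, which is possible since $X \in \mathrm{Pro}(\mathrm{An}^\omega)$. Then each $\mathrm{Map}(X_i,-)$ commutes with filtered colimits, because $X_i$ is compact, and $J^{\op}$ is filtered. The comparison map is therefore an equivalence termwise in $i$, hence after taking the colimit over $I$.

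For case (2), we take $J = \ast$ and $Y_j = Y$. The colimit over $J$ is trivial, so both expressions reduce to $\mathrm{colim}_i \mathrm{Map}(X_i, \mathrm{Map}(Y,A))$ and the map is the identity.

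The only point needing some care is that the comparison is well defined as a morphism in $\mathrm{Pro}(\mathrm{An})$ rather than merely objectwise. This follows because it arises from a natural transformation of functors $\mathrm{An}\rightarrow\mathrm{An}$, so no real obstacle is expected. One should also justify that in case (1) the pro-compact representative can be chosen, which holds by definition of the essential image of $\mathrm{Pro}(\mathrm{An}^\omega)$.
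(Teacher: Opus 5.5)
Your proposal is correct and follows essentially the same route as the paper. Both proofs identify the functor of $X \times Y$ as $\mathrm{colim}_{(i,j)} \mathrm{Map}(X_i \times Y_j, -)$, use compactness of the $X_i$ to commute $\mathrm{Map}(X_i,-)$ with the filtered colimit over $J$, and treat case (2) as the trivial one-term colimit; the only difference is that you phrase the argument through an explicit canonical comparison map rather than a chain of equivalences, which is a harmless and slightly more careful repackaging.
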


\begin{proof}
First assume that $X = \text{``} \mathrm{lim}_{i \in I} \text{''} X_i$ is pro-compact, with $X_i \in \mathrm{An}^\omega$. Then for any $A \in \mathrm{An}$ it holds that
\[ \begin{array}{rcl}
\mathrm{colim}_{i \in I} \mathrm{Map}(X_i, \mathrm{colim}_{j \in J} \mathrm{Map}(Y_j, A) ) & \cong & \mathrm{colim}_{i \in I} \mathrm{colim}_{j \in J} \mathrm{Map}(X_i,  \mathrm{Map}(Y_j, A) ) \\ & \cong & 
\mathrm{colim}_{(i,j) \in I \times J} \mathrm{Map}(X_i \times Y_j, A).
\end{array}  \]
But this is the functor corresponding to $X \times Y$.

Next, assume that $Y$ is constant. Then for any $A \in \mathrm{An}$ it holds that
\[ \mathrm{colim}_{i \in I} \mathrm{Map}(X_i, \mathrm{Map}(Y, A) ) \cong \mathrm{colim}_{i \in I} \mathrm{Map}(X_i \times Y, A). \]
But this is the functor corresponding to $X \times Y$.
\end{proof}

\begin{theorem}[Shape-theoretic Künneth theorem, locally contractible version] \label{kunnethlocallycontractible}
Let $\mathcal{X}, \mathcal{Y}$ be topoi, and assume that $\mathcal{X}$ is locally contractible. Then
\[ \Pi_\infty( \mathcal{X} \times^\mathrm{RTop} \mathcal{Y} ) \cong \Pi_\infty( \mathcal{X} ) \triangleright \Pi_\infty( \mathcal{Y} ). \]
In particular, if either $\mathcal{X}$ has pro-compact shape or the shape of $\mathcal{Y}$ is constant, then
\[ \Pi_\infty( \mathcal{X} \times^\mathrm{RTop}  \mathcal{Y} ) \cong \Pi_\infty( \mathcal{X} ) \times \Pi_\infty( \mathcal{Y} ) \]
holds in $\mathrm{Pro}(\mathrm{An})$.
\end{theorem}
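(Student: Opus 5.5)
We will compute the endofunctor $(\mathcal{X}\otimes\mathcal{Y})_*(\mathcal{X}\otimes\mathcal{Y})^*$ by factoring the terminal geometric morphism of the product as
\[ \mathcal{X}\otimes\mathcal{Y} \xrightarrow{\ \mathrm{pr}_\mathcal{Y}\ } \mathcal{Y} \xrightarrow{\ \mathcal{Y}\ } \mathrm{An}, \]
where $\mathrm{pr}_\mathcal{Y} = \mathcal{X}\times\mathrm{id}_\mathcal{Y}$ is the product of the terminal morphism $\mathcal{X}\to\mathrm{An}$ with $\mathrm{id}_\mathcal{Y}$. Then
\[ (\mathcal{X}\otimes\mathcal{Y})_*(\mathcal{X}\otimes\mathcal{Y})^* \simeq \mathcal{Y}_*\,(\mathrm{pr}_\mathcal{Y})_*(\mathrm{pr}_\mathcal{Y})^*\,\mathcal{Y}^*, \]
and the goal is to identify the middle part $(\mathrm{pr}_\mathcal{Y})_*(\mathrm{pr}_\mathcal{Y})^*\mathcal{Y}^*$ with $\mathcal{Y}^*\circ \mathcal{X}_*\mathcal{X}^*$. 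Granting that, we get $\mathcal{Y}_*\mathcal{Y}^*\circ\mathcal{X}_*\mathcal{X}^* = f_{\Pi_\infty(\mathcal{Y})}\circ f_{\Pi_\infty(\mathcal{X})}$. This is the functor of a substitution product of the two shapes, and we must be careful about the order convention in the definition of $\triangleright$. Since the statement is symmetric in the sense that $\mathcal{X}\otimes\mathcal{Y}\simeq\mathcal{Y}\otimes\mathcal{X}$, we will choose the factorization through $\mathcal{X}$ or through $\mathcal{Y}$ so that the composite comes out as $f_{\Pi_\infty(\mathcal{X})}\circ f_{\Pi_\infty(\mathcal{Y})}$. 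If needed, we instead factor through $\mathrm{pr}_\mathcal{X}$ and use base change for the essential morphism $\mathcal{X}\to\mathrm{An}$ in the other slot.

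The key input is Lemma \ref{essentialbasechange}, applied with $f=\mathcal{X}:\mathcal{X}\to\mathrm{An}$, which is essential because $\mathcal{X}$ is locally contractible. For the second morphism, take $g=\mathcal{Y}:\mathcal{Y}\to\mathrm{An}$ or $g=\mathrm{id}$, as appropriate. Concretely, consider the square with top row $\mathcal{X}\otimes\mathcal{Y}\to\mathrm{An}\otimes\mathcal{Y}=\mathcal{Y}$, bottom row $\mathcal{X}\otimes\mathrm{An}=\mathcal{X}\to\mathrm{An}$, and vertical maps induced by $\mathcal{Y}\to\mathrm{An}$. Lemma \ref{essentialbasechange} gives the base-change equivalence
\[ (\mathrm{pr}_\mathcal{X})_*\,(\mathcal{X}^*\otimes\mathrm{id}_\mathcal{Y}) \simeq \mathcal{X}^*\,\mathcal{Y}_*. \]
Now factor the terminal morphism of the product instead as $\mathcal{X}\otimes\mathcal{Y}\xrightarrow{\mathrm{pr}_\mathcal{X}}\mathcal{X}\to\mathrm{An}$. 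The pullback $(\mathcal{X}\otimes\mathcal{Y})^*$ is $(\mathcal{X}^*\otimes\mathrm{id}_\mathcal{Y})\circ\mathcal{Y}^*$, using the other factorization for the pullback. Hence
\[ (\mathcal{X}\otimes\mathcal{Y})_*(\mathcal{X}\otimes\mathcal{Y})^* \simeq \mathcal{X}_*(\mathrm{pr}_\mathcal{X})_*(\mathcal{X}^*\otimes\mathrm{id})\mathcal{Y}^* \simeq \mathcal{X}_*\mathcal{X}^*\,\mathcal{Y}_*\mathcal{Y}^*, \]
which is exactly the functor $f_{\Pi_\infty(\mathcal{X})}\circ f_{\Pi_\infty(\mathcal{Y})}$ defining $\Pi_\infty(\mathcal{X})\triangleright\Pi_\infty(\mathcal{Y})$. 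So the plan is to use the two factorizations of the terminal morphism, one for pushforwards and one for pullbacks, glued by the commutativity of the square.

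The main obstacle is the bookkeeping in the second step. We must check that the identification $\mathrm{pr}_\mathcal{X}^*\circ\mathcal{X}^* \simeq (\mathcal{X}^*\otimes\mathrm{id}_\mathcal{Y})\circ\mathcal{Y}^*$, coming from the commuting square of left adjoints, is compatible with the mate used in Lemma \ref{essentialbasechange}. We must also check that the resulting equivalence is the one induced by the canonical natural transformations, so that it is natural in the argument and gives an isomorphism in $\mathrm{Fun}^{\mathrm{lex,acc}}(\mathrm{An},\mathrm{An})^{\op}\simeq\mathrm{Pro}(\mathrm{An})$. Since both sides are left exact and accessible, an objectwise natural equivalence suffices, so no monad-structure issues arise.

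The final assertion then follows directly from Proposition \ref{substitutiontensor}. If $\Pi_\infty(\mathcal{X})$ is pro-compact or $\Pi_\infty(\mathcal{Y})$ is constant, then $\Pi_\infty(\mathcal{X})\triangleright\Pi_\infty(\mathcal{Y})\cong\Pi_\infty(\mathcal{X})\times\Pi_\infty(\mathcal{Y})$.
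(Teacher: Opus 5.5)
Your executed argument in the second paragraph is the paper's proof. You factor the pushforward of $\mathcal{X}\otimes\mathcal{Y}\to\mathrm{An}$ through $\mathrm{pr}_\mathcal{X}$ and the pullback through $\mathcal{Y}$, glue them with the base-change square of Lemma \ref{essentialbasechange} for the essential morphism $\mathcal{X}\to\mathrm{An}$ to get $\mathcal{X}_*\mathcal{X}^*\mathcal{Y}_*\mathcal{Y}^*$, and finish with Proposition \ref{substitutiontensor}.

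One correction to your first paragraph: the choice of factorization is forced, not an ordering convention. The route through $(\mathrm{pr}_\mathcal{Y})_*$ would need $(\mathrm{pr}_\mathcal{Y})_*(\mathrm{id}_\mathcal{X}\otimes\mathcal{Y}^*)\simeq\mathcal{Y}^*\mathcal{X}_*$, which is a proper (not essential) base change and fails in general, e.g.\ in Example \ref{counterexample}.
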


\begin{proof}
Consider the commutative diagram
\[ \begin{tikzcd}
	& {\mathcal{Y}} & {\mathrm{An}} & \\
	{\mathrm{An}} & {\mathcal{X} \otimes \mathcal{Y}} & {\mathcal{X}} & {\mathrm{An}}
	\arrow["{\mathcal{Y}_*}", from=1-2, to=1-3]
	\arrow["{\mathcal{X}^* \otimes \mathrm{id}_\mathcal{Y}}", from=1-2, to=2-2]
	\arrow["{\mathcal{X}^*}", from=1-3, to=2-3]
	\arrow["{\mathcal{Y}^*}", from=2-1, to=1-2]
	\arrow["{(\mathcal{X} \otimes \mathcal{Y})^*}"', from=2-1, to=2-2]
	\arrow["{(\mathrm{id}_\mathcal{X} \otimes \mathcal{Y}^*)_*}", from=2-2, to=2-3]
	\arrow["{(\mathcal{X} \otimes \mathcal{Y})_*}"', curve={height=18pt}, from=2-2, to=2-4]
	\arrow["{\mathcal{X}_*}", from=2-3, to=2-4]
\end{tikzcd} \]
where the central square is commutative by Lemma \ref{essentialbasechange}. Hence it holds that
\[
(\mathcal{X} \otimes \mathcal{Y})_*
(\mathcal{X} \otimes \mathcal{Y})^*
\cong 
\mathcal{X}_* \mathcal{X}^* \mathcal{Y}_* \mathcal{Y}^*,
\]
in other words
\[ \Pi_\infty( \mathcal{X} \times^\mathrm{RTop} \mathcal{Y} ) \cong \Pi_\infty( \mathcal{X} ) \triangleright \Pi_\infty( \mathcal{Y} ). \]
\end{proof}

The other version of the Künneth theorem that we would like to present is concerned with properness, see Definition \ref{definitionproper}. As with the theorem about inverse limits, a version with stronger conditions has appeared in \cite[Proposition 2.11]{Hoyois2018HigherGaloisTheory}. The proof given there is essentially the same.

\begin{theorem}[Shape-theoretic Künneth theorem, proper version] \label{shapekuenneth}
Let $\mathcal{X}, \mathcal{Y}$ be topoi, and suppose $\mathcal{X}$ is proper. Then
\[
\Pi_\infty( \mathcal{X} \times^{\mathrm{RTop}} \mathcal{Y} )
\simeq
\Pi_\infty( \mathcal{Y} ) \triangleright \Pi_\infty( \mathcal{X} ).
\]
In particular, if either $\mathcal{X}$ has constant shape or $\mathcal{Y}$ has pro-compact shape, it holds that
\[
\Pi_\infty( \mathcal{X} \times^{\mathrm{RTop}} \mathcal{Y} )
\simeq
\Pi_\infty( \mathcal{X} ) \times \Pi_\infty( \mathcal{Y} ).
\]
\end{theorem}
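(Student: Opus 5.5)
We mirror the proof of Theorem \ref{kunnethlocallycontractible}, with proper base-change replacing the essential base-change of Lemma \ref{essentialbasechange}. Since $\mathcal{X}$ is proper, the geometric morphism $\mathcal{X} \rightarrow \mathrm{An}$ is proper (equivalently compact, by Theorem \ref{propercompact}). The product $\mathcal{X} \times^{\mathrm{RTop}} \mathcal{Y} \simeq \mathcal{X} \otimes \mathcal{Y}$ sits in the pullback square of topoi
\[ \begin{tikzcd}
	{\mathcal{X} \otimes \mathcal{Y}} & {\mathcal{X}} \\
	{\mathcal{Y}} & {\mathrm{An}}
	\arrow["{p_\mathcal{X}}", from=1-1, to=1-2]
	\arrow["{p_\mathcal{Y}}"', from=1-1, to=2-1]
	\arrow["{\mathcal{X}}", from=1-2, to=2-2]
	\arrow["{\mathcal{Y}}"', from=2-1, to=2-2]
\end{tikzcd} \]
This holds because $\mathrm{An}$ is terminal in $\mathrm{RTop}$. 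Applying Definition \ref{definitionproper} with $\mathcal{Y}'' = \mathcal{Y}$ (and trivial second square), proper base-change yields a natural equivalence
\[ \mathcal{Y}^* \mathcal{X}_* \simeq (p_\mathcal{Y})_* (p_\mathcal{X})^* . \]

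Next we factor the structure map. The pullback along $\mathcal{X} \otimes \mathcal{Y} \rightarrow \mathrm{An}$ is $(p_\mathcal{X})^* \mathcal{X}^*$, and the pushforward is $\mathcal{Y}_* (p_\mathcal{Y})_*$. We then compute
\[ (\mathcal{X} \otimes \mathcal{Y})_* (\mathcal{X} \otimes \mathcal{Y})^* \simeq \mathcal{Y}_* (p_\mathcal{Y})_* (p_\mathcal{X})^* \mathcal{X}^* \simeq \mathcal{Y}_* \mathcal{Y}^* \mathcal{X}_* \mathcal{X}^* . \]
Under the equivalence $\mathrm{Pro}(\mathrm{An}) \simeq \mathrm{Fun}^{\mathrm{lex,acc}}(\mathrm{An},\mathrm{An})^{\op}$, the composite $f_{\Pi_\infty(\mathcal{Y})} \circ f_{\Pi_\infty(\mathcal{X})}$ corresponds by definition to $\Pi_\infty(\mathcal{Y}) \triangleright \Pi_\infty(\mathcal{X})$, which proves the first claim. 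Note the order: the outer functor belongs to $\mathcal{Y}$, which is why the substitution product comes out reversed relative to Theorem \ref{kunnethlocallycontractible}.

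For the second claim we apply Proposition \ref{substitutiontensor} to $\Pi_\infty(\mathcal{Y}) \triangleright \Pi_\infty(\mathcal{X})$. If $\mathcal{Y}$ has pro-compact shape, case (1) applies directly. If $\mathcal{X}$ has constant shape, case (2) applies with the right factor $\Pi_\infty(\mathcal{X})$ constant. In both cases we get $\Pi_\infty(\mathcal{Y}) \times \Pi_\infty(\mathcal{X}) \simeq \Pi_\infty(\mathcal{X}) \times \Pi_\infty(\mathcal{Y})$, using symmetry of the cartesian product in $\mathrm{Pro}(\mathrm{An})$. Although compactness of $\mathcal{X}$ also makes $\Pi_\infty(\mathcal{X})$ pro-compact, that is not what case (1) requires here, since $\Pi_\infty(\mathcal{X})$ is the right factor.

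The main obstacle is the base-change step. One must check that the square really is a pullback in $\mathrm{RTop}$, i.e.\ that the product is computed by $\otimes$; this was recalled in the preliminaries. One must also check that Definition \ref{definitionproper} yields the Beck--Chevalley equivalence in the orientation $\mathcal{Y}^*\mathcal{X}_* \simeq (p_\mathcal{Y})_*(p_\mathcal{X})^*$, with the mate being the specific comparison map. Everything after that is formal, as in Theorem \ref{kunnethlocallycontractible}. One should also verify that the resulting equivalence of endofunctors is natural in $A$, so that it is an equivalence in $\mathrm{Fun}^{\mathrm{lex,acc}}(\mathrm{An},\mathrm{An})$; this holds because the base-change equivalence is itself a natural transformation.
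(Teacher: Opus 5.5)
Your proposal is correct and matches the paper's proof: both use the pullback square over the terminal topos $\mathrm{An}$, the proper base-change equivalence $\mathcal{Y}^*\mathcal{X}_* \simeq (p_\mathcal{Y})_*(p_\mathcal{X})^*$, the identification of the shape endofunctor with $\mathcal{Y}_*\mathcal{Y}^*\mathcal{X}_*\mathcal{X}^*$, and then Proposition~\ref{substitutiontensor} with the factors in the correct order. The only cosmetic difference is that the paper first writes the pushforward as $\mathcal{X}_*(p_\mathcal{X})_*$ and converts it to $\mathcal{Y}_*(p_\mathcal{Y})_*$ using the commuting square of right adjoints, whereas you factor through $p_\mathcal{Y}$ from the start.
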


\begin{proof}
We have the commuting square of left adjoints
\[
\begin{tikzcd}
	{\mathcal{X} \otimes \mathcal{Y}} & {\mathcal{X}} \\
	{\mathcal{Y}} & {\mathrm{An}}
	\arrow["{\mathrm{id}_\mathcal{X} \otimes \mathcal{Y}^*}"', from=1-2, to=1-1]
	\arrow["{\mathcal{X}^* \otimes \mathrm{id}_\mathcal{Y}}", from=2-1, to=1-1]
	\arrow["{\mathcal{X}^*}"', from=2-2, to=1-2]
	\arrow["{\mathcal{Y}^*}", from=2-2, to=2-1]
\end{tikzcd}
\]
and similarly, the commuting square of right adjoints
\[
\begin{tikzcd}
	{\mathcal{X} \otimes \mathcal{Y}} & {\mathcal{X}} \\
	{\mathcal{Y}} & {\mathrm{An}}.
	\arrow["{(\mathrm{id}_\mathcal{X} \otimes \mathcal{Y}^*)_*}", from=1-1, to=1-2]
	\arrow["{(\mathcal{X}^* \otimes \mathrm{id}_\mathcal{Y})_*}"', from=1-1, to=2-1]
	\arrow["{\mathcal{X}_*}", from=1-2, to=2-2]
	\arrow["{\mathcal{Y}_*}"', from=2-1, to=2-2]
\end{tikzcd}
\]
Applying properness of $\mathcal{X}$, we also see that the square
\[
\begin{tikzcd}
	{\mathcal{X} \otimes \mathcal{Y}} & {\mathcal{X}} \\
	{\mathcal{Y}} & {\mathrm{An}}
	\arrow["{(\mathcal{X}^* \otimes \mathrm{id}_\mathcal{Y})_*}"', from=1-1, to=2-1]
	\arrow["{\mathrm{id}_\mathcal{X} \otimes \mathcal{Y}^*}"', from=1-2, to=1-1]
	\arrow["{\mathcal{X}_*}", from=1-2, to=2-2]
	\arrow["{\mathcal{Y}^*}", from=2-2, to=2-1]
\end{tikzcd}
\]
commutes. Combining these commutativity statements, we see that the shape of $\mathcal{X} \otimes \mathcal{Y}$ is computed from the composition
\[
\mathcal{X}_*
(\mathrm{id}_\mathcal{X} \otimes \mathcal{Y}^*)_*
(\mathrm{id}_\mathcal{X} \otimes \mathcal{Y}^*)
\mathcal{X}^*
\simeq
\mathcal{Y}_*
(\mathcal{X}^* \otimes \mathrm{id}_\mathcal{Y})_*
(\mathrm{id}_\mathcal{X} \otimes \mathcal{Y}^*)
\mathcal{X}^*
\simeq
\mathcal{Y}_* \mathcal{Y}^* \mathcal{X}_* \mathcal{X}^*.
\]
Now this means, translating to pro-anima, that
\[
\Pi_\infty( \mathcal{X} \times^{\mathrm{RTop}} \mathcal{Y} )
\simeq
\Pi_\infty( \mathcal{Y} ) \triangleright \Pi_\infty( \mathcal{X} ).
\]
\end{proof}

We also obtain the perhaps surprising result on commutation of shape with certain infinite products.

\begin{corollary}
Let $\mathcal{X}_i, i \in I$ be a family of proper topoi. Then
\[ \Pi_\infty( \prod_{i \in I} \mathcal{X}_i ) \simeq \prod_{i \in I} \Pi_\infty( \mathcal{X}_i ). \]
\end{corollary}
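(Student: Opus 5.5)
The plan is to write the infinite product as a cofiltered limit of finite products. Then Theorem \ref{shapeinverselimits} handles the limit, and Theorem \ref{shapekuenneth} handles each finite stage.

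For a finite subset $F \subset I$ put $\mathcal{X}_F := \prod_{i \in F} \mathcal{X}_i$, computed in $\mathrm{RTop}$ (the empty product is $\mathrm{An}$). Since $\mathrm{RTop}$ has all limits, we have
\[ \prod_{i \in I} \mathcal{X}_i \simeq \lim_{F \subset I \text{ finite}} \mathcal{X}_F, \]
where the indexing poset of finite subsets, ordered by reverse inclusion, is cofiltered. The transition maps $\mathcal{X}_{F'} \rightarrow \mathcal{X}_F$ for $F \subset F'$ are the projections.

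The first step is to show that each $\mathcal{X}_F$ is compact. By Theorem \ref{propercompact} each $\mathcal{X}_i$ is compact. By induction it suffices to show that $\mathcal{X} \otimes \mathcal{Y}$ is proper whenever $\mathcal{X}$ and $\mathcal{Y}$ are. The projection $\mathcal{X} \otimes \mathcal{Y} \rightarrow \mathcal{Y}$ is the base change of the proper morphism $\mathcal{X} \rightarrow \mathrm{An}$ along $\mathcal{Y} \rightarrow \mathrm{An}$. By Definition \ref{definitionproper}, properness is stable under pullback, since pasting pullback squares gives pullback squares. Properness is also stable under composition, because left adjointable squares paste. Hence the composite $\mathcal{X} \otimes \mathcal{Y} \rightarrow \mathcal{Y} \rightarrow \mathrm{An}$ is proper, and so it is compact.

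The second step is to show that the transition maps are perfect. Each projection $\mathcal{X}_{F'} \rightarrow \mathcal{X}_F$ is the base change of the proper morphism $\mathcal{X}_{F' \setminus F} \rightarrow \mathrm{An}$, so it is proper by the stability just shown. Proper morphisms are perfect by \cite[Remark 7.3.1.5]{Lurie2009HTT}. Theorem \ref{shapeinverselimits} therefore applies and gives
\[ \Pi_\infty\bigl(\textstyle\prod_{i \in I} \mathcal{X}_i\bigr) \simeq \lim_F \Pi_\infty(\mathcal{X}_F). \]

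The third step computes the finite stages. By induction on $|F|$, using Theorem \ref{shapekuenneth} with $\mathcal{X}_F$ proper and $\mathcal{Y} = \mathcal{X}_j$, we get
\[ \Pi_\infty(\mathcal{X}_{F \cup \{j\}}) \simeq \Pi_\infty(\mathcal{X}_j) \triangleright \Pi_\infty(\mathcal{X}_F). \]
Here $\Pi_\infty(\mathcal{X}_j)$ is pro-compact because $\mathcal{X}_j$ is compact. Proposition \ref{substitutiontensor}(1) then turns this into the product $\Pi_\infty(\mathcal{X}_j) \times \Pi_\infty(\mathcal{X}_F)$. So $\Pi_\infty(\mathcal{X}_F) \simeq \prod_{i \in F} \Pi_\infty(\mathcal{X}_i)$.

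These identifications must be compatible with the projection maps as $F$ varies. They are, because each one is induced by the canonical comparison $\Pi_\infty(\prod_F \mathcal{X}_i) \rightarrow \prod_F \Pi_\infty(\mathcal{X}_i)$ coming from functoriality of $\Pi_\infty$ applied to the projections. Taking the limit over $F$ gives $\prod_{i \in I} \Pi_\infty(\mathcal{X}_i)$, since an arbitrary product is the cofiltered limit of its finite subproducts.

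I expect the main obstacle to be the stability of properness under base change and composition, together with this naturality check. The limit and finite-product steps are formal once those are in place.
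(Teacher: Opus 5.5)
Your proposal is correct and follows the same route as the paper. The paper's proof simply says the corollary follows by combining Theorem \ref{shapeinverselimits}, applied to the cofiltered system of finite subproducts, with Theorem \ref{shapekuenneth}; the points you add are exactly the details that one-line proof leaves implicit, namely closure of properness under base change and composition (so the finite products are compact and the projections perfect) and compatibility of the finite-stage identifications through the canonical comparison map.
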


\begin{proof}
This follows directly from combining Theorem \ref{shapeinverselimits} with Theorem \ref{shapekuenneth}.
\end{proof}

We record the corresponding versions for topological spaces and/or locales.

\begin{corollary}[Shape-theoretic Künneth theorem, localic version]
Let $X$ be a stably compact space and $L$ a locale. Assume that either $X$ has constant shape or $L$ has pro-compact shape. Then
\[
\Pi_\infty( X \times^{\mathrm{Loc}} L )
\simeq
\Pi_\infty( X ) \times \Pi_\infty( L ).
\]
\end{corollary}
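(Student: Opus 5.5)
The plan is to reduce to Theorem \ref{shapekuenneth}, applied to the topoi $\mathcal{X} = \mathrm{Sh}(X)$ and $\mathcal{Y} = \mathrm{Sh}(L)$. This takes three inputs: properness of $\mathrm{Sh}(X)$, identification of the product in $\mathrm{RTop}$ with the localic product, and a check that the hypotheses of the second clause of that theorem hold.

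\emph{Properness.} By Corollary \ref{stablycompactspacecompacttopos}, $\mathrm{Sh}(X)$ is a compact topos, since $X$ is stably compact. By Theorem \ref{propercompact}, a compact topos is proper, i.e.\ the geometric morphism $\mathrm{Sh}(X) \rightarrow \mathrm{An}$ is proper. So $\mathrm{Sh}(X)$ satisfies the hypothesis of Theorem \ref{shapekuenneth}.

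\emph{Products.} Next I identify $\mathrm{Sh}(X \times^{\mathrm{Loc}} L)$ with $\mathrm{Sh}(X) \times^{\mathrm{RTop}} \mathrm{Sh}(L)$. The functor $\mathrm{Sh} : \mathrm{Loc} \rightarrow \mathrm{RTop}$ is fully faithful and right adjoint to $\mathrm{Open}$, so it preserves products. This is the same reasoning used in the corollary on homotopy invariance for locales. Here $X$ is viewed as its locale $\Omega(X)$ and $X \times^{\mathrm{Loc}} L$ is the localic product, so no core-compactness issue about products of spaces arises. Hence $\Pi_\infty(X \times^{\mathrm{Loc}} L) \simeq \Pi_\infty(\mathrm{Sh}(X) \times^{\mathrm{RTop}} \mathrm{Sh}(L))$.

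\emph{Applying the theorem.} Theorem \ref{shapekuenneth} now gives
\[ \Pi_\infty(X \times^{\mathrm{Loc}} L) \simeq \Pi_\infty(L) \triangleright \Pi_\infty(X). \]
It remains to turn the substitution product into a product. If $X$ has constant shape, the second factor is constant, and case (2) of Proposition \ref{substitutiontensor} gives $\Pi_\infty(L) \triangleright \Pi_\infty(X) \simeq \Pi_\infty(L) \times \Pi_\infty(X)$. If $L$ has pro-compact shape, the first factor is pro-compact, and case (1) gives the same conclusion. Finally, the cartesian product in $\mathrm{Pro}(\mathrm{An})$ is symmetric, which yields $\Pi_\infty(X) \times \Pi_\infty(L)$.

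The main point needing care is the product identification. One must make sure the localic product, and not the topological one, is what enters, and that the right adjoint $\mathrm{Sh}$ genuinely carries localic products to the Lurie tensor product. The latter follows formally from $\mathrm{Sh}$ being a right adjoint, so I expect no real obstacle. The remaining subtlety is only matching the order of factors in $\triangleright$ with the correct case of Proposition \ref{substitutiontensor}.
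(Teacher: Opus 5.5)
Your proposal is correct and follows essentially the same route as the paper. The paper also argues that $\mathrm{Sh}\colon\mathrm{Loc}\to\mathrm{RTop}$ preserves limits, obtains properness of $\mathrm{Sh}(X)$ from Corollary \ref{stablycompactspacecompacttopos} and Theorem \ref{propercompact}, and then applies Theorem \ref{shapekuenneth}; your explicit matching of the factors in $\Pi_\infty(L)\triangleright\Pi_\infty(X)$ with the two cases of Proposition \ref{substitutiontensor} just spells out that theorem's ``in particular'' clause.
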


\begin{corollary}[Shape-theoretic Künneth theorem, topological version] \label{shapekunnethtopological}
Let $X$ be a stably compact space and $Y$ a topological space. Assume that either $X$ has constant shape or $Y$ has pro-compact shape. Then
\[
\Pi_\infty( X \times^{\mathrm{Top}} Y )
\simeq
\Pi_\infty( X ) \times \Pi_\infty( Y ).
\]
\end{corollary}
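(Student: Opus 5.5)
The plan is to reduce Corollary \ref{shapekunnethtopological} to the localic version (the preceding corollary), applied with $L = \Omega(Y)$. The only real issue is comparing the topological product $X \times^{\mathrm{Top}} Y$ with the localic product $\Omega(X) \times^{\mathrm{Loc}} \Omega(Y)$. Recall that $\Pi_\infty(Y) := \Pi_\infty(\mathrm{Sh}(\Omega(Y)))$ depends only on the locale of opens of $Y$, so the hypotheses ``$Y$ has pro-compact shape'' and ``$L$ has pro-compact shape'' coincide for $L = \Omega(Y)$.

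First, I will show that the canonical comparison map of locales
\[ \Omega(X \times^{\mathrm{Top}} Y) \rightarrow \Omega(X) \times^{\mathrm{Loc}} \Omega(Y) \]
is an isomorphism whenever $X$ is stably compact. The idea is to copy the argument for $[0,1]$ in the proof of the homotopy invariance corollary: there it was used that $[0,1]$ is core-compact, so $\Omega(X \times [0,1]) \cong \Omega(X) \times \Omega([0,1])$. This is the classical fact that for a core-compact (equivalently, exponentiable) space $X$, the frame $\mathcal{O}(X \times Y)$ is the frame coproduct $\mathcal{O}(X) \otimes \mathcal{O}(Y)$ for every space $Y$; see \cite{PicadoPultr} or \cite{GoubaultLarrecq2013NonHausdorff}. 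So the key step is to check that a stably compact space is core-compact. This holds because stably compact spaces are in particular locally compact, and sober locally compact spaces are core-compact; indeed $\mathcal{O}(X)$ is a continuous frame, being a retract of the continuous (coherent) frame $\mathcal{O}(\gamma(X))$ via the internal adjunction $p^* \dashv i^*$ described above. Retracts of continuous lattices along maps preserving directed joins are continuous, and $i^*$ preserves all joins, so this part should be routine.

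With that isomorphism in hand, I apply $\mathrm{Sh}$ and then $\Pi_\infty$ to obtain
\[ \Pi_\infty(X \times^{\mathrm{Top}} Y) \simeq \Pi_\infty(\Omega(X) \times^{\mathrm{Loc}} \Omega(Y)) \simeq \Pi_\infty(X) \times \Pi_\infty(Y), \]
where the second equivalence is the localic Künneth corollary. That corollary itself comes from Theorem \ref{shapekuenneth}, since $\mathrm{Sh}(X)$ is compact by Corollary \ref{stablycompactspacecompacttopos}, hence proper by Theorem \ref{propercompact}, and since $\mathrm{Sh} : \mathrm{Loc} \rightarrow \mathrm{RTop}$ preserves products, being a right adjoint.

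I expect the main obstacle to be the identification of the topological product's locale with the localic product. This is exactly where non-locally-compact factors could produce extra opens in the frame coproduct, or fail to produce them. The argument therefore hinges on correctly invoking core-compactness of $X$; $Y$ is allowed to be an arbitrary topological space, with no sobriety needed, since only $\Omega(Y)$ enters.
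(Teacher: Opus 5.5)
Your proposal is correct and follows the paper's proof. You reduce to the localic Künneth corollary, which is Theorem \ref{shapekuenneth} applied to the proper topos $\mathrm{Sh}(X)$ together with the fact that $\mathrm{Sh}\colon\mathrm{Loc}\to\mathrm{RTop}$ preserves products, and you use core-compactness of $X$ to identify $\Omega(X\times^{\mathrm{Top}}Y)$ with $\Omega(X)\times^{\mathrm{Loc}}\Omega(Y)$. The only addition is your justification of core-compactness, which the paper merely asserts; either of your routes works. Local compactness alone suffices, without sobriety, and in the retract argument the section $p^*$ must also preserve directed joins, which it does as a frame map.
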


\begin{proof}
The claim about locales follows from the statement that $\mathrm{Loc} \hookrightarrow \mathrm{RTop}$ preserves limits, together with the fact that for $X$ stably compact it holds that $\mathrm{Sh}(X)$ is proper by Corollary \ref{stablycompactspacecompacttopos} and Theorem \ref{propercompact}.

The topological case follows since $X$ is core-compact, and therefore
\[ \Omega(X \times^{\mathrm{Top}} Y) \simeq \Omega(X) \times^{\mathrm{Loc}} \Omega(Y). \]
\end{proof}

\section{Cohomology of topoi and shape} \label{cohomology}

Let $\mathcal{E}$ be a presentable $\infty$-category. If $\mathcal{X}$ is a topos, we define
\[ \mathrm{Sh}(\mathcal{X}, \mathcal{E}) := \mathcal{X} \otimes \mathcal{E} \cong \mathrm{Fun}^R( \mathcal{X}^\mathrm{op}, \mathcal{E} ) \]
as the $\infty$-category of $\mathcal{E}$-valued sheaves on $\mathcal{X}$. The following instances are worth highlighting.

\begin{example}
The case $\mathcal{E} = [1]$ recovers the frame of opens
\[ \mathrm{Open}(\mathcal{X}) = \mathrm{Fun}^R( \mathcal{X}^\mathrm{op}, [1]). \]
\end{example}

\begin{example}
The case $\mathcal{E} = \mathrm{Set}$ gives the $1$-truncation
\[ \mathcal{X}_{ \leq 1 } = \mathrm{Fun}^R( \mathcal{X}^\mathrm{op}, \mathrm{Set}) \]
consisting of the $0$-truncated objects of $\mathcal{X}$. The $1$-truncation of a topos is always a $1$-topos. (And every $1$-topos arises in such a way, by an adjunction analogous to the localic reflection for $0$-topoi.)
\end{example}

\begin{example}
The case $\mathcal{E} = \mathrm{Sp}$ defines the $\infty$-category of spectra-valued sheaves on $\mathcal{X}$.
\end{example}

\begin{example}
Let $R$ be a ring spectrum. The case $\mathcal{E} = \mathrm{Mod}_R$ defines the $\infty$-category of sheaves of $R$-modules on $\mathcal{X}$.
\end{example}

\begin{example}
The case $\mathcal{E} = \mathrm{Ab}$ defines the $\infty$-category of sheaves of abelian groups on $\mathcal{X}$. It only depends on the $1$-truncation of $\mathcal{X}$, as $\mathrm{Ab} \simeq \mathrm{Ab} \otimes \mathrm{Set}$.
\end{example}

By functoriality of the Lurie tensor product, the canonical adjunction
\[ \begin{tikzcd}
	{\mathcal{X}} & {\mathrm{An}}
	\arrow[""{name=0, anchor=center, inner sep=0}, "{\mathcal{X}_*}"', curve={height=12pt}, from=1-1, to=1-2]
	\arrow[""{name=1, anchor=center, inner sep=0}, "{\mathcal{X}^*}"', curve={height=12pt}, from=1-2, to=1-1]
	\arrow["\dashv"{anchor=center, rotate=-90}, draw=none, from=1, to=0]
\end{tikzcd} \]
induces the adjunction
\[ \begin{tikzcd}
	{\mathcal{X} \otimes \mathcal{E}} & {\mathcal{E}}
	\arrow[""{name=0, anchor=center, inner sep=0}, "{(\mathcal{X} \otimes \mathrm{id}_\mathcal{E})_*}"', curve={height=12pt}, from=1-1, to=1-2]
	\arrow[""{name=1, anchor=center, inner sep=0}, "{\mathcal{X}^* \otimes \mathrm{id}_\mathcal{E}}"', curve={height=12pt}, from=1-2, to=1-1]
	\arrow["\dashv"{anchor=center, rotate=-90}, draw=none, from=1, to=0]
\end{tikzcd} \]
The left adjoint can be understood as the assignment of an object $E \in \mathcal{E}$ to its constant sheaf $\underline{E}$. The right adjoint can be understood as the global sections functor.

\begin{definition}
Let $\mathcal{E}$ be a presentable $\infty$-category and let $E \in \mathcal{E}$. Let $\mathcal{X}$ be a topos. The \emph{$E$-valued cohomology} of $\mathcal{X}$ is defined to be
\[ H^\bullet(\mathcal{X}, E) := (\mathcal{X} \otimes \mathrm{id}_\mathcal{E})_*( \mathcal{X}^* \otimes \mathrm{id}_\mathcal{E} )(E) \in \mathcal{E}. \]
\end{definition}

\begin{remark}
The expression $H^\bullet(\mathcal{X}, E)$ is contravariantly functorial in the topos $\mathcal{X}$, and covariantly functorial in $E$. The proof of this is completely analogous to how one shows that shape is functorial.
\end{remark}

\begin{example}
The case $\mathcal{E} = \mathrm{Mod}_{H\mathbb{Z}} = D(\mathbb{Z})$, and $E = H\mathbb{Z}$ recovers the classically known \emph{$\mathbb{Z}$-valued sheaf cohomology} of a topos as
\[ H^n(\mathcal{X}, \mathbb{Z}) = \pi_{-n} H^\bullet(\mathcal{X}, \mathbb{Z}) \]
for $n \in \mathbb{Z}$.
\end{example}

Any presentable $\infty$-category $\mathcal{E}$ admits a \emph{tensoring} and \emph{cotensoring} by $\mathrm{An}$. Define for $X \in \mathrm{An}$ and $E \in \mathcal{E}$ the expressions
\[ X \otimes E := \mathrm{colim}_{X} E \qquad \text{ and } \qquad  E^X := \mathrm{lim}_{X} E \]
where the colimit, respectively limit, is over the constant diagram with value $E$. Alternatively, $X \otimes E$, respectively $E^X$, are defined via the universal properties
\[ \mathrm{Map}_{\mathrm{An}}( X, \mathrm{Map}_\mathcal{E}( E, D ) ) \cong \mathrm{Map}_\mathcal{E}( X \otimes E, D ) \]
and
\[ \mathrm{Map}_{\mathrm{An}}( X, \mathrm{Map}_\mathcal{E}( D, E ) ) \cong \mathrm{Map}_\mathcal{E}( D, E^X ), \]
natural in $D \in \mathcal{E}$. One observes that (co)-tensoring comes from functors
\[ \begin{array}{rcl}
\mathrm{An} \times \mathcal{E} & \rightarrow & \mathcal{E} \\
(X,E) & \mapsto & X \otimes E
\end{array} \]
which is the left Kan extension of the identity on $\mathcal{E}$ along 
\[\mathcal{E} \cong \ast \times \mathcal{E} \rightarrow \mathrm{An} \times \mathcal{E},\]
as well as
\[ \begin{array}{rcl}
\mathrm{An}^\mathrm{op} \times \mathcal{E} & \rightarrow & \mathcal{E} \\
(X,E) & \mapsto & E^X
\end{array} \]
which is the right Kan extension of the identity on $\mathcal{E}$ along 
\[\mathcal{E} \cong \ast \times \mathcal{E} \rightarrow \mathrm{An}^\mathrm{op} \times \mathcal{E}.\]
It is immediate from the given universal properties that for fixed $X \in \mathrm{An}$ we have an induced adjunction
\[ \begin{tikzcd}
	{\mathcal{E}} & {\mathcal{E}.}
	\arrow[""{name=0, anchor=center, inner sep=0}, "{X \otimes (-)}", curve={height=-12pt}, from=1-1, to=1-2]
	\arrow[""{name=1, anchor=center, inner sep=0}, "{(-)^X}", curve={height=-12pt}, from=1-2, to=1-1]
	\arrow["\dashv"{anchor=center, rotate=-90}, draw=none, from=0, to=1]
\end{tikzcd} \]

\begin{theorem} \label{shapeandcohomologylocallycontractible}
Let $\mathcal{X}$ be a locally contractible topos, and $\mathcal{E}$ a presentable $\infty$-category. Let $E \in \mathcal{E}$. Then there exists a natural isomorphism
\[ H^\bullet(\mathcal{X},E) \cong E^{\Pi_\infty(\mathcal{X})}. \]
\end{theorem}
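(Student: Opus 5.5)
Since $\mathcal{X}$ is locally contractible, $\mathcal{X} \rightarrow \mathrm{An}$ is essential, i.e.\ we have $\mathcal{X}_! \dashv \mathcal{X}^* \dashv \mathcal{X}_*$. The plan is to tensor this with $\mathcal{E}$ and identify the resulting functors $\mathcal{E} \rightarrow \mathcal{E}$ explicitly. The internal adjunction $\mathcal{X}_! \dashv \mathcal{X}^*$ lives in $\mathrm{Pr}^L$. Since $-\otimes \mathcal{E}$ is a $2$-functor on $\mathrm{Pr}^L$ (as used in Lemma \ref{essentialbasechange} and Proposition \ref{essentialandcontractible}), we get an adjunction
\[ \mathcal{X}_! \otimes \mathrm{id}_\mathcal{E} \dashv \mathcal{X}^* \otimes \mathrm{id}_\mathcal{E}. \]
We also have $\mathcal{X}^* \otimes \mathrm{id}_\mathcal{E} \dashv (\mathcal{X}\otimes \mathrm{id}_\mathcal{E})_*$. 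The cohomology functor $H^\bullet(\mathcal{X},-) = (\mathcal{X}\otimes\mathrm{id}_\mathcal{E})_*(\mathcal{X}^*\otimes \mathrm{id}_\mathcal{E})$ is therefore right adjoint to the composite
\[ (\mathcal{X}_!\otimes \mathrm{id}_\mathcal{E})(\mathcal{X}^*\otimes\mathrm{id}_\mathcal{E}) \cong (\mathcal{X}_!\mathcal{X}^*)\otimes \mathrm{id}_\mathcal{E}, \]
using functoriality of $-\otimes\mathcal{E}$ on $1$-morphisms of $\mathrm{Pr}^L$.

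Next I identify $\mathcal{X}_!\mathcal{X}^* : \mathrm{An} \rightarrow \mathrm{An}$. It is colimit-preserving, hence determined by its value on $\ast$. By the proposition characterizing locally contractible topoi, $\mathcal{X}_!$ is the local shape $\Pi^\mathcal{X}_\infty$, so $\mathcal{X}_!\mathcal{X}^*(\ast) = \mathcal{X}_!(1) \simeq \Pi_\infty(\mathcal{X})$. This is the same computation as in the proof of Proposition \ref{globalsectionslocallycontractible}. Thus $\mathcal{X}_!\mathcal{X}^* \simeq \Pi_\infty(\mathcal{X}) \times (-)$, with $\Pi_\infty(\mathcal{X})$ a constant pro-anima.

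It then remains to see that, under $\mathrm{An}\otimes\mathcal{E}\simeq\mathcal{E}$, the functor $(\Pi_\infty(\mathcal{X})\times -)\otimes \mathrm{id}_\mathcal{E}$ is the tensoring $\Pi_\infty(\mathcal{X})\otimes(-)$. The functor $A \times (-) : \mathrm{An}\rightarrow\mathrm{An}$ is the colimit over $A$ of the identity, i.e.\ $\mathrm{colim}_A \mathrm{id}$ in $\mathrm{Fun}^L(\mathrm{An},\mathrm{An})$. The assignment $F \mapsto F\otimes\mathrm{id}_\mathcal{E}$ is a colimit-preserving functor $\mathrm{Fun}^L(\mathrm{An},\mathrm{An}) \simeq \mathrm{An} \rightarrow \mathrm{Fun}^L(\mathcal{E},\mathcal{E})$, being $-\otimes \mathrm{id}_\mathcal{E}$ restricted along $\mathrm{An}\simeq\mathrm{Fun}^L(\mathrm{An},\mathrm{An})$, which is the action of $\mathrm{An}$ on $\mathcal{E}$. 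So it sends $\mathrm{colim}_A\mathrm{id}$ to $\mathrm{colim}_A \mathrm{id}_\mathcal{E} = A\otimes(-)$. Alternatively, one checks it directly via $\mathcal{E}\simeq\mathrm{Fun}^R(\mathrm{An}^{\op},\mathcal{E})$, where the functor becomes restriction/colimit along $A$. This identification, making the $2$-functoriality and the compatibility of $-\otimes\mathcal{E}$ with colimits of functors precise, is the step I expect to need the most care.

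Finally, passing to right adjoints, $H^\bullet(\mathcal{X},-)$ is right adjoint to $\Pi_\infty(\mathcal{X})\otimes(-)$, whose right adjoint is $(-)^{\Pi_\infty(\mathcal{X})}$ by the adjunction recorded just before the theorem. Uniqueness of adjoints gives $H^\bullet(\mathcal{X},E)\cong E^{\Pi_\infty(\mathcal{X})}$, naturally in $E$. Naturality in $\mathcal{X}$ follows from the naturality of the mate constructions.
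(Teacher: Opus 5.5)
Your proposal is correct and follows the paper's proof. The paper likewise applies the $2$-functor $-\otimes\mathcal{E}$ to the triple adjunction $\Pi_\infty^{\mathcal{X}} \dashv \mathcal{X}^* \dashv \mathcal{X}_*$ (where $\Pi_\infty^{\mathcal{X}} = \mathcal{X}_!$), composes to obtain $\Pi_\infty(\mathcal{X})\otimes(-) \dashv H^\bullet(\mathcal{X},-)$, and concludes by uniqueness of right adjoints; your separate identification of $(\mathcal{X}_!\mathcal{X}^*)\otimes\mathrm{id}_{\mathcal{E}}$ with the tensoring spells out a step the paper states in a single line.
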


\begin{proof}
Applying the $2$-functor $- \otimes \mathcal{E} : \mathrm{Pr}^L \rightarrow \mathrm{Pr}^L$ to the internal adjunction
\[ \begin{tikzcd}
	{\mathcal{X}} & {\mathrm{An}}
	\arrow[""{name=0, anchor=center, inner sep=0}, "{\Pi_\infty^\mathcal{X}}", curve={height=-12pt}, from=1-1, to=1-2]
	\arrow[""{name=1, anchor=center, inner sep=0}, "{\mathcal{X}_*}"', curve={height=12pt}, from=1-1, to=1-2]
	\arrow[""{name=2, anchor=center, inner sep=0}, "{\mathcal{X}^*}"{description}, from=1-2, to=1-1]
	\arrow["\dashv"{anchor=center, rotate=-90}, draw=none, from=0, to=2]
	\arrow["\dashv"{anchor=center, rotate=-90}, draw=none, from=2, to=1]
\end{tikzcd} \]
we obtain the internal adjunction
\[ \begin{tikzcd}
	{\mathrm{Sh}(\mathcal{X},\mathcal{E})} && {\mathcal{E}.}
	\arrow[""{name=0, anchor=center, inner sep=0}, "{\Pi_\infty^\mathcal{X} \otimes \mathrm{id}_\mathcal{E}}", curve={height=-12pt}, from=1-1, to=1-3]
	\arrow[""{name=1, anchor=center, inner sep=0}, "{( \mathcal{X} \otimes \mathrm{id}_\mathcal{E})_*}"', curve={height=12pt}, from=1-1, to=1-3]
	\arrow[""{name=2, anchor=center, inner sep=0}, "{\mathcal{X}^* \otimes \mathrm{id}_\mathcal{E}}"{description}, from=1-3, to=1-1]
	\arrow["\dashv"{anchor=center, rotate=-90}, draw=none, from=0, to=2]
	\arrow["\dashv"{anchor=center, rotate=-90}, draw=none, from=2, to=1]
\end{tikzcd} \]
These induce the adjunction
\[ \begin{tikzcd}
	{\mathcal{E}} & {\mathcal{E}.}
	\arrow[""{name=0, anchor=center, inner sep=0}, "{\Pi_\infty(\mathcal{X}) \otimes (-)}", curve={height=-12pt}, from=1-1, to=1-2]
	\arrow[""{name=1, anchor=center, inner sep=0}, "{H^\bullet(\mathcal{X}, - )}", curve={height=-12pt}, from=1-2, to=1-1]
	\arrow["\dashv"{anchor=center, rotate=-90}, draw=none, from=0, to=1]
\end{tikzcd}\]
But the right adjoint of $\Pi_\infty(\mathcal{X}) \otimes (-)$ is given by $(-)^{\Pi_\infty(\mathcal{X})}$, hence the claim follows.
\end{proof}

We can extend the cotensoring by animae on a presentable $\infty$-category $\mathcal{E}$ to a cotensoring by pro-animae. If $X = \text{``}\mathrm{lim}_{i \in I}\text{''} X_i \in \mathrm{Pro}(\mathrm{An})$ we extend the definition of the cotensoring to
\[ E^X := \mathrm{colim}_{i \in I} E^{X_i}. \]
This is given by a functor 
\[ \begin{array}{rcl}
\mathrm{Pro}(\mathrm{An})^\mathrm{op} \times \mathcal{E} & \rightarrow & \mathcal{E} \\
(X,E) & \mapsto & E^X,
\end{array} \]
now obtained via left Kan extending the constant case.

\begin{theorem} \label{shapeandcohomology}
Let $\mathcal{X}$ be a topos, and $\mathcal{E}$ a compactly generated presentable $\infty$-category. Let $E \in \mathcal{E}$. Then there exists a natural isomorphism
\[ H^\bullet(\mathcal{X},E) \cong E^{\Pi_\infty(\mathcal{X})}. \]
\end{theorem}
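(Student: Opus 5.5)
We reduce to the case $\mathcal{E} = \mathrm{An}$ by testing against compact objects. Since $\mathcal{E}$ is compactly generated, the functors $\mathrm{Map}_\mathcal{E}(C,-)$ for $C$ ranging over compact objects of $\mathcal{E}$ are jointly conservative and preserve filtered colimits. So it suffices to construct a natural comparison map $E^{\Pi_\infty(\mathcal{X})} \to H^\bullet(\mathcal{X},E)$ and to check that it becomes an equivalence after applying $\mathrm{Map}_\mathcal{E}(C,-)$ for every compact $C$.

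The comparison map is constructed as follows. Write $\Pi_\infty(\mathcal{X}) = \text{``}\lim_{i}\text{''} X_i$. Each index gives a map $\mathcal{X}_*\mathcal{X}^* \Leftarrow \mathrm{Map}(X_i,-)$, and hence, by adjunction, a point of $\mathcal{X}_*\mathcal{X}^*(X_i)$, i.e.\ a map $1 \to \mathcal{X}^*(X_i)$ in $\mathcal{X}$. Tensoring with $\mathcal{E}$, this yields a natural map $E^{X_i} \to (\mathcal{X}\otimes\mathrm{id}_\mathcal{E})_*(\mathcal{X}^*\otimes \mathrm{id}_\mathcal{E})(E)$. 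Here one uses that $(\mathcal{X}^*\otimes\mathrm{id})$ commutes with cotensors by anima, because it is a right adjoint as well: its right adjoint is the tensored pushforward. Passing to the colimit over $i$ gives the comparison map.

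For the check, identify $\mathcal{X}\otimes\mathcal{E} \simeq \mathrm{Fun}^R(\mathcal{X}^{\mathrm{op}},\mathcal{E})$. Under this identification, the constant sheaf $\underline{E}$ and its global sections are computed as the limit-preserving functor $\mathcal{X}^{\mathrm{op}} \to \mathcal{E}$ extending $E$, evaluated at $1$. Then
\[ \mathrm{Map}_\mathcal{E}(C, H^\bullet(\mathcal{X},E)) \simeq H^\bullet(\mathcal{X}, \mathrm{Map}_\mathcal{E}(C,E)) = \mathcal{X}_*\mathcal{X}^*\mathrm{Map}_\mathcal{E}(C,E). \]
The first equivalence holds because postcomposition with the limit-preserving functor $\mathrm{Map}_\mathcal{E}(C,-)$ gives a functor $\mathcal{X}\otimes\mathcal{E}\to\mathcal{X}$, namely $\mathrm{id}_\mathcal{X}\otimes \mathrm{Map}(C,-)$, which commutes with global sections and sends the constant sheaf on $E$ to the constant sheaf on $\mathrm{Map}(C,E)$. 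Making this second compatibility precise is the main obstacle. The functor $\mathrm{Map}(C,-)$ is not a left adjoint, so it does not directly define a morphism in $\mathrm{Pr}^L$. Instead I would use that $\mathcal{E}\simeq \mathrm{Ind}(\mathcal{E}^\omega)$, so that $\mathcal{X}\otimes\mathcal{E}\simeq \mathrm{Fun}^{\mathrm{lex}}((\mathcal{E}^{\omega})^{\mathrm{op}},\mathcal{X})$, roughly presheaves on $\mathcal{E}^\omega$ with values in $\mathcal{X}$. In this description both $\mathcal{X}^*\otimes\mathrm{id}$ and the tensored pushforward act pointwise by postcomposition with $\mathcal{X}^*$ and $\mathcal{X}_*$. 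This is the key point where compact generation is needed, since evaluation at $C$ then commutes with both functors. (One should also check that postcomposition with the left exact functor $\mathcal{X}^*$ preserves left exactness, which holds.)

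On the other side,
\[ \mathrm{Map}_\mathcal{E}(C, \mathrm{colim}_i E^{X_i}) \simeq \mathrm{colim}_i \mathrm{Map}(X_i,\mathrm{Map}_\mathcal{E}(C,E)) = \mathcal{X}_*\mathcal{X}^*(\mathrm{Map}_\mathcal{E}(C,E)), \]
using compactness of $C$ and the definition of the shape. Finally, one checks that the comparison map induces exactly this identification. This follows by naturality from the case $\mathcal{E}=\mathrm{An}$, where the statement is the definition of $\Pi_\infty$.
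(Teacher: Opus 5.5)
Your argument is essentially the paper's. You identify $\mathcal{X}\otimes\mathcal{E}\simeq\mathrm{Fun}^{\mathrm{lex}}((\mathcal{E}^\omega)^{\mathrm{op}},\mathcal{X})$ so that pullback and pushforward act by postcomposition with $\mathcal{X}^*$ and $\mathcal{X}_*$, evaluate at compact objects $C$, and pull $\mathrm{colim}_i E^{X_i}$ out of $\mathrm{Map}_\mathcal{E}(C,-)$ using compactness.

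The paper skips your separately built comparison map. Your identifications are natural in $C\in\mathcal{E}^\omega$, so $\mathcal{E}\simeq\mathrm{Fun}^{\mathrm{lex}}((\mathcal{E}^\omega)^{\mathrm{op}},\mathrm{An})$ gives the equivalence directly. This also sidesteps your stated reason for that map, which is wrong: $\mathcal{X}^*\otimes\mathrm{id}_\mathcal{E}$ is a left adjoint and in general does not commute with cotensors by infinite anima (e.g.\ for the sheaf topos of the Cantor space). The lax comparison $\underline{E^{X_i}}\to\underline{E}^{X_i}$ would still let you build the map.
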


Recall that $\mathcal{E}$ is a compactly generated $\infty$-category iff it is of the form
\[ \mathcal{E} \simeq \mathrm{Ind}(\mathcal{E}^\omega) \simeq \mathrm{Fun}^\mathrm{lex}( (\mathcal{E}^\omega)^\mathrm{op}, \mathrm{An} ). \]
Moreover, for any topos $\mathcal{X}$ we can then see, by symmetry of the Lurie tensor product,  that
\[ \mathcal{X} \otimes \mathcal{E} \simeq \mathrm{Fun}^R( \mathcal{E}^\mathrm{op}, \mathcal{X} ) \simeq \mathrm{Fun}^\mathrm{lex}( (\mathcal{E}^\omega)^\mathrm{op}, \mathcal{X} ). \]
Since both $\mathcal{X}^*$ and $\mathcal{X}_*$ are finite-limit preserving, we see that we can understand the adjunction
\[ \begin{tikzcd}
	{\mathcal{X} \otimes \mathcal{E}} & {\mathcal{E}}
	\arrow[""{name=0, anchor=center, inner sep=0}, "{(\mathcal{X} \otimes \mathrm{id}_\mathcal{E})_*}"', curve={height=12pt}, from=1-1, to=1-2]
	\arrow[""{name=1, anchor=center, inner sep=0}, "{\mathcal{X}^* \otimes \mathrm{id}_\mathcal{E}}"', curve={height=12pt}, from=1-2, to=1-1]
	\arrow["\dashv"{anchor=center, rotate=-90}, draw=none, from=1, to=0]
\end{tikzcd} \] 
as induced via post-composition
\[ \begin{tikzcd}
	{\mathcal{X} \otimes \mathcal{E}} & {\mathrm{Fun}^\mathrm{lex}((\mathcal{E}^\omega)^\mathrm{op}, \mathcal{X})} & {\mathrm{Fun}^\mathrm{lex}((\mathcal{E}^\omega)^\mathrm{op}, \mathrm{An})} & {\mathcal{E}}
	\arrow["\simeq"{description}, draw=none, from=1-1, to=1-2]
	\arrow[""{name=0, anchor=center, inner sep=0}, "{(\mathcal{X}_*)_\circ}"', curve={height=12pt}, from=1-2, to=1-3]
	\arrow[""{name=1, anchor=center, inner sep=0}, "{(\mathcal{X}^*)_\circ}"', curve={height=12pt}, from=1-3, to=1-2]
	\arrow["\simeq"{description}, draw=none, from=1-4, to=1-3]
	\arrow["\dashv"{anchor=center, rotate=-90}, draw=none, from=1, to=0]
\end{tikzcd} \]	
With this being understood, we can now provide the proof of Theorem \ref{shapeandcohomology}.

\begin{proof}
Write $\Pi_\infty(\mathcal{X}) = \text{``}\mathrm{lim}_{i \in I}\text{''} X_i \in \mathrm{Pro}(\mathrm{An})$. Note that $E \in \mathcal{E}$ corresponds under the identification 
\[\mathcal{E} \simeq \mathrm{Fun}^\mathrm{lex}((\mathcal{E}^\omega)^\mathrm{op}, \mathrm{An})\]
to the functor 
\[\mathrm{Map}_\mathcal{E}( - , E ) : (\mathcal{E}^\omega)^\mathrm{op} \rightarrow \mathrm{An}. \]
Hence the object $H^\bullet(\mathcal{X},E)$ corresponds to
\[\begin{array}{rcl}
\mathcal{X}_* \mathcal{X}^* \mathrm{Map}_\mathcal{E}( - , E ) & \simeq & \mathrm{colim}_{i \in I} \mathrm{Map}_{\mathrm{An}}( X_i, \mathrm{Map}_\mathcal{E}( - , E )) \\ 
& \simeq & \mathrm{colim}_{i \in I}  \mathrm{Map}_\mathcal{E}( - , E^{X_i} ) \\
& \simeq &  \mathrm{Map}_\mathcal{E}( - , \mathrm{colim}_{i \in I} E^{X_i} )
\end{array} \]
where in the final line we are allowed to pull the colimit through as we only evaluate against compact objects in $\mathcal{E}$. Hence it follows that
\[ H^\bullet(\mathcal{X},E) \cong \mathrm{colim}_{i \in I} E^{X_i} \cong E^{\Pi_\infty(\mathcal{X})}. \]
\end{proof}

The statement of Theorem \ref{shapeandcohomology} works more generally for \emph{compactly assembled} target categories.

\begin{definition}[{\cite[Section 21.1.2]{Lurie2018SAG}}]
A presentable $\infty$-category $\mathcal{E}$ is called \emph{compactly assembled} if there exist adjunctions
\[ \begin{tikzcd}
	{\mathcal{E}} & {\mathcal{E}'}
	\arrow[""{name=0, anchor=center, inner sep=0}, "{i_!}", curve={height=-12pt}, hook, from=1-1, to=1-2]
	\arrow[""{name=1, anchor=center, inner sep=0}, "{i_*}"', curve={height=12pt}, hook, from=1-1, to=1-2]
	\arrow[""{name=2, anchor=center, inner sep=0}, "{i^*}"{description}, from=1-2, to=1-1]
	\arrow["\dashv"{anchor=center, rotate=-90}, draw=none, from=0, to=2]
	\arrow["\dashv"{anchor=center, rotate=-90}, draw=none, from=2, to=1]
\end{tikzcd} \]
with $\mathcal{E}'$ compactly generated, and $i^*$ both a left and right Bousfield localization.
\end{definition}

Note that this can be expressed as saying that $\mathcal{E}$ arises from an \emph{internal} right Bousfield localization of a compactly generated $\infty$-category in $\mathrm{Pr}^L$.

\begin{remark}
Compactly assembled presentable $\infty$-categories are the natural $\infty$-categorical generalizations of continuous lattices. The condition is closely related to local compactness. In fact, a topos $\mathcal{X}$ is exponentiable in $\mathrm{RTop}$ iff it is compactly assembled, see \cite{anel2018exponentiablehighertoposes}.
\end{remark}

\begin{example}
Any compactly generated presentable $\infty$-category is compactly assembled. This includes the $\infty$-category $\mathrm{Mod}_R$ for a ring spectrum $R$, in particular the derived $\infty$-category of a ring.
\end{example}

\begin{example}
Any locally compact frame is compactly assembled.
\end{example}

\begin{example}[{\cite[Theorem 3.50]{Lehner2026KTheoryStablyCompact}}]
If $X$ is a stably locally compact space, then $\mathrm{Sh}(X)$ is compactly assembled.
\end{example}

\begin{theorem} \label{shapeandcohomologycompactlyassembled}
Let $\mathcal{X}$ be a topos, and $\mathcal{E}$ a compactly assembled presentable $\infty$-category. Let $E \in \mathcal{E}$. Then there exists a natural isomorphism
\[ H^\bullet(\mathcal{X},E) \cong E^{\Pi_\infty(\mathcal{X})}. \]
\end{theorem}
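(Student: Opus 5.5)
We reduce to the compactly generated case, Theorem \ref{shapeandcohomology}, using the retract description of $\mathcal{E}$. Choose, by definition, a compactly generated $\mathcal{E}'$ with $i_! \dashv i^* \dashv i_*$, where $i_!$ and $i_*$ are fully faithful. All three functors are colimit preserving: $i^*$ and $i_!$ because they are left adjoints, and $i_*$ because in the definition it is a left adjoint in $\mathrm{Pr}^L$ (we are dealing with an internal Bousfield localization). In particular $i^*$ and $i_!$ are morphisms in $\mathrm{Pr}^L$, and $i^* i_! \simeq \mathrm{id}_\mathcal{E}$ exhibits $\mathcal{E}$ as a retract of $\mathcal{E}'$ in $\mathrm{Pr}^L$.

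The plan is to show that both sides of the desired equivalence are compatible with $i^*$ and $i_!$, and then transport the equivalence from $\mathcal{E}'$.

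\emph{Cohomology side.} Apply the $2$-functor $\mathcal{X} \otimes - $ to the internal adjunction $i_! \dashv i^*$ in $\mathrm{Pr}^L$. This yields $\mathrm{id}_\mathcal{X} \otimes i_! \dashv \mathrm{id}_\mathcal{X} \otimes i^*$. Naturality of the Lurie tensor product in the second variable gives
\[ (\mathcal{X}^* \otimes \mathrm{id}_{\mathcal{E}'}) i_! \simeq (\mathrm{id}_\mathcal{X} \otimes i_!)(\mathcal{X}^* \otimes \mathrm{id}_\mathcal{E}). \]
Passing to right adjoints of the analogous square for $i^*$ gives
\[ (\mathcal{X} \otimes \mathrm{id}_{\mathcal{E}})_* (\mathrm{id}_\mathcal{X} \otimes i^*) \simeq i^* (\mathcal{X} \otimes \mathrm{id}_{\mathcal{E}'})_* . \]
Here $\mathrm{id}_\mathcal{X} \otimes i^*$ is right adjoint to $\mathrm{id}_\mathcal{X} \otimes i_!$, so this is simply the mate of the commuting square of right adjoints $\mathcal{X}_*$ versus $i^*$.

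Combining the two, and using $(\mathrm{id}_\mathcal{X} \otimes i^*)(\mathrm{id}_\mathcal{X} \otimes i_!) \simeq \mathrm{id}$ (functoriality applied to $i^* i_! \simeq \mathrm{id}$), we get
\[ H^\bullet(\mathcal{X}, E) \simeq i^* H^\bullet(\mathcal{X}, i_! E). \]

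\emph{Cotensor side.} Write $\Pi_\infty(\mathcal{X}) = \text{``}\lim_{j}\text{''} X_j$. Since $i^*$ is a right adjoint it preserves the limits $E^{X_j} = \lim_{X_j} E$, and since it is a left adjoint it preserves the filtered colimit over $j$. Hence
\[ i^* \big( (i_! E)^{\Pi_\infty(\mathcal{X})} \big) \simeq \mathrm{colim}_j (i^* i_! E)^{X_j} \simeq E^{\Pi_\infty(\mathcal{X})}. \]

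\emph{Conclusion.} Theorem \ref{shapeandcohomology} applied to $\mathcal{E}'$ gives $H^\bullet(\mathcal{X}, i_! E) \simeq (i_!E)^{\Pi_\infty(\mathcal{X})}$. Applying $i^*$ and the two compatibilities above yields the claim. Naturality in $E$ and $\mathcal{X}$ follows because every identification used is natural.

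The main obstacle is the cohomology-side step. There we must check carefully that the mate $(\mathcal{X} \otimes \mathrm{id})_*(\mathrm{id}_\mathcal{X} \otimes i^*) \to i^*(\mathcal{X} \otimes \mathrm{id})_*$ is an equivalence. The cleanest argument is that it is obtained by tensoring with $\mathcal{X}$ the honest commuting square of left adjoints $i_!$ versus $\mathcal{X}^*$ and then passing to right adjoints; this works because $i_! \dashv i^*$ is an adjunction inside $\mathrm{Pr}^L$, which is exactly where the compactly assembled hypothesis (as opposed to merely presentable) is used. As a fallback, mirroring the proof of Lemma \ref{HoyoisLemma}, one can instead observe that both sides, viewed as functors $\mathrm{Pr}^L \ni \mathcal{E} \mapsto (\mathcal{E} \to \mathcal{E})$, are functorial for colimit-preserving functors that also preserve limits. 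Since $i^*$ is such a functor, the identity on $\mathcal{E}$ factors through $\mathcal{E}'$ compatibly, and retracts of equivalences are equivalences.
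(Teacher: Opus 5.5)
Your proposal is correct and essentially identical to the paper's proof. The paper likewise tensors the left-adjoint squares for $i_!$ and $i^*$ with $\mathcal{X}$ and passes to the right-adjoint square $(\mathcal{X}\otimes\mathrm{id}_{\mathcal{E}})_*(\mathrm{id}_{\mathcal{X}}\otimes i^*) \simeq i^*(\mathcal{X}\otimes\mathrm{id}_{\mathcal{E}'})_*$, applies Theorem~\ref{shapeandcohomology} to $\mathcal{E}'$, and uses that $i^*$ preserves both limits and colimits to move it past the pro-cotensor. One harmless slip: $i_*$ is only a right adjoint and need not preserve colimits (only $i_!$ and $i^*$ are guaranteed to lie in $\mathrm{Pr}^L$), but you never actually use that claim.
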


\begin{proof}
Choose a compactly generated presentable $\infty$-category $\mathcal{E}'$ together with adjunctions
\[ \begin{tikzcd}
	{\mathcal{E}} & {\mathcal{E}'}
	\arrow[""{name=0, anchor=center, inner sep=0}, "{i_!}", curve={height=-12pt}, hook, from=1-1, to=1-2]
	\arrow[""{name=1, anchor=center, inner sep=0}, "{i_*}"', curve={height=12pt}, hook, from=1-1, to=1-2]
	\arrow[""{name=2, anchor=center, inner sep=0}, "{i^*}"{description}, from=1-2, to=1-1]
	\arrow["\dashv"{anchor=center, rotate=-90}, draw=none, from=0, to=2]
	\arrow["\dashv"{anchor=center, rotate=-90}, draw=none, from=2, to=1]
\end{tikzcd} \]
Note the following. Since $i^*$ preserves both limits and colimits, it follows that for $F \in \mathcal{E}'$, and $X = \text{``}\mathrm{lim}_{i \in I}\text{''} X_i \in \mathrm{Pro}(\mathrm{An})$ it holds that
\[ i^*( F^X ) \cong i^*( \mathrm{colim}_{i \in I} F^{X_i} ) \cong  \mathrm{colim}_{i \in I} i^*(F)^{X_i}  \cong i^*(F)^X. \]	
	
For the sake of this proof, denote the induced adjunction from the geometric morphism $\mathcal{X} \rightarrow \mathrm{An}$ by
\[ \begin{tikzcd}
	{\mathcal{X} \otimes \mathcal{E}} & {\mathcal{E}}
	\arrow[""{name=0, anchor=center, inner sep=0}, "{\mathcal{E}_*}"', curve={height=12pt}, from=1-1, to=1-2]
	\arrow[""{name=1, anchor=center, inner sep=0}, "{\mathcal{E}^*}"', curve={height=12pt}, from=1-2, to=1-1]
	\arrow["\dashv"{anchor=center, rotate=-90}, draw=none, from=1, to=0]
\end{tikzcd} \]
and similarly for $\mathcal{E}'$. We have the two commuting squares of left adjoints
\[ \begin{tikzcd}
	{\mathcal{X} \otimes \mathcal{E}} & {\mathcal{E}} \\
	{\mathcal{X} \otimes \mathcal{E}'} & {\mathcal{E}'}
	\arrow[""{name=0, anchor=center, inner sep=0}, "{i_!}"', curve={height=6pt}, hook, from=1-1, to=2-1]
	\arrow["{\mathcal{E}^*}"', from=1-2, to=1-1]
	\arrow[""{name=1, anchor=center, inner sep=0}, "{i_!}"', curve={height=6pt}, hook, from=1-2, to=2-2]
	\arrow[""{name=2, anchor=center, inner sep=0}, "{i^*}"', curve={height=6pt}, from=2-1, to=1-1]
	\arrow[""{name=3, anchor=center, inner sep=0}, "{i^*}"', curve={height=6pt}, from=2-2, to=1-2]
	\arrow["{\mathcal{E}'^*}", from=2-2, to=2-1]
	\arrow["\dashv"{anchor=center}, draw=none, from=0, to=2]
	\arrow["\dashv"{anchor=center}, draw=none, from=1, to=3]
\end{tikzcd} \]
from which it follows that we also have the commuting square of right adjoints
\[ \begin{tikzcd}
	{\mathcal{X} \otimes \mathcal{E}} & {\mathcal{E}} \\
	{\mathcal{X} \otimes \mathcal{E}'} & {\mathcal{E}'}
	\arrow["{\mathcal{E}_*}", from=1-1, to=1-2]
	\arrow["{i^*}", from=2-1, to=1-1]
	\arrow["{\mathcal{E}'_*}", from=2-1, to=2-2]
	\arrow["{i^*}"', from=2-2, to=1-2]
\end{tikzcd} \]
Now put everything together to observe that for $E \in \mathcal{E}$,
\[ \begin{array}{rcl}
 H^\bullet(\mathcal{X},E) = \mathcal{E}_* \mathcal{E}^*(E) & \cong & \mathcal{E}_* \mathcal{E}^* i^* i_! (E) \\ 
 & \cong & \mathcal{E}_*  i^* \mathcal{E}'^* i_! (E) \\
 & \cong &  i^* \mathcal{E}'_* \mathcal{E}'^* i_! (E) \\
 & \cong & i^*( i_! (E)^{\Pi_\infty(\mathcal{X})} ) \\
 & \cong & ( i^* i_! (E))^{\Pi_\infty(\mathcal{X})} \\ 
 & \cong & E^{\Pi_\infty(\mathcal{X})}.
\end{array} \]
\end{proof}

\section{Homology of topoi and shape}

We have discussed the relationship of shape with cohomology in Section \ref{cohomology}. Here we would like to discuss the relationship with the dual notion: \emph{homology}. Surprisingly enough, this is somewhat more subtle than the story for cohomology.

\subsection{Pro-homology}

Before we get to actual homology, there is a canonical and well-behaved sense in which shape determines homology, if one is willing to work with \emph{pro-homology}.

Dual to the notion of cotensoring introduced earlier, there is the notion of a tensoring. In the following, let $\mathcal{E}$ be a presentable $\infty$-category. If $X \in \mathrm{An}$ and $E \in \mathcal{E}$ define
\[ X \otimes E := \mathrm{colim}_X \mathrm{const}_E \in \mathcal{E}. \]
This can be seen to arise from a functor
\[ \mathrm{An} \times \mathcal{E} \rightarrow \mathcal{E} \]
preserving colimits in both variables, whose induced left adjoint functor
\[ \mathrm{An} \otimes \mathcal{E} \simeq \mathcal{E} \]
is the functor identifying $\mathrm{An}$ as the unit for $\otimes$ on $\mathrm{Pr}^L$.

Now fix $E \in \mathcal{E}$. Then tensoring gives an adjunction
\[ \begin{tikzcd}
	{\mathrm{An}} & {\mathcal{E}.}
	\arrow[""{name=0, anchor=center, inner sep=0}, "{-\otimes E}", curve={height=-12pt}, from=1-1, to=1-2]
	\arrow[""{name=1, anchor=center, inner sep=0}, "{\mathrm{Map}_\mathcal{E}(E,-)}", curve={height=-12pt}, from=1-2, to=1-1]
	\arrow["\dashv"{anchor=center, rotate=-90}, draw=none, from=0, to=1]
\end{tikzcd} \]
Applying the $2$-functor $\mathrm{Pro}$, we get the induced adjunction
\[ \begin{tikzcd}
	{\mathrm{Pro}(\mathrm{An})} & {\mathrm{Pro}(\mathcal{E}).}
	\arrow[""{name=0, anchor=center, inner sep=0}, "{\mathrm{Pro}(-\otimes E)}", curve={height=-12pt}, from=1-1, to=1-2]
	\arrow[""{name=1, anchor=center, inner sep=0}, "{\mathrm{Pro}(\mathrm{Map}_\mathcal{E}(E,-))}", curve={height=-12pt}, from=1-2, to=1-1]
	\arrow["\dashv"{anchor=center, rotate=-90}, draw=none, from=0, to=1]
\end{tikzcd} \]
Concretely, this sends a pro-anima $\text{``} \mathrm{lim}_{i \in I} \text{''} X_i \in \mathrm{Pro}(\mathrm{An})$ to the pro-object
\[ 	\text{``} \mathrm{lim}_{i \in I} \text{''} (X_i \otimes E) \]
in $\mathrm{Pro}(\mathcal{E})$. Composing this adjunction with the shape adjunction gives
	\[ \begin{tikzcd}
	{\mathrm{RTop}} & {\mathrm{Pro}(\mathrm{An})} & {\mathrm{Pro}(\mathcal{E})}
	\arrow["{\Pi_\infty}", curve={height=-12pt}, from=1-1, to=1-2]
	\arrow["\beta", curve={height=-12pt}, from=1-2, to=1-1]
	\arrow[""{name=0, anchor=center, inner sep=0}, "{\mathrm{Pro}(-\otimes E)}", curve={height=-12pt}, from=1-2, to=1-3]
	\arrow[""{name=1, anchor=center, inner sep=0}, "{\mathrm{Pro}(\mathrm{Map}_\mathcal{E}(E,-))}", curve={height=-12pt}, from=1-3, to=1-2]
	\arrow["\dashv"{anchor=center, rotate=-90}, draw=none, from=0, to=1]
\end{tikzcd} \]

\begin{definition}
Let $\mathcal{X}$ be a topos, $\mathcal{E}$ a presentable $\infty$-category and $E \in \mathcal{E}$. The \emph{pro-homology} of $\mathcal{X}$ with values in $E$ is defined as
\[ H_\bullet^\mathrm{pro}( \mathcal{X}, E ) := \mathrm{Pro}(-\otimes E)(\Pi_\infty(\mathcal{X})) \in \mathrm{Pro}(\mathcal{E}). \]
\end{definition}

By construction, pro-homology is covariant and colimit preserving in the topos $\mathcal{X}$.

\subsection{Homology} \label{homology}

While pro-homology has good formal properties, it lands in pro-objects, and not in $\mathcal{E}$. Topos \emph{homology} does not have this defect. Let us set up the theory.

\begin{definition}
Let $\mathcal{X}$ be a topos and $\mathcal{E}$ be a presentable $\infty$-category. Then the $\infty$-category of $\mathcal{E}$-valued \emph{cosheaves} on $\mathcal{X}$ is defined to be
\[ \mathrm{CoSh}(\mathcal{X}, \mathcal{E}) := \mathrm{Fun}^L( \mathcal{X}, \mathcal{E} ). \]
\end{definition}

The usage of cosheaves is dual to that of sheaves. The intuition behind them is perhaps most cleanly summarized in the following table of corresponding terms.

\begin{table}[h]
\centering
\begin{tabular}{c|c}
Function & Measure \\
\hline
Sheaf & Cosheaf \\
\hline
Cohomology & Homology
\end{tabular}
\end{table}

\begin{remark}
The analogy of cosheaves with measures in the setting of topos theory was originally suggested by Lawvere in the context of 1-topoi, see e.g.\ \cite{Bunge1995} for an overview.
\end{remark}

\begin{example}
Let $\mathcal{X}$ be locally contractible. Then 
\[ \Pi_\infty^\mathcal{X} : \mathcal{X} \rightarrow \mathrm{An} \]
is the \emph{canonical} cosheaf on $\mathcal{X}$.
\end{example}

\begin{example}
Let $\mathcal{X}$ be a topos and $x$ a point of $\mathcal{X}$. Then the stalk functor
\[ x^* : \mathcal{X} \rightarrow \mathrm{An} \]
is the \emph{Dirac} cosheaf based at $x$.
\end{example}

Since $\mathrm{Fun}^L(-,\mathcal{E}) : (\mathrm{Pr}^L)^\mathrm{op} \rightarrow \mathrm{Pr}^L$ is a contravariant functor, the geometric morphism 
\[ \begin{tikzcd}
	{\mathcal{X}} & {\mathrm{An}}
	\arrow[""{name=0, anchor=center, inner sep=0}, "{\mathcal{X}_*}"', curve={height=12pt}, from=1-1, to=1-2]
	\arrow[""{name=1, anchor=center, inner sep=0}, "{\mathcal{X}^*}"', curve={height=12pt}, from=1-2, to=1-1]
	\arrow["\dashv"{anchor=center, rotate=-90}, draw=none, from=1, to=0]
\end{tikzcd} \]
induces the adjunction
\[ \begin{tikzcd}
	{\mathrm{Fun}^L(\mathcal{X}, \mathcal{E})} & {\mathcal{E}}
	\arrow[""{name=0, anchor=center, inner sep=0}, "{(\mathcal{X}^*)^\circ}", curve={height=-12pt}, from=1-1, to=1-2]
	\arrow[""{name=1, anchor=center, inner sep=0}, "{(\mathcal{X}^*)_*}", curve={height=-12pt}, from=1-2, to=1-1]
	\arrow["\dashv"{anchor=center, rotate=-90}, shift left=2, draw=none, from=0, to=1]
\end{tikzcd} \]
with the left adjoint being given by precomposition by $\mathcal{X}^*$. Since the notation is becoming cumbersome, let us write
\[\begin{tikzcd}
	{\mathrm{CoSh}(\mathcal{X}, \mathcal{E})} & {\mathrm{Fun}^L(\mathcal{X}, \mathcal{E})} & {\mathcal{E}}
	\arrow["{=}"{description}, draw=none, from=1-1, to=1-2]
	\arrow[""{name=0, anchor=center, inner sep=0}, "{\mathcal{X}_+}", curve={height=-12pt}, from=1-2, to=1-3]
	\arrow[""{name=1, anchor=center, inner sep=0}, "{\mathcal{X}^+}", curve={height=-12pt}, from=1-3, to=1-2]
	\arrow["\dashv"{anchor=center, rotate=-90}, shift left=2, draw=none, from=0, to=1]
\end{tikzcd}\]

Given its analogy with cohomology, the following definition should not be a surprise.

\begin{definition}
Let $\mathcal{E}$ be a presentable $\infty$-category and let $E \in \mathcal{E}$. Let $\mathcal{X}$ be a topos. The \emph{$E$-valued homology} of $\mathcal{X}$ is defined to be
\[ H_\bullet(\mathcal{X}, E) := \mathcal{X}_+ \mathcal{X}^+ (E) \in \mathcal{E}. \]
\end{definition}

Analogously to the situation of shape and cohomology, one can argue that topos homology is a covariant functor in both $\mathcal{X}$ and $E$. 

\begin{remark}
If $\mathcal{X} = \mathrm{Sh}(X)$ where $X$ is a locally compact Hausdorff space, and $\mathcal{E}$ is presentable and stable, then the adjunction
\[\begin{tikzcd}
	{\mathrm{CoSh}(\mathcal{X}, \mathcal{E})}  & {\mathcal{E}}
	\arrow[""{name=0, anchor=center, inner sep=0}, "{\mathcal{X}_+}", curve={height=-12pt}, from=1-1, to=1-2]
	\arrow[""{name=1, anchor=center, inner sep=0}, "{\mathcal{X}^+}", curve={height=-12pt}, from=1-2, to=1-1]
	\arrow["\dashv"{anchor=center, rotate=-90}, shift left=2, draw=none, from=0, to=1]
\end{tikzcd}\]
identifies, using Verdier duality, with the adjunction
\[ \begin{tikzcd}
	{\mathrm{Sh}(X,\mathcal{E})} & {\mathcal{E}}
	\arrow[""{name=0, anchor=center, inner sep=0}, "{X_!}", curve={height=-12pt}, from=1-1, to=1-2]
	\arrow[""{name=1, anchor=center, inner sep=0}, "{X^!}", curve={height=-12pt}, from=1-2, to=1-1]
	\arrow["\dashv"{anchor=center, rotate=-90}, draw=none, from=0, to=1]
\end{tikzcd} \]
Hence, in this case, topos homology of $\mathrm{Sh}(X)$ identifies with homology as given by the six functor formalism on locally compact Hausdorff spaces, see \cite[Section 4]{krause_nikolaus_puetzstueck}, also \cite{Volpe2021SixOperationsInTopology}.
\end{remark}

Homology satisfies some natural colimit preservation.

\begin{lemma} \label{homologycolimits}
Let $\mathcal{E}$ be a presentable $\infty$-category and let $E \in \mathcal{E}$. The functor
\[ H_\bullet( - , E) : \mathrm{RTop} \rightarrow \mathcal{E} \]
preserves colimits indexed by animae.
\end{lemma}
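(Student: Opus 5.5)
The statement to prove is that $H_\bullet(-,E) : \mathrm{RTop} \rightarrow \mathcal{E}$ sends colimits of topoi indexed by an anima $K$ to colimits in $\mathcal{E}$. My plan is to make $H_\bullet(-,E)$ explicit as an evaluation functor and then use that anima-indexed limits in $\mathrm{Pr}^L$ are simultaneously colimits.

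First, rewrite homology. By definition $\mathcal{X}^+(E) = \mathcal{X}^+ (E)$ is right adjoint to precomposition with $\mathcal{X}^*$. Using $\mathrm{Fun}^L(\mathcal{X},\mathcal{E}) \simeq \mathrm{Fun}^R(\mathcal{E}^{\mathrm{op}}, \mathcal{X}^{\mathrm{op}})^{\mathrm{op}}$, or more simply $\mathrm{Fun}^L(\mathcal{X},\mathcal{E}) \simeq \mathrm{Fun}^R(\mathcal{X}^{\mathrm{op}}, \mathcal{E}^{\mathrm{op}})^{\mathrm{op}}$, I would identify the pair $\mathcal{X}_+ \dashv \mathcal{X}^+$ as $\mathrm{Fun}^L(-,\mathcal{E})$ applied to the internal adjunction $\mathcal{X}^* \dashv \mathcal{X}_*$ in $\mathrm{Pr}^L$ (contravariantly). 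For a geometric morphism $f : \mathcal{X} \rightarrow \mathcal{Y}$ we get $\mathcal{X}_+ \simeq \mathcal{Y}_+ \circ (f^*)^\circ$, where $(f^*)^\circ : \mathrm{Fun}^L(\mathcal{X},\mathcal{E}) \rightarrow \mathrm{Fun}^L(\mathcal{Y},\mathcal{E})$ is precomposition with $f^*$. With this, the functoriality of $H_\bullet$ is induced by the counits $(f^*)^\circ (f^*)_* \Rightarrow \mathrm{id}$.

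Second, compute colimits. Let $\mathcal{X} = \mathrm{colim}_{k \in K} \mathcal{X}_k$ in $\mathrm{RTop}$ with $K$ an anima. Colimits in $\mathrm{RTop}$ are computed as limits of the pullback functors, so $\mathcal{X} \simeq \lim_{k\in K} \mathcal{X}_k$ in $\widehat{\mathrm{Cat}}_\infty$, and this limit also lies in $\mathrm{Pr}^L$. Since $K$ is a groupoid, every transition functor $f^*$ is an equivalence. Hence the left adjoints $p_k^* : \mathcal{X}\to \mathcal{X}_k$, with right adjoints $p_{k,*}$, exhibit $\mathcal{X}$ also as the colimit $\mathrm{colim}_K \mathcal{X}_k$ in $\mathrm{Pr}^L$ along the inverse equivalences $(f^*)^{-1} = f_*$. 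This uses the standard $\mathrm{Pr}^L$/$\mathrm{Pr}^R$ limit–colimit duality, with the diagram of right adjoints being the diagram of inverses.

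Third, apply $\mathrm{Fun}^L(-,\mathcal{E})$, which sends colimits in $\mathrm{Pr}^L$ to limits. This gives $\mathrm{Fun}^L(\mathcal{X},\mathcal{E}) \simeq \lim_K \mathrm{Fun}^L(\mathcal{X}_k,\mathcal{E})$. The functor $H_\bullet(\mathcal{X},E)$ is then the composite $\mathcal{X}_+\mathcal{X}^+$, where $\mathcal{X}^+$ is right adjoint to the colimit-compatible projection. Equivalently, $\mathrm{Map}_\mathcal{E}(H_\bullet(\mathcal{X},E), D) \simeq \mathrm{Map}(\mathcal{X}^+E, \mathcal{X}^+D)$ is computed in the limit category as $\lim_K \mathrm{Map}(\mathcal{X}_k^+E,\mathcal{X}_k^+D)$. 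Here I would need that $\mathcal{X}^+$ is compatible with the limit, i.e.\ $(p_k^*)^\circ$-restrictions of $\mathcal{X}^+E$ are $\mathcal{X}_k^+E$. That holds because in a $K$-indexed limit of presentable categories along equivalences, the left adjoint $(p_k^*)^\circ$ is itself an equivalence-type projection whose right adjoint gives the mate. One then gets $\lim_K \mathrm{Map}(H_\bullet(\mathcal{X}_k,E),D) \simeq \mathrm{Map}(\mathrm{colim}_K H_\bullet(\mathcal{X}_k,E), D)$, and Yoneda finishes the argument.

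The main obstacle is this Beck–Chevalley compatibility in the third step: that $\mathcal{X}^+$ commutes with the restriction maps of the $K$-indexed limit. I would try to deduce it from ambidexterity for anima-indexed limits in $\mathrm{Pr}^L$. Concretely, a $K$-indexed limit along equivalences is a parametrized colimit, and the projections $p_k^*$ have both adjoints given by $K$-indexed left Kan extensions. I would then check the mate on each $\mathcal{X}_k$ using that all transition maps are invertible.
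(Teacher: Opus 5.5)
\textbf{Gap: the step you flag as the main obstacle is never proved, and the route you sketch for it rests on false claims.} Your overall strategy is sound and runs parallel to the paper's. You identify the colimit in $\mathrm{RTop}$ (the limit of the pullback functors) with a colimit in $\mathrm{Pr}^L$, using that $K$ is an anima. You then apply $\mathrm{Fun}^L(-,\mathcal{E})$ and reduce everything to showing that $\mathcal{X}^+$ is computed componentwise.

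The sketch for that last step fails in two places:
\begin{itemize}
\item The functor $(p_k^*)^\circ$ is neither an equivalence nor a projection of the limit. It maps $\mathrm{Fun}^L(\mathcal{X}_k,\mathcal{E})$ into $\mathrm{Fun}^L(\mathcal{X},\mathcal{E})$.
\item The two adjoints of $p_k^*$ are the left and the right Kan extension along $\{k\}\to K$, and these differ in a topos. For $K$ a two-point set, $p_1^*=\mathrm{pr}_1\colon\mathcal{X}_1\times\mathcal{X}_2\to\mathcal{X}_1$ has left adjoint $x\mapsto(x,\emptyset)$ and right adjoint $x\mapsto(x,1)$.
\end{itemize}
So no ambidexterity inside $\mathcal{X}$ is available. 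None is needed beyond the $\mathrm{Pr}^L/\mathrm{Pr}^R$ duality you already use in your second step.

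\textbf{The fix.} What is missing is the correct identification of the projections of $\mathrm{Fun}^L(\mathcal{X},\mathcal{E})\simeq\lim_K\mathrm{Fun}^L(\mathcal{X}_k,\mathcal{E})$.
\begin{itemize}
\item The insertions exhibiting $\mathcal{X}$ as a colimit in $\mathrm{Pr}^L$ are the left adjoints $p_{k,!}$ of the projections $p_k^*$, not $p_{k,*}$. These exist because the transition functors are equivalences, so the $p_k^*$ lie in $\mathrm{Pr}^R$.
\item Hence the $k$-th projection is $(p_{k,!})^\circ$. Precomposing the unit and counit of $p_{k,!}\dashv p_k^*$ shows $(p_k^*)^\circ\dashv(p_{k,!})^\circ$.
\item Since $\mathcal{X}_k^*\simeq p_k^*\mathcal{X}^*$ ($\mathrm{An}$ being terminal), your own first step gives $(\mathcal{X}_k)_+\simeq\mathcal{X}_+\circ(p_k^*)^\circ$.
\item Passing to right adjoints yields $(p_{k,!})^\circ\mathcal{X}^+\simeq\mathcal{X}_k^+$. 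This is exactly the componentwise description you need.
\end{itemize}
Your Yoneda computation then goes through. It is best phrased as checking that the canonical map $\colim_K H_\bullet(\mathcal{X}_k,E)\to H_\bullet(\mathcal{X},E)$ induces an equivalence on $\Map(-,D)$, so that the two $K$-indexed diagrams of mapping spaces are matched automatically.

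\textbf{Comparison with the paper.} The paper factors $\mathcal{X}\to\mathrm{Fun}(A,\mathrm{An})\to\mathrm{An}$ and writes $\mathcal{X}_+$ as $\colim\circ((\mathcal{X}_a)_+)_a$. It then uses mates, together with the invertibility of the transition functors, to see that the right adjoint of $((\mathcal{X}_a)_+)_a$ is computed objectwise, and reads off the colimit formula without Yoneda. Your repaired argument needs no mate argument at all. It uses the anima hypothesis only to guarantee that the pullback functors in the diagram, and hence the $p_k^*$, admit left adjoints.
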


\begin{remark}
The special case of $S$ being a (discrete) set, viewed as an anima, implies that topos homology is \emph{additive}, i.e. that we have an isomorphism
\[ \coprod_{s \in S} H_\bullet( \mathcal{X}_s , E) \cong H_\bullet( \coprod_{s \in S}^\mathrm{RTop} \mathcal{X}_s , E) \]
for any $S$-indexed family $(\mathcal{X}_s)_{s \in S}$ of topoi.
\end{remark}

\begin{proof}
Let $\mathcal{X}_\bullet : A \rightarrow \mathrm{RTop}$ be a diagram for $A \in \mathrm{An}$ and write $\mathcal{X} = \mathrm{colim}_{a \in A} \mathcal{X}_a $. Keeping in mind that $\mathrm{An}$ is the terminal topos, the natural transformation of geometric morphisms
\[ \mathcal{X}_a \rightarrow \mathrm{An} \]
induces a factorization of geometric morphisms
\[ \mathcal{X} = \mathrm{colim}_{a \in A} \mathcal{X}_a \rightarrow \mathrm{colim}_{a \in A} \mathrm{An} \simeq \mathrm{Fun}(A, \mathrm{An}) \rightarrow \mathrm{An}. \]

Recall that the inclusion $\mathrm{RTop} \rightarrow \mathrm{Pr}^R \simeq (\mathrm{Pr}^L)^\mathrm{op}$ preserves colimits, and that $A$-indexed limits in $\mathrm{Pr}^L$ are computed equivalently as $A$-indexed colimits. The functor $\mathrm{Fun}^L(-, \mathcal{E}) : (\mathrm{Pr}^L)^{\mathrm{op}} \rightarrow \mathrm{Pr}^L$ is an internal hom for a symmetric monoidal structure and therefore sends colimits to limits.\footnote{The reason being that $\mathrm{Fun}^L(-, \mathcal{E})$ is \emph{right adjoint to itself.}} Also recall that limits in $\mathrm{Pr}^L$ are computed as limits in $\widehat{\mathrm{Cat}}_\infty$. Thus by applying the functor $\mathrm{Fun}^L(-, \mathcal{E})$ we get the following adjunctions
\[ \begin{tikzcd}
	{\mathrm{CoSh}(\mathcal{X},\mathcal{E})} & {\mathrm{lim}_{a \in A} \mathrm{CoSh}(\mathcal{X}_a,\mathcal{E})} & {\mathrm{lim}_{a \in A} \mathcal{E}} & {\mathrm{Fun}(A,\mathcal{E})} & {\mathcal{E}} \\
	&&& {}
	\arrow["\simeq"{description}, draw=none, from=1-1, to=1-2]
	\arrow[""{name=0, anchor=center, inner sep=0}, "{({\mathcal{X}_a}_+)_{a \in A}}", curve={height=-12pt}, from=1-2, to=1-3]
	\arrow[""{name=1, anchor=center, inner sep=0}, "R", curve={height=-12pt}, from=1-3, to=1-2]
	\arrow["\simeq"{description}, draw=none, from=1-3, to=1-4]
	\arrow[""{name=2, anchor=center, inner sep=0}, "{\mathrm{colim}}", curve={height=-12pt}, from=1-4, to=1-5]
	\arrow[""{name=3, anchor=center, inner sep=0}, "{\mathrm{const}}", curve={height=-12pt}, from=1-5, to=1-4]
	\arrow["\dashv"{anchor=center, rotate=-90}, shift left=3, draw=none, from=0, to=1]
	\arrow["\dashv"{anchor=center, rotate=-90}, draw=none, from=2, to=3]
\end{tikzcd} \]
where the composite of the left adjoints agrees with $\mathcal{X}_+$, and the composite of the right adjoints agrees with $\mathcal{X}^+$.

Since $A$ is an anima, every morphism $f \colon a \rightarrow a'$ in $A$
is an equivalence. Consequently, the transition geometric morphism
$\mathcal{X}_a \rightarrow \mathcal{X}_{a'}$ and the induced transition
functor on cosheaf categories are equivalences. Taking mates of the
compatibility equivalences for the functors $(\mathcal{X}_a)_+$ shows
that their right adjoints $(\mathcal{X}_a)^+$ also assemble compatibly.
It follows that $R$ is computed objectwise:
\[
R((E_a)_{a \in A})
\simeq
((\mathcal{X}_a)^+(E_a))_{a \in A}.
\]

Putting everything together, we see that
\[ H_\bullet(\mathcal{X},E) = \mathcal{X}_+ \mathcal{X}^+(E) \cong \mathrm{colim}_{a \in A} (\mathcal{X}_a)_+ (\mathcal{X}_a)^+(E) = \mathrm{colim}_{a \in A} H_\bullet(\mathcal{X}_a,E). \]
\end{proof}

\begin{theorem} \label{localcontractiblehomology}
Let $\mathcal{X}$ be a locally contractible topos, $\mathcal{E}$ a presentable $\infty$-category, and $E \in \mathcal{E}$. Then
\[ H_\bullet( \mathcal{X}, E ) \cong \Pi_\infty(\mathcal{X}) \otimes E. \]
\end{theorem}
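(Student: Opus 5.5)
We want $H_\bullet(\mathcal{X},E) = \mathcal{X}_+\mathcal{X}^+(E) \cong \Pi_\infty(\mathcal{X})\otimes E$ when $\mathcal{X}$ is locally contractible. The plan is to compute $\mathcal{X}_+$ and $\mathcal{X}^+$ directly, using the extra left adjoint $\Pi_\infty^\mathcal{X} = \mathcal{X}_!$. No Galois-theoretic identification of $\mathcal{X}$ is needed.

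First, I will identify $\mathcal{X}^+ : \mathcal{E} \to \mathrm{Fun}^L(\mathcal{X},\mathcal{E})$. By definition, $\mathcal{X}^+$ is right adjoint to precomposition with $\mathcal{X}^*$. Since $\mathcal{X}$ is locally contractible, we have an internal adjunction $\Pi_\infty^\mathcal{X} \dashv \mathcal{X}^*$ in $\mathrm{Pr}^L$. The contravariant $2$-functor $\mathrm{Fun}^L(-,\mathcal{E})$ sends it to an adjunction in the reversed direction. Concretely, for $G \in \mathrm{Fun}^L(\mathcal{X},\mathcal{E})$ and $F \in \mathrm{Fun}^L(\mathrm{An},\mathcal{E})\simeq \mathcal{E}$, the unit $\mathrm{id} \Rightarrow \mathcal{X}^*\Pi^\mathcal{X}_\infty$ and the counit $\Pi_\infty^\mathcal{X}\mathcal{X}^* \Rightarrow \mathrm{id}$ give natural equivalences
\[ \mathrm{Nat}(G\circ \mathcal{X}^*, F) \simeq \mathrm{Nat}(G, F\circ \Pi_\infty^\mathcal{X}), \]
by the usual whiskering argument for mates. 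Hence $\mathcal{X}^+(F) \simeq F\circ \Pi^\mathcal{X}_\infty$. Under $\mathcal{E}\simeq \mathrm{Fun}^L(\mathrm{An},\mathcal{E})$, where $E \leftrightarrow (-)\otimes E$, this gives $\mathcal{X}^+(E) \simeq \Pi^\mathcal{X}_\infty(-)\otimes E$. It is a colimit-preserving cosheaf, being a composite of left adjoints.

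Next, I will apply $\mathcal{X}_+$, which is precomposition with $\mathcal{X}^*$, to get
\[ \mathcal{X}_+\mathcal{X}^+(E) \simeq \Pi_\infty^\mathcal{X}(\mathcal{X}^*(-))\otimes E, \]
viewed as an object of $\mathcal{E}$ by evaluating at $\ast\in\mathrm{An}$. This yields $\Pi_\infty^\mathcal{X}(\mathcal{X}^*\ast)\otimes E = \Pi_\infty^\mathcal{X}(1)\otimes E \simeq \Pi_\infty(\mathcal{X})\otimes E$, using $\Pi^\mathcal{X}_\infty(1)\simeq\Pi_\infty(\mathcal{X})$ from the local-versus-global discussion. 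An alternative route is Proposition \ref{globalsectionslocallycontractible}, which already records $\Pi^\mathcal{X}_\infty\circ\mathcal{X}^* \simeq \Pi_\infty(\mathcal{X})\times(-)$. With that, the resulting functor is $A \mapsto (\Pi_\infty(\mathcal{X})\times A)\otimes E$, and its value at $\ast$ is the claim.

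The main point needing care is the identification $\mathcal{X}^+(F)\simeq F\circ\Pi^\mathcal{X}_\infty$. One must check that precomposition with $\Pi^\mathcal{X}_\infty$ is genuinely right adjoint to precomposition with $\mathcal{X}^*$ on colimit-preserving functors, and that it lands in $\mathrm{Fun}^L$. The second part is clear, since $\Pi^\mathcal{X}_\infty$ is a left adjoint. For the first part, I would argue exactly as in the proof of Theorem \ref{shapeandcohomologylocallycontractible}, invoking $2$-functoriality of $\mathrm{Fun}^L(-,\mathcal{E})$, which is the internal hom in $\mathrm{Pr}^L$ and hence carries internal adjunctions to internal adjunctions. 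Naturality in $E$ then follows because every step is functorial in $E$.
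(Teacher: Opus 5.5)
Your proposal is correct and follows essentially the paper's argument. You apply the $2$-functor $\mathrm{Fun}^L(-,\mathcal{E})$ to the internal adjunction $\Pi^\mathcal{X}_\infty \dashv \mathcal{X}^*$, so that $\mathcal{X}^+$ becomes precomposition with $\Pi^\mathcal{X}_\infty$; homology is then precomposition with $\Pi^\mathcal{X}_\infty\mathcal{X}^* \simeq \Pi_\infty(\mathcal{X})\times(-)$ on $\mathrm{Fun}^L(\mathrm{An},\mathcal{E})\simeq\mathcal{E}$, which is exactly what the paper does.
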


\begin{proof}
The adjunctions
\[ \begin{tikzcd}
	{\mathcal{X}} & {\mathrm{An},}
	\arrow[""{name=0, anchor=center, inner sep=0}, "{\Pi_\infty^\mathcal{X}}", curve={height=-12pt}, from=1-1, to=1-2]
	\arrow[""{name=1, anchor=center, inner sep=0}, curve={height=12pt}, from=1-1, to=1-2]
	\arrow[""{name=2, anchor=center, inner sep=0}, "{\mathcal{X}^*}"{description}, from=1-2, to=1-1]
	\arrow["\dashv"{anchor=center, rotate=-90}, draw=none, from=0, to=2]
	\arrow["\dashv"{anchor=center, rotate=-90}, draw=none, from=2, to=1]
\end{tikzcd} \]
with both $\Pi_\infty^\mathcal{X}$ and $\mathcal{X}^*$ being left adjoints, leads to the composite
\[ \Pi_\infty^\mathcal{X} \mathcal{X}^* \cong \Pi_\infty(\mathcal{X}) \otimes - : \mathrm{An} \rightarrow \mathrm{An}. \]
Applying the $2$-functor $\mathrm{Fun}^L( - , \mathcal{E}) : (\mathrm{Pr}^L)^\mathrm{op} \rightarrow \mathrm{Pr}^L$ then gives the adjunctions
\[ \begin{tikzcd}
	{\mathrm{Fun}^L(\mathcal{X}, \mathcal{E})} & {\mathcal{E}}
	\arrow[""{name=0, anchor=center, inner sep=0}, "{(\mathcal{X}^*)^\circ}", curve={height=-24pt}, from=1-1, to=1-2]
	\arrow[""{name=1, anchor=center, inner sep=0}, curve={height=24pt}, from=1-1, to=1-2]
	\arrow[""{name=2, anchor=center, inner sep=0}, "{(\Pi_\infty^\mathcal{X})^\circ}"', from=1-2, to=1-1]
	\arrow["\dashv"{anchor=center, rotate=-90}, shift right=4, draw=none, from=0, to=2]
	\arrow["\dashv"{anchor=center, rotate=-90}, shift right=4, draw=none, from=2, to=1]
\end{tikzcd} \]
Hence $H_\bullet( \mathcal{X}, E )$ is given by precomposition with $\Pi_\infty(\mathcal{X}) \otimes - : \mathrm{An} \rightarrow \mathrm{An}$ on $\mathrm{Fun}^L( \mathrm{An}, \mathcal{E} ) \simeq \mathcal{E}$. But this just identifies with tensoring with $\Pi_\infty(\mathcal{X})$ on $\mathcal{E}$.
\end{proof}

The tensoring can of course be extended to $\mathrm{Pro}(\mathrm{An})$ as
\[ \text{``} \mathrm{lim}_{i \in I} X_i \text{''} \otimes E := \mathrm{lim}_{i \in I} (X_i \otimes E) \]
which arises as a functor
\[ \mathrm{Pro}(\mathrm{An}) \times \mathcal{E} \rightarrow \mathcal{E} \]

\subsection{Homology of compact topoi}

It would be pleasant to have, for a general topos $\mathcal{X}$, an analogous formula for homology as was the case for cohomology, saying that it holds that
\[ H_\bullet( \mathcal{X}, E ) = \Pi_\infty(\mathcal{X}) \otimes E. \]
Unfortunately, we need some restrictions on both $\mathcal{X}$ and $\mathcal{E}$ for this to hold.

Recall the notion of a \emph{stable $\infty$-category}. The standard reference is \cite[Chapter 1]{Lurie2017HA}. As additional references, see \cite[Chapter 4]{CnossenStableHomotopy}. We note that a functor $f : \mathcal{C} \rightarrow \mathcal{D}$ between stable $\infty$-categories is called \emph{exact} if it preserves finite limits, or equivalently finite colimits. The $\infty$-category of spectra 
\[ \mathrm{Sp} := \mathrm{lim}( \cdots \rightarrow \mathrm{An}_{\ast /} \xrightarrow{\Omega} \mathrm{An}_{\ast /} ) \]
plays a special role within the context of stable $\infty$-categories: All stable $\infty$-categories are canonically \emph{enriched} over $\mathrm{Sp}$.

\begin{definition}
Let $\mathcal{C}$ be a stable $\infty$-category and $c,d \in \mathcal{C}$. The \emph{mapping spectrum} from $c$ to $d$ is defined as
\[ {\mathrm{map}(c,d)_{\mathcal{C}}}_n := \mathrm{Map}_{\mathcal{C}}(\Omega^n c, d ).  \]
\end{definition}

The mapping spectrum between two objects is in fact functorial in both $c$ and $d$.

\begin{proposition}[{\cite[Proposition 4.4.4]{CnossenStableHomotopy}}]
Let $\mathcal{C}$ be a stable $\infty$-category. There exists a unique functor
$\mathrm{map}_{\mathcal{C}}(-,-) \colon
\mathcal{C}^{\mathrm{op}} \times \mathcal{C} \rightarrow \mathrm{Sp}$
that is exact in both variables and fits into a commutative triangle
\[
\begin{tikzcd}
    & \mathrm{Sp} \arrow[d, "\Omega^\infty"] \\
    \mathcal{C}^{\mathrm{op}} \times \mathcal{C}
        \arrow[ur, "{\mathrm{map}_{\mathcal{C}}(-,-)}"]
        \arrow[r, "{\mathrm{Map}_{\mathcal{C}}(-,-)}"']
    & \mathrm{An}.
\end{tikzcd}
\]
\end{proposition}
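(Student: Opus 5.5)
The plan is to build $\mathrm{map}_\mathcal{C}$ by currying and lifting each corepresentable functor $\mathrm{Map}_\mathcal{C}(c,-)$ uniquely to spectra. The key input is the universal property of $\mathrm{Sp}$ with respect to stable sources: for a stable $\infty$-category $\mathcal{D}$, post-composition with $\Omega^\infty$ gives an equivalence
\[ \Omega^\infty \circ - : \mathrm{Fun}^{\mathrm{lex}}(\mathcal{D}, \mathrm{Sp}) \xrightarrow{\ \simeq\ } \mathrm{Fun}^{\mathrm{lex}}(\mathcal{D}, \mathrm{An}) \]
(cf.\ \cite[Chapter 1]{Lurie2017HA}). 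I would first recall or sketch why this holds. Since $\mathcal{D}$ has a zero object, a left exact $F : \mathcal{D} \rightarrow \mathrm{An}$ lifts canonically to $\mathrm{An}_{*/}$, because $F(0) \simeq \ast$. Left exactness gives $F \circ \Omega \simeq \Omega \circ F$. Since $\Sigma$ is inverse to $\Omega$ on $\mathcal{D}$, the family $(F \circ \Sigma^n)_{n}$ forms a compatible tower in $\lim( \cdots \rightarrow \mathrm{An}_{*/} \xrightarrow{\Omega} \mathrm{An}_{*/})$, and hence a lift to $\mathrm{Sp}$. Conversely, any lex lift is determined by this tower. I expect this step to be the main obstacle, since making the tower coherent as a functor, rather than objectwise, requires some care. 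I would simply cite it rather than reprove it.

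Given this, consider the Yoneda functor $\mathcal{C}^{\mathrm{op}} \rightarrow \mathrm{Fun}^{\mathrm{lex}}(\mathcal{C}, \mathrm{An})$, $c \mapsto \mathrm{Map}_\mathcal{C}(c,-)$. Compose it with the inverse of the equivalence above, taking $\mathcal{D} = \mathcal{C}$. This produces a functor $\mathcal{C}^{\mathrm{op}} \rightarrow \mathrm{Fun}^{\mathrm{lex}}(\mathcal{C}, \mathrm{Sp}) = \mathrm{Fun}^{\mathrm{ex}}(\mathcal{C}, \mathrm{Sp})$. Uncurrying gives $\mathrm{map}_\mathcal{C} : \mathcal{C}^{\mathrm{op}} \times \mathcal{C} \rightarrow \mathrm{Sp}$, and it satisfies $\Omega^\infty \circ \mathrm{map}_\mathcal{C} \simeq \mathrm{Map}_\mathcal{C}$ by construction.

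Exactness in the second variable holds by construction. For the first variable, the Yoneda functor sends finite colimits in $\mathcal{C}$ to finite limits in $\mathrm{Fun}^{\mathrm{lex}}(\mathcal{C},\mathrm{An})$, and these limits are computed pointwise. The inverse equivalence preserves limits, and limits in $\mathrm{Fun}^{\mathrm{ex}}(\mathcal{C},\mathrm{Sp})$ are also pointwise. Hence $\mathrm{map}_\mathcal{C}(-,d)$ is exact for each $d$. Reading off the levels of the tower from the first paragraph recovers the description
\[ \mathrm{map}_\mathcal{C}(c,d)_n \simeq \mathrm{Map}_\mathcal{C}(c, \Sigma^n d) \simeq \mathrm{Map}_\mathcal{C}(\Omega^n c, d), \]
which matches the definition preceding the statement.

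For uniqueness, let $M$ be any functor that is exact in both variables and satisfies $\Omega^\infty M \simeq \mathrm{Map}_\mathcal{C}$. Currying $M$ gives a functor $\mathcal{C}^{\mathrm{op}} \rightarrow \mathrm{Fun}^{\mathrm{ex}}(\mathcal{C}, \mathrm{Sp})$. Post-composing it with the equivalence $\Omega^\infty \circ -$ yields the Yoneda functor. Therefore $M$ is equivalent to the curried form of $\mathrm{map}_\mathcal{C}$, and the space of such lifts is contractible, since it is a fiber of an equivalence.
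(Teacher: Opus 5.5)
Your proposal is correct. The paper gives no argument of its own here; it simply imports the statement from \cite[Proposition 4.4.4]{CnossenStableHomotopy}. Your route is the standard proof of this fact: curry, then lift the Yoneda functor $\mathcal{C}^{\mathrm{op}} \to \mathrm{Fun}^{\mathrm{lex}}(\mathcal{C},\mathrm{An})$ through the equivalence $\Omega^\infty \circ - \colon \mathrm{Fun}^{\mathrm{ex}}(\mathcal{C},\mathrm{Sp}) \simeq \mathrm{Fun}^{\mathrm{lex}}(\mathcal{C},\mathrm{An})$ from \cite[\S 1.4.2]{Lurie2017HA}. Exactness in the first variable follows because Yoneda turns finite colimits into pointwise finite limits, and uniqueness follows because the space of lifts is a fiber of an equivalence. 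The construction also recovers the levelwise formula $\mathrm{Map}_{\mathcal{C}}(\Omega^n c, d)$ that the paper uses as its definition.
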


\begin{proposition} \label{ambidexterity} Let $\mathcal{C}$ be a stable $\infty$-category, and $X$ a compact anima. Then there exist adjunctions
\[ \begin{tikzcd}
	{\mathcal{C}} & {\mathcal{C}}
	\arrow[""{name=0, anchor=center, inner sep=0}, "{X \otimes - }", curve={height=-24pt}, from=1-1, to=1-2]
	\arrow[""{name=1, anchor=center, inner sep=0}, "{X \otimes - }"', curve={height=12pt}, from=1-1, to=1-2]
	\arrow[""{name=2, anchor=center, inner sep=0}, "{(-)^X}"', from=1-2, to=1-1]
	\arrow["\dashv"{anchor=center, rotate=-90}, draw=none, from=0, to=2]
	\arrow["\dashv"{anchor=center, rotate=-90}, draw=none, from=2, to=1]
\end{tikzcd} \]
\end{proposition}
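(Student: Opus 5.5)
The plan is to prove the ambidexterity statement: for $X$ a compact anima in a stable $\mathcal{C}$, the functor $X \otimes -$ is also right adjoint to $(-)^X$. The left adjunction $X \otimes - \dashv (-)^X$ comes directly from the universal properties of tensoring and cotensoring stated earlier. For the other adjunction, I will construct a natural equivalence $X \otimes E \simeq E^X$ (up to a twist that vanishes for anima, i.e. trivial ambidexterity in the stable, non-parametrized setting). It then suffices to show that $E \mapsto E^X$ is right adjoint to $E \mapsto X \otimes E$, which already holds, and to transport this along the equivalence.

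\textbf{Step 1: comparing $X \otimes -$ and $(-)^X$.} Both are exact functors $\mathcal{C} \to \mathcal{C}$, and I regard them as functors of $X$ as well. In $X$, the assignment $X \mapsto X \otimes E = \colim_X E$ sends colimits of anima to colimits in $\mathcal{C}$. The assignment $X \mapsto E^X = \lim_X E$ sends colimits of anima to limits. Since $\mathcal{C}$ is stable, finite limits and finite colimits agree, so pushouts of anima go to pullback squares, which are also pushout squares.

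\textbf{Step 2: reduction to finite anima.} By the remark that compact anima are retracts of finite ones, and since both constructions are functorial and retracts are preserved, I may assume $X$ is finite. I then induct on cells, writing $X$ as an iterated pushout of spheres along $S^{n-1} \to D^n$. The statement to prove at each stage is that the natural maps are equivalences and that the unit and counit of an adjunction $(-)^X \dashv X \otimes -$ exist.

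\textbf{Step 3: constructing the second adjunction directly.} A cleaner approach, which I would try first, avoids a global identification. Instead, I directly show that $(-)^X$ preserves colimits and that its right adjoint is $X \otimes -$. I compute $\mathrm{Map}(E^X, D)$ using the mapping spectrum. The functor $\mathrm{map}(E^{(-)}, D)$ from finite anima to $\mathrm{Sp}$ turns the finite limit $\lim_X E$ into a finite colimit $\colim_X \mathrm{map}(E, D)$, by exactness in the first variable. Likewise, $\mathrm{map}(E, X \otimes D)$ equals $\colim_X \mathrm{map}(E, D)$, because the finite colimit passes through the exact second variable. Both sides are therefore $X \otimes \mathrm{map}(E, D)$ in $\mathrm{Sp}$, naturally in $E$ and $D$. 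Applying $\Omega^\infty$ gives $\mathrm{Map}(E^X, D) \simeq \mathrm{Map}(E, X \otimes D)$, which is the desired adjunction.

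The main obstacle is the key claim inside Step 3: that $X \mapsto \mathrm{map}(E^X, D)$, which is contravariant in $E^X$ and hence takes colimits in $X$ to colimits, agrees naturally with $X \otimes \mathrm{map}(E, D)$. This requires that the functor $X \mapsto E^X$ from finite anima into $\mathcal{C}^{\mathrm{op}}$ is the unique finite-colimit-preserving extension of its value $E$ at the point. Such an extension is determined by its value on $\ast$, since finite anima are generated from $\ast$ under finite colimits. I expect this uniqueness argument, together with naturality in $E$ and $D$, to need the most care. The extension from finite to compact $X$ then follows formally by retracts.
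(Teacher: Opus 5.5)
Your Step 3, together with the retract reduction to finite $X$, is the paper's argument. Exactness of $\mathrm{map}_\mathcal{C}(-,-)$ in each variable identifies both $\mathrm{map}_\mathcal{C}(E^X,D)$ and $\mathrm{map}_\mathcal{C}(E,X\otimes D)$ with $X\otimes\mathrm{map}_\mathcal{C}(E,D)$, and applying $\Omega^\infty$ gives $(-)^X\dashv X\otimes-$. Your appeal to finite anima being freely generated by $\ast$ under finite colimits is a good way to supply the naturality that the paper leaves implicit.

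Do drop the opening plan, however, because there is no equivalence $X\otimes E\simeq E^X$ in general. For $X=S^1$ and $E=\mathbb{S}$ one gets $\mathbb{S}\oplus\Sigma\mathbb{S}$ versus $\mathbb{S}\oplus\Sigma^{-1}\mathbb{S}$, which are Spanier--Whitehead duals of each other, not equivalent. The word ``ambidexterity'' here only means that $(-)^X$ has $X\otimes-$ as both a left and a right adjoint.
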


Let me stress that the adjunction $X \otimes - \dashv (-)^X$ always exists as soon as $\mathcal{C}$ admits tensors and cotensors by finite animae. It is the adjunction $(-)^X \dashv X \otimes - $ that is special in the stable context.

\begin{proof}
Let $c,d \in \mathcal{C}$. Every compact anima is a retract of a finite anima. Since for compact $X$, the natural transformations
\[ \mathrm{map}_\mathcal{C}(c^X,d) \leftarrow X \otimes \mathrm{map}_\mathcal{C}(c,d) \rightarrow  \mathrm{map}_\mathcal{C}(c,X \otimes d) \]
are retracts of the analogous maps involving a finite anima instead of $X$, and a retract of an isomorphism is an isomorphism, it suffices to show the claim under the assumption that $X$ is a finite anima.

Since $\mathrm{map}_\mathcal{C}(-,-)$ is exact in both variables, it follows that
\[ \mathrm{map}_\mathcal{C}(c^X,d) \cong X \otimes \mathrm{map}_\mathcal{C}(c,d) \cong  \mathrm{map}_\mathcal{C}(c,X \otimes d). \]
The statement about mapping animae, which shows that $(-)^X$ is left adjoint to $X \otimes - $, follows by applying 
\[ \Omega^\infty : \mathrm{Sp} \rightarrow \mathrm{An}_{\ast /} \rightarrow \mathrm{An}. \]
\end{proof}

\begin{corollary} Let $\mathcal{E}$ be a presentable stable $\infty$-category and $X \in \mathrm{Pro}(\mathrm{An}^\omega)$. Then there exists an adjunction
\[\begin{tikzcd}
	{\mathcal{E}} & {\mathcal{E}}
	\arrow[""{name=0, anchor=center, inner sep=0}, "{(-)^X}", curve={height=-12pt}, from=1-1, to=1-2]
	\arrow[""{name=1, anchor=center, inner sep=0}, "{X \otimes -}", curve={height=-12pt}, from=1-2, to=1-1]
	\arrow["\dashv"{anchor=center, rotate=-90}, draw=none, from=0, to=1]
\end{tikzcd}\]
\end{corollary}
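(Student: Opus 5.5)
We reduce to the finite-level adjunctions of Proposition \ref{ambidexterity} and then pass to the pro-system. Write $X = \text{``}\mathrm{lim}_{i \in I}\text{''} X_i$ with $I$ cofiltered and each $X_i \in \mathrm{An}^\omega$. By the extended definitions, $E^X = \mathrm{colim}_{i \in I} E^{X_i}$ and $X \otimes F = \mathrm{lim}_{i \in I}(X_i \otimes F)$. Both are functorial in $E$ and $F$, since they come from the functors $\mathrm{Pro}(\mathrm{An})^{\mathrm{op}} \times \mathcal{E} \rightarrow \mathcal{E}$ and $\mathrm{Pro}(\mathrm{An}) \times \mathcal{E} \rightarrow \mathcal{E}$ described earlier.

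For $E, F \in \mathcal{E}$ we compute
\[
\mathrm{Map}_\mathcal{E}\bigl(\mathrm{colim}_{i} E^{X_i}, F\bigr) \simeq \mathrm{lim}_{i} \mathrm{Map}_\mathcal{E}(E^{X_i}, F) \simeq \mathrm{lim}_{i} \mathrm{Map}_\mathcal{E}(E, X_i \otimes F) \simeq \mathrm{Map}_\mathcal{E}\bigl(E, \mathrm{lim}_{i} X_i \otimes F\bigr).
\]
The first and last steps are the universal properties of colimits and limits in $\mathcal{E}$; both exist because $\mathcal{E}$ is presentable. The middle step applies Proposition \ref{ambidexterity} levelwise: for compact $X_i$ it gives $(-)^{X_i} \dashv X_i \otimes -$.

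The only real content is making the middle step coherent in $i$. We need the equivalences $\mathrm{Map}(E^{X_i}, F) \simeq \mathrm{Map}(E, X_i \otimes F)$ to assemble into an equivalence of functors $I^{\mathrm{op}} \rightarrow \mathrm{An}$, and likewise in $E$ and $F$. I would get this by building the adjunction once, uniformly, on the full subcategory $\mathrm{An}^\omega$. The key is the natural equivalence of bifunctors
\[
\mathrm{map}_\mathcal{E}(c^{K}, d) \simeq \mathrm{map}_\mathcal{E}(c, K \otimes d)
\]
for $K \in \mathrm{An}^\omega$, with naturality in $K$ as well.

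This equivalence follows from the proof of Proposition \ref{ambidexterity}. Both sides are functors $(\mathrm{An}^\omega)^{\mathrm{op}} \times \mathcal{E}^{\mathrm{op}} \times \mathcal{E} \rightarrow \mathrm{Sp}$. Both send finite colimits in $K$ to finite limits: for the left side because $c^K$ turns colimits into limits and $\mathrm{map}$ is exact; for the right side because $K \otimes d$ preserves colimits and $\mathrm{map}(c,-)$ is exact. Both agree with $\mathrm{map}_\mathcal{E}(c,d)$ at $K = \mathrm{pt}$. So both are the unique finite-limit-preserving extension from the point, i.e.\ canonically equivalent to $\mathrm{map}(c,d)^{K}$, first on finite anima and then on retracts. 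Uniqueness of this extension is what supplies naturality in $K$. Applying $\Omega^\infty$ and taking $\mathrm{lim}_{i \in I}$ of the resulting natural equivalence, restricted along $I^{\mathrm{op}} \rightarrow (\mathrm{An}^\omega)^{\mathrm{op}}$, gives the displayed chain naturally in $E$ and $F$.

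This natural equivalence $\mathrm{Map}(E^X, F) \simeq \mathrm{Map}(E, X \otimes F)$ exhibits $X \otimes -$ as right adjoint to $(-)^X$. Note that stability is used only in the levelwise step. This mirrors Theorem \ref{shapeandcohomologycompactlyassembled}, where colimits of cotensors were handled the same way.
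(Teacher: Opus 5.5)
Your proposal is correct and is essentially the paper's proof: the same chain $\mathrm{Map}_\mathcal{E}(\mathrm{colim}_i E^{X_i},F)\simeq \mathrm{lim}_i \mathrm{Map}_\mathcal{E}(E^{X_i},F)\simeq \mathrm{lim}_i \mathrm{Map}_\mathcal{E}(E,X_i\otimes F)\simeq \mathrm{Map}_\mathcal{E}(E,\mathrm{lim}_i X_i\otimes F)$, with Proposition~\ref{ambidexterity} applied levelwise. Your extra coherence argument, which the paper omits, is a good addition, but its variance needs fixing: $K\mapsto \mathrm{map}_\mathcal{E}(c^K,d)$ and $K\mapsto \mathrm{map}_\mathcal{E}(c,K\otimes d)$ are \emph{covariant}, finite-colimit-preserving functors on $\mathrm{An}^\omega$ (and you restrict along $I\to\mathrm{An}^\omega$), so the uniqueness argument identifies both with $K\otimes \mathrm{map}_\mathcal{E}(c,d)$, as in the proof of Proposition~\ref{ambidexterity}, not with $\mathrm{map}_\mathcal{E}(c,d)^K$.
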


\begin{proof}
Write $X = \text{``} \mathrm{lim}_{i \in I} \text{''} X_i$, with $X_i \in \mathrm{An}^\omega$. Let $D,E \in \mathcal{E}$. Then
\[ \begin{array}{rcl}
 \mathrm{Map}_\mathcal{E}( D^X, E ) & \cong & \mathrm{Map}_\mathcal{E}(\mathrm{colim}_{i \in I}  D^{X_i}, E ) \\
 & \cong & \mathrm{lim}_{i \in I} \mathrm{Map}_\mathcal{E}(  D^{X_i}, E ) \\
 & \cong & \mathrm{lim}_{i \in I} \mathrm{Map}_\mathcal{E}(  D, X_i \otimes E ) \\ 
  & \cong &  \mathrm{Map}_\mathcal{E}(  D, \mathrm{lim}_{i \in I} (X_i \otimes E )) \\ 
  & \cong &  \mathrm{Map}_\mathcal{E}(  D, X \otimes E ). \\ 
\end{array} \]
\end{proof}

The $\infty$-category $\mathrm{Sp}$ has a special role when viewed as a \emph{presentable} $\infty$-category as well. Note that the definition of $\mathrm{Sp}$ as
\[ \mathrm{Sp} := \mathrm{lim}( \cdots \xrightarrow{ \Omega } \mathrm{An}_{\ast /} \xrightarrow{\Omega} \mathrm{An}_{\ast /} ) \]
means equivalently that
\[ \mathrm{Sp} \cong \mathrm{colim}^{\mathrm{Pr}^L}( \mathrm{An}_{\ast /} \xrightarrow{ \Sigma } \mathrm{An}_{\ast /} \xrightarrow{\Sigma} \cdots  ) \]
with the colimit taken in $\mathrm{Pr}^L$. Since taking Lurie tensor products of presentable $\infty$-categories preserves colimits in either variable, the following claim follows easily.

\begin{proposition}[{\cite[Example 5.2.6(4)--(5), together with the discussion preceding
Example 5.1.1]{CarmeliSchlankYanovski2021}}] \label{stablepresentableproperty}
Let $\mathcal{C}$ be a presentable $\infty$-category.
\begin{itemize}
\item $\mathcal{C}$ is pointed iff the tensor product of $\mathcal{C}$ with the left adjoint $(-)_+ : \mathrm{An} \rightarrow \mathrm{An}_{\ast /}$ induces an equivalence
\[ \mathcal{C} \simeq \mathcal{C} \otimes \mathrm{An}_{\ast /}. \]
\item $\mathcal{C}$ is stable iff the tensor product of $\mathcal{C}$ with the left adjoint $ \Sigma^\infty_+ : \mathrm{An} \rightarrow \mathrm{Sp}$ induces an equivalence
\[ \mathcal{C} \simeq \mathcal{C} \otimes \mathrm{Sp}. \]
\end{itemize}
\end{proposition}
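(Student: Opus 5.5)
We prove the two items by the same mechanism, with $\mathcal{D}=\mathrm{An}_{\ast/}$ in the first case and $\mathcal{D}=\mathrm{Sp}$ in the second. In each case $\mathcal{D}$ is the colimit in $\mathrm{Pr}^L$ of a diagram built from $\mathrm{An}$ and a single left adjoint. The key fact is that $\mathcal{C}\otimes -$ preserves colimits in $\mathrm{Pr}^L$. In addition, $\mathcal{C}\otimes \mathcal{D}\simeq \mathrm{Fun}^R(\mathcal{D}^{\mathrm{op}},\mathcal{C})$, which under the colimit presentation is computed as a limit in $\widehat{\mathrm{Cat}}_\infty$ along right adjoints.

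\emph{Pointed case.} First we identify $\mathcal{C}\otimes\mathrm{An}_{\ast/}\simeq \mathrm{Fun}^R(\mathrm{An}_{\ast/}^{\mathrm{op}},\mathcal{C})\simeq \mathcal{C}_{\ast/}$, the $\infty$-category of pointed objects. Concretely, $\mathrm{An}_{\ast/}$ is the cofiber of $\ast\to\mathrm{An}$ in $\mathrm{Pr}^L$, equivalently the limit computing pointed objects, so tensoring gives $\mathcal{C}_{1/}$. Under this identification, $\mathrm{id}_\mathcal{C}\otimes(-)_+$ becomes the functor $c\mapsto c\sqcup 1$, which is left adjoint to the forgetful functor $\mathcal{C}_{1/}\to\mathcal{C}$. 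Next we check both directions. If $\mathcal{C}$ is pointed, $1$ is initial, so $\mathcal{C}_{1/}\to\mathcal{C}$ is an equivalence and its left adjoint $(-)_+$ is one too. Conversely, suppose $\mathcal{C}\to\mathcal{C}_{1/}$, $c\mapsto c\sqcup 1$, is an equivalence. Then $\mathcal{C}$ is equivalent to a pointed category, since $\mathcal{C}_{1/}$ has zero object $1$, and being pointed is invariant under equivalence. (Alternatively, the unit $c\to c\sqcup 1$ is then an equivalence for $c=\emptyset$, which gives $\emptyset\simeq 1$.)

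\emph{Stable case.} We write $\mathrm{Sp}\simeq\mathrm{colim}^{\mathrm{Pr}^L}(\mathrm{An}_{\ast/}\xrightarrow{\Sigma}\mathrm{An}_{\ast/}\xrightarrow{\Sigma}\cdots)$. Applying $\mathcal{C}\otimes-$, which preserves colimits, and passing to right adjoints gives
\[ \mathcal{C}\otimes\mathrm{Sp}\simeq \lim(\cdots\xrightarrow{\Omega}\mathcal{C}_{\ast}\xrightarrow{\Omega}\mathcal{C}_{\ast})=\mathrm{Sp}(\mathcal{C}). \]
Here $\mathcal{C}_\ast=\mathcal{C}\otimes\mathrm{An}_{\ast/}$ by the first step, and we must check that $\mathrm{id}\otimes\Sigma$ is the suspension of $\mathcal{C}_\ast$. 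This holds because both are colimit-preserving and agree on generators, or equivalently because their right adjoints are both $\Omega$ via cotensoring with $S^1$. The map $\mathrm{id}\otimes\Sigma^\infty_+$ is then $\Sigma^\infty_+:\mathcal{C}\to\mathrm{Sp}(\mathcal{C})$, with right adjoint $\Omega^\infty$ followed by forgetting the basepoint.

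Now we treat the two directions. If $\mathcal{C}$ is stable, it is pointed, so $(-)_+$ is an equivalence by the first part. Moreover $\Omega$ is an equivalence on $\mathcal{C}$, so the tower is constant along equivalences and $\Omega^\infty:\mathrm{Sp}(\mathcal{C})\to\mathcal{C}$ is an equivalence; hence so is its left adjoint. Conversely, if $\mathcal{C}\simeq\mathcal{C}\otimes\mathrm{Sp}=\mathrm{Sp}(\mathcal{C})$, then $\mathcal{C}$ is equivalent to a stable category, since $\mathrm{Sp}(\mathcal{C})$ is pointed with invertible $\Omega$ and has finite limits, which is the standard criterion for stability. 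Stability is invariant under equivalence, so $\mathcal{C}$ is stable.

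The main obstacle is the identification of $\mathrm{id}_\mathcal{C}\otimes F$ with the expected concrete functor, namely $(-)_+$, $\Sigma$, or $\Sigma^\infty_+$, as opposed to an abstract equivalence $\mathcal{C}\simeq\mathcal{C}\otimes\mathcal{D}$. I would handle it by working throughout with the right adjoints $\mathrm{Fun}^R(\mathcal{D}^{\mathrm{op}},\mathcal{C})\to\mathrm{Fun}^R(\mathrm{An}^{\mathrm{op}},\mathcal{C})\simeq\mathcal{C}$, given by restriction along $F^R$. Under the pointed-object and spectrum-object descriptions, this restriction is visibly evaluation at the base point, i.e. the forgetful functor and $\Omega^\infty$ respectively.
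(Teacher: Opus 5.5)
Your argument is correct and is essentially the paper's argument made explicit. The paper only records that $\mathrm{Sp}\simeq\mathrm{colim}^{\mathrm{Pr}^L}(\mathrm{An}_{\ast/}\xrightarrow{\Sigma}\mathrm{An}_{\ast/}\xrightarrow{\Sigma}\cdots)$ and that $\mathcal{C}\otimes-$ preserves colimits, deferring the rest to Carmeli--Schlank--Yanovski; you supply the identifications $\mathcal{C}\otimes\mathrm{An}_{\ast/}\simeq\mathcal{C}_{1/}$ and $\mathcal{C}\otimes\mathrm{Sp}\simeq\mathrm{Sp}(\mathcal{C})$, and you recognize the unit maps through their right adjoints.

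Two slips, neither affecting the argument:
\begin{itemize}
\item $\mathrm{An}_{\ast/}$ is not the cofiber of $\ast\to\mathrm{An}$ in $\mathrm{Pr}^L$. The only morphism from the zero object $\ast$ of $\mathrm{Pr}^L$ to $\mathrm{An}$ picks $\emptyset$, and its cofiber is $\mathrm{An}$ itself. What you actually use is that $\mathrm{An}_{\ast/}$ is the pullback of $\mathrm{ev}_0\colon\mathrm{Fun}(\Delta^1,\mathrm{An})\to\mathrm{An}$ along the inclusion of the terminal object, which is a limit of right adjoints.
\item On $\mathrm{Fun}^R(\mathcal{D}^{\mathrm{op}},\mathcal{C})$, the right adjoint of $\mathrm{id}_\mathcal{C}\otimes F$ is restriction along $F^{\mathrm{op}}$, not along $F^R$. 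For example, it is $G\mapsto G(S^0)$ for $F=(-)_+$ and $G\mapsto G(S^1)\simeq\Omega G(S^0)$ for $F=\Sigma$. This still gives the forgetful functor, $\Omega$ and $\Omega^\infty$, as you claim.
\end{itemize}
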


Therefore the functor $- \otimes \mathrm{Sp} : \mathrm{Pr}^L \rightarrow \mathrm{Pr}^L$ can be understood as the \emph{stabilization} of a presentable $\infty$-category. The following statement is almost trivial, but worth pointing out.

\begin{lemma} \label{filteredcolimitsexactfunctor}
Let $f \colon \mathcal{C} \rightarrow \mathcal{D}$ be a left-exact functor between presentable stable $\infty$-categories that preserves filtered colimits. Then $f$ preserves all colimits and admits a right adjoint.
\end{lemma}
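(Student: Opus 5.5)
The strategy is to show that $f$ is exact, so that it preserves finite colimits. Filtered colimits and finite colimits together generate all small colimits, and the adjoint functor theorem then supplies a right adjoint.

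\textbf{Step 1: exactness.} Between stable $\infty$-categories, a functor preserving finite limits automatically preserves finite colimits. Since $f$ is left-exact, it preserves the zero object, because the zero object is the terminal object, i.e.\ the empty limit. It also preserves pullback squares. In a stable $\infty$-category a square is a pullback if and only if it is a pushout. Hence $f$ sends pushout squares to pullback squares in $\mathcal{D}$, which are again pushouts. A functor preserving the initial object (which is the zero object) and pushouts preserves all finite colimits. So $f$ is exact. This is the standard fact quoted after the definition of exact functors above, and I would simply cite it (see \cite[Chapter 1]{Lurie2017HA}).

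\textbf{Step 2: all colimits.} Every small colimit can be written as a filtered colimit of finite colimits. Concretely, a small diagram $K \rightarrow \mathcal{C}$ has colimit equal to the filtered colimit, over the finite subsimplicial sets $K' \subset K$, of the colimits over $K'$; this is \cite[Corollary 4.2.3.11]{Lurie2009HTT}. Since $f$ preserves both kinds of colimits, it preserves all small colimits.

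\textbf{Step 3: right adjoint.} $f$ preserves colimits between presentable $\infty$-categories. By the adjoint functor theorem \cite[Corollary 5.5.2.9]{Lurie2009HTT}, $f$ admits a right adjoint; accessibility is automatic because $f$ preserves filtered colimits.

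No step is a real obstacle. The only point needing care is Step 1, namely the pullback/pushout coincidence in stable $\infty$-categories, which is foundational and will be cited rather than reproved.
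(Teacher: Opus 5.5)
Your proposal is correct and follows the same route as the paper. Left-exactness upgrades to exactness in the stable setting, finite plus filtered colimits give all small colimits, and the adjoint functor theorem for presentable $\infty$-categories produces the right adjoint; you just fill in the details and references the paper leaves implicit (the accessibility remark is harmless but unnecessary, since colimit preservation alone suffices for a right adjoint).
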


\begin{proof}
As $f$ is left exact, it also preserves all finite colimits. But a functor preserving filtered colimits and finite colimits preserves all colimits. Since $\mathcal{C}$ is presentable, it follows that $f$ admits a right adjoint by the Adjoint Functor Theorem.
\end{proof}

The following is a technical lemma that will be quite useful. Let us set up some terminology.

\begin{definition}
Let $\mathcal{C}$ be an $\infty$-category admitting filtered colimits and finite limits. We say that in $\mathcal{C}$ \emph{filtered colimits are exact} if the functor $ k : \mathrm{Ind}(\mathcal{C}) \rightarrow \mathcal{C} $ preserves finite limits.
\end{definition}

\begin{example}
If $\mathcal{C}$ is either a topos or a presentable stable $\infty$-category, then filtered colimits in $\mathcal{C}$ are exact.
\end{example}

\begin{lemma} \label{perfectfunctorstable}
Suppose
\[\begin{tikzcd}
	{\mathcal{C}} & {\mathcal{D}}
	\arrow[""{name=0, anchor=center, inner sep=0}, "{f^*}", curve={height=-12pt}, from=1-1, to=1-2]
	\arrow[""{name=1, anchor=center, inner sep=0}, "{f_*}", curve={height=-12pt}, from=1-2, to=1-1]
	\arrow["\dashv"{anchor=center, rotate=-90}, draw=none, from=0, to=1]
\end{tikzcd}\]
is an adjunction between presentable $\infty$-categories $\mathcal{C}, \mathcal{D}$ that both satisfy the condition that filtered colimits are exact. Assume $f_*$ preserves filtered colimits. Then tensoring with $\mathrm{Sp}$ induces a further adjunction
\[	\begin{tikzcd}
	{\mathcal{C} \otimes \mathrm{Sp}} & {\mathcal{D} \otimes \mathrm{Sp}.}
	\arrow[""{name=0, anchor=center, inner sep=0}, "{f^* \otimes \mathrm{id}_\mathrm{Sp}}", curve={height=-24pt}, from=1-1, to=1-2]
	\arrow[""{name=1, anchor=center, inner sep=0}, "{(f \otimes \mathrm{id}_\mathrm{Sp})^!}"', curve={height=12pt}, from=1-1, to=1-2]
	\arrow[""{name=2, anchor=center, inner sep=0}, "{(f \otimes \mathrm{id}_\mathrm{Sp})_*}"', from=1-2, to=1-1]
	\arrow["\dashv"{anchor=center, rotate=-91}, draw=none, from=0, to=2]
	\arrow["\dashv"{anchor=center, rotate=-88}, draw=none, from=2, to=1]
\end{tikzcd}\]
\end{lemma}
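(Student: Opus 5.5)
The plan is to show that the right adjoint $(f\otimes\mathrm{id}_\mathrm{Sp})_*$ preserves all small colimits; the adjoint functor theorem then supplies the further right adjoint $(f\otimes \mathrm{id}_\mathrm{Sp})^!$. The stabilized categories are presentable and stable. So by Lemma \ref{filteredcolimitsexactfunctor} it suffices to show two things: $(f\otimes\mathrm{id}_\mathrm{Sp})_*$ is exact, which is automatic since it is a right adjoint between stable categories, and it preserves filtered colimits.

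To control filtered colimits, I would first make $(f\otimes\mathrm{id}_\mathrm{Sp})_*$ explicit. Use the model
\[ \mathcal{C}\otimes\mathrm{Sp}\simeq \mathrm{Fun}^R(\mathrm{Sp}^\mathrm{op},\mathcal{C}) \simeq \mathrm{Sp}(\mathcal{C}) = \mathrm{lim}\bigl(\cdots\xrightarrow{\Omega}\mathcal{C}_{\ast/}\xrightarrow{\Omega}\mathcal{C}_{\ast/}\bigr). \]
In this model the left adjoint $f^*\otimes\mathrm{id}_\mathrm{Sp}$ corresponds to $L\circ \mathrm{Sp}(f^*)$, possibly after spectrification. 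Passing to right adjoints, the functor $(f\otimes\mathrm{id}_\mathrm{Sp})_*$ is given levelwise by postcomposition with $f_*$. Since $f_*$ is a right adjoint, it preserves the terminal object and $\Omega$, so it sends $\Omega$-spectrum objects to $\Omega$-spectrum objects. Thus $(f\otimes\mathrm{id}_\mathrm{Sp})_*(X)_n = f_*(X_n)$.

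Next I would describe filtered colimits in $\mathrm{Sp}(\mathcal{C})$. They are computed by taking levelwise filtered colimits in $\mathcal{C}_{\ast/}$ and then applying spectrification $X\mapsto \mathrm{colim}_k \Omega^k X_{n+k}$. Here the hypothesis that filtered colimits are exact enters: $\Omega$ is a finite limit and commutes with filtered colimits in $\mathcal{C}$. So a levelwise filtered colimit of $\Omega$-spectra is already an $\Omega$-spectrum, and filtered colimits in $\mathrm{Sp}(\mathcal{C})$ are computed levelwise. The same holds in $\mathrm{Sp}(\mathcal{D})$. Filtered colimits in $\mathcal{C}_{\ast/}$ are computed in $\mathcal{C}$, as filtered diagrams are weakly contractible. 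Since $f_*$ preserves filtered colimits by assumption, levelwise postcomposition with $f_*$ preserves filtered colimits. Hence $(f\otimes\mathrm{id}_\mathrm{Sp})_*$ does as well, and Lemma \ref{filteredcolimitsexactfunctor} gives the claim.

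I expect the main obstacle to be the identification of $(f\otimes \mathrm{id}_\mathrm{Sp})_*$ with levelwise postcomposition by $f_*$. This is a compatibility of the equivalence $\mathcal{C}\otimes\mathrm{Sp}\simeq\mathrm{Sp}(\mathcal{C})$ with functoriality in right adjoints. To establish it, I would use that $\mathcal{C}\otimes\mathcal{D}\simeq\mathrm{Fun}^R(\mathcal{D}^\mathrm{op},\mathcal{C})$ is functorial in right adjoints $\mathcal{C}\to\mathcal{C}'$ by postcomposition. Indeed, the right adjoint of $f^*\otimes\mathrm{id}$ is postcomposition with $f_*$, which is well defined because $f_*$ preserves limits. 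Then $\mathrm{Fun}^R(\mathrm{Sp}^\mathrm{op},\mathcal{C})\simeq \mathrm{Sp}(\mathcal{C})$ by evaluation at the spheres $\mathbb{S}^{-n}$, which is evidently compatible with postcomposition.
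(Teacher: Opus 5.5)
Your proposal is correct and follows the paper's proof essentially step by step: you model $\mathcal{C}\otimes\mathrm{Sp}$ as the limit of the $\Omega$-tower on pointed objects, identify $(f\otimes\mathrm{id}_\mathrm{Sp})_*$ with levelwise $f_*$, use exactness of filtered colimits to see that filtered colimits of spectrum objects are computed levelwise, and conclude with Lemma \ref{filteredcolimitsexactfunctor}. The only difference is that you justify the levelwise description of the right adjoint via $\mathrm{Fun}^R(\mathrm{Sp}^{\mathrm{op}},\mathcal{C})$ and evaluation at the spheres, a step the paper simply asserts.
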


\begin{proof}
Note that 
\[ \mathcal{C} \otimes \mathrm{Sp} \cong \mathrm{lim}( \cdots \xrightarrow{ \Omega } \mathcal{C}_{1 /} \xrightarrow{\Omega} \mathcal{C}_{1 /} ). \]
The functor $f_*$ preserves limits, and thus is in particular left exact, hence the induced right adjoint $ \mathcal{D} \otimes \mathrm{Sp} \rightarrow \mathcal{C} \otimes \mathrm{Sp}$ is obtained by applying $f_*$ objectwise. The assumption that filtered colimits in $\mathcal{C}$ and $\mathcal{D}$ are exact implies that $\mathcal{C}_{1 /} \xrightarrow{\Omega} \mathcal{C}_{1 /}$ preserves filtered colimits, and similarly for $\mathcal{D}$, hence filtered colimits in $\mathcal{C} \otimes \mathrm{Sp}$ and $\mathcal{D} \otimes \mathrm{Sp}$ are computed objectwise as well. Hence the induced right adjoint 
\[ (f \otimes \mathrm{id}_\mathrm{Sp})_* : \mathcal{D} \otimes \mathrm{Sp} \rightarrow \mathcal{C} \otimes \mathrm{Sp} \]
preserves filtered colimits. Now Lemma \ref{filteredcolimitsexactfunctor} guarantees the existence of the further right adjoint.
\end{proof}

Let us summarize the immediate consequence for sheaves on a topos with values in a presentable stable $\infty$-category.

\begin{corollary}
Let $\mathcal{E}$ be a presentable stable $\infty$-category. The functor
\[ \begin{array}{rcl}
\mathrm{LTop} & \rightarrow & \mathrm{Pr}^L \\
\mathcal{X} & \mapsto & \mathcal{X} \otimes \mathcal{E}
\end{array} \]
sends perfect geometric morphisms to internal left adjoints, i.e. for every perfect geometric morphism of topoi $f : \mathcal{X} \rightarrow \mathcal{Y}$ there are induced adjunctions
\[ \begin{tikzcd}
	{\mathrm{Sh}(\mathcal{X},\mathcal{E})} & {\mathrm{Sh}(\mathcal{Y},\mathcal{E})}
	\arrow[""{name=0, anchor=center, inner sep=0}, "{f_*}"{description}, from=1-1, to=1-2]
	\arrow[""{name=1, anchor=center, inner sep=0}, "{f^*}"', curve={height=12pt}, from=1-2, to=1-1]
	\arrow[""{name=2, anchor=center, inner sep=0}, "{f^!}", curve={height=-12pt}, from=1-2, to=1-1]
	\arrow["\dashv"{anchor=center, rotate=-91}, draw=none, from=0, to=2]
	\arrow["\dashv"{anchor=center, rotate=-89}, draw=none, from=1, to=0]
\end{tikzcd} \]
\end{corollary}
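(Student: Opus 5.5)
The corollary follows from Lemma \ref{perfectfunctorstable} once we use Proposition \ref{stablepresentableproperty} to trade $\mathcal{E}$ for $\mathcal{E}\otimes\mathrm{Sp}$. Let $f : \mathcal{X} \rightarrow \mathcal{Y}$ be perfect. Its underlying adjunction $f^* \dashv f_*$ lives between presentable $\infty$-categories in which filtered colimits are exact, since both are topoi. By perfectness, $f_*$ preserves filtered colimits. Lemma \ref{perfectfunctorstable} therefore gives a triple of adjoints
\[ f^* \otimes \mathrm{id}_\mathrm{Sp} \dashv (f \otimes \mathrm{id}_\mathrm{Sp})_* \dashv (f\otimes \mathrm{id}_\mathrm{Sp})^! \]
between $\mathcal{Y}\otimes\mathrm{Sp}$ and $\mathcal{X}\otimes\mathrm{Sp}$. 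In particular, $(f\otimes\mathrm{id}_\mathrm{Sp})_*$ is a morphism of $\mathrm{Pr}^L$ whose right adjoint is again a morphism of $\mathrm{Pr}^L$.

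Next we tensor with $\mathcal{E}$. Since $\mathcal{E}$ is presentable and stable, Proposition \ref{stablepresentableproperty} gives $\mathcal{E} \simeq \mathcal{E}\otimes \mathrm{Sp}$. Hence
\[ \mathcal{X}\otimes\mathcal{E} \simeq (\mathcal{X}\otimes \mathrm{Sp})\otimes \mathcal{E}, \]
and under this identification $f^*\otimes\mathrm{id}_\mathcal{E}$ corresponds to $(f^*\otimes\mathrm{id}_\mathrm{Sp})\otimes \mathrm{id}_\mathcal{E}$, and similarly for $\mathcal{Y}$. The functor $-\otimes\mathcal{E} : \mathrm{Pr}^L \rightarrow \mathrm{Pr}^L$ is a $2$-functor, as used earlier in the paper, so it preserves internal adjunctions in $\mathrm{Pr}^L$. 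Applying it to the internal adjunction $(f\otimes\mathrm{id}_\mathrm{Sp})_* \dashv (f\otimes \mathrm{id}_\mathrm{Sp})^!$ gives an adjunction
\[ (f\otimes\mathrm{id}_\mathrm{Sp})_*\otimes\mathrm{id}_\mathcal{E} \dashv (f\otimes\mathrm{id}_\mathrm{Sp})^!\otimes\mathrm{id}_\mathcal{E}. \]
Applying it to $f^*\otimes\mathrm{id}_\mathrm{Sp} \dashv (f\otimes\mathrm{id}_\mathrm{Sp})_*$ gives an adjunction whose left adjoint is $f^*\otimes\mathrm{id}_\mathcal{E}$.

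It remains to identify the middle functor. The right adjoint of $f^*\otimes\mathrm{id}_\mathcal{E}$ is by definition the pushforward $f_*$ on $\mathrm{Sh}(\mathcal{X},\mathcal{E})$. By uniqueness of adjoints it agrees with $(f\otimes\mathrm{id}_\mathrm{Sp})_*\otimes\mathrm{id}_\mathcal{E}$, which we have just shown is itself a left adjoint. Setting $f^! := (f\otimes\mathrm{id}_\mathrm{Sp})^!\otimes\mathrm{id}_\mathcal{E}$ then gives the required triple $f^*\dashv f_*\dashv f^!$, so $f^*\otimes\mathrm{id}_\mathcal{E}$ is an internal left adjoint.

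The main obstacle is bookkeeping for the associativity and unit equivalences $(\mathcal{X}\otimes\mathrm{Sp})\otimes\mathcal{E} \simeq \mathcal{X}\otimes(\mathrm{Sp}\otimes\mathcal{E}) \simeq \mathcal{X}\otimes\mathcal{E}$, together with checking that they are compatible with $f^*$. This holds because the equivalence $\mathcal{E}\simeq\mathcal{E}\otimes\mathrm{Sp}$ is natural in $\mathrm{Pr}^L$, being induced by $\Sigma^\infty_+$. Everything else is formal.
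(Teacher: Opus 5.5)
Your argument is the paper's: reduce to $\mathcal{E}=\mathrm{Sp}$ via $\mathcal{E}\simeq\mathcal{E}\otimes\mathrm{Sp}$ and the $2$-functoriality of $-\otimes\mathcal{E}$, with the $\mathrm{Sp}$ case supplied by Lemma \ref{perfectfunctorstable}. You spell out the bookkeeping that the paper compresses into one sentence.

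One step is wrong as written, although you do not need it. You claim that $(f\otimes\mathrm{id}_\mathrm{Sp})^!$ is again a morphism of $\mathrm{Pr}^L$. You then apply $-\otimes\mathcal{E}$ to the ``internal adjunction'' $(f\otimes\mathrm{id}_\mathrm{Sp})_*\dashv(f\otimes\mathrm{id}_\mathrm{Sp})^!$.

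In general $f^!$ does not preserve colimits. Take the Cantor set $C$ and $f\colon\mathrm{Sh}(C)\to\mathrm{An}$. The constant sheaf $\underline{\mathbb{S}}$ is compact in $\mathrm{Sh}(C,\mathrm{Sp})$, because $\Gamma(C;-)$ preserves filtered colimits. However, $\Gamma(C;\underline{\mathbb{S}})\simeq\mathrm{colim}_k\,\mathbb{S}^{2^k}$ has $\pi_0\cong C(C,\mathbb{Z})$, which is not finitely generated, so it is not a compact spectrum. Hence $f^!$ cannot preserve filtered colimits. Consequently $(f\otimes\mathrm{id}_\mathrm{Sp})^!\otimes\mathrm{id}_\mathcal{E}$ is not defined as a $\mathrm{Pr}^L$-tensor.

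The fix is immediate. The only internal adjunction you need is $f^*\otimes\mathrm{id}_\mathrm{Sp}\dashv(f\otimes\mathrm{id}_\mathrm{Sp})_*$, which genuinely lives in $\mathrm{Pr}^L$. Tensoring it with $\mathcal{E}$ identifies the pushforward on $\mathrm{Sh}(\mathcal{X},\mathcal{E})$ with $(f\otimes\mathrm{id}_\mathrm{Sp})_*\otimes\mathrm{id}_\mathcal{E}$. This is a colimit-preserving functor between presentable categories, so $f^!$ exists by the adjoint functor theorem, and that is exactly what ``internal left adjoint'' requires.

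If you want a formula, use the models $\mathcal{Z}\otimes\mathcal{E}\simeq\mathrm{Fun}^R(\mathcal{E}^{\mathrm{op}},\mathcal{Z}\otimes\mathrm{Sp})$ for $\mathcal{Z}=\mathcal{X},\mathcal{Y}$. In these models $f^!$ is postcomposition with $(f\otimes\mathrm{id}_\mathrm{Sp})^!$. This is a $\mathrm{Pr}^R$-functoriality, not a $\mathrm{Pr}^L$ one.
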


\begin{proof}
Since $\mathcal{E} \simeq \mathcal{E} \otimes \mathrm{Sp}$, and the Lurie tensor product is a $2$-functor in both variables, the statement reduces to the case $\mathcal{E} = \mathrm{Sp}$, which is exactly Lemma \ref{perfectfunctorstable}.
\end{proof}

We can now prove the following statement about homology of compact topoi.

\begin{theorem} \label{compacthomology}
Let $\mathcal{X}$ be a compact topos, and let $\mathcal{E}$ be a compactly assembled stable and presentable $\infty$-category. Let $E \in \mathcal{E}$. Then
\[ H_\bullet( \mathcal{X}, E ) \cong \Pi_\infty( \mathcal{X} ) \otimes E \in \mathcal{E}. \]
\end{theorem}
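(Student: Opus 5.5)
The plan is to show that $H_\bullet(\mathcal{X},-)$ is right adjoint to $H^\bullet(\mathcal{X},-)$ on $\mathcal{E}$. Granting that, the statement follows formally. By Theorem \ref{shapeandcohomologycompactlyassembled}, $H^\bullet(\mathcal{X},-) \cong (-)^{\Pi_\infty(\mathcal{X})}$. Since $\mathcal{X}$ is compact, $\Pi_\infty(\mathcal{X})$ is pro-compact. The Corollary following Proposition \ref{ambidexterity} then says that $(-)^{\Pi_\infty(\mathcal{X})}$ is left adjoint to $\Pi_\infty(\mathcal{X}) \otimes -$. Uniqueness of adjoints gives $H_\bullet(\mathcal{X},E) \cong \Pi_\infty(\mathcal{X}) \otimes E$, naturally in $E$.

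To find the right adjoint of $H^\bullet(\mathcal{X},-)$, I write $\Gamma := (\mathcal{X} \otimes \mathrm{id}_\mathcal{E})_*$ and $c := \mathcal{X}^* \otimes \mathrm{id}_\mathcal{E}$, so that $H^\bullet(\mathcal{X},-) = \Gamma c$. The geometric morphism $\mathcal{X} \to \mathrm{An}$ is perfect because $\mathcal{X}$ is compact, and $\mathcal{E}$ is stable. The Corollary following Lemma \ref{perfectfunctorstable} therefore gives a further right adjoint $\Gamma^!$, and we get a chain $c \dashv \Gamma \dashv \Gamma^!$. Hence $\Gamma c$ has right adjoint $\Gamma \Gamma^!$. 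It remains to produce a natural equivalence
\[ \mathcal{X}_+ \mathcal{X}^+ \cong \Gamma \Gamma^! : \mathcal{E} \to \mathcal{E}. \]

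For this I will build a comparison functor
\[ \Phi : \mathrm{Sh}(\mathcal{X},\mathcal{E}) \to \mathrm{CoSh}(\mathcal{X},\mathcal{E}), \qquad S \mapsto \bigl( Y \mapsto \Gamma( Y \otimes S ) \bigr), \]
where $Y \otimes S$ is the action of $\mathcal{X}$ on the $\mathcal{X}$-module $\mathcal{X} \otimes \mathcal{E}$; concretely $Y \otimes S = Y_! Y^* S$. The value $\Phi(S)$ preserves colimits in $Y$ because $\Gamma$ preserves colimits, which uses compactness and stability as above. Since $1 \otimes S = S$, we have $\mathcal{X}_+ \Phi(S) = \Phi(S)(1) = \Gamma(S)$, i.e. $\mathcal{X}_+ \Phi \cong \Gamma$. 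The key claim is then $\Phi \Gamma^! \cong \mathcal{X}^+$. Together with the previous identity this gives $\mathcal{X}_+ \mathcal{X}^+ \cong \mathcal{X}_+ \Phi \Gamma^! \cong \Gamma \Gamma^!$.

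To prove the key claim I check the universal property of $\mathcal{X}^+$: for every cosheaf $F$ I want a natural equivalence $\mathrm{Map}(F, \Phi\Gamma^! E) \cong \mathrm{Map}_\mathcal{E}(F(1), E)$. I would first try to reduce to the case where $\mathcal{E}$ is compactly generated. Write $\mathcal{E}$ as an internal retract of a compactly generated $\mathcal{E}'$ via $i_! \dashv i^* \dashv i_*$, as in the proof of Theorem \ref{shapeandcohomologycompactlyassembled}, and check that all constructions commute with $i^*$. Next I reduce to $\mathcal{E} = \mathrm{Sp}$ using $\mathcal{E} \simeq \mathcal{E} \otimes \mathrm{Sp}$ (Proposition \ref{stablepresentableproperty}). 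In that case cosheaves are colimit-preserving functors $\mathcal{X} \otimes \mathrm{Sp} \to \mathrm{Sp}$, and I would test the equivalence on the generators $\Sigma^\infty_+ Y$. This step, identifying the right adjoint $\mathcal{X}^+$ of evaluation at the terminal object with $\Phi \Gamma^!$, is where I expect the main difficulty. It is essentially a Verdier-duality statement for compact topoi, and it must use compactness of $\mathcal{X}$ and the compactly assembled hypothesis on $\mathcal{E}$ in an essential way. If the direct check is intractable, my fallback is to prove $\mathcal{X}_+\mathcal{X}^+ \cong \Gamma\Gamma^!$ after mapping in from compact objects of $\mathcal{E}'$. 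The tool there is the push-pull formula of Lemma \ref{pushpull}, combined with writing $\Gamma^!$ as a limit of the finite pieces $X_i$.
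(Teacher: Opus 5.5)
Your outer frame is correct and matches the paper's last step. Once $H_\bullet(\mathcal{X},-)$ is known to be right adjoint to $H^\bullet(\mathcal{X},-)\cong(-)^{\Pi_\infty(\mathcal{X})}$, the Corollary after Proposition \ref{ambidexterity} finishes the proof. The gap is in how you obtain that adjunction.

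\textbf{The key claim is false in general.} Your claim $\Phi\Gamma^!\cong\mathcal{X}^+$ is not merely hard to prove; it fails for general compact topoi. For $\mathcal{X}=\mathrm{Sh}(X)$ with $X$ compact Hausdorff, $\Phi(S)=\Gamma_c(-,S)$ is the Verdier duality functor, and there the claim does hold. Now take $\mathcal{X}=\mathrm{An}_{/A}$ with $A=S^1\vee S^1=BF_2$, which is compact because $A$ is finite, and take $\mathcal{E}=D(\mathbb{Q})$.
\begin{itemize}
\item Then $\Gamma=R\mathrm{Hom}_{\mathbb{Q}F_2}(\mathbb{Q},-)\simeq\Sigma^{-1}W\otimes_{\mathbb{Q}F_2}(-)$, where $W=H^1(F_2;\mathbb{Q}F_2)$ is infinite-dimensional because $F_2$ has infinitely many ends.
\item Hence $\Gamma^!(E)\simeq\Sigma\,\mathrm{Hom}_{\mathbb{Q}}(W,E)$.
\item On the universal cover $Y=y_a$, you get $\Phi\Gamma^!(\mathbb{Q})(Y)\simeq W\otimes W^*$.
\item On the other hand, $\mathcal{X}_+$ is $F_2$-coinvariants, so $\mathcal{X}^+(\mathbb{Q})$ is the trivial module and $\mathcal{X}^+(\mathbb{Q})(Y)\simeq\mathbb{Q}$.
\end{itemize}
Conceptually, for $\mathrm{An}_{/A}$ the functor $\Phi$ twists by Klein's dualizing spectrum of $A$. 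That spectrum is invertible only when $A$ satisfies Poincar\'e duality. So only the composite identity $\mathcal{X}_+\mathcal{X}^+\cong\Gamma\Gamma^!$ is true, and the factorization through $\Phi\Gamma^!$ cannot produce it. Testing on generators would fail for the same reason. The push-pull fallback is too unspecific to count as an argument.

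\textbf{The missing idea: find a left adjoint of $\mathcal{X}_+$, not its right adjoint.} Do not try to identify $\mathcal{X}^+$. The paper's argument runs as follows.
\begin{itemize}
\item Because $\mathcal{E}$ is stable, $\mathrm{CoSh}(\mathcal{X},\mathcal{E})\simeq\mathrm{Fun}^L(\mathrm{Sh}(\mathcal{X},\mathrm{Sp}),\mathcal{E})$.
\item By compactness and Lemma \ref{perfectfunctorstable}, $\mathcal{X}^*\dashv\mathcal{X}_*$ on $\mathrm{Sh}(\mathcal{X},\mathrm{Sp})$ is an adjunction internal to $\mathrm{Pr}^L$.
\item The $2$-functor $\mathrm{Fun}^L(-,\mathcal{E})$ reverses $1$-cells but not $2$-cells. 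It therefore turns this adjunction into $(\mathcal{X}_*)^\circ\dashv(\mathcal{X}^*)^\circ=\mathcal{X}_+$.
\item Composing with $\mathcal{X}_+\dashv\mathcal{X}^+$ gives $(\mathcal{X}_*\mathcal{X}^*)^\circ\dashv\mathcal{X}_+\mathcal{X}^+$, and the left-hand side is $(-)^{\Pi_\infty(\mathcal{X})}$.
\end{itemize}
In your notation, $(\mathcal{X}_*)^\circ=\Phi c$, since $\Phi(cE)(Y)\simeq\Gamma_{\mathrm{Sp}}(\Sigma^\infty_+Y)\otimes E$. So the statement to aim for is $\Phi c\dashv\mathcal{X}_+$, not $\Phi\Gamma^!\cong\mathcal{X}^+$. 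With that, $H^\bullet(\mathcal{X},-)=\Gamma c=\mathcal{X}_+\Phi c$ has right adjoint $\mathcal{X}_+\mathcal{X}^+$ directly. Neither $\Gamma^!$ nor any Verdier-duality statement is needed.
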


\begin{proof}
Since $\mathrm{Fun}^L(-, \mathcal{E}) : (\mathrm{Pr}^L)^{\mathrm{op}} \rightarrow \mathrm{Pr}^L$ is a $2$-functor, the adjunctions
\[\begin{tikzcd}
	{\mathrm{Sh}(\mathcal{X}, \mathrm{Sp})} & {\mathrm{Sp}}
	\arrow[""{name=0, anchor=center, inner sep=0}, "{\mathcal{X}_*}"{description}, from=1-1, to=1-2]
	\arrow[""{name=1, anchor=center, inner sep=0}, "{\mathcal{X}^*}"', curve={height=18pt}, from=1-2, to=1-1]
	\arrow[""{name=2, anchor=center, inner sep=0}, "{\mathcal{X}^!}", curve={height=-12pt}, from=1-2, to=1-1]
	\arrow["\dashv"{anchor=center, rotate=-90}, draw=none, from=0, to=2]
	\arrow["\dashv"{anchor=center, rotate=-90}, draw=none, from=1, to=0]
\end{tikzcd}\]
induce, under the equivalence $\mathcal{E} \simeq \mathrm{Fun}^L(\mathrm{Sp},\mathcal{E})$, the adjunctions
\[\begin{tikzcd}
	{\mathrm{CoSh}(\mathcal{X}, \mathcal{E})} & {\mathcal{E}}
	\arrow[""{name=0, anchor=center, inner sep=0}, "{(\mathcal{X}^*)^\circ}"{description}, from=1-1, to=1-2]
	\arrow[""{name=1, anchor=center, inner sep=0}, "{(\mathcal{X}_*)^\circ}"', curve={height=18pt}, from=1-2, to=1-1]
	\arrow[""{name=2, anchor=center, inner sep=0}, "{\mathcal{X}^+}", curve={height=-18pt}, from=1-2, to=1-1]
	\arrow["\dashv"{anchor=center, rotate=-90}, draw=none, from=0, to=2]
	\arrow["\dashv"{anchor=center, rotate=-90}, draw=none, from=1, to=0]
\end{tikzcd}\]
where $(\mathcal{X}^*)^\circ = \mathcal{X}_+$. From this we see that we have the adjunction
	\[ (\mathcal{X}_* \mathcal{X}^* )^\circ \cong (\mathcal{X}^*)^\circ (\mathcal{X}_*)^\circ \dashv \mathcal{X}_+ \mathcal{X}^+ \]
of functors $\mathcal{E} \rightarrow \mathcal{E}$. But since $\mathcal{E}$ is compactly assembled, the left-hand side agrees with the functor $(-)^{\Pi_\infty(\mathcal{X})}$ and therefore its right adjoint $\mathcal{X}_+ \mathcal{X}^+ $ agrees with the functor $\Pi_\infty(\mathcal{X}) \otimes - $.
\end{proof}

\begin{example}
Without either compactness or local contractibility of the topos $\mathcal{X}$, the statement of Theorems \ref{localcontractiblehomology} and \ref{compacthomology} becomes false. There always exists a comparison map
\[ H_\bullet( \mathcal{X}, E ) \rightarrow \Pi_\infty( \mathcal{X} ) \otimes E. \]
For the case $\mathcal{E} = D(\mathbb{Z})$, and $E = H\mathbb{Z}$, the right-hand side is referred to as \emph{strong homology} in earlier literature, see \cite{LisicaMardesic1985StrongHomologyI}.
As we have seen in Lemma \ref{homologycolimits}, the left-hand side is always additive. However, the right-hand side is not. Assuming the continuum hypothesis, Marde{\v{s}}i{\'c} and Prasolov \cite{MardesicPrasolov1988StrongHomology} gave a counterexample given by a countable wedge of $2$-dimensional Hawaiian earrings. Later, Prasolov gave a counterexample which is independent of the continuum hypothesis, see \cite{Prasolov2005NonAdditivity}.
\end{example}

\begin{remark}
The proof of Theorem \ref{compacthomology} also shows that 
\[ H_\bullet( \mathcal{X}, E ) \cong \mathcal{X}_* \mathcal{X}^!(E).\] Hence for compact Hausdorff spaces $X$, the topos definition of homology of $\mathrm{Sh}(X)$ agrees with the definition of homology that arises from the six-functor formalism on locally compact Hausdorff spaces, see e.g.\ \cite{Volpe2021SixOperationsInTopology}.
\end{remark}

\begingroup
\setlength{\emergencystretch}{8em}
\printbibliography
\endgroup

\end{document}